\documentclass[12pt]{amsart}
\usepackage{amssymb,amsmath,tabularx,mathrsfs,mathbbol,yfonts,upgreek}
\usepackage{amsthm,verbatim,comment}
\usepackage{marginnote}
\usepackage[bookmarks=true]{hyperref}
\usepackage{pstricks,pst-node,pst-plot}
\usepackage{geometry}
\usepackage{stmaryrd}
\usepackage{paralist}
\usepackage[all]{xy}
\usepackage{array}
\usepackage{url}
\usepackage{longtable}
\geometry{a4paper, top=4cm, left=3cm, right=3cm, bottom=4cm}
\numberwithin{equation}{section}

\DeclareSymbolFontAlphabet{\mathbb}{AMSb}
\DeclareSymbolFontAlphabet{\mathbbl}{bbold}

\newtheorem{thm}{Theorem}[section]
\newtheorem{thmx}{Theorem}

\newtheorem{lem}[thm]{Lemma}
\newtheorem{prop}[thm]{Proposition}
\newtheorem{cor}[thm]{Corollary}

\theoremstyle{definition}
\newtheorem{defn}[thm]{Definition}
\newtheorem{nota}[thm]{Notation}
\newtheorem{eg}[thm]{Example}

\theoremstyle{remarks}

\newtheorem*{rem*}{Remarks}

\newtheoremstyle{case}{}{}{}{}{}{:}{ }{}
\theoremstyle{case}

\newcommand{\B}{\mathcal{B}}

\newcommand{\F}{\mathbb{F}}

\title[On Terwilliger $\F$-algebras of quasi-thin association schemes]{On Terwilliger $\F$-algebras of quasi-thin association schemes}
\begin{document}
\author{Yu Jiang}
\address[Y. Jiang]{Division of Mathematical Sciences, Nanyang Technological University, SPMS-MAS-05-34, 21 Nanyang Link, Singapore 637371.}
\email[Y. Jiang]{jian0089@e.ntu.edu.sg}


\begin{abstract} In \cite{Han1}, Hanaki defined the Terwilliger algebras of association schemes over a commutative unital ring. In this paper, we call the Terwilliger algebras of association schemes over a field $\F$ the Terwilliger $\F$-algebras of association schemes and study the Terwilliger $\F$-algebras of quasi-thin association schemes. As main results, we determine the $\F$-dimensions, the semisimplicity, the Jacobson radicals, and the algebraic structures of the Terwilliger $\F$-algebras of quasi-thin association schemes. We also get some results with independent interests.
\vspace{-1em}
\end{abstract}
\maketitle
\noindent{\textbf{Keywords.} Association scheme; Terwilliger $\F$-algebra; Quasi-thin scheme\\
\textbf{Mathematics Subject Classification 2020.} 05E30 (primary), 05E16 (secondary)}
\section{Introduction}
Let $X$ denote a nonempty finite set and call an association scheme on $X$ a scheme.

The subconstituent algebras of schemes, introduced by Terwilliger in \cite{T}, are now known as the Terwilliger algebras of schemes. Recall that a Terwilliger algebra of a scheme is a finite-dimensional semisimple complex algebra that contains the complex adjacency algebra of this scheme. So it contains the combinatorial information of this scheme and can control the structure of this scheme (see \cite{T} and \cite{Rie}). However, it is difficult to determine the algebraic structure of a Terwilliger algebra of a scheme.

In \cite{Han1}, Hanaki introduced the Terwilliger algebras of schemes over a commutative unital ring. Moreover, he also called the Terwilliger algebras of schemes over a field of positive characteristic the modular Terwilliger algebras of schemes and studied their basic properties. In particular, he proved that the algebraic structure of a modular Terwilliger algebra of a scheme depends on the characteristic of the coefficient field (see Theorem \ref{T;Maincitation}). So determining the algebraic structure of a modular Terwilliger algebra of a scheme is also difficult.

Fix a field $\F$ of characteristic $p$ and call the Terwilliger algebras of schemes over $\F$ the Terwilliger $\F$-algebras of schemes. In this paper, we determine the $\F$-dimensions, the semisimplicity, the Jacobson radicals, the algebraic structures of the Terwilliger $\F$-algebras of quasi-thin schemes. Fix a scheme $S=\{R_0, R_1,\ldots, R_d\}$. Notice that the notion of a Terwilliger $\F$-algebra of $S$ generalizes the notions of a Terwilliger algebra of $S$ and a modular Terwilliger algebra of $S$. So we can describe the algebraic structures of the Terwilliger algebras and modular Terwilliger algebras of quasi-thin schemes by our main results. Fix $x\in X$ and let $\mathcal{T}$ denote the Terwilliger $\F$-algebra of $S$ with respect to $x$. Let $\mathcal{J}$ denote the Jacobson radical of $\mathcal{T}$. Given $R_a, R_b\in S$, let $R_aR_b$ be the complex product of $R_a, R_b$ and $R_{a'}$ be the transpose relation of $R_a$. Write $k_a$ for the valency of $R_a$. We state our main results as follows.
\begin{thmx}\label{T;A}
Assume that $S=\{R_0, R_1,\ldots, R_d\}$ is a quasi-thin scheme. Then the $\F$-dimension of $\mathcal{T}$ is the sum of $(d+1)^2$, $|\{(a,b): R_a, R_b\in S,\ |R_{a'}R_b|=2\}|$, and $|\{(c,e): R_c, R_e\in S,\ (c,e)\ \text{is a bad pair of $S$}\}|$.
\end{thmx}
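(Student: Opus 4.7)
The plan is to use the Peirce decomposition $\mathcal{T}=\bigoplus_{0\le a,b\le d}E_a^*\mathcal{T}E_b^*$ afforded by the dual idempotents, which are pairwise orthogonal and sum to the identity of $\mathcal{T}$, and then compute $\dim_\F(E_a^*\mathcal{T}E_b^*)$ for each ordered pair before summing. Each block sits inside $E_a^*\M_X(\F)E_b^*$, the space of matrices supported on $R_a(x)\times R_b(x)$, whose $\F$-dimension is $k_ak_b\le 4$ under the quasi-thin hypothesis. Hence each block is extremely small and amenable to exhaustive case analysis.

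The first batch of basis elements to extract is $\{E_a^*A_cE_b^*:R_c\in R_{a'}R_b\}$. For distinct $R_c\in R_{a'}R_b$ these matrices are $\{0,1\}$-valued with pairwise disjoint supports in the $R_a(x)\times R_b(x)$ block, simply because the relations of $S$ partition $X\times X$. So the set is $\F$-linearly independent of cardinality $|R_{a'}R_b|$, and summing over $(a,b)$ yields
\[
\sum_{a,b}|R_{a'}R_b|=(d+1)^2+|\{(a,b):|R_{a'}R_b|=2\}|,
\]
using that every such complex product has size $1$ or $2$ in the quasi-thin setting. This reproduces the first two terms of the dimension formula.

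It then remains to establish that the remaining dimension of $\mathcal{T}$ equals the number of bad pairs. For each bad pair $(c,e)$ I would construct an explicit matrix in $\mathcal{T}$ --- most naturally a triple product of the form $E_c^*A_mE_n^*A_{m'}E_e^*$ --- and verify, using the definition of bad pair, that it is $\F$-linearly independent of the $\{E_c^*A_aE_e^*\}$'s. Conversely, to show no further dimension appears, I would analyze arbitrary words in the generators $E_{a_i}^*A_{c_j}$ and, using the valency bound $k_a\le 2$ together with the small block sizes, show inductively that every such word reduces to a combination of the already-identified basis elements, with the bad-pair contributions appearing precisely when no such reduction is possible.

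The main obstacle is the converse direction in the final step: showing that for every non-bad pair the set $\{E_a^*A_cE_b^*\}$ already exhausts $E_a^*\mathcal{T}E_b^*$. This requires a complete classification of the multiplicative patterns available in a quasi-thin Terwilliger algebra, which is feasible precisely because valencies are bounded by $2$, but forces a careful enumeration of how the structure constants $p_{ab}^c$ combine inside each block. Once this reduction is in hand, the three contributions add up to the dimension asserted in the theorem.
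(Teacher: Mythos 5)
Your Peirce-block setup and your first count are sound and essentially reproduce what the paper does through Lemma \ref{L;generalresults} (i)--(ii) and Lemma \ref{L;Nonzerointersection}: the nonzero triple products $E_a^*A_cE_b^*$ with $R_c\in R_{a'}R_b$ have pairwise disjoint supports inside the block $E_a^*M_X(\F)E_b^*$, and summing $|R_{a'}R_b|\in\{1,2\}$ over all ordered pairs gives $(d+1)^2+|\{(a,b):|R_{a'}R_b|=2\}|$. The genuine gap is that everything after this point is a statement of intent rather than an argument, and it is exactly the part that carries the weight of Theorem \ref{T;A}. You must prove (i) that each bad pair contributes one further dimension, (ii) that it contributes \emph{only} one, i.e.\ that a block attached to a bad pair stays $2$-dimensional rather than $3$- or $4$-dimensional and a block with $|R_{a'}R_b|=2$ stays $2$-dimensional, and (iii) that every other block with $|R_{a'}R_b|=1$ stays $1$-dimensional. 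This upper-bound direction is the content of the paper's Section 3: one shows that any word in the generators which does not already reduce to shorter words forces $k_{i_u}=k_{j_u}=k_{\ell_u}=2$, $p_{i_uj_u}^{\ell_u}=1$ and, crucially, $|R_{i_0'}R_{\ell_a}|=1$ at its endpoints (Lemma \ref{L;Linearsolution}, Corollary \ref{C;quasithindecomposiiton}), so that extra dimension can only appear at bad pairs; and that such a word is a $(0,1)$-matrix whose rows and columns have equal sums and each contain at least one $1$ and at least one $0$ (Lemmas \ref{L;Case4}--\ref{L;Case7}), hence equals $E_{u_1v_1}+E_{u_2v_2}$ or $E_{u_1v_2}+E_{u_2v_1}$ (Corollary \ref{C;badpairform}), which caps the block at dimension two. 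None of this ``careful enumeration'' is carried out in your proposal, so the asserted dimension is only bounded from below.

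A second, smaller defect: your proposed witness for a bad pair, a product of two triple products of the special shape $E_c^*A_mE_n^*A_{m'}E_e^*$, is too restrictive. A bad pair $(i_0,\ell_a)$ is defined by a chain of arbitrary length $a\geq 1$ whose intermediate pairs may have $|R_{i_0'}R_{\ell_v}|=2$, so the natural witness is the full product $\prod_{z=0}^{a}E_{i_z}^*A_{j_z}E_{\ell_z}^*$; there is no reason it can be realized by a length-two word, nor that the two middle adjacency matrices are transposes of each other. Even with the correct witness you still need the analogue of Lemma \ref{L;Badpairlemma} (iii): since $|R_{c'}R_e|=1$ for a bad pair $(c,e)$, the ``first batch'' in that block is the single matrix $E_c^*JE_e^*$, and one must verify that the witness (which by Corollary \ref{C;badpairform} has exactly two ones in a $2\times 2$ block) is not a scalar multiple of it.
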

For the definition of a bad pair of $S$, one can refer to Definition \ref{D;badpair}. Observe that the definition of a bad pair of $S$ only depends on the intersection numbers of $S$.
\begin{thmx}\label{T;B}
Assume that $S$ is a quasi-thin scheme. Then $\mathcal{T}$ is semisimple if and only if $p\neq2$ or $p=2$ and $S$ is a thin scheme.
\end{thmx}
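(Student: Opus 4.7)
The plan is to prove the two implications separately, using Theorem A as the bookkeeping tool. The dimension formula splits $\dim_\F\mathcal{T}$ into three pieces: a universal $(d+1)^2$ from the $E_i^*\mathcal{T}E_j^*$ ``off-diagonal'' combinatorics, a contribution from ordered pairs $(a,b)$ with $|R_{a'}R_b|=2$, and a contribution indexed by bad pairs. I expect that the first two pieces always descend to simple components of $\mathcal{T}/\mathcal{J}$ in every characteristic, while the bad pair piece descends to $\mathcal{T}/\mathcal{J}$ when $p\neq 2$ and sits inside $\mathcal{J}$ when $p=2$. The proof would therefore reduce the semisimplicity question to controlling exactly how bad pairs behave in each characteristic.

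For the ``if'' direction, suppose either $p\neq 2$, or $p=2$ and $S$ is thin. If $S$ is thin then every valency $k_a$ equals $1$, forcing $|R_{a'}R_b|=1$ for all $(a,b)$ and ruling out bad pairs (whose definition requires valency-$2$ relations). In this case one checks directly that $\mathcal{T}$ is isomorphic to a product of matrix algebras over $\F$, hence semisimple in every characteristic. If instead $p\neq 2$, I would produce a basis of $\mathcal{T}$ adapted to the $E_i^*\mathcal{T}E_j^*$ decomposition and show that each bad pair $(c,e)$ contributes an additional simple block of $\mathcal{T}/\mathcal{J}$ rather than a nilpotent element; the arithmetic input is that the intersection numbers of a quasi-thin scheme lie in $\{0,1,2\}$, so the ``bad'' structure constants equal $\pm 2$, which is invertible in $\F$ under the assumption $p\neq 2$, and the associated relations close up into semisimple matrix blocks.

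For the ``only if'' direction I argue by contrapositive: assume $p=2$ and $S$ quasi-thin but not thin. Then some $R_a\in S$ has $k_a=2$, and a short combinatorial check on intersection numbers produces at least one bad pair $(c,e)$. The nilpotent element is built inside $E_c^*\mathcal{T}E_e^*$ from a carefully chosen product of an adjacency matrix $A_f$ with the dual idempotents $E_c^*$ and $E_e^*$; in characteristic $2$ the leading $k_a A_0=2A_0$ contribution in $A_a A_{a'}$ vanishes, and this vanishing is precisely what makes the candidate element $u$ satisfy $u\neq 0$ while every product $u\mathcal{T}u$ collapses to zero. Consequently $u\in\mathcal{J}$ and $\mathcal{T}$ is not semisimple.

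The main obstacle I foresee is the precise verification that bad pairs switch sides between $\mathcal{T}/\mathcal{J}$ and $\mathcal{J}$ as $p$ passes to $2$. This is ultimately a $\pmod{2}$ analysis of the structure constants of $\mathcal{T}$ in the $E_i^*\mathcal{T}E_j^*$ basis, and is cleanest if carried out in tandem with the paper's explicit description of $\mathcal{J}$ (announced in the introduction), rather than inferred purely from Theorem A. Accordingly I expect the actual write-up to interleave the proofs of Theorem B and of the radical description, rather than treat them in sequence.
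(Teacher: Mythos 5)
Your guiding picture is wrong at a crucial point, and the ``only if'' direction rests on a false step. First, a quasi-thin non-thin scheme need not have any bad pair: in the paper's Example of the order-$6$ scheme No.\ $5$ one has $k_2=k_3=2$ but $|R_{g'}R_h|=2$ for all valency-$2$ relations, so the set of bad pairs is empty; hence no ``short combinatorial check'' can produce a bad pair from $p=2$ and non-thinness. Second, bad pairs are irrelevant to the failure of semisimplicity in characteristic $2$: by Theorems C and E the radical is $\mathcal{J}=\mathcal{J}_1=\langle\{E_a^*JE_b^*: \max\{k_a,k_b\}=2\}\rangle_\F$, i.e.\ it lives inside the ``$(d+1)^2$'' piece, while the bad-pair basis elements $B_{ce}$ survive modulo $\mathcal{J}$ and generate the simple blocks $M_{|\mathcal{C}_v|}(\F)$ --- exactly the opposite of your claim that the bad-pair contribution ``sits inside $\mathcal{J}$ when $p=2$''. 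The correct short argument, and the one the paper uses, is: if $p=2$ and some $k_a=2$ then $S$ is not $p'$-valenced, so Hanaki's Theorem 3.4 applies; equivalently, $JE_a^*J=\overline{k_a}J=O$ makes $\langle E_a^*JE_a^*\rangle_\F$ a nonzero nilpotent two-sided ideal of the corner $E_a^*\mathcal{T}E_a^*$, whence $\mathcal{J}\neq\{O\}$. Your candidate element built at a bad pair, and the assertion that ``every product $u\mathcal{T}u$ collapses to zero'', are both unjustified; note that a square-zero element does not by itself generate a nilpotent ideal --- one needs the corner-ideal structure (Hanaki's Lemma 3.3 / Proposition 3.1) or the paper's Corollary 6.7 on $\mathcal{J}_1$.

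On the ``if'' side, the thin case is fine (the paper's Lemma 5.5 even gives $\mathcal{T}=M_X(\F)$, a single matrix algebra). For $p\neq 2$ your plan --- exhibit an adapted basis and show the algebra decomposes into matrix blocks --- is viable and is essentially the Section 7 structure theorem (the elements $B_{ab}$, $C_{ab}$, the equivalence relation on valency-$2$ relations, and Corollary 7.10), but as written it is only a plan: you would still need the basis $\mathcal{B}$ of Section 4 and the multiplication rules of Section 7 to carry it out, and your arithmetic claim that the relevant structure constants are ``$\pm 2$'' is not accurate; the only input really used is invertibility of $\overline{2}$. The paper's own proof of Theorem B is more economical: Lemma 5.3 shows directly that $\mathcal{J}=\{O\}$ when $p\neq2$ by localizing a hypothetical nonzero radical element to a corner $E_g^*\mathcal{J}E_h^*$ with $k_g=k_h=2$ (via the $p'$-valenced Lemma 5.1), pinning down its form with the row/column-sum symmetry of Lemma 3.5, and producing a non-nilpotent element of $\mathcal{J}$ (Lemma 5.2), while the ``only if'' direction is delegated entirely to Hanaki's Theorem 3.4. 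So the proposal needs both a repaired ``only if'' argument and a completed ``if'' argument before it constitutes a proof.
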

\begin{thmx}\label{T;C}
Assume that $p=2$ and $S$ is a quasi-thin scheme. Then $\mathcal{J}=\mathcal{J}_1$.
\end{thmx}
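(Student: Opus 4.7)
The plan is to establish the equality $\mathcal{J} = \mathcal{J}_1$ by proving two inclusions.

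First inclusion, $\mathcal{J}_1 \subseteq \mathcal{J}$: I would check that $\mathcal{J}_1$ is a nilpotent two-sided ideal of $\mathcal{T}$, which forces it inside the Jacobson radical. The ideal property should be verified by multiplying a distinguished spanning set of $\mathcal{J}_1$ by the standard generating elements of $\mathcal{T}$ and showing the result lies back in $\mathcal{J}_1$. Nilpotency should then follow because $p=2$ kills the intersection numbers equal to $2$ in a quasi-thin scheme (where all intersection numbers lie in $\{0, 1, 2\}$), so a bounded power of $\mathcal{J}_1$ must vanish; a short induction on the length of a product of generators should suffice.

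Second inclusion, $\mathcal{J} \subseteq \mathcal{J}_1$: here the cleanest strategy is to prove that $\mathcal{T}/\mathcal{J}_1$ is semisimple. Theorem~A together with a dimension computation for $\mathcal{J}_1$ should give $\dim_\F \mathcal{T}/\mathcal{J}_1 = (d+1)^2$, the count of the ``standard'' triples. I would then exhibit a surjective $\F$-algebra homomorphism from $\mathcal{T}/\mathcal{J}_1$ onto a semisimple algebra of matching dimension, forcing an isomorphism and hence semisimplicity. A natural target is an algebra assembled from the adjacency algebra of $S$ together with the Terwilliger $\F$-algebra of the thin residue of $S$, since the thin residue of a quasi-thin scheme is itself a thin scheme whose Terwilliger $\F$-algebra is semisimple by Theorem~B. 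Once semisimplicity of $\mathcal{T}/\mathcal{J}_1$ is established, $\mathcal{J}$ must be contained in $\mathcal{J}_1$.

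The main obstacle I anticipate is the case analysis of products in $\mathcal{T}$ required for both the ideal property and the nilpotency of $\mathcal{J}_1$. One must carefully track how basis elements of $\mathcal{T}$ multiply for each type of pair---those with $|R_{a'}R_b| = 1$, those with $|R_{a'}R_b| = 2$, and bad pairs in the sense of Definition~\ref{D;badpair}---and exploit the vanishing in $\F$ of structure constants equal to $2$. Organising this case analysis cleanly, and confirming that no terms outside $\mathcal{J}_1$ appear in products of its generators, is where the technical work will be concentrated; it is also the step that makes essential use of both the quasi-thin hypothesis on $S$ and the hypothesis $p=2$.
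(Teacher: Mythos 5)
Your first inclusion is essentially the paper's own argument and is sound: $\mathcal{J}_1$ is a two-sided ideal of $\mathcal{T}$ with $\mathcal{J}_1^3=\{O\}$ (Corollary \ref{C;ideal1}), the vanishing coming from $JE_a^*J=\overline{k_a}J$ with $k_a=2=p$ — so it is the valencies of weight $2$, rather than general intersection numbers equal to $2$, that get killed, but that is a minor imprecision. The genuine gap is in the second inclusion. Your dimension claim $\dim_\F\mathcal{T}/\mathcal{J}_1=(d+1)^2$ is false: by Theorem \ref{T;A}, $\dim_\F\mathcal{T}=(d+1)^2+|\mathcal{R}|+|\mathcal{S}|$, while by Lemma \ref{L;generalresults} (iii) $\dim_\F\mathcal{J}_1=(d+1)^2-|\mathcal{A}_1|^2$, so $\dim_\F\mathcal{T}/\mathcal{J}_1=|\mathcal{A}_1|^2+|\mathcal{R}|+|\mathcal{S}|$. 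For the scheme of order $6$, No.\ $5$ (Example \ref{E;example2}) this is $4+4+0=8$, not $16$. Consequently there is no surjection onto a semisimple algebra of dimension $(d+1)^2$ to exploit, and the proposed target is also of the wrong shape: by Theorem \ref{T;E} (b2) the semisimple quotient is $M_{|\mathcal{A}_1|}(\F)\oplus\bigoplus_{v}M_{|\mathcal{C}_v|}(\F)$, whose blocks are indexed by the equivalence classes of valency-$2$ relations under the relation generated by $|R_{a'}R_b|=2$ together with bad pairs — combinatorial data that ``adjacency algebra of $S$ plus Terwilliger $\F$-algebra of the thin residue'' does not encode. Moreover, the adjacency $\F$-algebra of a non-thin quasi-thin scheme need not be semisimple when $p=2$, and nothing in the paper (nor in general) guarantees that the thin residue of a quasi-thin scheme is thin; you may have the thin radical in mind, but that only accounts for the single block $M_{|\mathcal{A}_1|}(\F)$.

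For comparison, the paper's proof of $\mathcal{J}\subseteq\mathcal{J}_1$ (Lemma \ref{L;Radicalsemisimple}) does not identify the quotient at all at this stage. It takes $\underline{M}$ in the radical of $\mathcal{T}/\mathcal{J}_1$, expands $M$ in the basis $\mathcal{B}=\mathcal{V}\cup\mathcal{W}$ of Corollary \ref{C;basis} — the hard combinatorial output of Section 4, including the elements $B_{ab}$ attached to pairs with $|R_{a'}R_b|=2$ and to bad pairs — and kills each coefficient: a nonzero coefficient of $\underline{B_{uv}}$ would force the non-nilpotent element $\underline{B_{uu}}=\underline{B_{uv}B_{vu}}$ into the radical, and a nonzero coefficient of $\underline{E_y^*JE_z^*}$ with $k_y=k_z=1$ would force $\underline{E_y^*JE_y^*}$ into it. Any correct proof of this inclusion must engage with those extra basis elements $B_{ab}$ (equivalently, with the classes $\mathcal{C}_v$); your plan as written never touches them, so the second half of your argument would not go through without substantial reworking along these lines.
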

For the definition of $\mathcal{J}_1$, one can refer to Notations \ref{N;notation3} and \ref{N;notation1}. According to Theorems \ref{T;B} and \ref{T;C}, we can determine the Jacobson radicals of Terwilliger $\F$-algebras of quasi-thin schemes.
\begin{thmx}\label{T;D}
Assume that $S$ is a quasi-thin scheme. Every Terwilliger $\F$-algebra of $S$ is isomorphic to $\mathcal{T}$ as $\F$-algebras.
\end{thmx}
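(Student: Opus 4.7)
The plan is to bootstrap from Theorems~\ref{T;A}, \ref{T;B}, and \ref{T;C}, which describe $\mathcal{T}$ using invariants that depend only on $S$ and $\F$ and not on the specific choice of base point $x$. For any $y\in X$, let $\mathcal{T}_y$ denote the Terwilliger $\F$-algebra of $S$ with respect to $y$; the aim is $\mathcal{T}_y\cong\mathcal{T}$ as $\F$-algebras. Since those theorems are established for an arbitrary base point, they apply equally to $\mathcal{T}_y$, so $\dim_\F\mathcal{T}_y=\dim_\F\mathcal{T}$, semisimplicity either holds for both algebras or fails for both, and, in the non-semisimple case, the Jacobson radicals are described by the same formula.

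In the semisimple case ($p\neq 2$ or $S$ thin), the explicit decomposition of the standard $\mathcal{T}$-module $\F^X$ underlying the proofs of Theorems~\ref{T;A} and \ref{T;B} pins down the multiplicities and dimensions of the Wedderburn components in terms of intersection numbers of $S$ alone. Consequently $\mathcal{T}$ and $\mathcal{T}_y$ have matching Wedderburn decompositions, and the conclusion follows from classical semisimple theory. In the remaining case ($p=2$ with $S$ a non-thin quasi-thin scheme), the strategy is to construct an explicit isomorphism by transport of the natural spanning set of $\mathcal{T}$: send each basis element at $x$ to its analogue at $y$, and verify that this linear bijection respects multiplication using the restrictions coming from the quasi-thin hypothesis. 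Theorem~\ref{T;C} then guarantees that radicals are sent to radicals, and the induced map on semisimple quotients is already known to be an isomorphism by the preceding case.

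The main obstacle is the multiplicativity check. A priori, the structure constants for products of Terwilliger basis elements at $x$ could depend on combinatorial data finer than the intersection numbers of $S$. The key input is that in a quasi-thin scheme every valency $k_a$ is at most $2$, which forces each such structure constant to lie in a highly restricted range, governed precisely by the bad pairs of Definition~\ref{D;badpair} and by the pairs $(a,b)$ with $|R_{a'}R_b|=2$ that already feature in Theorem~\ref{T;A}. Once the relevant products are tabulated, the verification that the transport map is an $\F$-algebra homomorphism becomes a finite case analysis using the identities already established in the proofs of Theorems~\ref{T;A} and \ref{T;C}.
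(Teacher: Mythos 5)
Your second branch (the case $p=2$, $S$ not thin) is essentially the paper's own proof, and in fact the paper runs exactly that transport argument uniformly in every characteristic: it takes the basis $\mathcal{B}(y)=\mathcal{V}(y)\cup\mathcal{W}(y)$ of $\mathcal{T}(y)$ (Corollaries \ref{C;basis} and \ref{C;independent}), observes that products of basis elements at $y$ obey the same formulas as at $x$ (the analogues of Lemmas \ref{L;Computation} and \ref{L;Secondcomputation}, whose structure constants are Kronecker deltas and valencies $\overline{k_\ell}$, hence depend only on $S$ and not on the base vertex), and concludes that the obvious linear bijection $B_{gh}(y)\mapsto B_{gh}$, $E_i^*(y)JE_j^*(y)\mapsto E_i^*JE_j^*$ is an $\F$-algebra isomorphism. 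No case distinction on $p$ is needed, and once the isomorphism exists, radicals correspond automatically, so the appeal to Theorem \ref{T;C} and to the semisimple quotient is superfluous.

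The genuine gap is in your semisimple branch. Knowing that $\dim_\F\mathcal{T}(y)=\dim_\F\mathcal{T}$ (Theorem \ref{T;A}) and that both algebras are semisimple (Theorem \ref{T;B}) does not determine the isomorphism type: semisimple algebras of equal dimension can have different Wedderburn decompositions. You appeal to ``the explicit decomposition of the standard module $\F^X$ underlying the proofs of Theorems \ref{T;A} and \ref{T;B}'', but no such decomposition occurs in those proofs: Theorem \ref{T;A} is proved by exhibiting the basis $\mathcal{B}$, and Theorem \ref{T;B} by radical computations. The identification of the Wedderburn components (the simple ideal of dimension $(d+1)^2$ together with the blocks $M_{|\mathcal{C}_k|}(\F)$ indexed by the $\sim$-classes of $\mathcal{A}_2$) is precisely the content of Theorem \ref{T;E} and Corollary \ref{C;maintheorem1}, which the paper establishes \emph{after} Theorem \ref{T;D}, again via an explicit basis ($\mathcal{C}$) rather than via the standard module; so as written this branch rests on structure not yet available, and the base-point independence of that structure is exactly what has to be proved. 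The repair is simple: the transport argument you propose for $p=2$ works verbatim for $p\neq 2$ (and in the thin case one has $\mathcal{T}(y)=M_X(\F)=\mathcal{T}$ directly), so drop the case split and run the basis-transport argument in general.
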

\begin{thmx}\label{T;E}
Assume that $S=\{R_0, R_1,\ldots, R_d\}$ is a quasi-thin scheme and let $M_n(\F)$ be the full matrix algebra of $\F$-square matrices of size $n$.
\begin{enumerate}[(i)]
\item [\em (i)] Assume further that $S$ is a thin scheme. Then $\mathcal{T}\cong M_{|X|}(\F)$ as $\F$-algebras.
\item [\em (ii)] Assume further that $S$ is not a thin scheme. Set $\mathcal{A}_i=\{a: R_a\in S,\ k_a=i\}$ for any $i\in\{1,2\}$. Let $\sim$ be a binary relation on $\mathcal{A}_2$, where, for any $b, c\in\mathcal{A}_2$, $b\sim c$ if and only if $(b,c)\in\{(g,h): |R_{g'}R_h|=2\ \text{or}\ (g,h)\ \text{is a bad pair of $S$}\}$. Then $\sim$ is an equivalence relation on $\mathcal{A}_2$.
Let $\mathcal{C}_0, \mathcal{C}_1,\ldots, \mathcal{C}_{\gamma}$ be exactly all pairwise distinct equivalence classes of $\mathcal{A}_2$ with respect to $\sim$.
    \begin{enumerate}[(a)]
    \item [\em (a)] If $p\neq 2$, there exist minimal two-sided ideals $\mathcal{I}_{-1}, \mathcal{I}_0,\ldots, \mathcal{I}_{\gamma}$ of $\mathcal{T}$ such that the following assertions hold.
        \begin{enumerate}[(1)]
        \item [\em (a1)] For any $j\in\{-1,0,\ldots,\gamma\}$, $\mathcal{I}_j$ is a unital $\F$-algebra.
        \item [\em (a2)] The ideal $\mathcal{I}_{-1}$ is a simple $\F$-algebra with $\F$-dimension $(d+1)^2$.
        \item [\em (a3)] As $\F$-algebras, $\mathcal{T}=\mathcal{I}_{-1}\oplus \bigoplus_{k=0}^{\gamma}\mathcal{I}_k\cong \mathcal{I}_{-1}\oplus \bigoplus_{k=0}^{\gamma}M_{|\mathcal{C}_k|}(\F)$.
         \end{enumerate}
        \item [\em (b)] If $p=2$, let $\mathcal{T}/\mathcal{J}$ be the quotient $\F$-algebra of $\mathcal{T}$ with respect to $\mathcal{J}$. Then there exist minimal two-sided ideals $\mathcal{K}_{-1}, \mathcal{K}_0,\ldots, \mathcal{K}_{\gamma}$ of $\mathcal{T}/\mathcal{J}$ such that the following assertions hold.
            \begin{enumerate}[(1)]
            \item [\em (b1)] For any $u\in \{-1, 0,\ldots, \gamma\}$, $\mathcal{K}_u$ is a unital $\F$-algebra.
            \item [\em (b2)] As $\F$-algebras, $\mathcal{T}/\mathcal{J}=\mathcal{K}_{-1}\oplus\bigoplus_{v=0}^{\gamma}\mathcal{K}_v\cong M_{|\mathcal{A}_1|}(\F)\oplus \bigoplus_{v=0}^{\gamma}M_{|\mathcal{C}_v|}(\F)$.
            \end{enumerate}
    \end{enumerate}
\end{enumerate}
\end{thmx}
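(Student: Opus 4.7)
The plan is to address parts (i), (ii)(a), and (ii)(b) in turn, with the bulk of the work going into constructing explicit orthogonal idempotents that realize the decompositions.

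For part (i), assume $S$ is thin. Every relation $R_a \in S$ has valency one, so each pair $(y,z) \in X \times X$ lies in exactly one $R_a$. The standard basis elements of $\mathcal{T}$ indexed by such triples therefore form a complete system of matrix units, giving $M_{|X|}(\F) \subseteq \mathcal{T}$; since the reverse inclusion is automatic, equality follows. Theorem \ref{T;A} supplies the matching dimension $(d+1)^2 = |X|^2$, as thin schemes have no pairs with $|R_{a'}R_b|=2$ and no bad pairs.

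For part (ii), I would first verify that $\sim$ is an equivalence relation on $\mathcal{A}_2$. Reflexivity holds because, for $b \in \mathcal{A}_2$, the complex product $R_{b'}R_b$ contains $R_0$ and must have exactly two elements by quasi-thinness. Symmetry follows from the adjoint identity relating $R_{a'}R_b$ and $R_{b'}R_a$, combined with the symmetric shape of the bad-pair condition in Definition \ref{D;badpair}. Transitivity is the nontrivial case and will require a careful combinatorial analysis of intersection numbers within the quasi-thin scheme.

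For (ii)(a) with $p \neq 2$, Theorem \ref{T;B} ensures $\mathcal{T}$ is semisimple, so a Wedderburn decomposition exists and is unique. My plan is to construct central idempotents $e_{-1}, e_0, \ldots, e_\gamma$ as explicit polynomial combinations of the standard basis of $\mathcal{T}$. The ideal $\mathcal{I}_{-1} = \mathcal{T} e_{-1}$ is designed to capture the part arising from the relations $R_a \in S$ under conjugation by all diagonal idempotents on $X$, yielding a simple $\F$-algebra of dimension $(d+1)^2$. For each equivalence class $\mathcal{C}_k$, the ideal $\mathcal{I}_k = \mathcal{T}e_k$ should come equipped with a natural system of matrix units indexed by $\mathcal{C}_k \times \mathcal{C}_k$, identifying it with $M_{|\mathcal{C}_k|}(\F)$. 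A dimension count using Theorem \ref{T;A} then confirms that the direct sum exhausts $\mathcal{T}$.

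For (ii)(b) with $p=2$, Theorem \ref{T;C} identifies $\mathcal{J} = \mathcal{J}_1$ explicitly, so $\mathcal{T}/\mathcal{J}$ is semisimple and inherits a block decomposition of the same shape as in (a). The idempotents from (a), suitably modified, descend to the quotient; the analogue of the $\mathcal{I}_{-1}$-block collapses from dimension $(d+1)^2$ to $|\mathcal{A}_1|^2$ precisely because, over characteristic two, the interactions between the valency-one part and the valency-two part lie in $\mathcal{J}_1$ and vanish in the quotient, leaving only the thin piece visible. The principal obstacle is proving transitivity of $\sim$, since this requires juggling intersection-number identities and bad-pair conditions simultaneously; a secondary hurdle is exhibiting matrix units for each $\mathcal{I}_k$ (respectively $\mathcal{K}_v$) in a form that is compatible with both the semisimple case and the modular reduction of (ii)(b).
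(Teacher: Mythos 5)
Your overall architecture matches the paper's: part (i) by matrix units plus the dimension count from Theorem \ref{T;A}; for part (ii), a block of dimension $(d+1)^2$ spanned by the $E_i^*JE_j^*$ together with one matrix-algebra block per $\sim$-class, and in characteristic two a passage to $\mathcal{T}/\mathcal{J}$ in which the valency-two part of the $J$-block dies, leaving $M_{|\mathcal{A}_1|}(\F)$. But as written the proposal has two genuine gaps, both of which you yourself flag as ``obstacles'' and then leave unresolved. First, you never prove that $\sim$ is symmetric and transitive. Symmetry is not just ``the symmetric shape of the bad-pair condition'': Definition \ref{D;badpair} is directional (a chain from $i_0$ to $\ell_a$), and one must reverse the chain, replace each $R_{j_h}$ by its transpose, and check via Lemma \ref{L;Intersectionnumber} (i), (iv) that all the conditions, including $|R_{b'}R_a|=1$, survive (this is the paper's Lemma \ref{L;Basistranspose}). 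Transitivity requires a case analysis ($\mathcal{R}$--$\mathcal{R}$, $\mathcal{R}$--$\mathcal{S}$, $\mathcal{S}$--$\mathcal{S}$) in which chains are concatenated and the alternative $|R_{a'}R_c|=2$ is absorbed into $\mathcal{R}$ via the gcd bound (Lemma \ref{L;Basistransitive}). Without these facts the classes $\mathcal{C}_k$ are not defined, and the identity $\sum_k|\mathcal{C}_k|^2=|\mathcal{R}|+|\mathcal{S}|$ that your concluding ``dimension count'' silently uses is unavailable.

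Second, the central idempotents and matrix units are never actually constructed; ``explicit polynomial combinations of the standard basis'' and ``should come equipped with a natural system of matrix units'' describe a hope, not an argument, and semisimplicity plus a dimension count does not by itself force the block sizes --- one must exhibit the ideals. The substance of the paper's proof is exactly this: the basis $\mathcal{B}=\mathcal{V}\cup\mathcal{W}$ of Corollary \ref{C;basis}, the multiplication rules $E_a^*JE_b^*E_c^*JE_e^*=\delta_{bc}\overline{k_b}E_a^*JE_e^*$ and $E_a^*JE_b^*B_{ce}=\delta_{bc}E_a^*JE_e^*$ (Lemma \ref{L;Computation}), $B_{ab}B_{be}=B_{ae}$ (Lemma \ref{L;Secondcomputation}), the corrected elements $C_{gh}=B_{gh}-\overline{2}^{-1}E_g^*JE_h^*$ for $p\neq2$, and the verifications that $\langle\mathcal{W}\rangle_\F$ and each $\langle\mathcal{D}_k\rangle_\F$ are two-sided ideals, the former simple when $p\neq2$, the latter isomorphic to $M_{|\mathcal{C}_k|}(\F)$. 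Note also that $\mathcal{I}_{-1}$ is only claimed simple of dimension $(d+1)^2$, not a full matrix algebra, so ``matrix units'' is the wrong target there. For (ii)(b) your remark that the idempotents ``suitably modified, descend'' hides the real issue: the correction term $\overline{2}^{-1}E_g^*JE_h^*$ does not exist when $p=2$; the paper instead uses $B_{gh}$ itself and invokes Theorem \ref{T;C} ($\mathcal{J}=\mathcal{J}_1$) to make the cross terms vanish in the quotient. So the plan is sound and parallel to the paper, but the proof of the two decisive ingredients is missing.
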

This paper is organized as follows. In Section $2$, we gather the basic notation and preliminaries. In Section $3$, we study the triple products of Terwilliger $\F$-algebras of schemes and present some needed results. In Section $4$, we show Theorem \ref{T;A}. Theorems \ref{T;B} and \ref{T;C} are also proved in Sections $5$ and $6$ respectively. In Section $7$, we finish the proofs of Theorems \ref{T;D} and \ref{T;E}.
\subsection*{Acknowledgement}
The presented paper is motivated by \cite[Problem 4.3]{Han} and \cite{Han1}. The author gratefully thanks Prof. Akihide Hanaki for letting him know many interesting problems on the modular representations of association schemes.
\section{Basic notation and preliminaries}
In this section, we set up the basic notation and present some preliminary results. We assume that the reader is familiar with the theory of association schemes. For a general background on this topic, one may refer to \cite{EI}, \cite{Z2}, or \cite{Z3}.
\subsection{General conventions}
Throughout this paper, fix a field $\F$ of characteristic $p$ and a nonempty finite set $X$. Let $\mathbb{N}$ denote the set of all natural numbers and set $\mathbb{N}_0=\mathbb{N}\cup\{0\}$. For any $a\in \mathbb{N}_0$, define $[a]=\{b\in \mathbb{N}_0: b\leq a\}$ and set $[-a]=\varnothing$ if $a\neq 0$. For a nonempty subset $Y$ of an $\F$-linear space, let $\langle Y\rangle_\F$ be the $\F$-linear space generated by $Y$. If $Y=\{y\}$, set $\langle y\rangle_\F=\langle \{y\}\rangle_\F$ and let $\langle\varnothing\rangle_\F$ be the zero space. The addition, the multiplication, the scalar multiplication of matrices displayed in this paper are the usual matrix operations. A scheme is an association scheme on $X$.
\subsection{Theory of schemes}
Let $S=\{R_0, R_1,\ldots, R_d\}$ be a partition of the cartesian product $X\times X$, where $R_a\neq \varnothing$ for all $a\in [d]$. Then $S$ is called a scheme of class $d$ if the following conditions hold:
\begin{enumerate}[(i)]
\item $R_0=\{(b,b): b\in X\}$;
\item For any $c\in [d]$, there exists $c'\in [d]$ such that $\{(e,f): (f,e)\in R_c\}=R_{c'}\in S$;
\item For any $i,j,k\in [d]$ and $(m,n), (\hat{m},\hat{n})\in R_k$, we have the following equality: $|\{\ell\in X: (m,\ell)\in R_i,\ (\ell,n)\in R_j\}|=|\{\hat{\ell}\in X: (\hat{m},\hat{\ell})\in R_i,\ (\hat{\ell},\hat{n})\in R_j\}|$.
\end{enumerate}

Throughout this paper, $S=\{R_0, R_1,\ldots, R_d\}$ denotes a fixed scheme of class $d$. Every member of $X$ is called a vertex of $S$. Every member of $S$ is called a relation of $S$. According to the definition of $S$, for any $u,v,w\in [d]$ and $(g,h)\in R_w$, we define $$p_{uv}^w=|\{r\in X: (g,r)\in R_u,\ (r,h)\in R_v\}|\in \mathbb{N}_0.$$ The number $p_{uv}^w$ is called the intersection number of $S$ with respect to $R_u$, $R_v$, $R_w$. An intersection number of $S$ always means an intersection number of $S$ with respect to three given relations of $S$. For any $R_q\in S$, let $k_q$ denote $p_{qq'}^0$ and call $k_q$ the valency of $R_q$. For any $s\in X$, we set $sR_q=\{\hat{s}\in X: (s, \hat{s})\in R_q\}$ and observe that $|sR_{q}|=k_q$. As $s$ is chosen from $X$ arbitrarily and $R_q\neq \varnothing$, notice that $k_q>0$.
For any nonempty subsets $U, V$ of $S$, we set $$UV=\{R_z\in S: \exists\ R_x\in U,\ \exists\ R_y\in V,\ p_{xy}^z>0\}.$$ The operation between $U$ and $V$ is called the complex multiplication of $U$ and $V$.
For any $R_{\alpha}, R_{\beta}\in S$, the complex product of $R_{\alpha}$ and $R_{\beta}$ is defined to be $\{R_{\alpha}\}\{R_{\beta}\}$ and is denoted by $R_{\alpha}R_{\beta}$. We collect some results of the intersection numbers of $S$ as follows. These results shall be repeatedly used in the following sections.
\begin{lem}\label{L;Intersectionnumber}
Assume that $R_a, R_b, R_c\in S$.
\begin{enumerate}[(i)]
\item [\em (i)] \cite[Lemmas 1.1.1 (ii) and 1.1.2 (iii)]{Z3} We have $p_{a'b'}^{c'}=p_{ba}^{c}$ and $k_a=k_{a'}$.
\item [\em (ii)] \cite[Lemma 1.1.3 (iii)]{Z3} We have $\sum_{e=0}^d p_{ae}^b=k_a$.
\item [\em (iii)]\cite[Lemma 1.1.3 (iv)]{Z3} We have $k_ak_b=\sum_{e=0}^dp_{ab}^e k_e$.
\item [\em (iv)]\cite[Proposition 2.2 (vi)]{EI} We have $k_cp_{ab}^c=k_ap_{cb'}^a=k_bp_{a'c}^b$.
\item [\em (v)]\cite[Lemma 1.5.2]{Z3} The number $|R_aR_b|$ is no more than the greatest common divisor of $k_a$ and $k_b$.
\end{enumerate}
\end{lem}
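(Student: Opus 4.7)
Every item of this lemma is a standard identity about intersection numbers, and the references already furnish complete proofs; my plan is simply to verify each part directly from the definition by a short counting argument, working in the order (i), (ii), (iii), (iv), (v) so that each later part may invoke the earlier ones when convenient.

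For (i), the idea is that transposition is an involution on $X\times X$ sending $R_c$ bijectively to $R_{c'}$. Fix $(g,h)\in R_c$, so $(h,g)\in R_{c'}$, and observe that the map $r\mapsto r$ converts the set counted by $p_{a'b'}^{c'}$, namely $\{r\in X:(h,r)\in R_{a'},(r,g)\in R_{b'}\}$, into $\{r\in X:(r,h)\in R_a,(g,r)\in R_b\}$, which is the set counted by $p_{ba}^c$. Specialising to $b=a$, $c=0$ (using $0'=0$) gives $k_a=k_{a'}$. For (ii), fix $(g,h)\in R_b$ and partition the $k_a$ elements of $gR_a$ by the unique relation $R_e$ containing $(r,h)$; the sum of the block sizes is $\sum_{e=0}^{d}p_{ae}^b$. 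For (iii), fix $g\in X$ and double count pairs $(r,h)\in X\times X$ with $(g,r)\in R_a$ and $(r,h)\in R_b$: choosing $r$ then $h$ gives $k_ak_b$, while first fixing the relation $R_e$ containing $(g,h)$ and then choosing $r$ gives $\sum_{e=0}^d k_e p_{ab}^e$.

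For (iv), the cleanest strategy is to triple-count triples $(g,r,h)\in X^3$ subject to $(g,r)\in R_a$, $(r,h)\in R_b$, and $(g,h)\in R_c$: fixing first $(g,h)\in R_c$ yields $k_cp_{ab}^c$; fixing first $(g,r)\in R_a$ and then counting $h$ with $(r,h)\in R_b$ and $(g,h)\in R_c$ (equivalently $(h,g)\in R_{c'}$, hence $(r,h)\in R_b$ and $(h,g)\in R_{c'}$) gives $k_a p_{bc'}^a$, which is rewritten as $k_a p_{cb'}^a$ using (i); and the third symmetric choice gives $k_b p_{a'c}^b$.

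Part (v) I expect to be the main obstacle, because it is the only assertion that is not a one-line double count. The strategy is the following. For any $R_z\in R_aR_b$ we have $p_{ab}^z\ge 1$, and by (iv) the common value $k_zp_{ab}^z=k_ap_{zb'}^a=k_bp_{a'z}^b$ is a positive integer divisible by both $k_a$ and $k_b$, hence by $\mathrm{lcm}(k_a,k_b)$. Fixing $g\in X$ and summing $p_{ab}^z$ over the distinct $z$ with $R_z\in R_aR_b$ controls the number of pairs $(r,h)$ with $(g,r)\in R_a$ and $(r,h)\in R_b$, which equals $k_ak_b$. Combining this bound with the divisibility just extracted shows that $|R_aR_b|\cdot\mathrm{lcm}(k_a,k_b)\le k_ak_b$, i.e.\ $|R_aR_b|\le \gcd(k_a,k_b)$, which is the claim.
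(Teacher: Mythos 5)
The paper itself offers no proof of this lemma (it is quoted from Zieschang and Bannai--Ito), so your counting arguments stand or fall on their own; most of them are fine, but one step fails. Your derivations of the first identity in (i), of (ii), of (iii), and of (v) are correct -- in particular the argument for (v), summing $k_zp_{ab}^z$ over $R_z\in R_aR_b$, using (iii) to get $\sum_{R_z\in R_aR_b}k_zp_{ab}^z=k_ak_b$ and (iv) to see that each summand is a positive multiple of both $k_a$ and $k_b$, hence of $\mathrm{lcm}(k_a,k_b)$, is exactly the standard proof (note the quantity you sum must be $k_zp_{ab}^z$, not $p_{ab}^z$ as written). The genuine gap is $k_a=k_{a'}$: specialising $b=a$, $c=0$ in $p_{a'b'}^{c'}=p_{ba}^{c}$ gives $p_{a'a'}^{0}=p_{aa}^{0}$, and for $(g,g)\in R_0$ the set counted by $p_{aa}^0$ requires $(g,r)\in R_a\cap R_{a'}$, so both sides are $0$ unless $a=a'$; the specialization is vacuous. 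Worse, no specialization of that identity can work, because with the paper's definition $k_a=p_{aa'}^0$ and $k_{a'}=p_{a'a}^0$, and the transposition identity maps $p_{aa'}^0$ to itself, so it never compares the two numbers. You need a global count, e.g. $|R_a|=\sum_{g\in X}|gR_a|=|X|\,k_a$ while also $|R_a|=\sum_{h\in X}|\{g:(g,h)\in R_a\}|=\sum_{h\in X}|hR_{a'}|=|X|\,k_{a'}$; alternatively, since your proof of (iv) uses only the first identity of (i) (no circularity), put $b=a'$, $c=0$ in (iv) to get $k_a=k_0\,p_{aa'}^0=k_{a'}\,p_{a'0}^{a'}=k_{a'}$.

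Two smaller points in (iv). The intermediate intersection number should be $p_{bc'}^{a'}$, since the set $\{h:(r,h)\in R_b,\ (h,g)\in R_{c'}\}$ is counted with respect to the pair $(r,g)\in R_{a'}$, not $(g,r)\in R_a$; your final rewriting to $k_ap_{cb'}^a$ via (i) is nonetheless correct (or one can count directly with respect to $(g,r)\in R_a$ and get $p_{cb'}^a$ without invoking (i)). Also, each of your three ways of counting the triples $(g,r,h)\in X^3$ produces an extra factor $|X|$ (e.g. fixing $(g,h)\in R_c$ gives $|X|\,k_c\,p_{ab}^c$, since $|R_c|=|X|k_c$); the factor cancels, but as written the three displayed totals are each off by $|X|$.
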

We are interested in some special schemes. Let us state their definitions as follows.

The scheme $S$ is called a thin scheme if the valency of every relation of $S$ is one.

The scheme $S$ is called a quasi-thin scheme if the valency of every relation of $S$ is at most two. So every thin scheme is a quasi-thin scheme. For the details of some properties of quasi-thin schemes, one may refer to \cite{H2}, \cite{H3}, \cite{J}, \cite{MP}, and \cite{MX}.

The scheme $S$ is called a $p'$-valenced scheme if the valency of every relation of $S$ is not divisible by $p$. So every thin scheme is a $p'$-valenced scheme. Observe that a quasi-thin scheme is a $p'$-valenced scheme if $p\neq2$. Notice that every scheme is a $0'$-valenced scheme.
We close this subsection by a property of quasi-thin schemes.
\begin{lem}\label{L;Nonzerointersection}
Let $n$ be the number of nonzero intersection numbers of $S$. If $S$ is a quasi-thin scheme, then   $n=(d+1)^2+|\{(a,b): R_a, R_b\in S,\ |R_{a'}R_b|=2\}|$.
\end{lem}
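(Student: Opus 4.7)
The plan is to count the nonzero intersection numbers $p_{uv}^w$ directly by fixing the first two indices. For each ordered pair $(R_u, R_v)$ of relations of $S$, the number of $R_w \in S$ with $p_{uv}^w > 0$ is exactly $|R_u R_v|$ by the definition of the complex product. Hence
\[ n = \sum_{R_u, R_v \in S} |R_u R_v|. \]

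Next, I would show that in a quasi-thin scheme each term $|R_u R_v|$ equals either $1$ or $2$. The upper bound is immediate from Lemma \ref{L;Intersectionnumber} (v), since $\gcd(k_u, k_v) \leq 2$ when all valencies lie in $\{1, 2\}$. For the lower bound, Lemma \ref{L;Intersectionnumber} (iii) gives $\sum_{e=0}^d p_{uv}^e k_e = k_u k_v > 0$, so at least one $p_{uv}^e$ is positive and therefore $R_u R_v$ is nonempty.

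Splitting the sum by whether $|R_u R_v| = 1$ or $|R_u R_v| = 2$ then yields
\[ n = |\{(u,v) : |R_u R_v| = 1\}| + 2\,|\{(u,v) : |R_u R_v| = 2\}| = (d+1)^2 + |\{(u,v) : R_u, R_v \in S,\ |R_u R_v| = 2\}|. \]
To match the form of the statement, I would finally invoke the bijection $(u,v) \mapsto (u',v)$ on $[d] \times [d]$, which is an involution because transposition of relations is, to rewrite the second cardinality as $|\{(a,b): R_a, R_b \in S,\ |R_{a'} R_b| = 2\}|$.

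There is no real obstacle here: the argument is a routine double-count, and the only substantive input is that quasi-thin forces $|R_u R_v| \in \{1, 2\}$, which is a one-line consequence of parts (iii) and (v) of Lemma \ref{L;Intersectionnumber}. The payoff of framing the count using $R_{a'} R_b$ rather than $R_a R_b$ is presumably cosmetic here but will align with conventions used in the later proofs, in particular in Theorem \ref{T;A}.
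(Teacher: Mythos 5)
Your proposal is correct and follows essentially the same route as the paper: both count $n$ as $\sum_{u,v}|R_uR_v|$, use Lemma \ref{L;Intersectionnumber} (v) (together with nonemptiness of each complex product) to see each summand is $1$ or $2$ in the quasi-thin case, and then apply the bijection $(a,b)\mapsto(a',b)$ to pass between $R_aR_b$ and $R_{a'}R_b$. Your explicit appeal to Lemma \ref{L;Intersectionnumber} (iii) for the lower bound $|R_uR_v|\geq 1$ is a minor point the paper leaves implicit; otherwise the arguments coincide.
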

\begin{proof}
Since $S$ is a quasi-thin scheme, notice that $k_c\leq 2$ for all $R_c\in S$. By Lemma \ref{L;Intersectionnumber} (v) and the definition of complex product of two relations of $S$, we have
\begin{align*}
n&=|\{(e,f): R_e, R_f\in S,\ |R_eR_f|=1\}|+2|\{(g,h): R_g, R_h\in S,\ |R_gR_h|=2\}|\\
&=|\{(i,j): R_i, R_j\in S\}|+|\{(g,h): R_g, R_h\in S,\ |R_gR_h|=2\}|\\
&=(d+1)^2+|\{(g,h): R_g, R_h\in S,\ |R_gR_h|=2\}|.
\end{align*}
Therefore it is enough to prove that $$|\{(a,b): R_a, R_b\in S,\ |R_{a'}R_b|=2\}|=|\{(g,h): R_g, R_h\in S,\ |R_gR_h|=2\}|.$$ For convenience, we set
\begin{align*}
\mathcal{G}=\{(a,b): R_a, R_b\in S,\ |R_{a'}R_b|=2\}\ \text{and}\
\mathcal{H}=\{(g,h): R_g, R_h\in S,\ |R_gR_h|=2\}.
\end{align*}
We may assume further that $\mathcal{G}\neq \varnothing$. For any $(\ell,m)\in \mathcal{G}$, notice that $(\ell', m)\in \mathcal{H}$. Let $\phi$ denote the map from $\mathcal{G}$ to $\mathcal{H}$ that sends every $(r,s)$ to $(r',s)$. It is clear that $\phi$ is a bijective map from $\mathcal{G}$ to $\mathcal{H}$. In particular, we have $|\mathcal{G}|=|\mathcal{H}|$. We are done.
\end{proof}
\subsection{Matrix algebras}\label{Sc:subsection}
Let $M_{X}(\F)$ be the full matrix algebra of $\F$-square matrices whose rows and columns are labeled by the members of $X$. Let $M=(a_{ij})\in M_X(\F)$. For any $b,c\in X$, the $b$-row (resp. $b$-column) of $M$ means the row (resp. column) of $M$ labeled by $b$. The entry $a_{bc}$ lies in the $b$-row and the $c$-column of $M$. It is called the $(b,c)$-entry of $M$. The $b$-row sum (resp. $b$-column sum) of $M$ means $\sum_{e\in X}a_{be}$ (resp. $\sum_{e\in X}a_{eb}$). Let $M^t$ be the transpose of $M$ and $L^t=\{N^t: N\in L\}$ for any $L\subseteq M_X(\F)$. Let $I$, $J$, $O$ denote the identity matrix, the all-one matrix, the all-zero matrix of $M_X(\F)$ respectively. For any $u,v\in X$, let $E_{uv}$ denote the $(0,1)$-matrix of $M_X(\F)$ whose unique nonzero entry is the $(u,v)$-entry. So $E_{bc}E_{uv}=\delta_{cu}E_{bv}$, where $\delta_{\alpha\beta}$ is the Kronecker delta of vertices $\alpha,\beta$ whose values are in $\F$. It is obvious that $\{E_{yz}: y,z\in X\}$ is an $\F$-basis of $M_X(\F)$. We end this subsection with a lemma.
\begin{lem}\label{L;Radical}
Let $A$ be a unital $\F$-subalgebra of $M_X(\F)$ and $\mathfrak{J}$ be the Jacobson radical of $A$. If $A^t=A$, then $\mathfrak{J}^t=\mathfrak{J}$.
\end{lem}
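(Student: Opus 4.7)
The plan is to exploit the fact that matrix transposition is an $\F$-linear anti-involution of $M_X(\F)$, and to show that any such anti-involution stabilizing $A$ must stabilize $\mathfrak{J}$. Concretely, write $\tau\colon M_X(\F)\to M_X(\F)$ for the map $M\mapsto M^t$. One checks from the usual matrix identities that $\tau$ is $\F$-linear, that $\tau(MN)=\tau(N)\tau(M)$, and that $\tau\circ\tau=\mathrm{id}$. The hypothesis $A^t=A$ says precisely that $\tau(A)=A$, so the restriction $\tau|_A$ is an $\F$-linear anti-automorphism of $A$.

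Next I would invoke the Artinian characterization of the Jacobson radical. Since $A\subseteq M_X(\F)$ and $X$ is finite, $A$ is a finite-dimensional $\F$-algebra, hence left and right Artinian. Consequently $\mathfrak{J}$ is the unique largest nilpotent two-sided ideal of $A$. Now, for any two-sided ideal $I$ of $A$ one has $\tau(I)=I^t$, and because $\tau$ is an anti-automorphism, $I^t$ is again a two-sided ideal of $A$ (right multiplication becomes left multiplication under $\tau$, and $A^t=A$). Furthermore, if $I^n=0$ then applying $\tau$ to a product of $n$ elements of $I^t$ shows $(I^t)^n=(I^n)^t=0$, so $I^t$ is nilpotent of the same index as $I$.

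Applying this to $I=\mathfrak{J}$, we conclude that $\mathfrak{J}^t$ is a nilpotent two-sided ideal of $A$, hence $\mathfrak{J}^t\subseteq\mathfrak{J}$ by maximality. Applying the same argument to $\mathfrak{J}^t$ (and using $\tau\circ\tau=\mathrm{id}$) yields $\mathfrak{J}=(\mathfrak{J}^t)^t\subseteq\mathfrak{J}^t$, so $\mathfrak{J}^t=\mathfrak{J}$.

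There is no real obstacle here; the only point that requires a moment's care is the observation that an \emph{anti}-automorphism (rather than an automorphism) still sends two-sided ideals to two-sided ideals and preserves nilpotence, which is what makes the characterization of $\mathfrak{J}$ as the largest nilpotent two-sided ideal transfer cleanly. If one prefers to avoid invoking this characterization, the alternative is to use that $\mathfrak{J}$ is the intersection of all maximal left ideals; then $\tau$ carries maximal left ideals bijectively to maximal right ideals of $A$, whose intersection is also $\mathfrak{J}$, yielding the same conclusion.
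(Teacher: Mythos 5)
Your proof is correct and follows essentially the same route as the paper: both show that $\mathfrak{J}^t$ is a two-sided nilpotent ideal of $A$ (using $A^t=A$ and the fact that $\mathfrak{J}$ is the largest such ideal in the finite-dimensional algebra $A$), conclude $\mathfrak{J}^t\subseteq\mathfrak{J}$, and then obtain the reverse inclusion by applying the transpose again. Your write-up merely makes explicit the details (anti-automorphisms preserve two-sided ideals and nilpotence) that the paper leaves implicit.
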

\begin{proof}
Since $\mathfrak{J}^t\subseteq A^t=A$ and $\mathfrak{J}$ is the Jacobson radical of $A$, observe that $\mathfrak{J}^t$ is a two-sided nilpotent ideal of $A$, which implies that $\mathfrak{J}^t\subseteq \mathfrak{J}$. For any $M\in \mathfrak{J}$, notice that $M=(M^t)^t\in \mathfrak{J}^t$ as $M^t\in \mathfrak{J}^t\subseteq \mathfrak{J}$. So $\mathfrak{J}^t=\mathfrak{J}$. The desired lemma is proved.
\end{proof}
\subsection{Terwilliger $\F$-algebras of schemes}
Let $\mathbb{Z}$ be the integer ring and $\F_p$ denote the prime subfield of $\F$. For any $a\in \mathbb{Z}$, let $\overline{a}$ be the image of $a$ under the surjective unital ring homomorphism from $\mathbb{Z}$ to $\F_p$.

Let $R_b\in S$. The adjacency $\F$-matrix with respect to $R_b$ is denoted by $A_b$. It is defined to be the $(0,1)$-matrix $(a_{ce})$ of $M_{X}(\F)$, where we have $a_{ce}=1$ if and only if $(c,e)\in R_b$. According to the definition of $S$, we observe that $A_0=I$, $A_b^t=A_{b'}$, and $\sum_{f=0}^dA_f=J$. Moreover, for any $R_g, R_h\in S$, we have
\begin{equation*}\label{Eq:preliminary1}
\tag{2.1} A_gA_h=\sum_{\ell=0}^d\overline{p_{gh}^{\ell}}A_{\ell}.
\end{equation*}
Let $y\in X$ and $R_z\in S$. The dual $\F$-idempotent with respect to $y$, $R_z$ is denoted by $E_z^*(y)$.  It is defined to be the sum $\sum_{i\in yR_z}E_{ii}$. Let $R_j\in S$ and define $\delta_{\alpha\beta}$ to be the Kronecker delta of integers $\alpha, \beta$ whose values are in $\F$. By the definition of $E_z^*(y)$, note that $E_z^*(y)E_j^*(y)=\delta_{zj}E_z^*(y)$, $\sum_{k=0}^dE_k^*(y)=I$, and $JE_z^*(y)J=\overline{k_z}J$. Moreover, for any $M=(b_{mn})\in M_X(\F)$, write $M=\sum_{m\in X}\sum_{n\in X}b_{mn}E_{mn}$ and notice that
\begin{equation*}\label{Eq:preliminary2}
\tag{2.2} E_z^*(y)ME_j^*(y)=\sum_{q\in yR_z}\sum_{r\in yR_j}b_{qr}E_{qr}.
\end{equation*}

The Terwilliger $\F$-algebra of $S$ with respect to $y$, denoted by $\mathcal{T}(y)$, is defined to be the $\F$-subalgebra of $M_X(\F)$ generated by $A_0, A_1,\ldots ,A_d$ and $E_0^*(y), E_1^*(y), \ldots, E_d^*(y)$. Call $E_z^*(y)A_bE_j^*(y)$ a triple product of $\mathcal{T}(y)$ and notice that it is a $(0,1)$-matrix by \eqref{Eq:preliminary2}. A Terwilliger $\F$-algebra of $S$ means the Terwilliger $\F$-algebra of $S$ with respect to a vertex of $S$. We now fix $x\in X$. Throughout this paper, let $\mathcal{T}=\mathcal{T}(x)$ and $E_s^*=E_s^*(x)$ for all $R_s\in S$. By the definition of $\mathcal{T}$, we have $\mathcal{T}^t=\mathcal{T}$. Let $\mathcal{J}$ be the Jacobson radical of $\mathcal{T}$. Let $w\in \mathbb{N}_0$ and $R_{i_u}, R_{j_u}, R_{\ell_u}\in S$ for all $u\in [w]$. We set
\[\prod_{v=0}^w(E_{i_v}^*A_{j_v}E_{\ell_v}^*)=E_{i_0}^*A_{j_0}E_{\ell_0}^*E_{i_1}^*A_{j_1}E_{\ell_1}^*\cdots E_{i_w}^*A_{j_w}E_{\ell_w}^*\in \mathcal{T}.\]
For example, if $w=2$, then $\prod_{v=0}^2(E_{i_v}^*A_{j_v}E_{\ell_v}^*)=E_{i_0}^*A_{j_0}E_{\ell_0}^*E_{i_1}^*A_{j_1}E_{\ell_1}^*E_{i_2}^*A_{j_2}E_{\ell_2}^*.$
\begin{lem}\label{L;generalresults}
The following assertions hold.
\begin{enumerate}[(i)]
\item [\em (i)] \cite[Lemma 3.2]{Han1} Let $R_a, R_b, R_c\in S$. Then the $e$-row of $E_a^*A_bE_c^*$ has exactly $p_{cb'}^a$ ones for any $e\in xR_a$. Moreover, the $f$-column of $E_a^*A_bE_c^*$ has exactly $p_{ab}^c$ ones for any $f\in xR_c$. In particular, $E_a^*A_bE_c^*\neq O$ if and only if $p_{ab}^c\neq 0$.
\item [\em (ii)] The set $\{E_i^*A_jE_\ell^*: R_i, R_j, R_\ell\in S,\ p_{ij}^{\ell}\neq0\}$ is an $\F$-linearly independent subset of $\mathcal{T}$. In particular, the number of nonzero intersection numbers of $S$ is equal to $|\{E_i^*A_jE_\ell^*: R_i, R_j, R_\ell\in S,\ p_{ij}^{\ell}\neq0\}|$.
\item [\em (iii)] The set $\{E_i^*JE_\ell^*: R_i, R_\ell\in S\}$ is an $\F$-linearly independent subset of $\mathcal{T}$ whose size equals $(d+1)^2$.
\end{enumerate}
\end{lem}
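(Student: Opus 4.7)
Part (i) is cited verbatim from [Han1, Lemma 3.2], so my plan is to just quote it. For parts (ii) and (iii), the entire strategy is to exploit the support structure of the matrices via the block identity \eqref{Eq:preliminary2}, together with the two partition facts: $S$ partitions $X\times X$, and $\{xR_s: R_s\in S\}$ partitions $X$ (the latter because every $\hat x\in X$ lies in a unique relation with $x$).

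For (ii), the key observation is that the supports of the matrices $E_i^*A_jE_\ell^*$ are pairwise disjoint as $(i,j,\ell)$ ranges over triples. Indeed, \eqref{Eq:preliminary2} shows every nonzero entry of $E_i^*A_jE_\ell^*$ sits in a position $(q,r)$ with $q\in xR_i$, $r\in xR_\ell$, and also $(q,r)\in R_j$. The first two constraints separate the supports across different pairs $(i,\ell)$ (since the $xR_s$ partition $X$), and the last constraint separates them across different $j$ for a fixed $(i,\ell)$ (since the $R_j$ partition $X\times X$). Because each nonzero $E_i^*A_jE_\ell^*$ is a $(0,1)$-matrix, evaluating any $\F$-linear dependence at a position in the support of a single summand forces its coefficient to be zero. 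Combining this with part (i), which tells us the nonzero triple products are precisely those indexed by $(i,j,\ell)$ with $p_{ij}^\ell\neq0$, gives the linear independence. The counting statement then follows because distinct triples $(i,j,\ell)$ with $p_{ij}^\ell\neq0$ yield distinct matrices.

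For (iii), I would write $J=\sum_{j=0}^d A_j$, so $E_i^*JE_\ell^*=\sum_{j=0}^d E_i^*A_jE_\ell^*$ is the $(0,1)$-matrix supported exactly on the block $xR_i\times xR_\ell$. For distinct pairs $(i,\ell)$ the supporting blocks are disjoint (again because the $xR_s$ partition $X$), and each such matrix is nonzero because $xR_i$ and $xR_\ell$ are nonempty (as $k_i,k_\ell>0$). The same support-disjointness argument as in (ii) yields linear independence, and the obvious bijection between the set of pairs $(i,\ell)$ and the family gives cardinality $(d+1)^2$.

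There is no real obstacle here; the only thing to be careful about is citing part (i) at the right moment in (ii) so that the triple products we claim to be linearly independent are exactly those listed in the statement (i.e.\ the ones indexed by nonzero intersection numbers), and noting that the characteristic $p$ plays no role because all relevant entries are $0$ or $1$ and we never need to invert integers.
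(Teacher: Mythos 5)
Your proposal is correct and follows essentially the same route as the paper: part (i) is quoted from Hanaki, and (ii)–(iii) are deduced from the block identity \eqref{Eq:preliminary2} together with the facts that $\sum_u A_u=J$ (so the $R_j$ partition $X\times X$) and that $\{xR_0,\ldots,xR_d\}$ partitions $X$, giving pairwise disjoint supports of the $(0,1)$-matrices and hence linear independence; the paper's own proof is just a terser statement of this disjoint-support argument.
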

\begin{proof}
By (i), notice that $O\notin \{E_i^*A_jE_\ell^*: R_i, R_j, R_\ell\in S,\ p_{ij}^{\ell}\neq0\}$. As $\sum_{u=0}^dA_u=J$ and $\{xR_0, xR_1,\ldots, xR_d\}$ is a partition of $X$, the first assertion of (ii) is shown by \eqref{Eq:preliminary2}. According to the first assertion of (ii), the second assertion of (ii) is proved. Since $\{xR_0, xR_1,\ldots, xR_d\}$ is a partition of $X$, (iii) is also shown by \eqref{Eq:preliminary2}.
\end{proof}
\section{Triple products of Terwilliger $\F$-algebras of schemes}
In this section, we provide some results of the triple products of $\mathcal{T}$. These results shall be utilized in the following sections. We first present some results about the multiplication of two triple products of $\mathcal{T}$.
\begin{lem}\label{L;Case1}
Let $R_i, R_j, R_\ell, R_m, R_n\in S$. Assume that $E_{i}^*A_jE_\ell^*\neq O\neq E_\ell^*A_mE_{n}^*$. If $\min\{k_i, k_j\}=1$ or $\min\{k_m, k_n\}=1$, then we have $$E_{i}^*A_jE_{\ell}^*A_mE_{n}^*=E_{i}^*A_jA_mE_{n}^*=\sum_{k=0}^d\overline{p_{jm}^k}E_{i}^*A_kE_{n}^*.$$
\end{lem}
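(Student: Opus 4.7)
The second equality in the conclusion is immediate from the product rule \eqref{Eq:preliminary1}: expanding $A_jA_m=\sum_{k=0}^d\overline{p_{jm}^k}A_k$ and sandwiching between $E_i^*$ and $E_n^*$ gives it at once. The content of the lemma therefore lies entirely in the first equality, $E_i^*A_jE_\ell^*A_mE_n^*=E_i^*A_jA_mE_n^*$.

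My plan is to show that under either hypothesis the middle idempotent $E_\ell^*$ is already forced by the flanking triple products, in the sense that one of the absorption identities
\[
E_i^*A_j=E_i^*A_jE_\ell^*\qquad\text{or}\qquad A_mE_n^*=E_\ell^*A_mE_n^*
\]
must hold. Either identity, multiplied on the appropriate side by the remaining factors, delivers the first equality at once.

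To prove the absorption when $\min\{k_i,k_j\}=1$: Lemma \ref{L;Intersectionnumber}(v) bounds $|R_iR_j|\leq\min\{k_i,k_j\}=1$, while Lemma \ref{L;generalresults}(i) combined with the hypothesis $E_i^*A_jE_\ell^*\neq O$ puts $R_\ell$ into $R_iR_j$. Hence $R_iR_j=\{R_\ell\}$, so $p_{ij}^f=0$ for every $f\neq\ell$. Lemma \ref{L;generalresults}(i) then yields $E_i^*A_jE_f^*=O$ for such $f$, and summing against $\sum_fE_f^*=I$ produces $E_i^*A_j=E_i^*A_jE_\ell^*$. The symmetric case $\min\{k_m,k_n\}=1$ is handled in the same spirit, using $k_{m'}=k_m$ from Lemma \ref{L;Intersectionnumber}(i) together with Lemma \ref{L;Intersectionnumber}(v) to get $|R_nR_{m'}|\leq1$; the hypothesis $E_\ell^*A_mE_n^*\neq O$ then translates via Lemmas \ref{L;generalresults}(i) and \ref{L;Intersectionnumber}(iv) to $p_{nm'}^\ell>0$, forcing $R_nR_{m'}=\{R_\ell\}$, and, on reversing the translation, $E_f^*A_mE_n^*=O$ for $f\neq\ell$, whence $A_mE_n^*=E_\ell^*A_mE_n^*$.

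The only real conceptual hurdle is recognizing the correct role of the valency bound in Lemma \ref{L;Intersectionnumber}(v): it is precisely the hypothesis $\min\{k_i,k_j\}=1$ (or its dual $\min\{k_m,k_n\}=1$) that collapses the complex product $R_iR_j$ (respectively $R_nR_{m'}$) to a singleton and thereby enables the absorption. Once this observation is in place, the remaining manipulations are routine bookkeeping via the nonvanishing criterion of Lemma \ref{L;generalresults}(i).
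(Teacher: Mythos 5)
Your proposal is correct and takes essentially the same route as the paper: in each case the valency hypothesis, through Lemma \ref{L;Intersectionnumber}(v), collapses the relevant complex product ($R_iR_j$ or $R_nR_{m'}$) to the singleton $\{R_\ell\}$, so by Lemma \ref{L;generalresults}(i) the middle idempotent is absorbed and $E_\ell^*$ can be replaced by $I=\sum_{k=0}^dE_k^*$, after which \eqref{Eq:preliminary1} gives the final expansion. The only cosmetic difference is in the case $\min\{k_m,k_n\}=1$, where the paper transposes the product and reuses the first case while you argue directly on the right-hand factor via Lemma \ref{L;Intersectionnumber}(iv); both variants are routine and equivalent.
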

\begin{proof}
Since we have $E_{i}^*A_jE_\ell^*\neq O\neq E_{\ell}^*A_mE_{n}^*$, notice that $p_{ij}^\ell>0$ and $p_{\ell m}^n>0$ by Lemma \ref{L;generalresults} (i). If $\min\{k_i,k_j\}=1$, then we have $k_i=1$ or $k_j=1$. By Lemma \ref{L;Intersectionnumber} (v), for any $R_a\in S$, we deduce that $p_{ij}^a>0$ only if $a=\ell$. In particular, by Lemma \ref{L;generalresults} (i) again, we get that $E_{i}^*A_jE_a^*\neq O$ only if $a=\ell$. We thus have
\begin{align*}\label{Eq:1} \tag{3.1} E_{i}^*A_jE_{\ell}^*A_mE_{n}^*=E_{i}^*A_j(\sum_{k=0}^dE_{k}^*)A_mE_{n}^*=E_{i}^*A_jA_mE_{n}^*=\sum_{k=0}^d\overline{p_{jm}^k}E_{i}^*A_kE_{n}^*
\end{align*}
by Lemma \ref{L;generalresults} (i) and \eqref{Eq:preliminary1}. If $\min\{k_m,k_n\}=1$,
by Lemma \ref{L;Intersectionnumber} (i), we also have $\min\{k_n, k_{m'}\}=1$. Since we have known that $p_{\ell m}^n\neq 0$, notice that $E_n^*A_{m'}E_\ell^*\neq O$ by Lemmas \ref{L;Intersectionnumber} (iv) and \ref{L;generalresults} (i). Following the analysis of the case $\min\{k_i,k_j\}=1$, we thus have
\begin{align*}\label{Eq:2} \tag{3.2}E_{n}^*A_{m'}E_\ell^*A_{j'}E_{i}^*=E_{n}^*A_{m'}A_{j'}E_{i}^*=\sum_{k=0}^d\overline{p_{m'j'}^{k'}}E_{n}^*A_{k'}E_{i}^*=\sum_{k=0}^d\overline{p_{jm}^{k}}E_{n}^*A_{k'}E_{i}^*
\end{align*}
by \eqref{Eq:preliminary1} and Lemma \ref{L;Intersectionnumber} (i). We can also obtain \eqref{Eq:1} by taking transposes in \eqref{Eq:2}. The desired lemma thus follows.
\end{proof}
\begin{lem}\label{L;Complexproductone}
Let $R_i, R_j, R_\ell, R_m\in S$. If $R_{i'}R_\ell=\{R_j\}$, then $E_i^*A_mE_\ell^*\neq O$ if and only if $m=j$. Moreover, $E_i^*A_jE_\ell^*=E_i^*JE_\ell^*$. In particular, if $k_i=1$ or $k_\ell=1$, then there is a unique $R_n\in S$ such that $E_i^*A_nE_\ell^*\neq O$. Moreover, $E_i^*A_nE_\ell^*=E_i^*JE_\ell^*$.
\end{lem}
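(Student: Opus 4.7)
The plan is to translate the hypothesis $R_{i'}R_\ell=\{R_j\}$ into a statement about intersection numbers and then feed that into Lemma \ref{L;generalresults} (i). First I would observe, using Lemma \ref{L;Intersectionnumber} (iv), that for every $R_m\in S$ the three numbers $p_{im}^{\ell}$, $p_{\ell m'}^{i}$, and $p_{i'\ell}^{m}$ are simultaneously zero or nonzero (since the valencies $k_i,k_\ell,k_m$ are all positive). In particular, $E_i^*A_mE_\ell^*\neq O$ if and only if $p_{im}^\ell\neq 0$ (by Lemma \ref{L;generalresults} (i)), which happens if and only if $p_{i'\ell}^m\neq 0$, which by the definition of complex multiplication happens if and only if $R_m\in R_{i'}R_\ell$. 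Under the hypothesis $R_{i'}R_\ell=\{R_j\}$, this forces $m=j$, giving the first assertion.

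For the equality $E_i^*A_jE_\ell^*=E_i^*JE_\ell^*$, I would simply expand $J=\sum_{k=0}^{d}A_k$ to obtain
\[
E_i^*JE_\ell^*=\sum_{k=0}^{d}E_i^*A_kE_\ell^*,
\]
and then use the first assertion just proved to discard every summand with $k\neq j$, leaving only $E_i^*A_jE_\ell^*$.

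For the ``in particular'' part, I would first note that $R_{i'}R_\ell$ is always nonempty: by Lemma \ref{L;Intersectionnumber} (ii), $\sum_{e=0}^{d}p_{i'\ell}^{e}=k_{i'}>0$, so some $p_{i'\ell}^{e}$ is positive. On the other hand, Lemma \ref{L;Intersectionnumber} (i) gives $k_{i'}=k_i$, and Lemma \ref{L;Intersectionnumber} (v) then yields
\[
|R_{i'}R_\ell|\leq \gcd(k_{i'},k_\ell)=\gcd(k_i,k_\ell),
\]
so the assumption $k_i=1$ or $k_\ell=1$ forces $|R_{i'}R_\ell|=1$, say $R_{i'}R_\ell=\{R_n\}$. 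Applying the first two assertions to this $R_n$ completes the proof.

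The main step to get right is the intersection number manipulation via Lemma \ref{L;Intersectionnumber} (iv), which is the only nontrivial ingredient; the rest is bookkeeping with the identity $\sum_k A_k=J$ and the valency bound for complex products. I do not expect a serious obstacle, as the entire argument is essentially a translation of the hypothesis through the standard dictionary between triple products and intersection numbers.
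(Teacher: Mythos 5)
Your argument is correct and follows essentially the same route as the paper: translate $E_i^*A_mE_\ell^*\neq O$ into $p_{im}^\ell\neq 0$ via Lemma \ref{L;generalresults} (i), pass to $p_{i'\ell}^m\neq 0$ via Lemma \ref{L;Intersectionnumber} (iv), conclude $m=j$, expand $J=\sum_{k=0}^dA_k$, and use Lemma \ref{L;Intersectionnumber} (i) and (v) for the valency-one case. One small slip: Lemma \ref{L;Intersectionnumber} (ii) states $\sum_{e=0}^d p_{ae}^b=k_a$ (the sum runs over the \emph{middle} index), so the identity $\sum_{e=0}^d p_{i'\ell}^{e}=k_{i'}$ you invoke is not that lemma and is false in general (already for rank-two schemes); the nonemptiness of $R_{i'}R_\ell$ that you actually need follows instead from Lemma \ref{L;Intersectionnumber} (iii), since $k_{i'}k_\ell=\sum_{e=0}^d p_{i'\ell}^{e}k_e>0$ forces some $p_{i'\ell}^{e}>0$.
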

\begin{proof}
As $R_{i'}R_\ell=\{R_j\}$, notice that $p_{i'\ell}^j>0$, which implies that $p_{ij}^\ell>0$ by Lemma \ref{L;Intersectionnumber} (iv). So $E_i^*A_jE_\ell^*\neq O$ by Lemma \ref{L;generalresults} (i). Conversely, if $E_i^*A_mE_\ell^*\neq O$, we have $p_{im}^\ell>0$ by Lemma \ref{L;generalresults} (i). We thus get $p_{i'\ell}^m>0$ by Lemma \ref{L;Intersectionnumber} (iv). So $R_m\in R_{i'}R_\ell$, which implies that $m=j$. We thus have $O\neq E_i^*A_jE_\ell^*=\sum_{k=0}^dE_i^*A_kE_\ell^*=E_i^*JE_\ell^*$.

If $k_i=1$ or $k_\ell=1$, then $|R_{i'}R_\ell|=1$ by Lemma \ref{L;Intersectionnumber} (i) and (v). So the remaining two assertions are shown by the first assertion and the second assertion.
\end{proof}
\begin{lem}\label{L;Case2}
Let $R_i, R_j, R_\ell, R_m, R_n\in S$. If $E_{i}^*A_jE_\ell^*\neq O\neq E_\ell^*A_mE_{n}^*$ and $k_\ell=1$, then we have
$$E_{i}^*A_jE_{\ell}^*A_mE_{n}^*=E_i^*JE_n^*=\sum_{k=0}^dE_{i}^*A_kE_{n}^*.$$
\end{lem}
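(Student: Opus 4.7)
The plan is to reduce the product to the identity $J E_\ell^* J = \overline{k_\ell}\,J$ by replacing each triple product with the corresponding $E^*JE^*$ expression, exploiting the hypothesis $k_\ell=1$ via Lemma \ref{L;Complexproductone}.

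First, I would invoke Lemma \ref{L;Complexproductone} twice. Since $k_\ell=1$ and $E_i^*A_jE_\ell^*\neq O$, the ``$k_\ell=1$'' clause of Lemma \ref{L;Complexproductone} yields $E_i^*A_jE_\ell^*=E_i^*JE_\ell^*$. The same clause applied to the pair $(\ell,n)$ (again $k_\ell=1$) together with $E_\ell^*A_mE_n^*\neq O$ gives $E_\ell^*A_mE_n^*=E_\ell^*JE_n^*$.

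Next, using the idempotent identity $E_\ell^*E_\ell^*=E_\ell^*$, I multiply the two expressions:
\[
E_i^*A_jE_\ell^*A_mE_n^*=(E_i^*A_jE_\ell^*)(E_\ell^*A_mE_n^*)=(E_i^*JE_\ell^*)(E_\ell^*JE_n^*)=E_i^*JE_\ell^*JE_n^*.
\]
Now I apply the identity $JE_\ell^*J=\overline{k_\ell}J$ recorded in Subsection 2.4; since $k_\ell=1$ and $\overline{1}=1$ in $\F$, this simplifies to $E_i^*JE_n^*$. Finally, using $J=\sum_{k=0}^d A_k$, I obtain $E_i^*JE_n^*=\sum_{k=0}^d E_i^*A_kE_n^*$, completing the chain of equalities.

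No real obstacle is anticipated: the lemma is essentially a direct corollary of Lemma \ref{L;Complexproductone} combined with the standard identity $JE_\ell^*J=\overline{k_\ell}J$. The only thing to be mindful of is the order in which the hypotheses are used, namely that $k_\ell=1$ must be applied to both triple products (via the $(i,\ell)$ pair and the $(\ell,n)$ pair) before the collapse to a single $J$ factor via the sandwich identity.
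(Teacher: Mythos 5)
Your proposal is correct and follows essentially the same route as the paper: apply Lemma \ref{L;Complexproductone} (the $k_\ell=1$ clause) to both nonzero triple products, then collapse via $JE_\ell^*J=\overline{k_\ell}J=J$ and expand $J=\sum_{k=0}^dA_k$. The only difference is that you spell out the intermediate idempotent step explicitly, which the paper leaves implicit.
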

\begin{proof}
Since $k_\ell=1$ and $E_{i}^*A_jE_\ell^*\neq O\neq E_\ell^*A_mE_{n}^*$, we have $E_i^*A_jE_\ell^*=E_i^*JE_\ell^*$ and $E_\ell^*A_mE_n^*=E_\ell^*JE_n^*$ by Lemma \ref{L;Complexproductone}. So the desired lemma follows as $JE_\ell^*J=J$.
\end{proof}
\begin{lem}\label{L;Case3}
Let $R_i, R_j, R_\ell, R_m, R_n\in S$.
\begin{enumerate}[(i)]
\item [\em (i)] If $k_ik_j=p_{ij}^\ell k_\ell$, then we have
$$E_{i}^*A_jE_{\ell}^*A_mE_{n}^*=E_{i}^*A_jA_mE_{n}^*=\sum_{k=0}^d\overline{p_{jm}^k}E_{i}^*A_kE_{n}^*.$$
\item [\em (ii)] If $k_\ell k_m=p_{\ell m}^nk_n$ and $k_\ell=k_n$, then we have
$$E_{i}^*A_jE_{\ell}^*A_mE_{n}^*=E_{i}^*A_jA_mE_{n}^*=\sum_{k=0}^d\overline{p_{jm}^k}E_{i}^*A_kE_{n}^*.$$
\end{enumerate}
\end{lem}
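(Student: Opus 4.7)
The plan is to prove both parts by the same mechanism: insert the partition of unity $I=\sum_{k=0}^d E_k^*$ between $A_j$ and $A_m$ in the middle of the right-hand side, and show that only the $E_\ell^*$ summand survives. Explicitly, since $\sum_k E_k^*=I$, we can write
\[
E_i^*A_jA_mE_n^* \;=\; \sum_{k=0}^d E_i^*A_jE_k^*A_mE_n^*,
\]
and the rightmost equality of the statement is then immediate from \eqref{Eq:preliminary1}, namely $A_jA_m=\sum_k\overline{p_{jm}^k}A_k$. So the only real task is to show that under each hypothesis, all summands with $k\neq\ell$ vanish. By Lemma~\ref{L;generalresults}(i), vanishing of $E_i^*A_jE_k^*A_mE_n^*$ reduces to either $p_{ij}^k=0$ or $p_{km}^n=0$.

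For part (i) I would combine the hypothesis $k_ik_j=p_{ij}^\ell k_\ell$ with Lemma~\ref{L;Intersectionnumber}(iii), which gives $k_ik_j=\sum_{e=0}^d p_{ij}^e k_e$. Subtracting yields $\sum_{e\neq\ell}p_{ij}^e k_e=0$; since every valency $k_e$ is a positive integer, each $p_{ij}^e$ with $e\neq\ell$ is zero. By Lemma~\ref{L;generalresults}(i), $E_i^*A_jE_k^*=O$ for all $k\neq\ell$, killing every summand except $k=\ell$.

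For part (ii) the symmetric statement I want is that $p_{km}^n=0$ for all $k\neq\ell$. First, $k_\ell k_m=p_{\ell m}^n k_n$ together with $k_\ell=k_n$ (and $k_n>0$) gives $p_{\ell m}^n=k_m$. The key auxiliary fact is the ``column-sum'' identity $\sum_{e=0}^d p_{em}^n=k_m$; this is not stated as such in the paper, but I can deduce it in one line from Lemma~\ref{L;Intersectionnumber}(i),(ii) by writing $p_{em}^n=p_{m'e'}^{n'}$ and noting that $e\mapsto e'$ is a bijection on $[d]$, so $\sum_e p_{em}^n=\sum_{e'}p_{m'e'}^{n'}=k_{m'}=k_m$. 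Combined with $p_{\ell m}^n=k_m$, this forces $p_{km}^n=0$ for $k\neq\ell$, hence $E_k^*A_mE_n^*=O$ for $k\neq\ell$ by Lemma~\ref{L;generalresults}(i), and only the $k=\ell$ summand survives.

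The only step that is not essentially automatic is the auxiliary identity $\sum_e p_{em}^n=k_m$ needed in (ii), and that is the one place where I might instead prefer a ``dual'' argument: observe that the transpose of $E_i^*A_jE_\ell^*A_mE_n^*-E_i^*A_jA_mE_n^*$ is $E_n^*A_{m'}E_\ell^*A_{j'}E_i^*-E_n^*A_{m'}A_{j'}E_i^*$, and check via Lemma~\ref{L;Intersectionnumber}(i),(iv) that the hypothesis of (ii) translates into the hypothesis of (i) for the transposed expression. This would let me deduce (ii) directly from (i) without establishing any new identity, which I expect to be the cleanest final write-up.
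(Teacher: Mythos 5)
Your proposal is correct. Part (i) is exactly the paper's argument: combine the hypothesis with Lemma \ref{L;Intersectionnumber}(iii) to force $p_{ij}^a=0$ for $a\neq\ell$, so only the $E_\ell^*$ term of the inserted partition of unity survives, and then apply \eqref{Eq:preliminary1}. For part (ii), your preferred ``dual'' route is precisely what the paper does: by Lemma \ref{L;Intersectionnumber}(i),(iv) the hypothesis $k_\ell k_m=p_{\ell m}^n k_n$, $k_\ell=k_n$ becomes $k_nk_{m'}=p_{nm'}^\ell k_\ell$, so the part-(i) mechanism applies to $E_n^*A_{m'}E_\ell^*A_{j'}E_i^*$ and one transposes back. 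Your alternative direct argument for (ii) is also valid and is a genuine (if minor) variant: from $k_\ell=k_n>0$ you get $p_{\ell m}^n=k_m$, and the column-sum identity $\sum_{e=0}^d p_{em}^n=k_m$ (correctly derived from Lemma \ref{L;Intersectionnumber}(i),(ii) via $p_{em}^n=p_{m'e'}^{n'}$ and the bijection $e\mapsto e'$) then kills $p_{km}^n$ for $k\neq\ell$, so $E_k^*A_mE_n^*=O$ for $k\neq\ell$ and only the $\ell$-summand survives. The paper's transpose reduction avoids introducing this auxiliary identity; your direct version avoids transposing the matrix product; both are equally short and rest on the same valency bookkeeping.
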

\begin{proof}
Since $k_ik_j=p_{ij}^\ell k_\ell$, for any $R_a\in S$, Lemma \ref{L;Intersectionnumber} (iii) tells us that $p_{ij}^a>0$ only if $a=\ell$. By Lemma \ref{L;generalresults} (i), we thus have $E_{i}^*A_jE_{a}^*\neq O$ only if $a=\ell$. According to Lemma \ref{L;generalresults} (i) and \eqref{Eq:preliminary1}, (i) is proved as we can deduce that
\begin{align*}\label{Eq:3}
\tag{3.3}E_{i}^*A_jE_{\ell}^*A_mE_{n}^*=E_{i}^*A_j(\sum_{k=0}^dE_{k}^*)A_mE_{n}^*=E_{i}^*A_jA_mE_{n}^*=\sum_{k=0}^d\overline{p_{jm}^k}E_{i}^*A_kE_{n}^*.
\end{align*}

For (ii), since $k_\ell k_m=p_{\ell m}^nk_n$ and $k_\ell=k_n$, Lemma \ref{L;Intersectionnumber} (i) and (iv) tell us that
\begin{align*}\label{Eq:4}
\tag{3.4}k_nk_{m'}=k_\ell k_m=p_{\ell m}^nk_n=p_{nm'}^\ell k_\ell.
\end{align*}
For any $R_a\in S$, by \eqref{Eq:4} and Lemma \ref{L;Intersectionnumber} (iii), observe that $p_{nm'}^a>0$ only if $a=\ell$. By Lemma \ref{L;generalresults} (i), we thus have $E_{n}^*A_{m'}E_{a}^*\neq O$ only if $a=\ell$. Following the analysis of (i), we deduce that
\begin{align*}\label{Eq:5}
\tag{3.5}E_{n}^*A_{m'}E_{\ell}^*A_{j'}E_{i}^*=E_{n}^*A_{m'}A_{j'}E_{i}^*=\sum_{k=0}^d\overline{p_{m'j'}^{k'}}E_{n}^*A_{k'}E_{i}^*=\sum_{k=0}^d\overline{p_{jm}^{k}}E_{n}^*A_{k'}E_{i}^*
\end{align*}
by \eqref{Eq:preliminary1} and Lemma \ref{L;Intersectionnumber} (i). We also get \eqref{Eq:3} by taking transposes in \eqref{Eq:5}.
\end{proof}
The following lemmas focus on the multiplication of many triple products of $\mathcal{T}$.
\begin{lem}\label{L;Case4}
Let $a\in \mathbb{N}_{0}$ and assume that $R_{i_b}, R_{j_b}, R_{\ell_b}\in S$ for all $b\in [a]$. If we have $k_{i_b}=k_{\ell_b}$ for all $b\in [a]$, then the $r$-row sum of $\prod_{z=0}^a (E_{i_z}^*A_{j_z}E_{\ell_z}^*)$ equals the $c$-column sum of $\prod_{z=0}^a (E_{i_z}^*A_{j_z}E_{\ell_z}^*)$ for any $r\in xR_{i_0}$ and $c\in xR_{\ell_a}$.
\end{lem}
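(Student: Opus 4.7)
The plan is to prove the stronger statement that both the $r$-row sum (for $r \in xR_{i_0}$) and the $c$-column sum (for $c \in xR_{\ell_a}$) of $M := \prod_{z=0}^a(E_{i_z}^*A_{j_z}E_{\ell_z}^*)$ equal the same constant $\prod_{z=0}^a \alpha_z$ in $\F$, where $\alpha_z := p_{\ell_z, j_z'}^{i_z}$. First I would dispose of the trivial case: since $E_{\ell_z}^*E_{i_{z+1}}^* = \delta_{\ell_z,i_{z+1}} E_{\ell_z}^*$, if $\ell_z \neq i_{z+1}$ for some $z \in [a-1]$ then $M = O$ and both sums are zero. Otherwise, combining $\ell_z = i_{z+1}$ with the hypothesis $k_{i_z} = k_{\ell_z}$ forces $k_{i_0} = k_{\ell_0} = k_{i_1} = \cdots = k_{\ell_a}$ to a single positive integer.

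Next, setting $P_z := E_{i_z}^* A_{j_z} E_{\ell_z}^*$, Lemma~\ref{L;generalresults}~(i) says $P_z$ has constant row sum $\alpha_z = p_{\ell_z, j_z'}^{i_z}$ over rows in $xR_{i_z}$ and constant column sum $p_{i_z, j_z}^{\ell_z}$ over columns in $xR_{\ell_z}$. Lemma~\ref{L;Intersectionnumber}~(iv) gives $k_{\ell_z}\, p_{i_z, j_z}^{\ell_z} = k_{i_z}\, p_{\ell_z, j_z'}^{i_z}$, and since $k_{i_z} = k_{\ell_z} > 0$ the two intersection numbers agree as integers, hence as elements of $\F$. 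In particular $P_z \mathbf{1} = \alpha_z \chi_{xR_{i_z}}$, where $\chi_Y$ denotes the characteristic vector of $Y \subseteq X$ and $\mathbf{1}$ the all-ones column vector.

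The row-sum part is then proved by induction on $a$. The base case $a = 0$ is immediate. For the inductive step I would write $M = P_0 \cdot Q$ with $Q := \prod_{z=1}^a P_z$; the inductive hypothesis (which $Q$ satisfies after reindexing) gives $Q\mathbf{1} = (\prod_{z=1}^a \alpha_z)\, \chi_{xR_{i_1}}$, and using $\ell_0 = i_1$ I would collapse $P_0 \chi_{xR_{i_1}} = P_0 \chi_{xR_{\ell_0}} = P_0 \mathbf{1} = \alpha_0\, \chi_{xR_{i_0}}$, yielding $M \mathbf{1} = (\prod_{z=0}^a \alpha_z)\, \chi_{xR_{i_0}}$ as needed. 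For the column-sum part I would take the transpose: $M^t = \prod_{z=0}^a(E_{\ell_{a-z}}^* A_{j_{a-z}'} E_{i_{a-z}}^*)$ is again of the form in the lemma, still satisfies the valency hypothesis, and has associated constants $\alpha_{a-z}$ (since $\alpha_z$ is invariant under the transpose substitution $(i_z,j_z,\ell_z) \leftrightarrow (\ell_z, j_z', i_z)$ by the calculation in the second paragraph). Applying the row-sum result to $M^t$ gives the $c$-column sum of $M$ also equal to $\prod_{z=0}^a \alpha_z$ for every $c \in xR_{\ell_a}$.

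The main obstacle is really just bookkeeping: verifying that $\alpha_z$ is symmetric under the transpose substitution so that $M^t$ comes with the same set of constants in reverse order, and that the compatibility $\ell_z = i_{z+1}$ is precisely what allows the factor $P_0 \chi_{xR_{i_1}}$ in the inductive step to collapse to $\alpha_0 \chi_{xR_{i_0}}$. Once these two observations are in place, the induction is essentially a one-line computation.
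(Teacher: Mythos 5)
Your proof is correct and follows essentially the same route as the paper's: an induction on the number of triple-product factors whose key input is Lemma \ref{L;generalresults} (i) combined with Lemma \ref{L;Intersectionnumber} (iv) and the hypothesis $k_{i_z}=k_{\ell_z}$, which makes each factor a $(0,1)$-matrix with equal constant row and column counts. The differences are only organizational: you peel off the leftmost factor and prove the slightly stronger claim that the common value is $\prod_{z=0}^a\overline{p_{\ell_z j_z'}^{i_z}}$, getting the column-sum half by one transpose of the whole product, whereas the paper peels off the rightmost factor and carries the row-sum/column-sum equality (without naming the value) through the induction using transposes inside the inductive step.
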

\begin{proof}
We work by induction. We first check the base case $a=0$. Since $k_{i_0}=k_{\ell_0}$ and $E_{i_0}^*A_{j_0}E_{\ell_0}^*$ is a $(0,1)$-matrix, by Lemmas \ref{L;Intersectionnumber} (iv) and \ref{L;generalresults} (i), the $r$-row sum of $E_{i_0}^*A_{j_0}E_{\ell_0}^*$ equals the $c$-column sum of $E_{i_0}^*A_{j_0}E_{\ell_0}^*$ for any $r\in xR_{i_0}$ and $c\in xR_{\ell_0}$. 

Assume that $a>0$. As $k_{i_b}=k_{\ell_b}$ for all $b\in [a]$, we assume that the $e$-row sum of $\prod_{z=0}^{m} (E_{i_z}^*A_{j_z}E_{\ell_z}^*)$ equals the $f$-column sum of $\prod_{z=0}^{m} (E_{i_z}^*A_{j_z}E_{\ell_z}^*)$ for any $m\in [a-1]$, $e\in xR_{i_0}$, and $f\in xR_{\ell_m}$. If $\prod_{z=0}^{a}(E_{i_z}^*A_{j_z}E_{\ell_z}^*)=O$, then the desired lemma holds. We thus can assume further that $\prod_{z=0}^{a}(E_{i_z}^*A_{j_z}E_{\ell_z}^*)\neq O$, which implies that $\ell_{a-1}=i_a$. Therefore we can set
\begin{align*}\label{Eq:6}
\tag{3.6}xR_{\ell_{a-1}}=xR_{i_a}=\{y_1, y_2,\ldots, y_{k_{\ell_{a-1}}}\}.
\end{align*}
For any $r\in xR_{i_0}$, let $g$ denote the $r$-row sum of $\prod_{z=0}^{a-1}(E_{i_z}^*A_{j_z}E_{\ell_z}^*)$. For any $c\in xR_{\ell_a}$, write $h$ for the $c$-column sum of $E_{i_a}^*A_{j_a}E_{\ell_a}^*$.

We list some facts to get the $r$-row sum and the $c$-column sum of $\prod_{z=0}^{a}(E_{i_z}^*A_{j_z}E_{\ell_z}^*)$.
\begin{enumerate}[(i)]
\item By \eqref{Eq:preliminary2} and \eqref{Eq:6}, all nonzero entries of $\prod_{z=0}^{a-1}(E_{i_z}^*A_{j_z}E_{\ell_z}^*)$ are in the columns labeled by $y_1, y_2,\ldots, y_{k_{\ell_{a-1}}}$.
\item By \eqref{Eq:preliminary2} and \eqref{Eq:6}, all nonzero entries of $E_{i_a}^*A_{j_a}E_{\ell_a}^*$ are in the rows labeled by $y_1,y_2,\ldots, y_{k_{\ell_{a-1}}}$.
\item As $k_{i_a}=k_{\ell_a}$ and $E_{i_a}^*A_{j_a}E_{\ell_a}^*$ is a $(0,1)$-matrix, by Lemmas \ref{L;Intersectionnumber} (iv), \ref{L;generalresults} (i), and  \eqref{Eq:6}, the $y_n$-row sum of $E_{i_a}^*A_{j_a}E_{\ell_a}^*$ equals $h$ for all $n\in [k_{\ell_{a-1}}]\setminus\{0\}$.
\item By (i) and (ii), all nonzero entries of $(\prod_{z=0}^{a-1}(E_{i_z}^*A_{j_z}E_{\ell_z}^*))^t$ are in the rows labeled by $y_1, y_2,\ldots, y_{k_{\ell_{a-1}}}$. All nonzero entries of $E_{\ell_a}^*A_{j_a'}E_{i_a}^*$ are in the columns labeled by $y_1,y_2,\ldots, y_{k_{\ell_{a-1}}}$.
\item By the inductive hypothesis and \eqref{Eq:6}, the $y_n$-row sum of $(\prod_{z=0}^{a-1}(E_{i_z}^*A_{j_z}E_{\ell_z}^*))^t$ equals $g$ for all $n\in [k_{\ell_{a-1}}]\setminus\{0\}$.
\end{enumerate}
As $\prod_{z=0}^{a}(E_{i_z}^*A_{j_z}E_{\ell_z}^*)\!\!=\!(\prod_{z=0}^{a-1}(E_{i_z}^*A_{j_z}E_{\ell_z}^*))(E_{i_a}^*A_{j_a}E_{\ell_a}^*)$,
by (i), (ii), (iii), the $r$-row sum of $\prod_{z=0}^{a}(E_{i_z}^*A_{j_z}E_{\ell_z}^*)$ is $gh$. As $(\prod_{z=0}^{a}(E_{i_z}^*A_{j_z}E_{\ell_z}^*))^t=(E_{\ell_a}^*A_{j_a'}E_{i_a}^*)(\prod_{z=0}^{a-1}(E_{i_z}^*A_{j_z}E_{\ell_z}^*))^t$,
by (iv) and (v), the $c$-row sum of $(\prod_{z=0}^{a}(E_{i_z}^*A_{j_z}E_{\ell_z}^*))^t$ is $gh$. So the $c$-column sum of $\prod_{z=0}^{a}(E_{i_z}^*A_{j_z}E_{\ell_z}^*)$ is $gh$. The induction is finished. The desired lemma follows.
\end{proof}
\begin{lem}\label{L;Case5}
Let $a\in \mathbb{N}_{0}$ and assume that $R_{i_b}, R_{j_b}, R_{\ell_b}\in S$ for all $b\in [a]$. If we have $p_{i_bj_b}^{\ell_b}=1$ and $\ell_e=i_{e+1}$ for all $b\in [a]$ and $e\in [a-1]$, then the $r$-row of $\prod_{z=0}^{a}(E_{i_z}^*A_{j_z}E_{\ell_z}^*)$ has at least one nonzero entry for any $r\in xR_{i_0}$. Moreover, the $c$-column of $\prod_{z=0}^{a}(E_{i_z}^*A_{j_z}E_{\ell_z}^*)$ has at least one nonzero entry for any $c\in xR_{\ell_a}$.
\end{lem}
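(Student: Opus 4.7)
The plan is to argue by induction on $a$, treating both assertions in tandem. In the base case $a=0$, Lemma~\ref{L;generalresults}~(i) tells us that the $c$-column of $E_{i_0}^*A_{j_0}E_{\ell_0}^*$ (for $c\in xR_{\ell_0}$) has exactly $p_{i_0j_0}^{\ell_0}=1$ nonzero entry, while the $r$-row (for $r\in xR_{i_0}$) has $p_{\ell_0 j_0'}^{i_0}$ nonzero entries. Lemma~\ref{L;Intersectionnumber}~(iv) together with $p_{i_0j_0}^{\ell_0}=1$ forces $p_{\ell_0 j_0'}^{i_0}=k_{\ell_0}/k_{i_0}\geq 1$, so both assertions hold.

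For the inductive step, set $M=\prod_{z=0}^{a}(E_{i_z}^*A_{j_z}E_{\ell_z}^*)$ and write $M=M_1 N$ with $M_1=\prod_{z=0}^{a-1}(E_{i_z}^*A_{j_z}E_{\ell_z}^*)$ and $N=E_{i_a}^*A_{j_a}E_{\ell_a}^*$. The column assertion is immediate: the hypothesis $p_{i_aj_a}^{\ell_a}=1$ forces each $c$-column of $N$ (for $c\in xR_{\ell_a}$) to carry exactly one $1$, at some $u\in xR_{i_a}=xR_{\ell_{a-1}}$, so the $c$-column of $M$ coincides with the $u$-column of $M_1$; the inductive hypothesis applied to $M_1$ (whose data satisfies the lemma's hypotheses with parameter $a-1$) gives that this column is nonzero.

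The row assertion is more delicate, because the $r$-row of $E_{i_0}^*A_{j_0}E_{\ell_0}^*$ may contain several ones, so summing the corresponding rows of $\prod_{z=1}^{a}(E_{i_z}^*A_{j_z}E_{\ell_z}^*)$ in $\F$ could a priori cancel modulo $p$. To exclude this, I would first establish, by a parallel induction on $a$, the auxiliary claim that the $(0,1)$-factors multiply to a matrix whose integer entries all lie in $\{0,1\}$. Indeed, if at the inductive step $(M)_{rc}\geq 2$ over $\Z$, then two distinct indices $u\neq u'$ would contribute with $(M_1)_{ru}=(M_1)_{ru'}=N_{uc}=N_{u'c}=1$; but $|\{v\in xR_{i_a}\colon (v,c)\in R_{j_a}\}|=p_{i_aj_a}^{\ell_a}=1$ allows at most one such $u$, a contradiction.

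With this $(0,1)$-property in hand the row claim follows quickly: for $r\in xR_{i_0}$, the base case supplies some $u\in xR_{\ell_0}=xR_{i_1}$ with $(E_{i_0}^*A_{j_0}E_{\ell_0}^*)_{r,u}=1$; the inductive hypothesis applied to $\prod_{z=1}^{a}(E_{i_z}^*A_{j_z}E_{\ell_z}^*)$ yields some $c\in xR_{\ell_a}$ where its $u$-row is nonzero, whence $(M)_{rc}\geq 1$ in $\Z$, and the $(0,1)$-property forces $(M)_{rc}=1$, so its image $1\in\F$ is nonzero. The main obstacle is precisely this $\Z$-to-$\F$ upgrade: without the $(0,1)$-property the row argument could succeed integrally yet fail in positive characteristic, so tracking integer entries through the triple-product chain is the key technical step.
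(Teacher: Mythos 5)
Your proof is correct, and the column half coincides with the paper's argument (peel off the last factor $E_{i_a}^*A_{j_a}E_{\ell_a}^*$ and use that its $c$-column carries a unique $1$, so the $c$-column of the product is just a column of the shorter product). For the row half, however, you take a genuinely different route. The paper also peels off the \emph{last} factor for the rows: from the inductive hypothesis it picks a nonzero $(r,h)$-entry of $\prod_{z=0}^{a-1}(E_{i_z}^*A_{j_z}E_{\ell_z}^*)$ with $h\in xR_{\ell_{a-1}}=xR_{i_a}$, chooses $n\in xR_{\ell_a}$ with the $(h,n)$-entry of the last factor equal to $1$, and then uses $p_{i_aj_a}^{\ell_a}=1$ again to see that the $n$-column of the last factor has no other nonzero entry; hence the $(r,n)$-entry of the full product is a single product of two nonzero scalars, and the cancellation issue you worry about never arises, with no need to leave $\F$. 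You instead peel off the \emph{first} factor, which genuinely creates a sum over the $p_{\ell_0j_0'}^{i_0}$ ones in the $r$-row, and you neutralize possible cancellation modulo $p$ by proving an integer $(0,1)$-entry property of the chain of triple products. That auxiliary claim is exactly the paper's Lemma~\ref{L;Case7} (stated there over $\F$, but proved by the same column-uniqueness argument you give, and proved independently of Lemma~\ref{L;Case5}), so there is no circularity; your detour is sound and even delivers the stronger $(0,1)$ statement as a by-product, at the price of the $\Z$-lift bookkeeping that the paper's last-factor argument avoids entirely.
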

\begin{proof}
We work by induction. Let us consider the base case $a=0$. Since $p_{i_0j_0}^{\ell_0}=1$, by Lemmas \ref{L;Intersectionnumber} (iv) and \ref{L;generalresults} (i), both the $r$-row and the $c$-column of $E_{i_0}^*A_{j_0}E_{\ell_0}^*$ have at least one nonzero entry for any $r\in xR_{i_0}$ and $c\in xR_{\ell_0}$. The base case is checked.

Assume that $a>0$. As $p_{i_bj_b}^{\ell_b}=1$ and $\ell_e=i_{e+1}$ for all $b\in [a]$ and $e\in [a-1]$, for any $m\in [a-1]$, we can assume that the following assertions hold for $\prod_{z=0}^{m}(E_{i_z}^*A_{j_z}E_{\ell_z}^*)$.
\begin{enumerate}[(i)]
\item For any $f\in xR_{i_0}$, the $f$-row of $\prod_{z=0}^{m}(E_{i_z}^*A_{j_z}E_{\ell_z}^*)$ has at least one nonzero entry.
\item For any $g\in xR_{\ell_m}$, the $g$-column of $\prod_{z=0}^{m}(E_{i_z}^*A_{j_z}E_{\ell_z}^*)$ has at least one nonzero entry.
\end{enumerate}

By the inductive hypothesis (i) and \eqref{Eq:preliminary2}, for any $r\in xR_{i_0}$, there exists $h\in xR_{\ell_{a-1}}$ such that the $(r,h)$-entry of $\prod_{z=0}^{a-1}(E_{i_z}^*A_{j_z}E_{\ell_z}^*)$ is not zero. Write $k$ for the $(r,h)$-entry of $\prod_{z=0}^{a-1}(E_{i_z}^*A_{j_z}E_{\ell_z}^*)$. So $k\neq 0$. As $\ell_{a-1}=i_a$, observe that $h\in xR_{i_a}$. Since $p_{i_aj_a}^{\ell_a}=1$, by Lemmas \ref{L;Intersectionnumber} (iv) and \ref{L;generalresults} (i), the $h$-row of $E_{i_a}^*A_{j_a}E_{\ell_a}^*$ has at least one entry that equals one. By \eqref{Eq:preliminary2}, let $n\in xR_{\ell_a}$, where the $(h,n)$-entry of $E_{i_a}^*A_{j_a}E_{\ell_a}^*$ is one. Since $p_{i_aj_a}^{\ell_a}=1$ and $n\in xR_{\ell_a}$, by Lemma \ref{L;generalresults} (i) and \eqref{Eq:preliminary2}, notice that the $(q,n)$-entry of $E_{i_a}^*A_{j_a}E_{\ell_a}^*$ is zero for all $q\in X\setminus\{h\}$.

As $\prod_{z=0}^{a}(E_{i_z}^*A_{j_z}E_{\ell_z}^*)\!=\!(\prod_{z=0}^{a-1}(E_{i_z}^*A_{j_z}E_{\ell_z}^*))(E_{i_a}^*A_{j_a}E_{\ell_a}^*)$ and we have known that the $(q,n)$-entry of $E_{i_a}^*A_{j_a}E_{\ell_a}^*$ is zero for all $q\in X\setminus\{h\}$, observe that the $(r,n)$-entry of $\prod_{z=0}^{a}(E_{i_z}^*A_{j_z}E_{\ell_z}^*)$ equals the product of the $(r,h)$-entry of $\prod_{z=0}^{a-1}(E_{i_z}^*A_{j_z}E_{\ell_z}^*)$ and the $(h,n)$-entry of $E_{i_a}^*A_{j_a}E_{\ell_a}^*$. So the $(r,n)$-entry of $\prod_{z=0}^{a}(E_{i_z}^*A_{j_z}E_{\ell_z}^*)$ is $k$. As $k\neq 0$, we get that the $r$-row of $\prod_{z=0}^{a}(E_{i_z}^*A_{j_z}E_{\ell_z}^*)$ has at least one nonzero entry.

For any $c\in xR_{\ell_a}$, by the equality $p_{i_aj_a}^{\ell_a}=1$, Lemma \ref{L;generalresults} (i), and \eqref{Eq:preliminary2},
there exists $u\in xR_{i_a}$ such that the $(u,c)$-entry of $E_{i_a}^*A_{j_a}E_{\ell_a}^*$ equals one. Moreover, we also note that the $(v,c)$-entry of $E_{i_a}^*A_{j_a}E_{\ell_a}^*$ is zero for all $v\in X\setminus\{u\}$. Since $u\in xR_{i_a}$ and $\ell_{a-1}=i_a$, notice that $u\in xR_{\ell_{a-1}}$. By the inductive hypothesis (ii) and \eqref{Eq:preliminary2}, there is $w\in xR_{i_0}$ such that the $(w,u)$-entry of $\prod_{z=0}^{a-1}(E_{i_z}^*A_{j_z}E_{\ell_z}^*)$ is not zero. Let $y$ be the $(w,u)$-entry of $\prod_{z=0}^{a-1}(E_{i_z}^*A_{j_z}E_{\ell_z}^*)$. So $y\neq 0$.

As $\prod_{z=0}^{a}(E_{i_z}^*A_{j_z}E_{\ell_z}^*)\!=\!(\prod_{z=0}^{a-1}(E_{i_z}^*A_{j_z}E_{\ell_z}^*))(E_{i_a}^*A_{j_a}E_{\ell_a}^*)$ and we have known that the $(v,c)$-entry of $E_{i_a}^*A_{j_a}E_{\ell_a}^*$ is zero for all $v\in X\setminus\{u\}$, observe that the $(w,c)$-entry of $\prod_{z=0}^{a}(E_{i_z}^*A_{j_z}E_{\ell_z}^*)$ equals the product of the $(w,u)$-entry of $\prod_{z=0}^{a-1}(E_{i_z}^*A_{j_z}E_{\ell_z}^*)$ and the $(u,c)$-entry of $E_{i_a}^*A_{j_a}E_{\ell_a}^*$. So the $(w,c)$-entry of $\prod_{z=0}^{a}(E_{i_z}^*A_{j_z}E_{\ell_z}^*)$ is $y$. As $y\neq 0$, we get that the $c$-column of $\prod_{z=0}^{a}(E_{i_z}^*A_{j_z}E_{\ell_z}^*)$ has at least one nonzero entry. The induction is finished.
The desired lemma follows.
\end{proof}
\begin{lem}\label{L;Case6}
Let $a\in \mathbb{N}_{0}$ and assume that $R_{i_b}, R_{j_b}, R_{\ell_b}\in S$ for all $b\in [a]$. If we have $k_{i_0}>1=p_{i_bj_b}^{\ell_b}$ for all $b\in [a]$, for any $r\in xR_{i_0}$, there exists $u\in xR_{\ell_a}$ such that the $(r,u)$-entry of $\prod_{z=0}^{a}(E_{i_z}^*A_{j_z}E_{\ell_z}^*)$ is zero. Moreover, for any $c\in xR_{\ell_a}$, there exists $v\in xR_{i_0}$ such that the $(v,c)$-entry of $\prod_{z=0}^{a}(E_{i_z}^*A_{j_z}E_{\ell_z}^*)$ is zero.
\end{lem}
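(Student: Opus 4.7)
The plan is to induct on $a$ while tracking the integer row-sums and column-sums of the partial products $P_k := \prod_{z=0}^{k}(E_{i_z}^{*} A_{j_z} E_{\ell_z}^{*})$ before reducing to $\F$. If $P_a = O$ then both assertions are trivial, so I may assume $P_a \neq O$, which forces $\ell_{b-1} = i_b$ for every $b \in \{1,\ldots,a\}$ (otherwise $E_{\ell_{b-1}}^{*} E_{i_b}^{*} = O$ would already kill some pair of adjacent factors).

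For the first assertion, Lemma~\ref{L;generalresults}(i) shows that each factor $E_{i_b}^{*} A_{j_b} E_{\ell_b}^{*}$ is a $(0,1)$-matrix whose nonzero rows (those in $xR_{i_b}$) have row-sum $p_{\ell_b j_b'}^{i_b}$, and Lemma~\ref{L;Intersectionnumber}(iv) combined with $p_{i_b j_b}^{\ell_b} = 1$ rewrites this count as $k_{\ell_b}/k_{i_b}$. A straightforward induction---at each step the nonzero columns of $P_{k-1}$ lie in $xR_{\ell_{k-1}} = xR_{i_k}$, so the row-sum of the next factor is a constant on the relevant rows and factors out---shows that the integer row-sum of $P_a$ at any $r \in xR_{i_0}$ is $\prod_{b=0}^{a} k_{\ell_b}/k_{i_b}$, which telescopes to $k_{\ell_a}/k_{i_0}$. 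Since the nonzero entries in row $r$ are confined to the $k_{\ell_a}$ columns indexed by $xR_{\ell_a}$, and these non-negative integers sum to $k_{\ell_a}/k_{i_0} < k_{\ell_a}$ (because $k_{i_0} > 1$), at least one of them must equal $0$ in $\mathbb{Z}$, and hence in $\F$, furnishing the required $u$.

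The second assertion is obtained by the mirror argument. By Lemma~\ref{L;generalresults}(i), each factor $E_{i_b}^{*} A_{j_b} E_{\ell_b}^{*}$ has column-sum $p_{i_b j_b}^{\ell_b} = 1$ on the columns indexed by $xR_{\ell_b}$. The analogous induction on column-sums (now collecting a factor of $1$ at each step) yields that the integer column-sum of $P_a$ at any $c \in xR_{\ell_a}$ equals $1$, while its nonzero entries occupy the $k_{i_0} > 1$ rows of $xR_{i_0}$; hence these $k_{i_0}$ non-negative integers summing to $1$ cannot all be positive, so at least one vanishes, giving the required $v$.

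The only subtlety to keep in mind is that the counting step (``row-sum strictly smaller than the number of admissible positions forces a zero entry'') is valid only over $\mathbb{Z}$, since over $\F$ cancellations modulo $p$ could in principle mask the conclusion. Running both the row-sum and column-sum inductions over the integers and reducing to $\F$ only at the end avoids this pitfall; the integrality of each quotient $k_{\ell_b}/k_{i_b}$ appearing in the telescoping is automatic from Lemma~\ref{L;Intersectionnumber}(iv) together with $p_{i_b j_b}^{\ell_b} = 1$, so the argument is internally consistent.
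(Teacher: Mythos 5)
Your proposal is correct, but it takes a genuinely different route from the paper. The paper argues by induction directly over $\F$, propagating a single witness zero entry: the inductive hypothesis supplies a zero $(r,n)$-entry of the partial product with $n\in xR_{\ell_{a-1}}=xR_{i_a}$, and since $p_{i_aj_a}^{\ell_a}=1$ each relevant column of the last factor $E_{i_a}^*A_{j_a}E_{\ell_a}^*$ has a unique one (via Lemma \ref{L;generalresults} (i) and Lemma \ref{L;Case5}), so the zero passes to a zero $(r,u)$-entry of the full product, with a mirrored argument for columns. You instead lift every factor to a $(0,1)$-matrix over $\mathbb{Z}$, compute exact integer row and column sums of the partial products --- using Lemma \ref{L;Intersectionnumber} (iv) to identify each factor's row sum as $p_{\ell_bj_b'}^{i_b}=k_{\ell_b}/k_{i_b}$ and its column sum as $p_{i_bj_b}^{\ell_b}=1$, with the telescoping to $k_{\ell_a}/k_{i_0}$ valid because non-vanishing of the product forces $\ell_{b-1}=i_b$ --- and then apply a pigeonhole count: a row of $k_{\ell_a}$ nonnegative integers summing to $k_{\ell_a}/k_{i_0}<k_{\ell_a}$, and a column of $k_{i_0}>1$ nonnegative integers summing to $1$, must each contain a zero, which survives reduction to $\F$. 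The steps you rely on (constancy of row/column sums of each factor on $xR_{i_b}$ and $xR_{\ell_b}$, confinement of nonzero entries to $xR_{i_0}\times xR_{\ell_a}$, integrality of $k_{\ell_b}/k_{i_b}$) all hold, and you correctly flag that the counting must be done over $\mathbb{Z}$ before reducing mod $p$. Your method in fact yields more than the lemma asks --- e.g.\ the integer lift has exactly one entry equal to one in each column over $xR_{\ell_a}$, which simultaneously recovers the content of Lemmas \ref{L;Case5}--\ref{L;Case7} for the lift --- at the cost of the lifting bookkeeping, whereas the paper's zero-propagation argument is more local, never leaves $\F$, and reuses Lemma \ref{L;Case5} as a black box.
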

\begin{proof}
We work by induction. Let us consider the base case $a=0$. For any $r\in xR_{i_0}$, assume that the $(r,f)$-entry of $E_{i_0}^*A_{j_0}E_{\ell_0}^*$ equals one for all $f\in xR_{\ell_0}$. As we have  $|xR_{i_0}|=k_{i_0}>1$, there exists $g\in X\setminus\{r\}$ such that $g\in xR_{i_0}$. By Lemma \ref{L;generalresults} (i) and \eqref{Eq:preliminary2}, notice that the $(g,f)$-entry of $E_{i_0}^*A_{j_0}E_{\ell_0}^*$ is one for all $f\in xR_{\ell_0}$. Therefore there exists $h\in xR_{\ell_0}$ such that the $h$-column of $E_{i_0}^*A_{j_0}E_{\ell_0}^*$ has at least two entries that equal one. Since $p_{i_0j_0}^{\ell_0}=1$, by Lemma \ref{L;generalresults} (i), the $h$-column of $E_{i_0}^*A_{j_0}E_{\ell_0}^*$ has exactly one entry that equals one, which yields a contradiction. So there is $\tilde{u}\in xR_{\ell_0}$ such that the $(r,\tilde{u})$-entry of $E_{i_0}^*A_{j_0}E_{\ell_0}^*$ is zero.

For any $c\in xR_{\ell_0}$, by the equality $p_{i_0j_0}^{\ell_0}=1$, Lemma \ref{L;generalresults} (i), and \eqref{Eq:preliminary2}, there exists $k\in xR_{i_0}$ such that the $(k,c)$-entry of $E_{i_0}^*A_{j_0}E_{\ell_0}^*$ equals one. Since $|xR_{i_0}|=k_{i_0}>1$, notice that there is $\tilde{v}\in X\setminus\{k\}$ such that $\tilde{v}\in xR_{i_0}$. As $p_{i_0j_0}^{\ell_0}=1$, the $(\tilde{v},c)$-entry of $E_{i_0}^*A_{j_0}E_{\ell_0}^*$ is zero by Lemma \ref{L;generalresults} (i) and \eqref{Eq:preliminary2}. The base case is checked.

Assume that $a>0$. As $k_{i_0}>1=p_{i_bj_b}^{\ell_b}$ for all $b\in [a]$, for any $m\in [a-1]$, we can assume that the following assertions hold for $\prod_{z=0}^{m}(E_{i_z}^*A_{j_z}E_{\ell_z}^*)$.
\begin{enumerate}[(i)]
\item For any $\hat{r}\in xR_{i_0}$, there is $\hat{u}\in xR_{\ell_m}$ such that the $(\hat{r},\hat{u})$-entry of $\prod_{z=0}^{m}(E_{i_z}^*A_{j_z}E_{\ell_z}^*)$ is zero.
\item For any $\hat{c}\in xR_{\ell_m}$, there is $\hat{v}\in xR_{i_0}$ such that the $(\hat{v},\hat{c})$-entry of $\prod_{z=0}^{m}(E_{i_z}^*A_{j_z}E_{\ell_z}^*)$ is zero.
\end{enumerate}

If $\prod_{z=0}^{a}(E_{i_z}^*A_{j_z}E_{\ell_z}^*)=O$, then the desired lemma holds. So we can assume further that $\prod_{z=0}^{a}(E_{i_z}^*A_{j_z}E_{\ell_z}^*)\neq O$, which implies that $\ell_{a-1}=i_a$.

For any $r\in xR_{i_0}$, by the inductive hypothesis (i), there is $n\in xR_{\ell_{a-1}}$ such that the $(r,n)$-entry of $\prod_{z=0}^{a-1}(E_{i_z}^*A_{j_z}E_{\ell_z}^*)$ is zero. Since $\ell_{a-1}=i_a$ and $n\in xR_{\ell_{a-1}}$, notice that $n\in xR_{i_a}$. Since $p_{i_aj_a}^{\ell_a}=1$, by Lemma \ref{L;Case5} and \eqref{Eq:preliminary2}, there is $u\in xR_{\ell_a}$ such that the $(n,u)$-entry of $E_{i_a}^*A_{j_a}E_{\ell_a}^*$ is one. Since $p_{i_aj_a}^{\ell_a}=1$, by Lemma \ref{L;generalresults} (i) and \eqref{Eq:preliminary2}, observe that the $(q,u)$-entry of $E_{i_a}^*A_{j_a}E_{\ell_a}^*$ is zero for all $q\in X\setminus\{n\}$.

As $\prod_{z=0}^{a}(E_{i_z}^*A_{j_z}E_{\ell_z}^*)\!=\!(\prod_{z=0}^{a-1}(E_{i_z}^*A_{j_z}E_{\ell_z}^*))(E_{i_a}^*A_{j_a}E_{\ell_a}^*)$ and we have known that the $(q,u)$-entry of $E_{i_a}^*A_{j_a}E_{\ell_a}^*$ is zero for all $q\in X\setminus\{n\}$, observe that the $(r,u)$-entry of $\prod_{z=0}^{a}(E_{i_z}^*A_{j_z}E_{\ell_z}^*)$ equals the product of the $(r,n)$-entry of $\prod_{z=0}^{a-1}(E_{i_z}^*A_{j_z}E_{\ell_z}^*)$ and the $(n,u)$-entry of $E_{i_a}^*A_{j_a}E_{\ell_a}^*$. Since the $(r,n)$-entry of $\prod_{z=0}^{a-1}(E_{i_z}^*A_{j_z}E_{\ell_z}^*)$ is zero, the $(r,u)$-entry of $\prod_{z=0}^{a}(E_{i_z}^*A_{j_z}E_{\ell_z}^*)$ is zero.

For any $c\in xR_{\ell_a}$, as $p_{i_aj_a}^{\ell_a}=1$, Lemma \ref{L;Case5} and \eqref{Eq:preliminary2} tell us that there is $w\in xR_{i_a}$ such that the $(w,c)$-entry of $E_{i_a}^*A_{j_a}E_{\ell_a}^*$ equals one. Moreover, by Lemma \ref{L;generalresults} (i) and \eqref{Eq:preliminary2}, the $(y,c)$-entry of $E_{i_a}^*A_{j_a}E_{\ell_a}^*$ is zero for all $y\in X\setminus\{w\}$. Since $\ell_{a-1}=i_a$ and $w\in xR_{i_a}$, note that $w\in xR_{\ell_{a-1}}$. By the inductive hypothesis (ii), there is $v\in xR_{i_0}$ such that $(v,w)$-entry of $\prod_{z=0}^{a-1}(E_{i_z}^*A_{j_z}E_{\ell_z}^*)$ is zero.

Since $\prod_{z=0}^{a}(E_{i_z}^*A_{j_z}E_{\ell_z}^*)\!=\!(\prod_{z=0}^{a-1}(E_{i_z}^*A_{j_z}E_{\ell_z}^*))(E_{i_a}^*A_{j_a}E_{\ell_a}^*)$ and we have known that the $(y,c)$-entry of $E_{i_a}^*A_{j_a}E_{\ell_a}^*$ is zero for all $y\in X\setminus\{w\}$, observe that the $(v,c)$-entry of $\prod_{z=0}^{a}(E_{i_z}^*A_{j_z}E_{\ell_z}^*)$ equals the product of the $(v,w)$-entry of $\prod_{z=0}^{a-1}(E_{i_z}^*A_{j_z}E_{\ell_z}^*)$ and the $(w,c)$-entry of $E_{i_a}^*A_{j_a}E_{\ell_a}^*$. Since $(v,w)$-entry of $\prod_{z=0}^{a-1}(E_{i_z}^*A_{j_z}E_{\ell_z}^*)$ is zero, notice that the $(v,c)$-entry of $\prod_{z=0}^{a}(E_{i_z}^*A_{j_z}E_{\ell_z}^*)$ is zero. The induction is finished. The desired lemma follows.
\end{proof}
The following lemma strengthens Lemmas \ref{L;Case5} and \ref{L;Case6}.
\begin{lem}\label{L;Case7}
Let $a\in \mathbb{N}_{0}$ and assume that $R_{i_b}, R_{j_b}, R_{\ell_b}\in S$ for all $b\in [a]$. If we have $p_{i_bj_b}^{\ell_b}=1$ for all $b\in [a]$, then $\prod_{z=0}^{a}(E_{i_z}^*A_{j_z}E_{\ell_z}^*)$ is a $(0,1)$-matrix.
\end{lem}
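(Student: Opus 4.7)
The plan is to argue by induction on $a$, matching the style of Lemmas \ref{L;Case4}--\ref{L;Case6}. The base case $a=0$ is immediate, since every triple product of $\mathcal{T}$ is a $(0,1)$-matrix (this was already observed right after the definition of $\mathcal{T}$ using \eqref{Eq:preliminary2}).

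For the inductive step, I would assume the result for $a-1$ and consider $P=\prod_{z=0}^{a-1}(E_{i_z}^*A_{j_z}E_{\ell_z}^*)$, a $(0,1)$-matrix by the inductive hypothesis, and $Q=E_{i_a}^*A_{j_a}E_{\ell_a}^*$. If $PQ=O$ there is nothing to prove, so I may assume $\ell_{a-1}=i_a$. The key observation is that the hypothesis $p_{i_aj_a}^{\ell_a}=1$, combined with Lemma \ref{L;generalresults} (i), says that for every $c\in xR_{\ell_a}$ the $c$-column of $Q$ contains exactly one entry equal to $1$ and all other entries equal to $0$; for $c\notin xR_{\ell_a}$, \eqref{Eq:preliminary2} forces the $c$-column of $Q$ to be entirely zero. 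Thus for each $c\in X$, the entries of the $c$-column of $Q$ take at most a single value $1$, at a unique row $s_c$ (if such a column has any nonzero entry at all).

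Then for any $r\in X$ and $c\in X$, the $(r,c)$-entry of $PQ$ is
\[
\sum_{s\in X}P_{r,s}\,Q_{s,c},
\]
and by the previous paragraph this sum collapses to at most the single term $P_{r,s_c}Q_{s_c,c}=P_{r,s_c}\cdot 1$. Since $P$ is a $(0,1)$-matrix by induction, this value is either $0$ or $1$. Hence $PQ$ is a $(0,1)$-matrix, completing the induction.

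I do not anticipate any real obstacle here: the whole argument rests on the single combinatorial fact that $p_{i_aj_a}^{\ell_a}=1$ forces the columns of the final triple product to have at most one nonzero entry, so multiplication on the right by $Q$ simply extracts individual entries of the previously constructed $(0,1)$-matrix $P$ and never produces a sum of two or more ones. The only mild subtlety is to handle the two cases $c\in xR_{\ell_a}$ and $c\notin xR_{\ell_a}$ uniformly via \eqref{Eq:preliminary2}; this is the same sort of bookkeeping used in the proofs of Lemmas \ref{L;Case5} and \ref{L;Case6}, so no new machinery is required.
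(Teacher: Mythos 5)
Your proof is correct and follows essentially the same route as the paper: induction on $a$, with the key observation that $p_{i_aj_a}^{\ell_a}=1$ together with Lemma \ref{L;generalresults} (i) and \eqref{Eq:preliminary2} forces every column of $E_{i_a}^*A_{j_a}E_{\ell_a}^*$ to have at most one nonzero entry (equal to one), so right multiplication by it merely copies single entries of the inductively obtained $(0,1)$-matrix. No discrepancies worth noting.
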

\begin{proof}
We work by induction. Observe that the base case $a=0$ is checked by \eqref{Eq:preliminary2}. Assume that $a>0$. As $p_{i_bj_b}^{\ell_b}=1$ for all $b\in [a]$, we can assume that $\prod_{z=0}^{m}(E_{i_z}^*A_{j_z}E_{\ell_z}^*)$ is a $(0,1)$-matrix for any $m\in [a-1]$. By the inductive hypothesis, $\prod_{z=0}^{a-1}(E_{i_z}^*A_{j_z}E_{\ell_z}^*)$ is a $(0,1)$-matrix.

Since $\prod_{z=0}^{a-1}(E_{i_z}^*A_{j_z}E_{\ell_z}^*)$ is a $(0,1)$-matrix, every entry of $\prod_{z=0}^{a-1}(E_{i_z}^*A_{j_z}E_{\ell_z}^*)$ is either zero or one. By the equality $p_{i_aj_a}^{\ell_a}=1$, Lemma \ref{L;generalresults} (i), and \eqref{Eq:preliminary2}, for any $c\in xR_{\ell_a}$, the $c$-column of $E_{i_a}^*A_{j_a}E_{\ell_a}^*$ has exactly one nonzero entry that equals one. For any $e\in X\setminus xR_{\ell_a}$, by \eqref{Eq:preliminary2}, note that every entry of the $e$-column of $E_{i_a}^*A_{j_a}E_{\ell_a}^*$ is zero. As $\prod_{z=0}^{a}(E_{i_z}^*A_{j_z}E_{\ell_z}^*)=\!(\prod_{z=0}^{a-1}(E_{i_z}^*A_{j_z}E_{\ell_z}^*))(E_{i_a}^*A_{j_a}E_{\ell_a}^*)$, it is obvious to see that every entry of $\prod_{z=0}^{a}(E_{i_z}^*A_{j_z}E_{\ell_z}^*)$ is either zero or one. The induction is finished. The desired lemma follows.
\end{proof}
For further discussion, for any $(a,b)\in X\times X$, recall that $E_{ab}$ is the $(0,1)$-matrix of $M_X(\F)$ whose unique nonzero entry is the $(a,b)$-entry. We are now ready to deduce the following corollary.
\begin{cor}\label{C;badpairform}
Let $a\in \mathbb{N}_{0}$ and assume that $R_{i_b}, R_{j_b}, R_{\ell_b}\in S$ for all $b\in [a]$. If we have $p_{i_bj_b}^{\ell_b}=1$, $k_{i_b}=k_{\ell_b}=2$, and $\ell_c=i_{c+1}$ for all $b\in [a]$ and $c\in [a-1]$, then we have $\prod_{z=0}^{a}(E_{i_z}^*A_{j_z}E_{\ell_z}^*)=E_{u_1v_1}+E_{u_2v_2}$ or $\prod_{z=0}^{a}(E_{i_z}^*A_{j_z}E_{\ell_z}^*)=E_{u_1v_2}+E_{u_2v_1}$, where $xR_{i_0}=\{u_1, u_2\}$ and $xR_{\ell_a}=\{v_1, v_2\}$.
\end{cor}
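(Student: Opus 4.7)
The plan is to read off the corollary directly from the three preceding lemmas, each of which was engineered precisely for this purpose. First I would fix the $2\times 2$ bounding box: by \eqref{Eq:preliminary2} and the conditions $k_{i_0}=k_{\ell_a}=2$, every nonzero entry of $\prod_{z=0}^{a}(E_{i_z}^*A_{j_z}E_{\ell_z}^*)$ must lie in a row labeled by an element of $xR_{i_0}=\{u_1,u_2\}$ and a column labeled by an element of $xR_{\ell_a}=\{v_1,v_2\}$. So the whole product is supported inside a $2\times 2$ block. Next, since $p_{i_bj_b}^{\ell_b}=1$ for all $b\in[a]$, Lemma \ref{L;Case7} tells me the product is a $(0,1)$-matrix, so each of those four potentially nonzero entries is either $0$ or $1$.

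Now I would use the remaining two lemmas to pin down which $(0,1)$-pattern occurs. Since $p_{i_bj_b}^{\ell_b}=1$ and $\ell_c=i_{c+1}$ for $c\in[a-1]$, Lemma \ref{L;Case5} applies and guarantees that each row indexed by $u_1, u_2$ and each column indexed by $v_1, v_2$ contains at least one $1$. On the other hand, because $k_{i_0}=2>1=p_{i_bj_b}^{\ell_b}$ for all $b\in[a]$, Lemma \ref{L;Case6} applies and supplies, for each of the rows $u_1, u_2$ and each of the columns $v_1, v_2$, an entry (within the $2\times 2$ block) that is zero. Combining these two observations: in the restricted $2\times 2$ block, every row and every column contains exactly one $1$ and exactly one $0$.

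A $2\times 2$ $(0,1)$-matrix whose rows are indexed by $\{u_1,u_2\}$ and columns by $\{v_1,v_2\}$ with exactly one $1$ in each row and each column corresponds to a permutation of $\{1,2\}$, of which there are only two. These two possibilities are precisely $E_{u_1v_1}+E_{u_2v_2}$ and $E_{u_1v_2}+E_{u_2v_1}$, which is exactly the stated dichotomy.

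I do not expect any real obstacle here; the corollary is essentially the packaging of Lemmas \ref{L;Case5}, \ref{L;Case6}, \ref{L;Case7} into a single convenient normal form. The only point requiring care is verifying the hypotheses of each lemma against the corollary's assumptions: the chain condition $\ell_c=i_{c+1}$ and $p_{i_bj_b}^{\ell_b}=1$ feed Lemmas \ref{L;Case5} and \ref{L;Case7}, while the valency hypothesis $k_{i_b}=k_{\ell_b}=2$ provides the strict inequality $k_{i_0}>1$ needed for Lemma \ref{L;Case6} and also forces the ambient support to be exactly $2\times 2$.
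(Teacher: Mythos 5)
Your proof is correct and takes essentially the same route as the paper: restrict the support to the $2\times 2$ block on rows $xR_{i_0}=\{u_1,u_2\}$ and columns $xR_{\ell_a}=\{v_1,v_2\}$ via \eqref{Eq:preliminary2}, then combine Lemmas \ref{L;Case5}, \ref{L;Case6}, and \ref{L;Case7} to conclude that each row and each column of that block contains exactly one entry equal to $1$, leaving only the two stated permutation patterns. The paper's (very terse) proof also cites Lemma \ref{L;Case4}, but as your argument shows, that row-sum/column-sum lemma is not actually needed for this corollary.
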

\begin{proof}
By the hypotheses and \eqref{Eq:preliminary2}, all possible nonzero entries of $\prod_{z=0}^{a}(E_{i_z}^*A_{j_z}E_{\ell_z}^*)$ are the $(u_1, v_1)$-entry, the $(u_1, v_2)$-entry, the $(u_2, v_1)$-entry, and the $(u_2, v_2)$-entry. So the desired corollary follows by the hypotheses and Lemmas \ref{L;Case4}, \ref{L;Case5}, \ref{L;Case6}, \ref{L;Case7}.
\end{proof}
For further discussion, we present the required notation.
\begin{nota}\label{N;notation}
\em Let
$\mathcal{S}_a=\{\prod_{b=0}^a(E_{i_b}^*A_{j_b}E_{\ell_b}^*): R_{i_b}, R_{j_b}, R_{\ell_b}\in S\ \text{for all}\ b\in[a]\}\setminus\{O\}$
for any $a\in \mathbb{N}_0$ and set $\mathcal{S}_{-1}=\varnothing$. For any $c\in \mathbb{N}_0\cup\{-1\}$, put $\mathcal{T}_c=\langle \mathcal{S}_c\rangle_\F\subseteq \mathcal{T}$ and let $\sum_{e=-1}^c\mathcal{T}_e=\langle\bigcup_{e=-1}^c\mathcal{S}_e\rangle_\F$. Write $\sum_{f\in \mathbb{N}_0}\mathcal{T}_f$ for $\langle \bigcup_{f\in \mathbb{N}_0}\mathcal{S}_f\rangle_\F$.
\end{nota}
\begin{lem}\label{L;Linearsolution}
The following assertions hold.
\begin{enumerate}[(i)]
\item [\em (i)] We have $\mathcal{T}=\sum_{f\in \mathbb{N}_0}\mathcal{T}_f$ as $\F$-algebras.
\item [\em (ii)] Let $a\in \mathbb{N}_0$ and $M\in \mathcal{S}_a$. Then there exist $c, a_0, a_1,\ldots, a_c\in \mathbb{N}_0$ such that
    $$M=\sum_{b=0}^ce_bM_b,$$ where we have $0\neq e_b\in \F$, $M_b\in \mathcal{S}_{a_b}$, and $M_b\notin\sum_{h=-1}^{a_{b}-1}\mathcal{T}_h$ for any $b\in [c]$.
\item [\em (iii)] Let $M\in \mathcal{T}$. Then there exist $c, a_0, a_1,\ldots, a_c\in \mathbb{N}_0$ such that
    \begin{align*}
    M=\sum_{b=0}^ce_bM_b,
    \end{align*}
   where we have $0\neq e_b\in \F$, $M_b\in \mathcal{S}_{a_b}$, and $M_b\notin\sum_{h=-1}^{a_{b}-1}\mathcal{T}_h$ for any $b\in [c]$. Moreover, let $b\in [c]$. If we have $a_b>0$ and $M_b=\prod_{u=0}^{a_b}(E_{i_u}^*A_{j_u}E_{\ell_u}^*)$, then we have $\min\{ k_{i_u}, k_{j_u}, k_{\ell_u}\}>1$, $p_{i_uj_u}^{\ell_u}>0$, $\ell_v=i_{v+1}$, $k_{i_v}k_{j_v}\neq p_{i_vj_v}^{\ell_v}k_{\ell_v}$ for any $u\in [a_b]$ and $v\in [a_b-1]$.
\end{enumerate}
\end{lem}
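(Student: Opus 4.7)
The plan is to prove the three assertions sequentially, with (i) giving the algebra-spanning statement, (ii) using it to refine single product elements, and (iii) combining them with the reduction lemmas \ref{L;Case1}, \ref{L;Case2}, and \ref{L;Case3} to force a canonical form.

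For (i), I would verify that $\sum_{f \in \mathbb{N}_0}\mathcal{T}_f$ is a unital $\F$-subalgebra of $\mathcal{T}$ containing every generator. Closure under multiplication is immediate: for $M \in \mathcal{S}_a$ and $N \in \mathcal{S}_b$, the product $MN$ is either zero or an element of $\mathcal{S}_{a+b+1}$. Every generator already lies in $\mathcal{T}_0$: using $\sum_{k=0}^d E_k^* = I$ and $A_0 = I$, one has $A_j = \sum_{i,\ell} E_i^* A_j E_\ell^*$ and $E_a^* = E_a^* A_0 E_a^*$. The reverse inclusion is trivial since $\mathcal{S}_f \subseteq \mathcal{T}$ for every $f \in \mathbb{N}_0$.

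For (ii), I would induct on $a$. The base case $a = 0$ is immediate: $\mathcal{T}_{-1} = \langle \varnothing \rangle_\F$ is the zero space, so any $M \in \mathcal{S}_0$ satisfies $M \notin \mathcal{T}_{-1}$, and the trivial decomposition with $c = 0$, $a_0 = 0$, $e_0 = 1$, $M_0 = M$ works. For the inductive step, if $M \notin \sum_{h=-1}^{a-1}\mathcal{T}_h$ then the trivial decomposition again suffices, otherwise $M$ is an $\F$-linear combination of elements of $\bigcup_{h=0}^{a-1}\mathcal{S}_h$, and the inductive hypothesis refines each summand.

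For (iii), I would first apply (i) to write $M$ as an $\F$-linear combination of elements of $\bigcup_{f}\mathcal{S}_f$, then apply (ii) to each summand and collect, producing a decomposition $M = \sum_{b=0}^c e_b M_b$ with $M_b \in \mathcal{S}_{a_b}$ and $M_b \notin \sum_{h=-1}^{a_b-1}\mathcal{T}_h$. The four listed conditions on any $M_b = \prod_{u=0}^{a_b}(E_{i_u}^* A_{j_u} E_{\ell_u}^*)$ with $a_b > 0$ would then follow: the requirements $p_{i_u j_u}^{\ell_u} > 0$ and $\ell_v = i_{v+1}$ are forced merely by $M_b \neq O$, the first via Lemma~\ref{L;generalresults}~(i) and the second since $E_{\ell_v}^* E_{i_{v+1}}^* = \delta_{\ell_v i_{v+1}} E_{\ell_v}^*$. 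For the remaining two I argue by contradiction: if $\min\{k_{i_u}, k_{j_u}, k_{\ell_u}\} = 1$ for some $u$, I match the failing valency with an adjacent pair of triple products and invoke Lemma~\ref{L;Case1} (when $k_{j_u} = 1$, or in the boundary cases where $k_{i_0}$ or $k_{\ell_{a_b}}$ equals one) or Lemma~\ref{L;Case2} (when $k_{\ell_u} = 1$ for $u < a_b$, or $k_{i_u} = 1$ for $u > 0$, using the chain condition $\ell_{u-1} = i_u$) to combine two consecutive factors; if $k_{i_v} k_{j_v} = p_{i_v j_v}^{\ell_v} k_{\ell_v}$ for some $v \in [a_b - 1]$, I apply Lemma~\ref{L;Case3}~(i) to combine the pair $(v, v+1)$. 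Each such reduction places $M_b$ in $\sum_{h=-1}^{a_b-1}\mathcal{T}_h$, contradicting the minimality of $a_b$.

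The main obstacle is the $\min\{k_{i_u}, k_{j_u}, k_{\ell_u}\} > 1$ step: for each way the minimum can equal one, I must identify the correct adjacent pair and invoke the correct reduction lemma, with the boundary indices $u = 0$ and $u = a_b$ requiring particular care to ensure a valid adjacent pair exists.
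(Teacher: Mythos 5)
Your proposal is correct and follows essentially the same route as the paper: (i) by observing that $\sum_{f\in\mathbb{N}_0}\mathcal{T}_f$ is a unital $\F$-subalgebra containing all generators $A_j$ and $E_a^*$, (ii) by induction on $a$, and (iii) by combining (i)--(ii) and ruling out the degenerate configurations — your explicit case analysis of which adjacent pair of factors to collapse (via Lemma \ref{L;Case1} when $k_{j_u}=1$ or at the boundary indices, Lemma \ref{L;Case2} when the shared middle idempotent has valency one, and Lemma \ref{L;Case3} (i) for $k_{i_v}k_{j_v}=p_{i_vj_v}^{\ell_v}k_{\ell_v}$) is just a spelled-out version of the paper's citation of those same lemmas, and the conditions $p_{i_uj_u}^{\ell_u}>0$, $\ell_v=i_{v+1}$ are likewise obtained from $M_b\neq O$. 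The only point the paper covers that you omit is the trivial case $M=O$ in (iii), which it handles by writing $O=E_0^*-E_0^*$ with $E_0^*\in\mathcal{S}_0$.
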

\begin{proof}
For (i), by the definition of $\sum_{f\in \mathbb{N}_0}\mathcal{T}_f$, $\sum_{f\in \mathbb{N}_0}\mathcal{T}_f$ is a unital $\F$-subalgebra of $\mathcal{T}$. Moreover, as $\mathcal{T}_0\subseteq\sum_{f\in \mathbb{N}_0}\mathcal{T}_f$, notice that $A_e, E_e^*\in\mathcal{T}_0\subseteq\sum_{f\in \mathbb{N}_0}\mathcal{T}_f$ for all $R_e\in S$. (i) is now shown by the definition of $\mathcal{T}$.

For (ii), we work by induction. When $a=0$, since $M\neq O$, we set $c=a_0=0$, $M_0=M$, and $e_0=1$. The desired assertion thus holds. The base case is checked. We now assume that $a>0$ and the desired assertion holds for every member of $\bigcup_{u=0}^{a-1}\mathcal{S}_u$. If $M\notin \sum_{h=-1}^{a-1}\mathcal{T}_h$, set $c=0$ and $a_0=a$. The desired assertion holds since we can set $M_0=M$ and $e_0=1$. If $M\in\sum_{h=-1}^{a-1}\mathcal{T}_h$, since $M\neq O$ and (i) holds, there exist $\ell\in \mathbb{N}_0$ and $g_0, g_1,\ldots, g_{\ell}\in [a-1]$ such that
\begin{align*}\label{Eq:7}
\tag{3.7}M=\sum_{m=0}^\ell z_mM(m),
\end{align*}
where we have $0\neq z_m\in\F$ and $M(m)\in \mathcal{S}_{g_m}$ for any $m\in [\ell]$. As $M(m)\in \mathcal{S}_{g_m}$ and $g_m<a$ for any $m\in [\ell]$, the inductive hypothesis holds for any $M(m)$. (ii) is now proved by the induction and \eqref{Eq:7}.

If $M\neq O$, the first assertion of (iii) follows by (i) and (ii). Since $E_0^*\in \mathcal{S}_0$ and $O=E_0^*-E_0^*$, the first assertion of (iii) thus follows. If there is $y\in [a_b]$ such that $\min\{k_{i_y}, k_{j_y}, k_{\ell_y}\}=1$, $M_b\in \sum_{h=-1}^{a_b-1}\mathcal{T}_h$ by Lemmas \ref{L;Case1} and \ref{L;Case2}. If there is $z\in [a_b-1]$ such that $k_{i_z}k_{j_z}=p_{i_zj_z}^{\ell_z}k_{\ell_z}$, then $M_b\in \sum_{h=-1}^{a_b-1}\mathcal{T}_h$ by Lemma \ref{L;Case3} (i). If there is $y\in [a_b]$ or $z\in [a_b-1]$ such that $p_{i_yj_y}^{\ell_y}=0$ or $\ell_z\neq i_{z+1}$, then $M_b=O$. By these mentioned facts and the first assertion of (iii), the second assertion of (iii) follows.
\end{proof}
\begin{eg}\label{E;example1}
\em We now use an example to illustrate Lemma \em \ref{L;Linearsolution} (iii). \em Assume that $X=[28]\setminus\{0\}$ and $x=1$. Assume further that $S$ is the scheme of order $28$, No. $175$ in \cite{HM}. Then $S=\bigcup_{a=0}^{15}\{R_a\}$. By computation, notice that
\begin{align*}
 O\neq E_4^*A_{12}E_8^*=E_4^*A_7E_9^*A_{13}E_8^*+E_4^*A_7E_9^*A_{15}E_8^*\in \mathcal{T},
\end{align*}
where we have $E_4^*A_7E_9^*A_{13}E_8^*, E_4^*A_7E_9^*A_{15}E_8^*\notin \sum_{h=-1}^0\mathcal{T}_h$, $p_{47}^9=p_{9(13)}^8=p_{9(15)}^8=1$, and $k_4=k_7=k_8=k_9=k_{13}=k_{15}=2$.
\end{eg}
We conclude this section with the following corollaries.
\begin{cor}\label{C;twovaluedecomposition}
Let $1<k\in \mathbb{N}$ and $M\in \mathcal{T}$. If $k_a\in\{1, k\}$ for all $R_a\in S$, then there exist $c, a_0, a_1,\ldots, a_c\in \mathbb{N}_0$ such that
\begin{align*}
M=\sum_{b=0}^ce_bM_b,
\end{align*}
where we have $0\neq e_b\in \F$, $M_b\in \mathcal{S}_{a_b}$, and $M_b\notin\sum_{h=-1}^{a_{b}-1}\mathcal{T}_h$ for any $b\in [c]$. Moreover, let $b\in [c]$. If $a_b>0$ and $M_b=\prod_{u=0}^{a_b}(E_{i_u}^*A_{j_u}E_{\ell_u}^*)$, then we have $k_{i_u}=k_{j_u}=k_{\ell_u}=k$, $0<p_{i_uj_u}^{\ell_u}<k$, $\ell_v=i_{v+1}$ for any $u\in [a_b]$ and $v\in [a_b-1]$.
\end{cor}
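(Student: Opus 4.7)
The idea is to invoke Lemma \ref{L;Linearsolution} (iii) to obtain the decomposition $M=\sum_{b=0}^c e_b M_b$ with all but one of the announced features, and then use the valency hypothesis $k_a\in\{1,k\}$ together with Lemma \ref{L;Case3} (ii) to supply the missing constraint at the index $u=a_b$ that Lemma \ref{L;Linearsolution} (iii) does not treat.

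Fix $b\in[c]$ with $a_b>0$ and write $M_b=\prod_{u=0}^{a_b}(E_{i_u}^*A_{j_u}E_{\ell_u}^*)$. Lemma \ref{L;Linearsolution} (iii) provides $\min\{k_{i_u},k_{j_u},k_{\ell_u}\}>1$, so the hypothesis that every valency of $S$ lies in $\{1,k\}$ forces $k_{i_u}=k_{j_u}=k_{\ell_u}=k$ for every $u\in[a_b]$; it also delivers the chain condition $\ell_v=i_{v+1}$ for $v\in[a_b-1]$ and the positivity $p_{i_uj_u}^{\ell_u}>0$ for $u\in[a_b]$. Lemma \ref{L;Intersectionnumber} (iii) yields $k^2=k_{i_u}k_{j_u}=\sum_e p_{i_uj_u}^e k_e\geq p_{i_uj_u}^{\ell_u}\cdot k$, hence $p_{i_uj_u}^{\ell_u}\leq k$. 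For $u\in[a_b-1]$, the extra clause $k_{i_u}k_{j_u}\neq p_{i_uj_u}^{\ell_u}k_{\ell_u}$ specializes to $k^2\neq p_{i_uj_u}^{\ell_u}\cdot k$, ruling out $p_{i_uj_u}^{\ell_u}=k$ and giving $0<p_{i_uj_u}^{\ell_u}<k$.

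The main obstacle is the remaining index $u=a_b$, where Lemma \ref{L;Linearsolution} (iii) offers no direct constraint. I would argue by contradiction: suppose $p_{i_{a_b}j_{a_b}}^{\ell_{a_b}}=k$. Then $k_{i_{a_b}}k_{j_{a_b}}=k^2=p_{i_{a_b}j_{a_b}}^{\ell_{a_b}}k_{\ell_{a_b}}$ and $k_{i_{a_b}}=k_{\ell_{a_b}}=k$, so the hypotheses of Lemma \ref{L;Case3} (ii) are met at the final two triple products of $M_b$. Grouping
\[M_b=\Bigl(\prod_{u=0}^{a_b-2}(E_{i_u}^*A_{j_u}E_{\ell_u}^*)\Bigr)\cdot E_{i_{a_b-1}}^*A_{j_{a_b-1}}E_{\ell_{a_b-1}}^*A_{j_{a_b}}E_{\ell_{a_b}}^*\]
(with $\ell_{a_b-1}=i_{a_b}$ used to fuse the middle idempotents, and the empty product read as $I$ when $a_b=1$), the right-hand factor collapses by Lemma \ref{L;Case3} (ii) into $\sum_r\overline{p_{j_{a_b-1}j_{a_b}}^r}E_{i_{a_b-1}}^*A_rE_{\ell_{a_b}}^*$. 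Substituting back expresses $M_b$ as an $\F$-linear combination of products of only $a_b$ triple products each, so $M_b\in\mathcal{T}_{a_b-1}\subseteq\sum_{h=-1}^{a_b-1}\mathcal{T}_h$, contradicting the defining property $M_b\notin\sum_{h=-1}^{a_b-1}\mathcal{T}_h$. Hence $p_{i_{a_b}j_{a_b}}^{\ell_{a_b}}<k$, and the corollary follows.
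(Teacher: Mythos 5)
Your proposal is correct and follows essentially the same route as the paper: apply Lemma \ref{L;Linearsolution} (iii) for the decomposition and the constraints away from the last index, note the valency hypothesis forces all relevant valencies to equal $k$ and (via Lemma \ref{L;Intersectionnumber}) $p_{i_uj_u}^{\ell_u}\leq k$, and rule out $p_{i_{a_b}j_{a_b}}^{\ell_{a_b}}=k$ by contradiction using Lemma \ref{L;Case3} (ii) against $M_b\notin\sum_{h=-1}^{a_b-1}\mathcal{T}_h$. The only cosmetic difference is that you derive the bound $p\leq k$ from Lemma \ref{L;Intersectionnumber} (iii) rather than (ii), and you spell out the collapsing of the last two triple products more explicitly.
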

\begin{proof}
The first assertion follows by Lemma \ref{L;Linearsolution} (iii). For the second assertion, set $w=a_b$. Since $k_a\in\{1,k\}$ for all $R_a\in S$, by Lemmas \ref{L;Linearsolution} (iii) and \ref{L;Intersectionnumber} (ii), it is enough to check that $p_{i_wj_w}^{\ell_w}<k$. By Lemma \ref{L;Intersectionnumber} (ii), we can assume that $p_{i_wj_w}^{\ell_w}=k$ to get a contradiction. By this assumption, Lemmas \ref{L;Linearsolution} (iii), and \ref{L;Intersectionnumber} (iii), we have $k_{i_w}=k_{j_w}=k_{\ell_w}=k$ and $k_{i_w}k_{j_w}=p_{i_wj_w}^{\ell_w}k_{\ell_w}$ as $k_a\in\{1,k\}$ for all $R_a\in S$. We thus have $M_b\in\sum_{h=-1}^{a_{b}-1}\mathcal{T}_h$ by Lemma \ref{L;Case3} (ii), which contradicts the proven first assertion. The second assertion thus follows.
\end{proof}
\begin{cor}\label{C;quasithindecomposiiton}
Assume that $S$ is a quasi-thin scheme and let $M\in \mathcal{T}$. Then there exist $c, a_0, a_1,\ldots, a_c\in \mathbb{N}_0$ such that
\begin{align*}
M=\sum_{b=0}^ce_bM_b,
\end{align*}
where we have $0\neq e_b\in \F$, $M_b\in \mathcal{S}_{a_b}$, and $M_b\notin\sum_{h=-1}^{a_{b}-1}\mathcal{T}_h$ for any $b\in [c]$. Moreover, let $b\in [c]$. If we have $a_b>0$ and $M_b=\prod_{u=0}^{a_b}(E_{i_u}^*A_{j_u}E_{\ell_u}^*)$, then we have the following assertions.
\begin{enumerate}[(i)]
\item [\em (i)] We have $k_{i_u}\!=\!k_{j_u}=\!k_{\ell_u}=2$, $p_{i_uj_u}^{\ell_u}\!\!=1$, $\ell_v=i_{v+1}$ for any $u\in [a_b]$ and $v\in [a_b-1]$.
\item [\em (ii)] We have $|R_{i_v'}R_{\ell_{v+1}}|=1$ for any $v\in [a_b-1]$.
\item [\em (iii)] We have $|R_{i_0'}R_{\ell_{a_b}}|=1$.
\end{enumerate}
\end{cor}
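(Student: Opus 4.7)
The first assertion of the corollary together with (i) follows at once from Corollary \ref{C;twovaluedecomposition} applied with $k=2$: in a quasi-thin scheme every valency lies in $\{1,2\}$, and the conclusion $0<p_{i_uj_u}^{\ell_u}<2$ provided there forces $p_{i_uj_u}^{\ell_u}=1$.

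For (ii) and (iii) I would argue by contradiction, showing that $|R_{i_v'}R_{\ell_{v+1}}|=2$ (respectively $|R_{i_0'}R_{\ell_{a_b}}|=2$) would force $M_b\in\sum_{h=-1}^{a_b-1}\mathcal{T}_h$, contrary to the standing hypothesis. The central input is Corollary \ref{C;badpairform}: under (i), any product of the stated shape (including any consecutive sub-product) equals either $E_{u_1v_1}+E_{u_2v_2}$ or $E_{u_1v_2}+E_{u_2v_1}$, where the relevant $xR_{\bullet}$ have size two. For (iii), first note that $1\leq|R_{i_0'}R_{\ell_{a_b}}|\leq 2$: the upper bound is Lemma \ref{L;Intersectionnumber}(v), and for the lower bound the vertex $x$ itself witnesses that the relation containing any $(y,z)\in xR_{i_0}\times xR_{\ell_{a_b}}$ lies in $R_{i_0'}R_{\ell_{a_b}}$. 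Assume this cardinality equals $2$, say $R_{i_0'}R_{\ell_{a_b}}=\{R_p,R_q\}$. Using Lemma \ref{L;generalresults}(i) and Lemma \ref{L;Intersectionnumber}(iv) with $k_{i_0}=k_{\ell_{a_b}}=2$, each of $E_{i_0}^*A_pE_{\ell_{a_b}}^*$ and $E_{i_0}^*A_qE_{\ell_{a_b}}^*$ has equal row- and column-sums $n_p,n_q\geq 1$ on the supporting $2\times 2$ block. Since their sum equals $E_{i_0}^*JE_{\ell_{a_b}}^*$, which is the all-ones $2\times 2$ block with four entries, $n_p+n_q=2$ and so $n_p=n_q=1$. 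Therefore $\{E_{i_0}^*A_pE_{\ell_{a_b}}^*,E_{i_0}^*A_qE_{\ell_{a_b}}^*\}=\{E_{u_1v_1}+E_{u_2v_2},E_{u_1v_2}+E_{u_2v_1}\}$, and Corollary \ref{C;badpairform} identifies $M_b$ with one of them; hence $M_b\in\mathcal{T}_0\subseteq\sum_{h=-1}^{a_b-1}\mathcal{T}_h$ (using $a_b>0$), a contradiction.

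For (ii), I would apply exactly the same counting to the two-factor sub-product $N_v:=E_{i_v}^*A_{j_v}E_{\ell_v}^*A_{j_{v+1}}E_{\ell_{v+1}}^*$ (using $\ell_v=i_{v+1}$ from (i)). By associativity $M_b$ factors through $N_v$, so $M_b\neq O$ forces $N_v\neq O$, and $N_v$ satisfies the hypotheses of Corollary \ref{C;badpairform}. If $|R_{i_v'}R_{\ell_{v+1}}|=2$, the argument of the previous paragraph delivers $N_v=E_{i_v}^*A_rE_{\ell_{v+1}}^*$ for a single $R_r\in R_{i_v'}R_{\ell_{v+1}}$. Substituting this single triple product back into $M_b$ in place of the block $N_v$ exhibits $M_b$ as a product of only $a_b$ triple products, i.e.\ $M_b\in\mathcal{T}_{a_b-1}\subseteq\sum_{h=-1}^{a_b-1}\mathcal{T}_h$, again contradicting the hypothesis.

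The main technical obstacle I anticipate is the counting step in (iii): carefully verifying that, under $k=2$ throughout, the two triple products $E_{i_0}^*A_pE_{\ell_{a_b}}^*$ and $E_{i_0}^*A_qE_{\ell_{a_b}}^*$ really do split $E_{i_0}^*JE_{\ell_{a_b}}^*$ into the two "permutation-type" matrices appearing in Corollary \ref{C;badpairform}. Once that identification is in hand, the collapsing substitution used for (ii) is a routine invocation of associativity of matrix multiplication, and the overall proof reduces to a tidy appeal to the results already assembled in Section 3.
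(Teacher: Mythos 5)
Your proposal is correct and follows essentially the same route as the paper: the decomposition and (i) come from Corollary \ref{C;twovaluedecomposition} with $k=2$, and (ii), (iii) are proved by contradiction by applying Corollary \ref{C;badpairform} to $M_b$ (resp.\ to the two-factor sub-product $N_v$) and to the single triple products $E_{i}^*A_rE_{\ell}^*$ coming from the complex product, contradicting $M_b\notin\sum_{h=-1}^{a_b-1}\mathcal{T}_h$. The only cosmetic difference is in (ii): you assume $|R_{i_v'}R_{\ell_{v+1}}|=2$ outright (which forces both intersection numbers to equal $1$) and then collapse the product length, whereas the paper first rules out $p_{i_vw}^{\ell_{v+1}}=1$ by the same collapsing argument and then uses Lemma \ref{L;Intersectionnumber} to show that $p_{i_vw}^{\ell_{v+1}}=2$ makes the complex product a singleton; the substance is identical.
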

\begin{proof}
Since $S$ is a quasi-thin scheme, we only show (ii) and (iii) by Corollary \ref{C;twovaluedecomposition}.

For (ii), let $R_w\in R_{i_v'}R_{\ell_{v+1}}$ and notice that $p_{i_vw}^{\ell_{v+1}}>0$ by Lemma \ref{L;Intersectionnumber} (iv). We claim that (ii) follows if $p_{i_vw}^{\ell_{v+1}}\neq 1$. If $p_{i_vw}^{\ell_{v+1}}\neq 1$, since $S$ is a quasi-thin scheme, notice that $\max\{k_w, p_{i_vw}^{\ell_{v+1}}, p_{i_v'\ell_{v+1}}^w\}\leq 2$ by Lemma \ref{L;Intersectionnumber} (ii). So $p_{i_vw}^{\ell_{v+1}}=2$ as $p_{i_vw}^{\ell_{v+1}}\neq 1$. Moreover, since $k_{\ell_{v+1}}=2$ by (i), the equality  $p_{i_vw}^{\ell_{v+1}}=k_{\ell_{v+1}}=2$ implies that $k_w=p_{i_v'\ell_{v+1}}^w=2$ by Lemma \ref{L;Intersectionnumber} (iv). So $R_{i_v'}R_{\ell_{v+1}}=\{R_w\}$ by Lemma \ref{L;Intersectionnumber} (i) and (iii). The claim follows.

By the proven claim, we prove the inequality $p_{i_vw}^{\ell_{v+1}}\neq 1$. Suppose that $p_{i_vw}^{\ell_{v+1}}=1$. By (i) and the equality $p_{i_vw}^{\ell_{v+1}}=1$, notice that all assumptions of Corollary \ref{C;badpairform} hold for both $E_{i_v}^*A_{j_v}E_{\ell_{v}}^*A_{j_{v+1}}E_{\ell_{v+1}}^*$ and $E_{i_v}^*A_wE_{\ell_{v+1}}^*$. By Corollary \ref{C;badpairform} and \eqref{Eq:preliminary2}, we have
\begin{align*}
&E_{i_v}^*A_{j_v}E_{\ell_{v}}^*A_{j_{v+1}}E_{\ell_{v+1}}^*=E_{i_v}^*A_wE_{\ell_{v+1}}^*\ \text{or}\ \\
&E_{i_v}^*A_{j_v}E_{\ell_{v}}^*A_{j_{v+1}}E_{\ell_{v+1}}^*+E_{i_v}^*A_wE_{\ell_{v+1}}^*=E_{i_v}^*JE_{\ell_{v+1}}^*.
\end{align*}
In particular, we obtain that $E_{i_v}^*A_{j_v}E_{\ell_{v}}^*A_{j_{v+1}}E_{\ell_{v+1}}^*\in \mathcal{T}_0$ and $M_b\in \sum_{h=-1}^{a_b-1}\mathcal{T}_h$, which contradicts the fact $M_b\notin \sum_{h=-1}^{a_b-1}\mathcal{T}_h$. So $p_{i_vw}^{\ell_{v+1}}\neq 1$. (ii) thus follows.

For (iii), we set $q=\ell_{a_b}$. Since $S$ is a quasi-thin scheme, by Lemma \ref{L;Intersectionnumber} (v), we have $|R_{i_0'}R_{q}|\leq2$. Assume that $R_{i_0'}R_{q}=\{R_y, R_z\}$ and $y\neq z$. Notice that $p_{i_0'q}^y\neq 0\neq p_{i_0'q}^z$, which implies that $p_{i_0y}^q\neq 0\neq p_{i_0z}^q$ by Lemma \ref{L;Intersectionnumber} (iv). Since $k_{i_0}=2$ by (i), we thus deduce that $p_{i_0y}^q= p_{i_0z}^q=1$ by Lemma \ref{L;Intersectionnumber} (ii).

Since $M_b=\prod_{u=0}^{a_b}(E_{i_u}^*A_{j_u}E_{\ell_u}^*)\neq O$, notice that $\ell_r=i_{r+1}$ for any $r\in [a_b-1]$. Moreover, by (i) and the equality $p_{i_0y}^q= p_{i_0z}^q=1$, notice that all assumptions of Corollary \ref{C;badpairform} hold for $M_b$, $E_{i_0}^*A_yE_q^*$, and $E_{i_0}^*A_zE_q^*$. As $y\neq z$, it is not very difficult to see that $E_{i_0}^*A_yE_q^*\neq E_{i_0}^*A_zE_q^*$. We thus get that $M_b=E_{i_0}^*A_yE_q^*$ or $M_b=E_{i_0}^*A_zE_q^*$ by Corollary \ref{C;badpairform}. In particular,  $M_b\in\mathcal{T}_0$, which contradicts the fact $M_b\notin \sum_{h=-1}^{a_b-1}\mathcal{T}_h$. Therefore we have $|R_{i_0'}R_q|=1$. (iii) is proved.
\end{proof}
\section{$\F$-Dimensions of Terwilliger $\F$-algebras of quasi-thin schemes}
In this section, we show Theorem \ref{T;A}. Our strategy is to find an $\F$-basis of $\mathcal{T}$ when $S$ is a quasi-thin scheme. For our purpose, we recall Notation \ref{N;notation} and the definition of $E_{ab}$ in Subsection \ref{Sc:subsection}. We first propose a definition and the required notation.
\begin{defn}\label{D;badpair}
\em Let $a\in \mathbb{N}$ and assume that $R_{i_b}, R_{j_b}, R_{\ell_b}\in S$ for all $b\in [a]$. Then the pair $(i_0, \ell_a)$ is called a bad pair of $S$ if the following conditions hold.
\begin{enumerate}[(i)]
\item [\em (i)] We have $k_{i_b}=k_{\ell_b}=2$ and $p_{i_bj_b}^{\ell_b}=1$ for all $b\in [a]$;
\item [\em (ii)] We have $|R_{i_0'}R_{\ell_a}|=1$ and $\ell_c=i_{c+1}$ for all $c\in [a-1]$.
\end{enumerate}
In particular, for any $R_u, R_v\in S$, $(u,v)$ is a bad pair of $S$ only if $k_u=k_v=2$.
\end{defn}
\begin{nota}\label{N;notation1}
\em Let $\preceq$ be a fixed total order over $X$. For any $a, b\in X$, write $a\prec b$ if $a\preceq b$ and $a\neq b$. Let $\mathcal{R}=\{(g,h): R_g, R_h\in S,\ k_g=k_h=|R_{g'}R_h|=2\}$. Let $\mathcal{S}$ denote the set of all bad pairs of $S$ and set $\mathcal{U}=\mathcal{R}\cup \mathcal{S}$. If $(i,j)\in \mathcal{U}$, observe that $k_i=k_j=2$ by the definition of $\mathcal{U}$. Assume that $xR_i=\{u_1, u_2\}$ and $xR_j=\{v_1, v_2\}$, where $u_1\prec u_2$ and $v_1\prec v_2$. Set $B_{ij}=E_{u_1v_1}+E_{u_2v_2}$ and $\mathcal{V}=\{B_{uv}: (u,v)\in \mathcal{U}\}$. Let $\mathcal{W}=\{E_y^*JE_z^*: R_y, R_z\in S\}$ and $\mathcal{B}=\mathcal{V}\cup\mathcal{W}$. By Lemma \em \ref{L;generalresults} (iii), $|\mathcal{W}|=(d+1)^2$.
\end{nota}
\begin{lem}\label{L;Badpairlemma}
The following assertions hold.
\begin{enumerate}[(i)]
\item [\em (i)] We have $|\mathcal{U}|=|\mathcal{R}|+|\mathcal{S}|$.
\item [\em (ii)] If $B_{ab}\in \mathcal{V}$, we have $E_g^*B_{ab}E_h^*=\delta_{ga}\delta_{bh}B_{ab}$ for any $R_g, R_h\in S$, where $\delta_{\alpha\beta}$ is the Kronecker delta of integers $\alpha, \beta$ whose values are in $\F$.
\item [\em (iii)] If $B_{ab}\in \mathcal{V}$, we have $B_{ab}\in \mathcal{T}\setminus \langle\mathcal{W}\rangle_\F$.
\item [\em (iv)] We have $|\mathcal{B}|=|\mathcal{V}|+|\mathcal{W}|=|\mathcal{U}|+|\mathcal{W}|=(d+1)^2+|\mathcal{R}|+|\mathcal{S}|$.
\end{enumerate}
\end{lem}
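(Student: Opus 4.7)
The plan is to dispatch the four assertions in order. For (i), the sets $\mathcal{R}$ and $\mathcal{S}$ are disjoint by their defining conditions: $(g,h)\in\mathcal{R}$ forces $|R_{g'}R_h|=2$, while Definition \ref{D;badpair} (ii) forces $|R_{g'}R_h|=1$ for every $(g,h)\in\mathcal{S}$. For (ii), expand $E_g^*=\sum_{i\in xR_g}E_{ii}$ and $B_{ab}=E_{u_1v_1}+E_{u_2v_2}$ with $\{u_1,u_2\}=xR_a$ and $\{v_1,v_2\}=xR_b$; since $\{xR_0,\ldots,xR_d\}$ partitions $X$, the product $E_g^*E_{u_iv_i}$ equals $E_{u_iv_i}$ when $g=a$ and vanishes otherwise, and a dual computation for the right-multiplication by $E_h^*$ gives the displayed identity.

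Part (iii) is the technical core. The non-containment $B_{ab}\notin\langle\mathcal{W}\rangle_\F$ is immediate from \eqref{Eq:preliminary2}: every $\F$-linear combination of the $E_y^*JE_z^*$ is constant on each block $xR_y\times xR_z$ of the partition $X\times X=\bigsqcup_{y,z}xR_y\times xR_z$, whereas $B_{ab}$ takes the values $1,0,0,1$ on its block $xR_a\times xR_b$. For $B_{ab}\in\mathcal{T}$, I would split by whether $(a,b)\in\mathcal{R}$ or $(a,b)\in\mathcal{S}$. If $(a,b)\in\mathcal{R}$, write $R_{a'}R_b=\{R_{j_1},R_{j_2}\}$; a short counting argument via Lemma \ref{L;Intersectionnumber} (ii), (iv) and Lemma \ref{L;generalresults} (i) shows that each $E_a^*A_{j_k}E_b^*$ has exactly one $1$ in every row of $xR_a$ and every column of $xR_b$, so it is one of the two permutation matrices on the $2\times 2$ block, and that their sum equals the all-ones block $E_a^*JE_b^*$; hence one of them is $B_{ab}$. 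If $(a,b)\in\mathcal{S}$, fix a witness sequence from Definition \ref{D;badpair} and invoke Corollary \ref{C;badpairform} to identify the associated product of triple products as either $B_{ab}$ or $E_{u_1v_2}+E_{u_2v_1}$; meanwhile, the singleton $|R_{i_0'}R_{\ell_m}|=1$ combined with Lemma \ref{L;Complexproductone} places $E_a^*JE_b^*$, which equals $B_{ab}+(E_{u_1v_2}+E_{u_2v_1})$ on the block, into $\mathcal{T}$. Subtracting the product from $E_a^*JE_b^*$ yields $B_{ab}\in\mathcal{T}$.

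Assertion (iv) is bookkeeping once (i) and (iii) are in hand. The map $(u,v)\mapsto B_{uv}$ is a bijection from $\mathcal{U}$ to $\mathcal{V}$, because the first and second coordinates of the support of $B_{uv}$ recover $xR_u$ and $xR_v$, and hence $u$ and $v$, via the partition of $X$; thus $|\mathcal{V}|=|\mathcal{U}|$. By (iii), $\mathcal{V}\cap\mathcal{W}=\varnothing$ since $\mathcal{W}\subseteq\langle\mathcal{W}\rangle_\F$, so $|\mathcal{B}|=|\mathcal{V}|+|\mathcal{W}|$; combining with (i) and with $|\mathcal{W}|=(d+1)^2$ (already noted in Notation \ref{N;notation1}) completes the chain of equalities. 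The main obstacle is the bad-pair case of (iii): one must simultaneously extract from the bad-pair data both the tight identification of the relevant product of triple products (via Corollary \ref{C;badpairform}) and the reason why $E_a^*JE_b^*$ lies in $\mathcal{T}$ (via the singleton complex product and Lemma \ref{L;Complexproductone}), then combine them to isolate $B_{ab}$.
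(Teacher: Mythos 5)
Your proposal is correct and follows essentially the same route as the paper: disjointness of $\mathcal{R}$ and $\mathcal{S}$ for (i), the $2\times 2$ block structure and Corollary \ref{C;badpairform} (in the bad-pair case) to realize $B_{ab}$ as a triple product or as $E_a^*JE_b^*$ minus one, and then (iii) plus the bijection $\mathcal{U}\to\mathcal{V}$ for (iv). Your minor variations -- proving (ii) by expanding $E_g^*=\sum_{i\in xR_g}E_{ii}$ directly, handling the $\mathcal{R}$-case by counting with the two elements of $R_{a'}R_b$ (where the paper picks one $R_c$ with $p_{ac}^b=1$ and quotes Corollary \ref{C;badpairform}), and arguing non-containment via block-constancy rather than extracting the coefficient $c_{ab}$ -- are all sound and equivalent in substance.
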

\begin{proof}
For (i), we may assume that $\mathcal{U}, \mathcal{R}, \mathcal{S}$ are not empty. As $\mathcal{U}=\mathcal{R}\cup\mathcal{S}$, it suffices to show $\mathcal{R}\cap \mathcal{S}=\varnothing$. For any $(r,s)\in \mathcal{R}$, note that $|R_{r'}R_s|=2$. For any $(u,v)\in \mathcal{S}$, by Definition \ref{D;badpair} (ii), we have $|R_{u'}R_v|=1$. So we get $\mathcal{R}\cap\mathcal{S}=\varnothing$. (i) thus follows.

Assume that $B_{ab}\in\mathcal{V}$. So $k_a=k_b=2$ by the definition of $\mathcal{V}$. So we can set $xR_a=\{u_1, u_2\}$ and $xR_b=\{v_1, v_2\}$, where $u_1\prec u_2$ and $v_1\prec v_2$. We list two cases.
\begin{enumerate}[\text{Case} 1:]
\item $(a,b)\in \mathcal{R}$.
\end{enumerate}
In this case, we have $k_a=k_b=|R_{a'}R_b|=2$. \!By Lemmas \ref{L;Intersectionnumber} (iv), (ii), and \ref{L;generalresults} (i), there exists $R_c\in S$ such that $E_a^*A_cE_b^*\neq O$ and $p_{ac}^b=1$. So we can deduce that $E_a^*A_cE_b^*=E_{u_1v_1}+E_{u_2v_2}$ or $E_a^*A_cE_b^*=E_{u_1v_2}+E_{u_2v_1}$ by Corollary \ref{C;badpairform}. In particular, by the definition of $B_{ab}$ and \eqref{Eq:preliminary2}, we have $B_{ab}=E_a^*A_cE_b^*$ or $B_{ab}=E_a^*JE_b^*-E_a^*A_cE_b^*$.
\begin{enumerate}[\text{Case} 2:]
\item $(a,b)\in \mathcal{S}$.
\end{enumerate}
By Definition \ref{D;badpair}, there are $e\in \mathbb{N}$ and $\prod_{m=0}^e(E_{i_m}^*A_{j_m}E_{\ell_m}^*)$ such that $i_0=a$, $\ell_e=b$, $k_{i_m}=k_{\ell_m}=2$ and $p_{i_mj_m}^{\ell_m}=1$ for all $m\in [e]$, $\ell_{n}=i_{n+1}$ for all $n\in [e-1]$. We have $\prod_{m=0}^e(E_{i_m}^*A_{j_m}E_{\ell_m}^*)=E_{u_1v_1}+E_{u_2v_2}$ or $\prod_{m=0}^e(E_{i_m}^*A_{j_m}E_{\ell_m}^*)=E_{u_1v_2}+E_{u_2v_1}$ by Corollary \ref{C;badpairform}. In particular, according to the definition of $B_{ab}$ and \eqref{Eq:preliminary2}, we can deduce that  $B_{ab}=\prod_{m=0}^e(E_{i_m}^*A_{j_m}E_{\ell_m}^*)$ or $B_{ab}=E_a^*JE_b^*-\prod_{m=0}^e(E_{i_m}^*A_{j_m}E_{\ell_m}^*)$.

For (ii), the desired equality is shown by the conclusions of Cases 1 and 2.

For (iii), by the conclusions of Cases 1 and 2, notice that $B_{ab}\in \mathcal{T}$. It is enough to show that $B_{ab}$ can not be written as an $\F$-linear combination of the members of $\mathcal{W}$. Suppose that $B_{ab}=\sum_{y=0}^d\sum_{z=0}^dc_{yz}E_y^*JE_z^*$, where $c_{yz}\in \F$ for any $R_y, R_z\in S$. By (ii), we have $B_{ab}=E_a^*B_{ab}E_b^*=c_{ab}E_a^*JE_b^*$. Recall that we have already known that $k_a=k_b=2$. By the definition of $B_{ab}$ and \eqref{Eq:preliminary2}, we thus deduce that
$$ E_{u_1v_1}+E_{u_2v_2}=B_{ab}=E_a^*B_{ab}E_b^*=c_{ab}E_a^*JE_b^*=c_{ab}(E_{u_1v_1}+E_{u_1v_2}+E_{u_2v_1}+E_{u_2v_2}),$$
which is an obvious contradiction. Therefore $B_{ab}$ can not be written as an $\F$-linear combination of the members of $\mathcal{W}$. (iii) thus follows.

For (iv), we have $\mathcal{V}\cap\mathcal{W}=\varnothing$ by (iii). So $|\B|=|\mathcal{V}|+|\mathcal{W}|$ as $\B=\mathcal{V}\cup\mathcal{W}$. By (i), it suffices to prove $|\mathcal{U}|=|\mathcal{V}|$. We may assume further that $\mathcal{U}\neq \varnothing$. Let $\phi$ be the surjective map from $\mathcal{U}$ to $\mathcal{V}$ that sends every $(i,j)$ to $B_{ij}$. For any $B_{ab}, B_{qw}\in \mathcal{V}$, by (ii), notice that $B_{ab}=B_{qw}$ if and only if $a=q$ and $b=w$, which implies that $\phi$ is a bijection from $\mathcal{U}$ to $\mathcal{V}$. We thus have $|\mathcal{U}|=|\mathcal{V}|$. (iv) thus follows.
\end{proof}
\begin{lem}\label{L;Linearlyindependent}
The set $\B$ is an $\F$-linearly independent subset of $\mathcal{T}$.
\end{lem}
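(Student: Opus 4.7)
The plan is to show that any $\F$-linear relation among the elements of $\B = \mathcal{V} \cup \mathcal{W}$ must have all coefficients zero, by exploiting the orthogonality of the dual idempotents to localize each relation to a single $(g,h)$ block.

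First I would suppose a vanishing linear combination
\[
\sum_{R_y, R_z \in S} c_{yz}\, E_y^* J E_z^* \;+\; \sum_{(a,b) \in \mathcal{U}} d_{ab}\, B_{ab} \;=\; O,
\]
with scalars $c_{yz}, d_{ab} \in \F$. For each fixed pair $R_g, R_h \in S$, I would multiply this equation on the left by $E_g^*$ and on the right by $E_h^*$. Using $E_g^* E_y^* = \delta_{gy} E_g^*$ and $E_z^* E_h^* = \delta_{zh} E_h^*$ on the first sum, and Lemma \ref{L;Badpairlemma}(ii) on the second sum, everything collapses to at most one term from each side:
\[
c_{gh}\, E_g^* J E_h^* \;+\; \epsilon_{gh}\, d_{gh}\, B_{gh} \;=\; O,
\]
where $\epsilon_{gh} = 1$ if $(g,h) \in \mathcal{U}$ and $\epsilon_{gh} = 0$ otherwise (so the $B_{gh}$ term is simply absent in the latter case).

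Next I would analyze the two cases. If $(g,h) \notin \mathcal{U}$, the identity reduces to $c_{gh}\, E_g^* J E_h^* = O$; since $xR_g$ and $xR_h$ are nonempty, $E_g^* J E_h^* = \sum_{q \in xR_g}\sum_{r \in xR_h} E_{qr}$ is a nonzero $(0,1)$-matrix by \eqref{Eq:preliminary2}, forcing $c_{gh} = 0$. If $(g,h) \in \mathcal{U}$, then $c_{gh}\, E_g^* J E_h^* = -d_{gh}\, B_{gh}$; if $d_{gh} \neq 0$, then $B_{gh}$ would be an $\F$-scalar multiple of a member of $\mathcal{W}$, hence lie in $\langle \mathcal{W}\rangle_\F$, contradicting Lemma \ref{L;Badpairlemma}(iii). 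So $d_{gh} = 0$, and then $c_{gh}\, E_g^* J E_h^* = O$ gives $c_{gh} = 0$ as before.

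Ranging $(g,h)$ over all pairs in $S \times S$ yields $c_{yz} = 0$ for all $(y,z)$ and $d_{ab} = 0$ for all $(a,b) \in \mathcal{U}$, so $\B$ is $\F$-linearly independent. There is no serious obstacle here: the essential work has already been carried out in the preceding lemma, where parts (ii) and (iii) were designed exactly to make the localization argument above go through. The only thing to watch is to invoke Lemma \ref{L;Badpairlemma}(iii) in the correct direction, namely as the statement that no nonzero scalar multiple of a single $E_g^* J E_h^*$ equals $B_{gh}$.
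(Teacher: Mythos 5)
Your proposal is correct and follows essentially the same route as the paper: sandwich the vanishing linear combination between $E_g^*$ and $E_h^*$, collapse it via Lemma \ref{L;Badpairlemma}(ii), and kill the $B_{gh}$ coefficient using Lemma \ref{L;Badpairlemma}(iii), after which the coefficients on $\mathcal{W}$ vanish. The only cosmetic difference is that you argue $c_{gh}=0$ blockwise from $E_g^*JE_h^*\neq O$, whereas the paper invokes the linear independence of $\mathcal{W}$ from Lemma \ref{L;generalresults}(iii); these are interchangeable.
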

\begin{proof}
As $\B=\mathcal{V}\cup \mathcal{W}$ and $\mathcal{W}\subseteq\mathcal{T}$, we have $\B\subseteq \mathcal{T}$ as $\mathcal{V}\subseteq \mathcal{T}$ by Lemma \ref{L;Badpairlemma} (iii). It suffices to check that $\B$ is linearly independent over $\F$. By Lemma \ref{L;generalresults} (iii), we may assume further that $\mathcal{V}\neq \varnothing$. Assume that we have
\begin{align*}\label{Eq:8}
\tag{4.1}\sum_{i=0}^d\sum_{j=0}^dc_{ij}E_i^*JE_j^*+\sum_{(a,b)\in \mathcal{U}}e_{ab}B_{ab}=O,
\end{align*}
where we have $c_{ij}, e_{ab}\in \F$ for any $R_i, R_j\in S$ and $(a,b)\in \mathcal{U}$. By \eqref{Eq:8} and Lemma \ref{L;Badpairlemma} (ii), for any $(g,h)\in \mathcal{U}$, we have $O=E_g^*OE_h^*=c_{gh}E_g^*JE_h^*+e_{gh}B_{gh}$, which implies that $e_{gh}=0$ by Lemma \ref{L;Badpairlemma} (iii). By Lemma \ref{L;generalresults} (iii), we also have $c_{ij}=0$ for any $R_i, R_j\in S$. So $\B$ is linearly independent over $\F$. The desired lemma is shown.
\end{proof}
We collect some results of quasi-thin schemes as follows.
\begin{lem}\label{L;Quasi-thin}
Assume that $S$ is a quasi-thin scheme.
\begin{enumerate}[(i)]
\item [\em (i)] The set $\mathcal{S}$ is empty if and only if there do not exist $R_u, R_v, R_w, R_y, R_z\in S$ such that $k_u=k_v=k_w=k_y=k_z=2$ and $p_{uv}^w=p_{wy}^z=|R_{u'}R_z|=1$.
\item [\em (ii)] If $M\in \mathcal{T}$, then $M=\sum_{N\in \mathcal{B}}c_N N,$
where $c_N\in \F$ for any $N\in \mathcal{B}$.
\end{enumerate}
\end{lem}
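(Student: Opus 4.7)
The plan is to handle the two parts separately, since (i) is a combinatorial statement about intersection numbers in $S$ while (ii) is a structural decomposition for arbitrary elements of $\mathcal{T}$.

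For part (i), the reverse implication is immediate: given such relations $R_u, R_v, R_w, R_y, R_z$, setting $a=1$, $i_0=u$, $j_0=v$, $\ell_0=i_1=w$, $j_1=y$, $\ell_1=z$ fulfills every condition of Definition \ref{D;badpair}, so $(u,z)\in\mathcal{S}$. For the forward direction, I would pick a bad pair $(p,q)\in\mathcal{S}$ realized by a defining sequence of minimum possible length $a$. When $a=1$ and the middle relations $R_{j_0}, R_{j_1}$ both have valency $2$, the quintuple $(i_0,j_0,\ell_0,j_1,\ell_1)$ already furnishes the five desired relations. The remaining cases are handled via Lemma \ref{L;Intersectionnumber}: if some $R_{j_b}$ has valency $1$, parts (ii) and (iv) of that lemma let us either replace $R_{j_b}$ by an alternative valency-$2$ relation $R_{j_b'}$ still satisfying $p_{i_b j_b'}^{\ell_b}=1$, or argue that the presence of such a thin $R_{j_b}$ forces a contraction of the sequence, producing a strictly shorter bad pair and contradicting the minimality of $a$.

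For part (ii), I would invoke Corollary \ref{C;quasithindecomposiiton} to write $M=\sum_{b=0}^{c}e_bM_b$ with each $M_b\in\mathcal{S}_{a_b}\setminus\sum_{h=-1}^{a_b-1}\mathcal{T}_h$ enjoying the structural properties listed there, reducing the claim to showing that every $M_b$ lies in $\langle\mathcal{B}\rangle_\F$. If $a_b=0$, then $M_b=E_i^*A_jE_\ell^*$ with $p_{ij}^\ell\neq 0$: when $\min\{k_i,k_\ell\}=1$ or $|R_{i'}R_\ell|=1$, Lemma \ref{L;Complexproductone} collapses $M_b$ to $E_i^*JE_\ell^*\in\mathcal{W}$, while when $k_i=k_\ell=2$ and $|R_{i'}R_\ell|=2$, we have $(i,\ell)\in\mathcal{R}\subseteq\mathcal{U}$ and Corollary \ref{C;badpairform} (applied with $a=0$) forces $M_b$ to be either $B_{i\ell}$ or $E_i^*JE_\ell^*-B_{i\ell}$. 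If $a_b>0$, then Corollary \ref{C;quasithindecomposiiton}(i) and (iii) combine with Definition \ref{D;badpair} to give $(i_0,\ell_{a_b})\in\mathcal{S}\subseteq\mathcal{U}$, so $B_{i_0\ell_{a_b}}\in\mathcal{V}$, and a further application of Corollary \ref{C;badpairform} pins $M_b$ down as either $B_{i_0\ell_{a_b}}$ or $E_{i_0}^*JE_{\ell_{a_b}}^*-B_{i_0\ell_{a_b}}$. In every case $M_b\in\langle\mathcal{B}\rangle_\F$, whence $M\in\langle\mathcal{B}\rangle_\F$.

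The genuine difficulty lies in the forward direction of (i): Definition \ref{D;badpair} only constrains the valencies $k_{i_b}$ and $k_{\ell_b}$, so verifying that a valency-$2$ choice for each middle relation $R_{j_b}$ can always be engineered (or that a failure to do so yields a strictly shorter bad pair) is the combinatorial crux. By contrast, everything in part (ii) amounts to organized bookkeeping built directly on the triple-product lemmas of Section 3.
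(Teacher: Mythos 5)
Your part (ii) and the easy implication of part (i) follow the paper's own route: the paper likewise takes the decomposition of Corollary \ref{C;quasithindecomposiiton} and, term by term, uses Lemma \ref{L;Complexproductone} (when $|R_{r_0'}R_{t_0}|=1$) and Corollary \ref{C;badpairform} (when $(r_0,t_0)\in\mathcal{R}$, resp.\ $(r_0,t_{b_i})\in\mathcal{S}$ for $a_b>0$) to conclude that each summand is $E_{r_0}^*JE_{t_0}^*$, $B_{r_0t_0}$, or $E_{r_0}^*JE_{t_0}^*-B_{r_0t_0}$; the only small omission is that before invoking Corollary \ref{C;badpairform} in the case $k_i=k_\ell=|R_{i'}R_\ell|=2$ you must record $p_{ij}^\ell=1$, which comes from Lemma \ref{L;Intersectionnumber} (ii) and (iv), as the paper does. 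The reverse direction of (i) is the same one-line observation as in the paper.

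The forward direction of (i) is where your proposal has genuine gaps. Note first that the paper avoids chain surgery altogether: it takes $(a,b)\in\mathcal{S}$, uses Lemma \ref{L;Badpairlemma} (iii) to see $B_{ab}\in\mathcal{T}\setminus\langle\mathcal{W}\rangle_\F$, expands $B_{ab}$ by Corollary \ref{C;quasithindecomposiiton}, shows some summand has $a_m>0$, and reads the quintuple off from Corollary \ref{C;quasithindecomposiiton} (i)--(iii) (which already guarantee $k_{j_0}=k_{j_1}=2$ and $|R_{i_0'}R_{\ell_1}|=1$). Your direct combinatorial argument can be repaired, but not as written. (1) You never treat a minimal defining sequence with $a\ge 2$ and all middle relations of valency $2$: there you must argue that either $|R_{i_0'}R_{\ell_1}|=1$, in which case $(i_0,\ell_1)$ is a bad pair with a strictly shorter sequence, or $|R_{i_0'}R_{\ell_1}|=2$, in which case Lemma \ref{L;Intersectionnumber} (ii),(iv) give $p_{i_0m}^{\ell_1}=1$ for each $R_m\in R_{i_0'}R_{\ell_1}$ and the first two triples contract; either way minimality is contradicted, but this step is absent. (2) The proposed replacement of a thin $R_{j_b}$ by a valency-$2$ relation with $p_{i_bj_b'}^{\ell_b}=1$ is not justified and is false in general: the second relation supplied by $\sum_e p_{i_be}^{\ell_b}=k_{i_b}=2$ may itself be thin (for instance $R_0$ when $i_b=\ell_b$). (3) The contraction route, when the minimal length is $a=1$, produces a single triple, which is \emph{not} a bad pair (Definition \ref{D;badpair} requires $a\in\mathbb{N}$), so ``contradicting the minimality of $a$'' does not apply; instead you need a separate contradiction, e.g.\ the contraction (which uses the associativity identity for intersection numbers, not contained in Lemma \ref{L;Intersectionnumber} (ii),(iv)) yields some $R_m$ with $p_{i_0m}^{\ell_1}=1$, while $|R_{i_0'}R_{\ell_1}|=1$ together with $\sum_e p_{i_0e}^{\ell_1}=k_{i_0}=2$ forces the unique nonzero value to be $2$. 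Until these three points are filled in, the crux you yourself flag remains open; once filled, you would have a purely combinatorial alternative to the paper's algebraic proof of this direction.
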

\begin{proof}
For (i), assume that $\mathcal{S}=\varnothing$. If there exist $R_u, R_v, R_w, R_y, R_z\in S$ such that $k_u=k_v=k_w=k_y=k_z=2$ and $p_{uv}^w=p_{wy}^z=|R_{u'}R_z|=1$, by Definition \ref{D;badpair}, notice that $(u,z)\in \mathcal{S}$ as Definition \ref{D;badpair} (i) and (ii) hold. So we have $\mathcal{S}\neq \varnothing$, which contradicts the assumption $\mathcal{S}=\varnothing$. One direction is shown.

For the other direction, assume that there do not exist $R_u, R_v, R_w, R_y, R_z\in S$ such that $k_u=k_v=k_w=k_y=k_z=2$ and $p_{uv}^w=p_{wy}^z=|R_{u'}R_z|=1$. If $\mathcal{S}\neq \varnothing$, we choose  $(a,b)\in \mathcal{S}$ and notice that $B_{ab}\in \mathcal{V}\subseteq \mathcal{T}$ by Lemma \ref{L;Badpairlemma} (iii). Moreover, we also have $|R_{a'}R_b|=1$ by Definition \ref{D;badpair} (ii). According to Corollary \ref{C;quasithindecomposiiton}, there exist $c, a_0, a_1,\ldots, a_c\in \mathbb{N}_0$ such that
\begin{align*}\label{Eq:9}
\tag{4.2}B_{ab}=\sum_{h=0}^ce_h(B_{ab})_h,
\end{align*}
where we have $0\neq e_h\in \F$, $(B_{ab})_h\in \mathcal{S}_{a_h}$, and $(B_{ab})_h\notin \sum_{k=-1}^{a_h-1}\mathcal{T}_k$ for any $h\in [c]$. Note that there is $m\in [c]$ such that $a_m>0$. Otherwise, notice that $(B_{ab})_h\in \mathcal{S}_0$ for any $h\in [c]$. As we have already known that $|R_{a'}R_b|=1$, by \eqref{Eq:9}, Lemmas \ref{L;Badpairlemma} (ii), and \ref{L;Complexproductone}, there exists some $n\in \F$ such that  $$B_{ab}=E_a^*B_{ab}E_b^*=nE_a^*JE_b^*\in \langle \mathcal{W}\rangle_\F,$$
which contradicts Lemma \ref{L;Badpairlemma} (iii). So there exists $m\in [c]$ such that $a_m>0$. Assume that $(B_{ab})_m=\prod_{q=0}^{a_m}(E_{i_q}^*A_{j_q}E_{\ell_q}^*)$. Since $a_m>0$, by Corollary \ref{C;quasithindecomposiiton} (i), (ii), (iii), we have $k_{i_0}=k_{j_0}=k_{\ell_0}=k_{j_1}=k_{\ell_1}=2$ and $p_{i_0j_0}^{\ell_0}=p_{\ell_0j_1}^{\ell_1}=|R_{i_0'}R_{\ell_1}|=1$. We thus can get a contradiction by setting $u=i_0$, $v=j_0$, $w=\ell_0$, $y=j_1$, and $z=\ell_1$. So we have $\mathcal{S}=\varnothing$. The other direction is shown. (i) is proved.

For (ii), by Corollary \ref{C;quasithindecomposiiton}, notice that there exist $s, b_0, b_1,\ldots, b_s\in \mathbb{N}_0$ and
\begin{align*}\label{Eq:10}
\tag{4.3}M=\sum_{g=0}^sf_gM_g,
\end{align*}
where we have $0\neq f_g\in \F$, $M_g\in \mathcal{S}_{b_g}$, and $M_g\notin \sum_{k=-1}^{b_g-1}\mathcal{T}_k$ for any $g\in [s]$. For any $i\in [s]$, assume that $M_i=\prod_{f=0}^{b_i}(E_{r_f}^*A_{s_f}E_{t_f}^*)$ as $M_i\in \mathcal{S}_{b_i}$. If $b_i=0$ and $|R_{r_0'}R_{t_0}|=1$, by Lemma \ref{L;Complexproductone}, notice that $O\neq M_i=E_{r_0}^*A_{s_0}E_{t_0}^*=E_{r_0}^*JE_{t_0}^*\in \mathcal{W}\subseteq\B$. If $b_i=0$ and $|R_{r_0'}R_{t_0}|>1$, as $S$ is a quasi-thin scheme, notice that $k_{r_0}=k_{t_0}=|R_{r_0'}R_{t_0}|=2$ and $p_{r_0s_0}^{t_0}=1$ by Lemma \ref{L;Intersectionnumber} (v), (iv), and (ii). So $(r_0, t_0)\in \mathcal{R}$. By Corollary \ref{C;badpairform}, the definition of $B_{r_0t_0}$, and \eqref{Eq:preliminary2}, we have $M_i=B_{r_0t_0}$ or $M_i=E_{r_0}^*JE_{t_0}^*-B_{r_0t_0}$. If $b_i>0$, by Corollary \ref{C;quasithindecomposiiton} (i) and (iii), notice that $(r_0, t_{b_i})\in \mathcal{S}$. By Corollary \ref{C;badpairform}, the definition of $B_{r_0t_{b_i}}$, and \eqref{Eq:preliminary2}, we also have $M_i=B_{r_0t_{b_i}}$ or $M_i=E_{r_0}^*JE_{t_{b_i}}^*-B_{r_0t_{b_i}}$. So $M_i$ can be written as an $\F$-linear combination of the members of $\mathcal{B}$. As $i$ is chosen from $[s]$ arbitrarily, (ii) thus follows by \eqref{Eq:10}.
\end{proof}
By Lemmas \ref{L;Linearlyindependent} and \ref{L;Quasi-thin} (ii), we deduce the following corollary.
\begin{cor}\label{C;basis}
Assume that $S$ is a quasi-thin scheme. Then $\B$ is an $\F$-basis of $\mathcal{T}$.
\end{cor}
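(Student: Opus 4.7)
The statement to prove is that $\mathcal{B}=\mathcal{V}\cup\mathcal{W}$ is an $\F$-basis of $\mathcal{T}$ when $S$ is quasi-thin. The plan is essentially to splice together the two preceding lemmas in the section, since each delivers exactly one of the two defining properties of a basis.

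First I would note that $\mathcal{B}\subseteq\mathcal{T}$. The inclusion $\mathcal{W}\subseteq\mathcal{T}$ is immediate from the definition of $\mathcal{T}$ (the $E_y^*$ and $J=\sum_f A_f$ all lie in $\mathcal{T}$), while $\mathcal{V}\subseteq\mathcal{T}$ is exactly the content of Lemma \ref{L;Badpairlemma} (iii). So $\mathcal{B}$ is a legitimate candidate subset of $\mathcal{T}$.

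Next, linear independence of $\mathcal{B}$ over $\F$ is the conclusion of Lemma \ref{L;Linearlyindependent}, which was proved in full generality (no quasi-thin hypothesis was used there), so it is directly available. For the spanning property, Lemma \ref{L;Quasi-thin} (ii) asserts that under the quasi-thin hypothesis every $M\in\mathcal{T}$ can be written as $\sum_{N\in\mathcal{B}}c_N N$ with $c_N\in\F$; this is exactly the statement that $\mathcal{B}$ generates $\mathcal{T}$ as an $\F$-linear space.

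Combining these, $\mathcal{B}$ is a linearly independent spanning set of $\mathcal{T}$, hence an $\F$-basis. There is no real obstacle here: all the heavy lifting, in particular the delicate case analysis that produces a decomposition with coefficients in $\mathcal{B}$ from an arbitrary element of $\mathcal{T}$ via Corollary \ref{C;quasithindecomposiiton} and \ref{C;badpairform}, has already been carried out in Lemma \ref{L;Quasi-thin} (ii). The corollary is simply the packaging of the two lemmas into a single statement, and the proof can be given in one or two lines.
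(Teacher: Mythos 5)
Your proposal is correct and matches the paper exactly: the paper deduces Corollary \ref{C;basis} precisely by combining the linear independence from Lemma \ref{L;Linearlyindependent} with the spanning statement of Lemma \ref{L;Quasi-thin} (ii). Your additional remark that $\mathcal{B}\subseteq\mathcal{T}$ (via Lemma \ref{L;Badpairlemma} (iii)) is already built into those lemmas, so nothing further is needed.
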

Theorem \ref{T;A} is shown by Corollary \ref{C;basis}, Lemmas \ref{L;Badpairlemma} (iv), and \ref{L;Intersectionnumber} (v).

By Theorem \ref{T;A} and Definition \ref{D;badpair}, we get the following corollary.
\begin{cor}\label{C;independent}
The $\F$-dimensions of Terwilliger $\F$-algebras of a quasi-thin scheme are independent of the characteristic $p$ and the choices of vertices of this scheme.
\end{cor}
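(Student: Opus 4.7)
The plan is to read the dimension formula provided by Theorem \ref{T;A} and observe that each of its three summands is defined purely in terms of the combinatorial data of $S$ (class, intersection numbers, and complex products of relations), none of which involve the characteristic $p$ of $\F$ or the choice of base vertex $x \in X$.

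More concretely, I would argue as follows. By Theorem \ref{T;A}, for $S = \{R_0, R_1, \ldots, R_d\}$ a quasi-thin scheme,
\[
\dim_\F \mathcal{T} = (d+1)^2 + |\{(a,b) : R_a, R_b \in S,\ |R_{a'}R_b|=2\}| + |\{(c,e) : R_c, R_e \in S,\ (c,e)\ \text{is a bad pair of } S\}|.
\]
The first summand depends only on the class $d$ of $S$. The second summand is defined in terms of the sizes of complex products $R_{a'}R_b$, which by the definition of complex multiplication are completely determined by the intersection numbers $p_{uv}^w$ of $S$; in particular this summand is independent of $\F$, $p$, and $x$. For the third summand, I would invoke Definition \ref{D;badpair}, where the notion of a bad pair of $S$ is formulated solely via valencies $k_a$, intersection numbers $p_{ab}^c$, and the sizes $|R_{u'}R_v|$ of complex products, all of which are intrinsic combinatorial invariants of $S$.

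Putting these three observations together yields that the right-hand side of the displayed equation depends only on $S$. Hence $\dim_\F \mathcal{T}$ is independent of $p$ and of the choice of vertex $x$, which is exactly the statement of the corollary. There is no substantial obstacle here: the entire content of the corollary is the observation that the formula in Theorem \ref{T;A} is scheme-theoretic, so the proof amounts to pointing at Theorem \ref{T;A} and Definition \ref{D;badpair} and checking that every quantity appearing in them is defined from $S$ alone.
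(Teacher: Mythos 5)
Your proposal is correct and matches the paper's own reasoning: the paper deduces this corollary directly from Theorem \ref{T;A} and Definition \ref{D;badpair}, observing exactly as you do that every quantity in the dimension formula (the class $d$, the sizes of complex products, and the bad pairs, all determined by the intersection numbers) is intrinsic to $S$ and does not involve $p$ or the base vertex. Since the fixed vertex $x$ was chosen arbitrarily, Theorem \ref{T;A} applies verbatim to every Terwilliger $\F$-algebra of $S$, which is the only point to note beyond your observations.
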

To deduce two additional corollaries, we need the following lemmas.
\begin{lem}\label{L;T0general}
We have $\mathcal{T}_0$ is a unital $\F$-subalgebra of $M_X(\F)$ only if $\mathcal{S}=\varnothing$.
\end{lem}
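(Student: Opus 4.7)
The plan is to prove the contrapositive: assume $\mathcal{S}\neq\varnothing$ and exhibit two elements of $\mathcal{T}_0$ whose product fails to lie in $\mathcal{T}_0$, so that $\mathcal{T}_0$ cannot be closed under multiplication (and hence cannot be a unital $\F$-subalgebra). Fix a bad pair $(a,b)\in\mathcal{S}$. By Definition \ref{D;badpair}, there exist $e\in\mathbb{N}$ and relations $R_{i_m},R_{j_m},R_{\ell_m}\in S$ for $m\in[e]$ with $i_0=a$, $\ell_e=b$, $k_{i_m}=k_{\ell_m}=2$, $p_{i_mj_m}^{\ell_m}=1$, $\ell_c=i_{c+1}$, and $|R_{a'}R_b|=1$. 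Each factor $E_{i_m}^*A_{j_m}E_{\ell_m}^*$ is nonzero by Lemma \ref{L;generalresults} (i), so each factor lies in $\mathcal{S}_0\subseteq\mathcal{T}_0$.

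Now suppose, toward a contradiction, that $\mathcal{T}_0$ is closed under multiplication. Then $\prod_{m=0}^e(E_{i_m}^*A_{j_m}E_{\ell_m}^*)\in\mathcal{T}_0$. By Corollary \ref{C;badpairform}, this product equals either $B_{ab}$ or its complement $E_a^*JE_b^*-B_{ab}$ in $E_a^*JE_b^*$. Because $E_a^*JE_b^*\in\mathcal{W}\subseteq\mathcal{T}_0$, both alternatives force $B_{ab}\in\mathcal{T}_0$. Writing $B_{ab}=\sum_{i,j,\ell}c_{ij\ell}E_i^*A_jE_\ell^*$ with $c_{ij\ell}\in\F$, and multiplying both sides by $E_a^*$ on the left and $E_b^*$ on the right, Lemma \ref{L;Badpairlemma} (ii) on the left and the identity $E_a^*E_i^*=\delta_{ai}E_a^*$ on the right collapse the expression to
\[
B_{ab}=\sum_{j=0}^d c_{ajb}\,E_a^*A_jE_b^*.
\]

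To finish, I invoke $|R_{a'}R_b|=1$ from Definition \ref{D;badpair} (ii): by Lemma \ref{L;Complexproductone}, there is a unique $R_n\in S$ with $E_a^*A_nE_b^*\neq O$, and for this $n$ we have $E_a^*A_nE_b^*=E_a^*JE_b^*$. Therefore the displayed sum reduces to a scalar multiple of $E_a^*JE_b^*$, which places $B_{ab}$ in $\langle\mathcal{W}\rangle_\F$. This contradicts Lemma \ref{L;Badpairlemma} (iii), so $\mathcal{T}_0$ cannot be closed under multiplication and hence cannot be a unital $\F$-subalgebra of $M_X(\F)$. The only delicate step is step five, where one must be careful to use Lemma \ref{L;Badpairlemma} (ii) to annihilate every triple product outside the block indexed by $(a,b)$ before applying Lemma \ref{L;Complexproductone}; the rest is a direct bookkeeping argument assembled from the earlier lemmas.
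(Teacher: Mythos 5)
Your proof is correct and reaches the paper's contradiction by essentially the same route: project the putative expression of $B_{ab}$ in $\mathcal{T}_0$ by $E_a^*(\cdot)E_b^*$, use $|R_{a'}R_b|=1$ together with Lemma \ref{L;Complexproductone} and Lemma \ref{L;Badpairlemma} (ii) to force $B_{ab}\in\langle\mathcal{W}\rangle_\F$, and contradict Lemma \ref{L;Badpairlemma} (iii). The only cosmetic difference is that you attack multiplicative closure directly via the bad-pair chain and Corollary \ref{C;badpairform}, whereas the paper first notes that $\mathcal{T}_0$ being a unital subalgebra is equivalent to $\mathcal{T}=\mathcal{T}_0$ (i.e.\ to $\mathcal{S}_0$ spanning $\mathcal{T}$) and then uses $B_{ab}\in\mathcal{T}$ from Lemma \ref{L;Badpairlemma} (iii); both arguments rest on the same computations.
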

\begin{proof}
As $A_i, E_{i}^*\in \mathcal{T}_0$ for any $R_i\in S$, by the definition of $\mathcal{T}$, notice that $\mathcal{T}_0$ is a unital $\F$-subalgebra of $M_X(\F)$ if and only if $\mathcal{T}=\mathcal{T}_0$. By Lemma \ref{L;generalresults} (ii), notice that $\mathcal{T}=\mathcal{T}_0$ if and only if $\mathcal{S}_0$ is an $\F$-basis of $\mathcal{T}$. Assume that $\mathcal{S}\neq \varnothing$. Let $(a,b)\in \mathcal{S}$ and notice that $O\neq B_{ab}\in \mathcal{V}\subseteq \mathcal{T}$ by Lemma \ref{L;Badpairlemma} (iii). We also have $|R_{a'}R_b|=1$ by Definition \ref{D;badpair} (ii). If $\mathcal{S}_0$ is an $\F$-basis of $\mathcal{T}$, by Lemmas \ref{L;Badpairlemma} (ii) and \ref{L;Complexproductone}, there exists some $c\in \F$ such that $B_{ab}=E_a^*B_{ab}E_b^*=cE_a^*JE_b^*\in \langle \mathcal{W}\rangle_\F$, which contradicts Lemma \ref{L;Badpairlemma} (iii). So $\mathcal{S}_0$ is not an $\F$-basis of $\mathcal{T}$. The desired lemma thus follows.
\end{proof}
\begin{lem}\label{L;T0quasi-thin}
Assume that $S$ is a quasi-thin scheme. Then we have $\mathcal{T}_0$ is a unital $\F$-subalgebra of $M_X(\F)$ if and only if there do not exist $R_u, R_v, R_w, R_y, R_z\in S$ such that $k_u=k_v=k_w=k_y=k_z=2$ and $p_{uv}^w=p_{wy}^z=|R_{u'}R_z|=1$.
\end{lem}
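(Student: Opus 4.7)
The plan is to prove the two directions separately, leveraging the earlier lemmas. For the forward direction, I would combine Lemma \ref{L;T0general} and Lemma \ref{L;Quasi-thin} (i): if $\mathcal{T}_0$ is a unital $\F$-subalgebra of $M_X(\F)$, then $\mathcal{S}=\varnothing$ by Lemma \ref{L;T0general}, and then by Lemma \ref{L;Quasi-thin} (i) (which uses the quasi-thin hypothesis) there do not exist $R_u, R_v, R_w, R_y, R_z \in S$ with $k_u=k_v=k_w=k_y=k_z=2$ and $p_{uv}^w=p_{wy}^z=|R_{u'}R_z|=1$.

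For the backward direction, assume no such quintuple exists. By Lemma \ref{L;Quasi-thin} (i) this gives $\mathcal{S}=\varnothing$, and hence $\mathcal{U}=\mathcal{R}$. The goal is to show $\mathcal{T}=\mathcal{T}_0$, because as noted in the proof of Lemma \ref{L;T0general} this is equivalent to $\mathcal{T}_0$ being a unital $\F$-subalgebra of $M_X(\F)$. Since $S$ is quasi-thin, Corollary \ref{C;basis} tells us that $\mathcal{B}=\mathcal{V}\cup\mathcal{W}$ is an $\F$-basis of $\mathcal{T}$. It therefore suffices to check $\mathcal{B}\subseteq\mathcal{T}_0$.

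The inclusion $\mathcal{W}\subseteq\mathcal{T}_0$ is immediate because $J=\sum_{f=0}^d A_f$, so for any $R_y, R_z\in S$ we have $E_y^*JE_z^*=\sum_{f=0}^d E_y^*A_fE_z^*\in\mathcal{T}_0$. For $\mathcal{V}\subseteq\mathcal{T}_0$, using $\mathcal{U}=\mathcal{R}$, every $B_{ab}\in\mathcal{V}$ satisfies $(a,b)\in\mathcal{R}$, and by the analysis in Case 1 of the proof of Lemma \ref{L;Badpairlemma} there exists $R_c\in S$ such that $B_{ab}=E_a^*A_cE_b^*$ or $B_{ab}=E_a^*JE_b^*-E_a^*A_cE_b^*$. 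In either case $B_{ab}\in\mathcal{T}_0$, since $E_a^*JE_b^*\in\mathcal{T}_0$ by the previous step.

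Consequently $\mathcal{T}=\langle\mathcal{B}\rangle_\F\subseteq\mathcal{T}_0\subseteq\mathcal{T}$, giving $\mathcal{T}=\mathcal{T}_0$, which completes the backward direction. The main point requiring care is the backward direction: one must be certain that the quasi-thin assumption forces every basis element of $\mathcal{T}$ described by Corollary \ref{C;basis} to already live in the span of triple products, which is exactly what the absence of bad pairs delivers via the $(a,b)\in\mathcal{R}$ case of Lemma \ref{L;Badpairlemma}.
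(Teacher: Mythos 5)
Your proof is correct, but the second half proceeds differently from the paper. The paper proves the key equivalence ``$\mathcal{T}_0$ is a unital subalgebra iff $\mathcal{S}=\varnothing$'' by a dimension count: $\dim_\F\mathcal{T}_0$ equals the number of nonzero intersection numbers by Lemma \ref{L;generalresults} (ii), and comparing this with $\dim_\F\mathcal{T}$ via Theorem \ref{T;A} and Lemma \ref{L;Nonzerointersection} shows that $\mathcal{S}_0$ is a basis of $\mathcal{T}$ exactly when $\mathcal{S}=\varnothing$; then Lemma \ref{L;Quasi-thin} (i) finishes, exactly as in your final step. You instead handle the backward direction constructively: with $\mathcal{S}=\varnothing$ you have $\mathcal{U}=\mathcal{R}$, and you verify element-by-element that the basis $\mathcal{B}=\mathcal{V}\cup\mathcal{W}$ of Corollary \ref{C;basis} lies in $\mathcal{T}_0$, using $E_y^*JE_z^*=\sum_f E_y^*A_fE_z^*$ for $\mathcal{W}$ and the explicit description $B_{ab}=E_a^*A_cE_b^*$ or $B_{ab}=E_a^*JE_b^*-E_a^*A_cE_b^*$ for $(a,b)\in\mathcal{R}$. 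That description is only stated inside the proof of Lemma \ref{L;Badpairlemma} (Case 1), so strictly speaking you are citing an argument rather than a statement; it would be cleaner to rederive it on the spot from Corollary \ref{C;badpairform} together with the existence of $R_c$ with $p_{ac}^b=1$ (Lemma \ref{L;Intersectionnumber} (iv), (ii) and Lemma \ref{L;generalresults} (i)), but this is a presentational point, not a gap. Your forward direction via Lemma \ref{L;T0general} matches the paper's logic. What your route buys is an explicit membership argument that avoids Theorem \ref{T;A} and Lemma \ref{L;Nonzerointersection}; what the paper's route buys is brevity, since the dimension comparison dispenses with any case analysis on the form of $B_{ab}$.
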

\begin{proof}
As $A_i, E_{i}^*\in \mathcal{T}_0$ for any $R_i\in S$, by the definition of $\mathcal{T}$, we have $\mathcal{T}_0$ is a unital $\F$-subalgebra of $M_X(\F)$ if and only if $\mathcal{T}=\mathcal{T}_0$. By Lemma \ref{L;generalresults} (ii), $\mathcal{T}=\mathcal{T}_0$ if and only if $\mathcal{S}_0$ is an $\F$-basis of $\mathcal{T}$. As $S$ is a quasi-thin scheme, by Theorem \ref{T;A}, Lemmas \ref{L;generalresults} (ii), and \ref{L;Nonzerointersection}, $\mathcal{S}_0$ is an $\F$-basis of $\mathcal{T}$ if and only if $\mathcal{S}=\varnothing$. So $\mathcal{T}_0$ is a unital $\F$-subalgebra of $M_X(\F)$ if and only if $\mathcal{S}=\varnothing$. So we can complete the proof by Lemma \ref{L;Quasi-thin} (i).
\end{proof}
\begin{defn}\label{D;triplyregular}\cite[Page 109]{W}
\em The scheme $S$ is called a triply regular scheme if, for all $y\in X$, the $\F$-linear space generated by all triple products of $\mathcal{T}(y)$ is always a unital $\F$-subalgebra of $M_X(\F)$.
\end{defn}
The following corollary is shown by Lemma \ref{L;T0general} and Definition \ref{D;triplyregular}.
\begin{cor}\label{C;triplyregulargeneral}
The scheme $S$ is a triply regular scheme only if $\mathcal{S}=\varnothing$.
\end{cor}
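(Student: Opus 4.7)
The plan is to deduce the corollary directly from Lemma \ref{L;T0general} by identifying the two notions of ``linear span of triple products''. First I would observe that, by Notation \ref{N;notation}, the space $\mathcal{T}_0 = \langle\mathcal{S}_0\rangle_\F$ is precisely the $\F$-linear span of the set of nonzero triple products of $\mathcal{T}(x)$; adjoining the zero matrix $O$ does not alter the $\F$-linear span. Hence the $\F$-linear space generated by all triple products of $\mathcal{T}(x)$, as appearing in Definition \ref{D;triplyregular} for the choice $y = x$, is exactly $\mathcal{T}_0$.

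Assuming now that $S$ is a triply regular scheme, the condition in Definition \ref{D;triplyregular} applied at the fixed vertex $x$ forces $\mathcal{T}_0$ to be a unital $\F$-subalgebra of $M_X(\F)$. Invoking Lemma \ref{L;T0general} then immediately yields $\mathcal{S} = \varnothing$, which is the desired conclusion.

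There is essentially no serious obstacle in this argument: all of the combinatorial content has been absorbed into Lemma \ref{L;T0general}, whose proof in turn hinges on the fact that a bad pair $(a,b) \in \mathcal{S}$ yields a nonzero matrix $B_{ab} \in \mathcal{V}$ which, under the assumption $\mathcal{T}_0 = \mathcal{T}$, would be forced via Lemma \ref{L;Complexproductone} to satisfy $B_{ab} = cE_a^* J E_b^*$ for some $c \in \F$, contradicting Lemma \ref{L;Badpairlemma} (iii). The only mildly delicate point in the current proof is to verify explicitly that Definition \ref{D;triplyregular} applied to the particular vertex $y = x$ already suffices to invoke Lemma \ref{L;T0general}, since Lemma \ref{L;T0general} is stated only for the Terwilliger $\F$-algebra $\mathcal{T} = \mathcal{T}(x)$ with respect to the fixed vertex $x$.
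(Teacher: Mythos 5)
Your proposal is correct and follows essentially the same route as the paper: the paper's proof is exactly the observation that Definition \ref{D;triplyregular} applied at the fixed vertex $x$ makes $\mathcal{T}_0$ (the span of all triple products of $\mathcal{T}$) a unital $\F$-subalgebra of $M_X(\F)$, whence Lemma \ref{L;T0general} gives $\mathcal{S}=\varnothing$. Your extra remarks merely spell out details the paper leaves implicit.
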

As the fixed vertex $x$ is chosen from $X$ arbitrarily, we get the following corollary by Lemma  \ref{L;T0quasi-thin} and Definition \ref{D;triplyregular}.
\begin{cor}\label{C;triplyregularquasi-thin}
Assume that $S$ is a quasi-thin scheme. Then the scheme $S$ is a triply regular scheme if and only if there do not exist $R_u, R_v, R_w, R_y, R_z\in S$ such that $k_u=k_v=k_w=k_y=k_z=2$ and $p_{uv}^w=p_{wy}^z=|R_{u'}R_z|=1$.
\end{cor}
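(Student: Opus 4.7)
The plan is to reduce the corollary directly to Lemma \ref{L;T0quasi-thin} by the observation that $x$ was chosen arbitrarily from $X$, and that the condition appearing in Lemma \ref{L;T0quasi-thin} is purely a statement about valencies and intersection numbers of $S$, with no reference to any vertex.

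First, I would unpack Definition \ref{D;triplyregular}: $S$ is triply regular exactly when, for every $y \in X$, the $\F$-linear span of the triple products of $\mathcal{T}(y)$ is a unital $\F$-subalgebra of $M_X(\F)$. By Notation \ref{N;notation} applied with $y$ in place of $x$, that linear span is precisely the analogue of $\mathcal{T}_0$ built from $\mathcal{T}(y)$. Thus $S$ is triply regular if and only if, for every $y \in X$, this analogue of $\mathcal{T}_0$ is a unital $\F$-subalgebra of $M_X(\F)$.

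Next, I would invoke Lemma \ref{L;T0quasi-thin}. The lemma was stated for the particular fixed vertex $x$, but its proof only uses that $S$ is quasi-thin and internal properties of $\mathcal{T}(x)$; nothing privileges $x$ over any other vertex. Hence the same statement holds with $x$ replaced by an arbitrary $y \in X$: under the quasi-thin hypothesis, the linear span of triple products of $\mathcal{T}(y)$ is a unital $\F$-subalgebra of $M_X(\F)$ if and only if there do not exist $R_u, R_v, R_w, R_y, R_z \in S$ with $k_u = k_v = k_w = k_y = k_z = 2$ and $p_{uv}^w = p_{wy}^z = |R_{u'}R_z| = 1$.

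The crucial point, which I would highlight, is that the non-existence condition depends only on $S$ and not on the vertex. Consequently the equivalence given by Lemma \ref{L;T0quasi-thin} takes the same form for every $y \in X$ simultaneously: either the non-existence condition holds and the span of triple products of $\mathcal{T}(y)$ is a unital subalgebra for every $y$, or the condition fails and this span is not a unital subalgebra for any $y$. Combining this with the reformulation of triple regularity in the first paragraph yields the stated equivalence. There is no real obstacle here beyond being explicit that Lemma \ref{L;T0quasi-thin} is vertex-independent; the corollary is essentially a packaging statement.
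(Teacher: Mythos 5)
Your proposal is correct and matches the paper's argument: the paper also deduces the corollary from Lemma \ref{L;T0quasi-thin} together with Definition \ref{D;triplyregular}, using exactly the observation that the fixed vertex $x$ was chosen arbitrarily and the non-existence condition is vertex-independent. You simply spell out in more detail what the paper states in one line.
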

We conclude this section by some examples of Corollary \ref{C;triplyregularquasi-thin}.
\begin{eg}\label{E;example2}
\em Assume that $S$ is the scheme of order $6$, No. $5$ in \cite{HM}. Notice that $S=\{R_0, R_1, R_2, R_3\}$, where $k_0=k_1=1$ and $k_2=k_3=2$. By Corollary \em \ref{C;triplyregularquasi-thin}, \em $S$ is a triply regular scheme as $|R_2R_2|=|R_2R_3|=|R_3R_2|=|R_3R_3|=2$. Assume that $S$ is the quasi-thin scheme in Example \em \ref{E;example1}. \em As we have $k_4=k_7=k_8=k_9=k_{13}=2$ and $p_{47}^9=p_{9 (13)}^8=|R_{4'}R_8|=1$, by Corollary \em \ref{C;triplyregularquasi-thin}, \em $S$ is not a triply regular scheme.
\end{eg}
\section{Semisimplicity of Terwilliger $\F$-algebras of quasi-thin schemes}
In this section, we determine the semisimple Terwilliger $\F$-algebras of quasi-thin schemes,
which establishes Theorem \ref{T;B}. For our purpose, recall that $\mathcal{J}$ denotes the Jacobson radical of $\mathcal{T}$. Moreover, we also recall Notation \ref{N;notation} and the definition of $E_{ab}$ in Subsection \ref{Sc:subsection}. We first provide some preliminary results.
\begin{lem}\label{L;p'-Valencedcase}
Assume that $S$ is a $p'$-valenced scheme. Let $M\in \mathcal{J}$.
\begin{enumerate}[(i)]
\item [\em (i)] There does not exist $R_a\in S$ such that $k_a=1$ and $E_a^*M\neq O$.
\item [\em (ii)] There does not exist $R_a\in S$ such that $k_a=1$ and $ME_{a}^*\neq O$.
\end{enumerate}
\end{lem}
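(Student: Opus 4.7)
The plan is to prove (i) first; then (ii) will follow by transposing, since Lemma \ref{L;Radical} gives $\mathcal{J}^t=\mathcal{J}$ and $E_a^*$ is symmetric. For (i), I would study the right ideal $V:=E_a^*\mathcal{T}$ as a right $\mathcal{T}$-module and show that $V$ is a \emph{simple} right $\mathcal{T}$-module. Once this is known, the standard fact that simple modules are annihilated by the Jacobson radical gives $V\mathcal{J}=O$; in particular $E_a^*M = E_a^*\cdot M \in V\mathcal{J}=O$ for every $M\in\mathcal{J}$.

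To identify $V$, note that since $k_a=1$, Lemma \ref{L;Complexproductone} forces every triple product $E_a^*A_jE_\ell^*$ to equal either $O$ or $E_a^*JE_\ell^*$. I would complement this with the auxiliary identity $JE_i^*A_jE_\ell^* = \overline{p_{ij}^{\ell}}\,JE_\ell^*$, which is immediate from the column-sum description in Lemma \ref{L;generalresults} (i): every column of $E_i^*A_jE_\ell^*$ indexed by a vertex of $xR_\ell$ sums to $p_{ij}^\ell$, and the remaining columns vanish. An induction on $m$ applied to a product $\prod_{b=0}^{m}(E_{i_b}^*A_{j_b}E_{\ell_b}^*)$ then shows that $E_a^*$ times any such product is a scalar multiple of $E_a^*JE_{\ell_m}^*$. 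Because $\mathcal{T}$ is $\F$-spanned by such products (Lemma \ref{L;Linearsolution} (i)), it follows that $V = \langle\{E_a^*JE_c^*:R_c\in S\}\rangle_\F$; Lemma \ref{L;generalresults} (iii) then yields $\dim_\F V = d+1$.

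The simplicity of $V$ uses the $p'$-valenced hypothesis. The right action on the spanning vectors reads $(E_a^*JE_{c_0}^*)E_b^* = \delta_{c_0b}E_a^*JE_{c_0}^*$ and, as a consequence of the auxiliary identity, $(E_a^*JE_{c_0}^*)A_fE_g^* = \overline{p_{c_0f}^{g}}\,E_a^*JE_g^*$. Given a nonzero submodule $W\le V$, I pick $0\ne w = \sum_c\alpha_cE_a^*JE_c^*\in W$ with some $\alpha_{c_0}\ne0$; right multiplication by $E_{c_0}^*$ places $E_a^*JE_{c_0}^*$ in $W$. For each $R_g\in S$, Lemma \ref{L;Intersectionnumber} (ii) gives $\sum_f p_{c_0f}^{g} = k_{c_0}$, and since $S$ is $p'$-valenced, $\overline{k_{c_0}}\ne0$, so some $R_f$ satisfies $\overline{p_{c_0f}^g}\ne0$. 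Right multiplication by $A_fE_g^*$ then places $E_a^*JE_g^*$ in $W$, so $W$ contains every spanning vector and $W=V$. Hence $V$ is simple.

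The simple-module argument just described yields (i); applying (i) to $M^t$ (which lies in $\mathcal{J}$ by Lemma \ref{L;Radical}) and transposing $E_a^*M^t=O$ produces $ME_a^*=O$, which is (ii). I expect the main obstacle to be the spanning computation, specifically the inductive collapse of long products of triple products to scalar multiples of $E_a^*JE_{\ell_m}^*$; once the auxiliary identity $JE_i^*A_jE_\ell^*=\overline{p_{ij}^\ell}JE_\ell^*$ is isolated, this becomes routine, and the $p'$-valenced hypothesis is used only at the single point where one needs $\overline{k_{c_0}}\ne0$ in $\F$ to find suitable $R_f$ at each $R_g$.
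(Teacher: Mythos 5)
Your proof is correct, but it takes a genuinely different route from the paper's. The paper argues by contradiction: using the refined decomposition of Lemma \ref{L;Linearsolution} (iii) it first notes that $E_a^*M\in\mathcal{T}_0$ whenever $k_a=1$, then applies Lemma \ref{L;Complexproductone} to extract a nonzero block $E_a^*ME_z^*=r_{yz}E_a^*JE_z^*\in\mathcal{J}$, and finally multiplies by $JE_a^*$ to place $\overline{k_z}E_a^*JE_a^*$ in $\mathcal{J}$; the $p'$-valenced hypothesis gives $\overline{k_z}\neq 0$, and $E_a^*JE_a^*=E_{kk}$ (with $xR_a=\{k\}$) is not nilpotent, a contradiction. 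You instead describe the right ideal $E_a^*\mathcal{T}$ completely: your auxiliary identity $JE_i^*A_jE_\ell^*=\overline{p_{ij}^{\ell}}JE_\ell^*$ does follow from the column counts in Lemma \ref{L;generalresults} (i) together with \eqref{Eq:preliminary2}, the inductive collapse of $E_a^*\prod_{b}(E_{i_b}^*A_{j_b}E_{\ell_b}^*)$ to a scalar multiple of $E_a^*JE_{\ell_m}^*$ works (the base case being Lemma \ref{L;Complexproductone}, which needs only $k_a=1$), and combined with Lemma \ref{L;Linearsolution} (i) and Lemma \ref{L;generalresults} (iii) this yields $E_a^*\mathcal{T}=\langle\{E_a^*JE_c^*:R_c\in S\}\rangle_\F$ of dimension $d+1$. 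The simplicity argument is also sound: $\sum_f p_{c_0f}^{g}=k_{c_0}$ (Lemma \ref{L;Intersectionnumber} (ii)) and $p\nmid k_{c_0}$ give, for each $R_g$, some $R_f$ with $\overline{p_{c_0f}^{g}}\neq 0$, so a nonzero submodule contains every $E_a^*JE_g^*$; then $V\mathcal{J}=\{O\}$ because the radical annihilates simple right modules, and (ii) follows by transposing via Lemma \ref{L;Radical}, exactly as in the paper. The trade-off: the paper's proof is shorter because Lemma \ref{L;Linearsolution} (iii) is already in place and only the nilpotency of radical elements is needed, whereas your proof bypasses that refined decomposition (using only the spanning statement) and delivers extra structural information, namely that $E_a^*\mathcal{T}$ is a $(d+1)$-dimensional simple right $\mathcal{T}$-module for every valency-one relation of a $p'$-valenced scheme; each proof invokes the $p'$-valenced hypothesis exactly once, you to establish simplicity, the paper to cancel $\overline{k_z}$ before reaching the idempotent contradiction.
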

\begin{proof}
Since $M\in \mathcal{J}\subseteq\mathcal{T}$, by Lemma \ref{L;Linearsolution} (iii), there exist $c, a_0, a_1,\ldots, a_c\in \mathbb{N}_0$ such that
$$M=\sum_{b=0}^ce_bM_b,$$
where we have $0\neq e_b\in \F$, $M_b\in \mathcal{S}_{a_b}$, and $M_b\notin\sum_{h=-1}^{a_{b}-1}\mathcal{T}_h$ for any $b\in [c]$. Moreover, let $b\in [c]$. If $a_b>0$ and $M_b=\prod_{u=0}^{a_b}(E_{i_u}^*A_{j_u}E_{\ell_u}^*)$, then we have $\min\{ k_{i_u}, k_{j_u}, k_{\ell_u}\}>1$ for any $u\in [a_b]$. For any $R_g\in S$ and $k_g=1$, we thus can deduce that
\begin{align*}\label{Eq:15}
\tag{5.1}E_g^*M=\sum_{h=0}^ce_hE_{g}^*M_h=\sum_{\substack{h=0\\
                  a_h=0\\
                  }}^ce_hE_{g}^*M_h\in \mathcal{T}_0.
\end{align*}

For (i), suppose that there exists $R_a\in S$ such that $k_a=1$ and $E_a^*M\neq O$. We can choose $k\in X$ such that $xR_a=\{k\}$. By \eqref{Eq:15}, note that
\begin{align*}\label{Eq:16}
\tag{5.2} O\neq E_a^*M=\sum_{v=0}^d\sum_{w=0}^d r_{vw}E_{a}^*A_vE_w^*,
\end{align*}
where $r_{vw}\in \F$ for any $v, w\in [d]$. As we have $O\neq E_a^*M=E_a^*MI=\sum_{q=0}^dE_a^*ME_q^*$ and $k_{a'}=k_a=1$ by Lemma \ref{L;Intersectionnumber} (i), by \eqref{Eq:16}, Lemmas \ref{L;Intersectionnumber} (v), and \ref{L;Complexproductone}, there exist $y,z\in [d]$ such that
\begin{align*}\label{Eq:17}
\tag{5.3} O\neq E_a^*ME_z^*=\sum_{s=0}^dr_{sz}E_a^*A_sE_z^*=&r_{yz}E_a^*A_yE_z^*\\
=&r_{yz}(\sum_{s=0}^dE_a^*A_sE_z^*)=r_{yz}E_a^*JE_z^*.
\end{align*}
Since $M\in \mathcal{J}$, by \eqref{Eq:17}, notice that $E_a^*JE_z^*\in \mathcal{J}$. Therefore we have $E_a^*JE_z^*JE_a^*\in \mathcal{J}$ as $JE_a^*\in \mathcal{T}$. We thus have $JE_z^*J=\overline{k_z}J$ and $\overline{k_z}E_a^*JE_a^*=E_a^*JE_z^*JE_a^*\in \mathcal{J}$, which implies that $E_{kk}=E_a^*JE_a^*\in \mathcal{J}$ as $S$ is a $p'$-valenced scheme and \eqref{Eq:preliminary2} holds. We thus can obtain a contradiction as $E_{kk}$ is not a nilpotent matrix. So there does not exist $R_a\in S$ such that $k_a=1$ and $E_a^*M\neq O$. (i) is shown.

For (ii), as $M\in \mathcal{J}\subseteq \mathcal{T}$ and $\mathcal{T}^t=\mathcal{T}$, we have $M^t\in \mathcal{J}$ by Lemma \ref{L;Radical}. If there is $R_a\in S$ such that $k_a=1$ and $ME_a^*\neq O$, then $E_a^*M^t\neq O$, which contradicts (i). So there does not exist $R_a\in S$ such that $k_a=1$ and $ME_a^*\neq O$. (ii) is proved.
\end{proof}
The following lemma can be proved by computation and induction.
\begin{lem}\label{L;Inductionmatrix}
Let $a\in \mathbb{N}$ and $b, c\in X$. Then we have $$(E_{bb}-E_{bc}-E_{cb}+E_{cc})^a=\overline{2}^{a-1}(E_{bb}-E_{bc}-E_{cb}+E_{cc}).$$
\end{lem}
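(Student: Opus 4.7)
The plan is to proceed by induction on $a$, reducing the entire claim to the single base computation $M^2 = \overline{2}\,M$, where I abbreviate $M = E_{bb}-E_{bc}-E_{cb}+E_{cc}$.

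First I would handle the degenerate case $b = c$ separately: there $M = O$, and then $M^a = O = \overline{2}^{a-1}M$ trivially. For $b \neq c$, the base case $a=1$ is immediate since $\overline{2}^{0} = 1$. For the inductive step, assume $M^a = \overline{2}^{a-1}M$; then
\[
M^{a+1} = M^a \cdot M = \overline{2}^{a-1} M \cdot M = \overline{2}^{a-1} M^2,
\]
so everything reduces to showing $M^2 = \overline{2}\,M$.

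The identity $M^2 = \overline{2}\,M$ is a direct computation using the matrix-unit multiplication rule $E_{ij}E_{k\ell} = \delta_{jk}E_{i\ell}$ recalled in Subsection~\ref{Sc:subsection}. Expanding $M \cdot M$ yields sixteen terms of the form $(\pm E_{ij})(\pm E_{k\ell})$ with indices in $\{b,c\}$; by the orthogonality $\delta_{jk}$ only those pairs with matching inner indices survive, and a short bookkeeping (tracking the sign $(\pm)(\pm)$ in each surviving term) shows that each of $E_{bb}, E_{cc}$ picks up coefficient $+2$ and each of $E_{bc}, E_{cb}$ picks up coefficient $-2$, giving $M^2 = \overline{2}(E_{bb}-E_{bc}-E_{cb}+E_{cc}) = \overline{2}\,M$, as desired.

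There is no real obstacle here; the only mild subtlety is to remember that the scalar lives in $\F$ and to write $\overline{2}$ rather than $2$ (it could be zero when $p=2$, in which case the lemma still reads correctly, namely $M^a = O$ for all $a \geq 2$). The induction then closes without further effort.
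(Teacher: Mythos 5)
Your proof is correct and follows exactly the route the paper indicates: the paper omits the argument, remarking only that the lemma "can be proved by computation and induction," which is precisely your reduction to the single computation $M^2=\overline{2}M$ followed by induction on $a$. The extra care with the case $b=c$ and with $\overline{2}$ possibly being zero is harmless (and in fact not even needed, since $M=O$ when $b=c$ makes every step trivial), so there is nothing to fix.
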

\begin{lem}\label{L;p>2Quasi-thin}
Assume that $p\neq 2$ and $S$ is a quasi-thin scheme. We have $\mathcal{J}=\{O\}$. In particular, $\mathcal{T}$ is semisimple.
\end{lem}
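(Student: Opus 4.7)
The plan is to take an arbitrary $M\in\mathcal{J}$ and force every coefficient in its expansion with respect to the basis $\mathcal{B}=\mathcal{V}\cup\mathcal{W}$ (Corollary \ref{C;basis}) to vanish. Write
\[
M \,=\, \sum_{(a,b)\in\mathcal{U}} e_{ab}\,B_{ab} \,+\, \sum_{R_i,R_j\in S} c_{ij}\,E_i^* J E_j^*.
\]
As a first reduction, $p\neq 2$ makes $S$ automatically $p'$-valenced, so Lemma \ref{L;p'-Valencedcase} yields $E_i^*M = ME_j^* = O$ whenever $k_i=1$ or $k_j=1$. Expanding via Lemma \ref{L;Linearlyindependent} kills $c_{ij}$ in all such cases, so $M$ is supported on the valency-$2$ part of the basis.

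For each pair $R_g,R_h\in S$ with $k_g=k_h=2$, I would next compute the product $(E_g^*JE_h^*)\,M\in\mathcal{J}$. Using $JE_j^*J=\overline{k_j}J$, the identity $E_h^*B_{ab}=\delta_{ha}B_{ab}$ from Lemma \ref{L;Badpairlemma}(ii), and the identity $E_g^*JB_{ab} = E_g^*JE_b^*$ (which holds because each column of $B_{ab}$ sums to the indicator of $xR_b$), this product collapses to $N:=\sum_{k_j=2}(\overline{2}\,c_{hj}+e_{hj})\,E_g^*JE_j^*$, with $e_{hj}$ interpreted as zero when $(h,j)\notin\mathcal{U}$. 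A short computation gives $N^2=\overline{2}\,(\overline{2}c_{hg}+e_{hg})\,N$. Since $N$ is nilpotent while the $E_g^*JE_j^*$ are $\F$-linearly independent (Lemma \ref{L;generalresults}(iii)), the coefficient $\overline{2}c_{hg}+e_{hg}$ must vanish. Letting $(g,h)$ range over all valency-$2$ pairs yields the crucial relation $e_{ab}=-\overline{2}\,c_{ab}$ for $(a,b)\in\mathcal{U}$, and $c_{gh}=0$ whenever $k_g=k_h=2$ but $(g,h)\notin\mathcal{U}$.

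Consequently $M=-\sum_{(g,h)\in\mathcal{U}}c_{gh}\,X_{gh}$, with $X_{gh}:=\overline{2}B_{gh}-E_g^*JE_h^*$; in the $xR_g\times xR_h$ block, $X_{gh}$ is precisely the matrix $E_{u_1u_1}-E_{u_1u_2}-E_{u_2u_1}+E_{u_2u_2}$ of Lemma \ref{L;Inductionmatrix}. Fix $(g_0,h_0)\in\mathcal{U}$ and consider $B_{g_0h_0}M\in\mathcal{J}$. The identities $B_{g_0h_0}J=E_{g_0}^*J$ and $B_{g_0h_0}B_{h_0h}=B_{g_0h}|_{\text{matrix}}$ compress this to $B_{g_0h_0}M=-\sum_h c_{h_0h}\,X_{g_0h}$, while Lemma \ref{L;Inductionmatrix} applied block-wise gives $X_{g_0g_0}X_{g_0h}=\overline{2}\,X_{g_0h}$. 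Hence
\[
(B_{g_0h_0}M)^2=-\overline{2}\,c_{h_0g_0}\,(B_{g_0h_0}M),
\]
and nilpotency forces either $c_{h_0g_0}=0$ directly or $B_{g_0h_0}M=O$; the latter, via the linear independence of the $X_{g_0h}$ across distinct $h$ (their column-supports $xR_h$ are pairwise disjoint), again kills every $c_{h_0h}$ and in particular $c_{h_0g_0}$. Letting $(g_0,h_0)$ sweep $\mathcal{U}$ gives $M=O$, so $\mathcal{J}=\{O\}$ and $\mathcal{T}$ is semisimple.

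The main obstacle will be the middle paragraph: assembling the relation $e_{ab}=-\overline{2}\,c_{ab}$ requires careful juggling of the multiplicative rules among $B_{ab}$, $E_i^*JE_j^*$, and $J$, and it is exactly here that $p\neq 2$ is indispensable---so that $E_g^*JE_g^*$ satisfies $(E_g^*JE_g^*)^2=\overline{2}\,E_g^*JE_g^*\neq O$ and Lemma \ref{L;Inductionmatrix} produces genuinely non-nilpotent amplification in the final step.
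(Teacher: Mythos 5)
Your argument is correct, but it runs along a different track than the paper's own proof. The paper takes a nonzero $M\in\mathcal{J}$, uses Lemma \ref{L;p'-Valencedcase} to locate a block $E_g^*ME_h^*\neq O$ with $k_g=k_h=2$, and then pins down that block's shape directly from the triple-product machinery (Corollary \ref{C;quasithindecomposiiton} together with the row-sum/column-sum Lemma \ref{L;Case4}), so that $E_g^*ME_h^*=m(E_{r_1s_1}+E_{r_2s_2})+n(E_{r_1s_2}+E_{r_2s_1})$; nilpotency of $E_0^*JE_g^*ME_h^*JE_0^*$ forces $n=-m$, and then $E_g^*ME_h^*M^tE_g^*$ is a nonzero multiple of the checkerboard matrix of Lemma \ref{L;Inductionmatrix}, which is not nilpotent when $p\neq 2$ --- contradiction. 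You instead expand $M$ in the basis $\mathcal{B}$ of Corollary \ref{C;basis} and annihilate coefficients: multiplication by $E_g^*JE_h^*$ yields the relation $e_{ab}=-\overline{2}c_{ab}$ (your analogue of $n=-m$), and multiplication by $B_{g_0h_0}$ together with the idempotent-like identity $X_{g_0g_0}X_{g_0h}=\overline{2}X_{g_0h}$ kills the remaining coefficients. This is closer in spirit to the paper's $p=2$ computation in Lemma \ref{L;Radicalsemisimple} than to its own $p\neq2$ proof, and it avoids the transpose trick ($MM^t$) entirely, at the price of needing the multiplication rules among the $B_{ab}$ and the $E_i^*JE_j^*$, which are exactly Lemmas \ref{L;Computation} and \ref{L;Secondcomputation} (stated later in the paper, but their proofs are independent of this lemma, so there is no circularity); your ad hoc justifications of these identities are correct. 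Two small points you should make explicit: the identity $X_{g_0g_0}X_{g_0h}=\overline{2}X_{g_0h}$ is not literally Lemma \ref{L;Inductionmatrix} but an easy variant of the same computation, and the final sweep needs the symmetry of $\mathcal{U}$ (Lemma \ref{L;Basistranspose}), since from the pair $(g_0,h_0)$ you only conclude $c_{h_0g_0}=0$, so to kill every $c_{ab}$ with $(a,b)\in\mathcal{U}$ you must know $(b,a)\in\mathcal{U}$ as well. With those citations added, your proof is complete; what it buys is a uniform, basis-driven computation that also re-derives the explicit Wedderburn-type structure used later, whereas the paper's proof is shorter and needs only the Section 3 machinery.
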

\begin{proof}
Suppose that $\mathcal{J}\neq \{O\}$ and $O\neq M\in \mathcal{J}\subseteq \mathcal{T}$. According to Corollary \ref{C;quasithindecomposiiton}, there exist $c, a_0, a_1,\ldots ,a_c\in \mathbb{N}_0$ such that
\begin{align*}\label{Eq:18}
\tag{5.4} M=\sum_{b=0}^ce_bM_b,
\end{align*}
where we have $0\neq e_b\in \F$, $M_b\in \mathcal{S}_{a_b}$, and $M_b\notin\sum_{h=-1}^{a_{b}-1}\mathcal{T}_h$ for any $b\in [c]$. Moreover, let $b\in [c]$. If $a_b>0$ and $M_b=\prod_{u=0}^{a_b}(E_{i_u}^*A_{j_u}E_{\ell_u}^*)$, then we have $k_{i_u}=k_{j_u}=k_{\ell_u}=2$ and $\ell_v=i_{v+1}$ for any $u\in [a_b]$ and $v\in [a_b-1]$.

As $p\neq2$ and $S$ is a quasi-thin scheme, notice that $S$ is a $p'$-valenced scheme. By Lemma \ref{L;p'-Valencedcase} (i) and (ii), we have
$$O\neq M=IMI=(\sum_{e=0}^dE_e^*)M(\sum_{f=0}^dE_f^*)=\!\!\sum_{\substack{e,f=0\\
                  k_e=k_f=2\\
                  }}^dE_e^*ME_f^*,$$
which implies that there exist $R_g, R_h\in S$ such that $k_g=k_h=2$ and $O\neq E_g^*ME_h^*$. So we can set $xR_g=\{r_1, r_2\}$ and $xR_h=\{s_1, s_2\}$. According to \eqref{Eq:18}, we thus can get  $E_g^*ME_h^*=\sum_{b=0}^ce_bE_g^*M_bE_h^*$, where, for any $y, z\in[2]\setminus\{0\}$, the $r_y$-row sum of every $e_bE_g^*M_bE_{h}^*$ equals its $s_z$-column sum by Lemma \ref{L;Case4}. In particular, the $r_y$-row sum of $E_g^*ME_h^*$ equals its $s_z$-column sum for any $y,z\in [2]\setminus\{0\}$. As all possible nonzero entries of $E_g^*ME_h^*$ are exactly its $(r_1, s_1)$-entry, $(r_1, s_2)$-entry, $(r_2, s_1)$-entry, and  $(r_2, s_2)$-entry by \eqref{Eq:preliminary2}, we thus have
\begin{align*}\label{Eq:19}
\tag{5.5} O\neq E_g^*ME_h^*=m(E_{r_1s_1}+E_{r_2s_2})+n(E_{r_1s_2}+E_{r_2s_1}),
\end{align*}
where $m, n\in \F$. Since $E_0^*JE_g^*=E_{xr_1}+E_{xr_2}\in\mathcal{T}$ and $E_h^*JE_0^*=E_{s_1x}+E_{s_2x}\in \mathcal{T}$ by \eqref{Eq:preliminary2}, we get that $E_0^*JE_g^*ME_h^*JE_0^*=\overline{2}(m+n)E_{xx}\in \mathcal{J}$ as $M\in \mathcal{J}$. In particular, $\overline{2}(m+n)E_{xx}$ is a nilpotent matrix, which implies that $n=-m$ as $p\neq2$. By \eqref{Eq:19}, we thus have $O\neq E_g^*ME_h^*=m(E_{r_1s_1}-E_{r_1s_2}-E_{r_2s_1}+E_{r_2s_2})$. So $m\neq 0$. We have
\begin{align*}\label{Eq:20}
\tag{5.6} E_g^*ME_h^*M^tE_g^*=E_g^*ME_h^*(E_g^*ME_h^*)^t=\overline{2}m^2(E_{r_1r_1}-E_{r_1r_2}-E_{r_2r_1}+E_{r_2r_2}).
\end{align*}
As $M\in \mathcal{J}\subseteq\mathcal{T}$ and $(E_g^*ME_h^*)^t\in\mathcal{T}$, observe that $E_g^*ME_h^*M^tE_g^*\in \mathcal{J}$. However, as $p\neq2$, note that $E_g^*ME_h^*M^tE_g^*$ is not a nilpotent matrix by \eqref{Eq:20} and Lemma \ref{L;Inductionmatrix}. So we have $E_g^*ME_h^*M^tE_g^*\notin\mathcal{J}$, which is a contradiction. We thus have $\mathcal{J}=\{O\}$, which implies that $\mathcal{T}$ is semisimple. The proof is now complete.
\end{proof}
\begin{eg}\label{E;example3}
\em In general case, $\mathcal{T}$ is not always semisimple even if $S$ is a $p'$-valenced scheme. For a
counterexample, assume that $p=2$ and $S$ is the scheme of order $15$, No. $5$ in \cite{HM}. Note
that $S$ is a $2'$-valenced scheme. Assume that $X=[15]\setminus\{0\}$ and $x=1$. By \cite[5.1]{Han1}, $\mathcal{T}$ is not semisimple.
\end{eg}
The proof of the following lemma is clear. Therefore we omit its proof.
\begin{lem}\label{L;Thinschemecase}
Assume that $S$ is a thin scheme.
\begin{enumerate}[(i)]
\item [\em (i)] We have $\mathcal{T}=M_X(\F)$.
\item [\em (ii)] We have $\mathcal{T}$ is a simple $\F$-algebra with $\F$-dimension $|X|^2$.
\end{enumerate}
\end{lem}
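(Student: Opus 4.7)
The plan is to show part (i) directly by exhibiting every matrix unit $E_{yz}$ as an element of $\mathcal{T}$, and then deduce part (ii) from the standard fact that a full matrix algebra over a field is simple.

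Since $S$ is thin, every $R_a\in S$ satisfies $k_a=1$, so for each $a\in[d]$ the set $xR_a$ is a singleton, say $xR_a=\{y_a\}$. Because $\{xR_0,xR_1,\ldots,xR_d\}$ partitions $X$, the map $a\mapsto y_a$ is a bijection from $[d]$ onto $X$; in particular $|X|=d+1$ and each dual $\F$-idempotent takes the form $E_a^*=E_{y_ay_a}$. Also, by \eqref{Eq:preliminary2}, for any $R_a,R_c\in S$ we have $E_a^*JE_c^*=E_{y_ay_c}$.

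Now fix $R_a,R_c\in S$. Since $k_a=1$, Lemma \ref{L;Complexproductone} applied to the pair $(a,c)$ yields a (unique) $R_n\in S$ with $E_a^*A_nE_c^*\neq O$, and moreover $E_a^*A_nE_c^*=E_a^*JE_c^*$. Combining this with the identity displayed above, we obtain $E_{y_ay_c}=E_a^*A_nE_c^*\in\mathcal{T}$. As $a$ and $c$ range over $[d]$, the pair $(y_a,y_c)$ ranges over all of $X\times X$, so $\{E_{yz}:y,z\in X\}\subseteq\mathcal{T}$. This set is the standard $\F$-basis of $M_X(\F)$, and since $\mathcal{T}\subseteq M_X(\F)$ by definition, we conclude $\mathcal{T}=M_X(\F)$, proving (i).

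For (ii), part (i) gives $\mathcal{T}=M_X(\F)$, which is the full matrix algebra of $\F$-square matrices of size $|X|$; this is a simple $\F$-algebra, and its $\F$-dimension is $|X|^2$. There is no real obstacle here: the only content is identifying the unique nonzero triple product $E_a^*A_nE_c^*$ with the matrix unit $E_{y_ay_c}$, which is exactly what Lemma \ref{L;Complexproductone} is designed to deliver in the thin case. (As a sanity check, Theorem \ref{T;A} would give the dimension directly: in a thin scheme $|R_{a'}R_b|\leq\gcd(k_{a'},k_b)=1$ for all $R_a,R_b$, and bad pairs require valency two, so the second and third summands vanish and $\dim_\F\mathcal{T}=(d+1)^2=|X|^2$, matching our count.)
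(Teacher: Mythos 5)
Your proof is correct; the paper in fact omits the proof of this lemma as ``clear'', and your argument is exactly the natural one: in the thin case each $E_a^*$ is a single diagonal matrix unit $E_{y_ay_a}$, and the products $E_a^*JE_c^*=E_{y_ay_c}$ give all matrix units, so $\mathcal{T}=M_X(\F)$ and (ii) follows from simplicity of a full matrix algebra. The only (harmless) redundancy is your appeal to Lemma \ref{L;Complexproductone}: since $J=\sum_{f=0}^dA_f\in\mathcal{T}$, the element $E_a^*JE_c^*$ already lies in $\mathcal{T}$ directly, and \eqref{Eq:preliminary2} identifies it with $E_{y_ay_c}$, so no triple-product analysis is needed.
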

To prove Theorem \ref{T;B}, we need the following result discovered by Hanaki.
\begin{thm}\label{T;Maincitation}\cite[Theorem 3.4]{Han1}
The $\F$-algebra $\mathcal{T}$ is semisimple only if the scheme $S$ is a $p'$-valenced scheme.
\end{thm}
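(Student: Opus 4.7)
The plan is to prove the contrapositive: if $S$ is not a $p'$-valenced scheme then $\mathcal{J}\neq\{O\}$. Assume some $R_a\in S$ satisfies $\overline{k_a}=0$ in $\F$; I will exhibit a nonzero nilpotent left ideal of $\mathcal{T}$, which must be contained in $\mathcal{J}$.

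Consider $P=E_0^*A_a\in\mathcal{T}$. A direct calculation from the definitions of $E_0^*$ and $A_a$ gives $P=\sum_{j\in xR_a}E_{xj}$, a nonzero rank-one matrix whose unique nonzero row (the $x$-row) is the characteristic row vector of $xR_a$. This rank-one shape yields the identity $PTP=c(T)P$ for every $T\in\mathcal{T}$, where $c(T):=\sum_{j\in xR_a}T(j,x)\in\F$. Consequently $(\mathcal{T}P)^2=\mathcal{T}(P\mathcal{T}P)$ collapses to $\{O\}$ as soon as $c(T)=0$ for every $T\in\mathcal{T}$, which becomes the central claim.

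To prove this claim, let $\mathbf{1}$ be the all-ones column vector indexed by $X$ and set $\chi_b:=E_b^*\mathbf{1}$, so that $\chi_b$ is the characteristic column vector of $xR_b$. A short calculation from the scheme axioms shows $A_b\chi_c\in\langle\chi_0,\ldots,\chi_d\rangle_\F$ for all $R_b,R_c\in S$; combined with the trivial identities $E_b^*\chi_c=\delta_{bc}\chi_c$, this proves that $U:=\langle\chi_0,\chi_1,\ldots,\chi_d\rangle_\F$ is a $\mathcal{T}$-submodule of the space of column vectors indexed by $X$. Since the $x$-th standard basis column equals $\chi_0$, one has $c(T)=\chi_a^tT\chi_0$. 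Writing $T\chi_0=\sum_{c=0}^d\beta_c\chi_c\in U$ and using the orthogonality $\chi_a^t\chi_c=\overline{|xR_a\cap xR_c|}=\overline{k_a}\,\delta_{ac}$ yields $c(T)=\overline{k_a}\beta_a=0$, as desired.

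Combining the two steps gives $(\mathcal{T}P)^2=\{O\}$, so $\mathcal{T}P$ is a nonzero nilpotent left ideal of the finite-dimensional $\F$-algebra $\mathcal{T}$. Every such ideal lies in the Jacobson radical, so $\mathcal{J}\supseteq\mathcal{T}P\neq\{O\}$ and $\mathcal{T}$ is not semisimple. The only step that is not a routine calculation is the $\mathcal{T}$-invariance of $U$, which is an immediate consequence of the scheme axioms; the rest of the argument is the rank-one collapse of $PTP$ and the single observation that $\overline{k_a}$ annihilates the only surviving coefficient $\beta_a$.
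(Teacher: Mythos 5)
Your argument is correct, but it takes a genuinely different route from the one the paper relies on. The paper does not reprove this statement at all: it imports it from Hanaki \cite[Theorem 3.4]{Han1}, and the mechanism behind Hanaki's proof is visible in the paper's own proof of Corollary \ref{C;semicorollary}: if $p\mid k_a$ then $JE_a^*J=\overline{k_a}J=O$, so $\langle E_a^*JE_a^*\rangle_\F$ is a nonzero nilpotent ideal of the corner algebra $E_a^*\mathcal{T}E_a^*$ (Lemma \ref{L;Global-Local} (ii)), and semisimplicity of $\mathcal{T}$ would force semisimplicity of every corner $E_a^*\mathcal{T}E_a^*$ (Lemma \ref{L;Global-Local} (i)). You instead work globally inside $\mathcal{T}$: you take $P=E_0^*A_a=\sum_{j\in xR_a}E_{xj}$, use the rank-one identity $PTP=c(T)P$ with $c(T)=\sum_{j\in xR_a}T(j,x)$, and kill $c(T)$ for all $T\in\mathcal{T}$ via the $\mathcal{T}$-invariance of the span $U$ of the characteristic vectors $\chi_b=E_b^*\mathbf{1}$ (the primary module), since $A_b\chi_c=\sum_e\overline{p_{cb'}^e}\chi_e$ and $\chi_a^t\chi_c=\overline{k_a}\delta_{ac}=0$; this makes $\mathcal{T}P$ a nonzero square-zero left ideal, hence $\mathcal{J}\neq\{O\}$. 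All steps check out (including $P\neq O$ because $k_a>0$, and the fact that a nilpotent one-sided ideal of a finite-dimensional algebra lies in the radical). What your route buys is self-containedness: it avoids the corner-algebra reduction (Hanaki's Proposition 3.1) entirely; what the cited route buys is brevity and the sharper local statement that $E_a^*\mathcal{T}E_a^*$ itself fails to be semisimple, which the paper reuses in Corollary \ref{C;semicorollary}.
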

Theorem \ref{T;B} is proved by Theorem \ref{T;Maincitation}, Lemmas \ref{L;p>2Quasi-thin}, and \ref{L;Thinschemecase} (ii).

We deduce a corollary of Theorem \ref{T;B} as follows. To deduce this corollary, we need to introduce another notation and an additional lemma.
\begin{nota}\label{N;notation2}
\em For any $R_a, R_b\in S$ and $\mathcal{X}\subseteq \mathcal{T}$, set $E_a^*\mathcal{X}E_b^*=\{E_a^*ME_b^*: M\in \mathcal{X}\}$. Notice that $E_a^*\mathcal{T}E_a^*$ is an $\F$-subalgebra of $\mathcal{T}$ with the identity element $E_a^*$.
\end{nota}
\begin{lem}\label{L;Global-Local}
The following assertions hold.
\begin{enumerate}[(i)]
\item [\em (i)] \cite[Proposition 3.1]{Han1} The $\F$-algebra $\mathcal{T}$ \!is semisimple only if $E_a^*\mathcal{T}E_a^*$ is semisimple for all $R_a\in S$.
\item [\em (ii)] \cite[Lemma 3.3]{Han1} If $R_a\in S$, $\langle E_a^*JE_a^*\rangle_\F$ is a nonzero two-sided ideal of $E_a^*\mathcal{T}E_a^*$.
\end{enumerate}
\end{lem}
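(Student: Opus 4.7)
The plan is to handle the two assertions separately: (i) is a general fact about Peirce corners of semisimple algebras, whereas (ii) requires an explicit computation in $M_X(\F)$ using the triple-product machinery developed earlier in the paper.

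For (i), I would invoke the classical identity, valid for any idempotent $e$ in a finite-dimensional $\F$-algebra $A$, that the Jacobson radical of $eAe$ equals $e$ times the Jacobson radical of $A$ times $e$. Applied with $A=\mathcal{T}$ and $e=E_a^*$, semisimplicity of $\mathcal{T}$ forces $\mathcal{J}=\{O\}$, so the Jacobson radical of $E_a^*\mathcal{T}E_a^*$ equals $E_a^*\{O\}E_a^*=\{O\}$, whence $E_a^*\mathcal{T}E_a^*$ is semisimple. Only one inclusion of the Peirce identity is non-routine; I would prove it by showing that every element $x$ in the Jacobson radical of $eAe$ satisfies $x=exe$ and that $AxA$ is nil in $A$ (quasi-regularity in $eAe$ propagates to conjugates by elements of $A$), so $AxA$ lies in the Jacobson radical of $A$ and hence $x$ lies in $e\cdot(\text{Jacobson radical of }A)\cdot e$. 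Alternatively one may invoke the Wedderburn structure theorem for the semisimple algebra $\mathcal{T}$ to reduce to the transparent case of matrix algebras.

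For (ii), first note that $E_a^*JE_a^*\neq O$ because its $(i,j)$-entry equals $1$ for every $i,j\in xR_a$ and $xR_a$ is nonempty. To verify that $\langle E_a^*JE_a^*\rangle_\F$ is a two-sided ideal of $E_a^*\mathcal{T}E_a^*$, I would reduce the problem to the following claim: for every $M\in E_a^*\mathcal{T}E_a^*$, the $i$-row sum of $M$ is independent of $i\in xR_a$, and the $j$-column sum of $M$ is independent of $j\in xR_a$. Granting this, if $r$ and $c$ denote these common values respectively, the direct computations $M\cdot(E_a^*JE_a^*)=MJE_a^*=r\cdot E_a^*JE_a^*$ and $(E_a^*JE_a^*)\cdot M=E_a^*JM=c\cdot E_a^*JE_a^*$ establish closure under right and left multiplication. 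To prove the claim, I would use Lemma \ref{L;Linearsolution} to write every $M\in E_a^*\mathcal{T}E_a^*$ as an $\F$-linear combination of products $\prod_{u=0}^{n}(E_{i_u}^*A_{j_u}E_{\ell_u}^*)$ with $i_0=\ell_n=a$ and $\ell_v=i_{v+1}$ for $v\in[n-1]$, and induct on $n$. The base case $n=0$ is precisely Lemma \ref{L;generalresults}\,(i): the matrix $E_a^*A_{j_0}E_a^*$ has $p_{aj_0'}^a$ ones in every $i$-row ($i\in xR_a$) and $p_{aj_0}^a$ ones in every $j$-column ($j\in xR_a$). For the inductive step, split $P=(E_a^*A_{j_0}E_{\ell_0}^*)\cdot P'$ and observe that the $i$-row sum of $P$ factors as the product of the (constant over $xR_a$) $i$-row sum of $E_a^*A_{j_0}E_{\ell_0}^*$ and the (constant over $xR_{\ell_0}$, by induction) common row sum of $P'$, since only intermediate indices in $xR_{\ell_0}$ contribute to the matrix product. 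Column sums are handled symmetrically; this is essentially the same row/column arithmetic already used in the proof of Lemma \ref{L;Case4}.

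The main obstacle I anticipate is the Peirce identity for Jacobson radicals in (i); while classical, its careful proof requires a quasi-regularity argument, and I would probably cite a standard reference (e.g.\ Lam's \emph{First Course in Noncommutative Rings}) rather than reprove it in detail. The combinatorial induction for (ii) should proceed routinely once the row/column-sum identities of Lemma \ref{L;generalresults}\,(i) and the multiplicative pattern of Lemma \ref{L;Case4} are in place.
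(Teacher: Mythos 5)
Both parts of your proposal are correct, but note that the paper itself offers no proof of Lemma \ref{L;Global-Local}: both assertions are imported verbatim from Hanaki's paper \cite{Han1} (Proposition 3.1 and Lemma 3.3), so there is no in-paper argument to compare against; what you have written is essentially a reconstruction of the cited proofs. For (i), the Peirce-corner identity $\mathrm{rad}(E_a^*\mathcal{T}E_a^*)=E_a^*\,\mathcal{J}\,E_a^*$ (or just the inclusion $\subseteq$, which is all you use) is indeed the standard route; your sketch of the hard inclusion is slightly loosely phrased (``quasi-regularity propagates to conjugates''), but the underlying mechanism is sound in finite dimension: for $x$ in the radical of the corner, $x\mathcal{T}x=x(E_a^*\mathcal{T}E_a^*)x$ lands in a power of that radical, so $\mathcal{T}x\mathcal{T}$ is a nilpotent two-sided ideal of $\mathcal{T}$, hence lies in $\mathcal{J}$, giving $x=E_a^*xE_a^*\in E_a^*\mathcal{J}E_a^*$; citing a standard reference for this is perfectly acceptable, as is your alternative Wedderburn reduction.

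For (ii), your reduction is the right one and the computation $M(E_a^*JE_a^*)=rE_a^*JE_a^*$, $(E_a^*JE_a^*)M=cE_a^*JE_a^*$ is correct once the constancy of row and column sums is known. Two small points of hygiene: the induction should be stated for arbitrary products $\prod_{u=0}^{n}(E_{i_u}^*A_{j_u}E_{\ell_u}^*)$ (row sums constant on $xR_{i_0}$, column sums constant on $xR_{\ell_n}$), not only for those with $i_0=\ell_n=a$, since the inductive hypothesis must be applied to the truncated product $P'$, whose outer indices are $\ell_0$ and $\ell_n$; and the spanning statement you need is exactly Lemma \ref{L;Linearsolution} (i) together with $E_c^*E_e^*=\delta_{ce}E_c^*$, which kills all products whose outer indices differ from $a$. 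With these points made explicit, the induction goes through exactly as in your row/column arithmetic (the same bookkeeping as in Lemma \ref{L;Case4}, but without any valency hypothesis, since you compare row sums only with row sums), and the base case is Lemma \ref{L;generalresults} (i). Nonvanishing of $E_a^*JE_a^*$ is immediate since $k_a>0$. So the proposal is a complete and correct substitute for the citation.
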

We are now ready to state our corollary.
\begin{cor}\label{C;semicorollary}
Assume that $S$ is a quasi-thin scheme. Then the following assertions are equivalent.
\begin{enumerate}[(i)]
\item [\em (i)] The scheme $S$ is a $p'$-valenced scheme.
\item [\em (ii)]The $\F$-algebra $\mathcal{T}$ is semisimple.
\item [\em (iii)] The $\F$-algebra $E_a^*\mathcal{T}E_a^*$ is semisimple for all $R_a\in S$.
\end{enumerate}
\end{cor}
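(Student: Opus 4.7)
The plan is to establish a cycle of implications $(\mathrm{ii}) \Rightarrow (\mathrm{i}) \Rightarrow (\mathrm{ii}) \Rightarrow (\mathrm{iii}) \Rightarrow (\mathrm{i})$, exploiting the results already available.

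First, $(\mathrm{ii}) \Rightarrow (\mathrm{i})$ is immediate from Theorem \ref{T;Maincitation}, which needs no quasi-thin hypothesis. For $(\mathrm{i}) \Rightarrow (\mathrm{ii})$, the point is that being quasi-thin and $p'$-valenced severely restricts $p$: since every $k_a \in \{1,2\}$, the condition $p \nmid k_a$ for all $R_a \in S$ forces either $p \neq 2$, or else $p = 2$ and every valency equals one (so $S$ is thin). In both cases Theorem \ref{T;B} delivers the semisimplicity of $\mathcal{T}$. The implication $(\mathrm{ii}) \Rightarrow (\mathrm{iii})$ is nothing but Lemma \ref{L;Global-Local} (i), again without using the quasi-thin assumption.

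The remaining step $(\mathrm{iii}) \Rightarrow (\mathrm{i})$ is the content of the corollary and is where the quasi-thin hypothesis enters. I would argue by contrapositive: assume $S$ is not $p'$-valenced, and exhibit some $R_a \in S$ for which $E_a^*\mathcal{T}E_a^*$ is not semisimple. Since $S$ is quasi-thin, any $R_a \in S$ with $p \mid k_a$ must satisfy $k_a = 2$ and $p = 2$. Pick such an $R_a$. By Lemma \ref{L;Global-Local} (ii), $\langle E_a^*JE_a^*\rangle_\F$ is a nonzero two-sided ideal of the unital $\F$-algebra $E_a^*\mathcal{T}E_a^*$. Using the identity $JE_a^*J = \overline{k_a}\,J$ noted in Section 2, one computes
\[
(E_a^*JE_a^*)^2 \;=\; E_a^*(JE_a^*J)E_a^* \;=\; \overline{k_a}\,E_a^*JE_a^* \;=\; \overline{2}\,E_a^*JE_a^* \;=\; O,
\]
so this ideal is nilpotent. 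Hence it lies in the Jacobson radical of $E_a^*\mathcal{T}E_a^*$, which is therefore nonzero, contradicting (iii).

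Closing the cycle this way, the four implications together give the equivalence. The only step requiring real thought is $(\mathrm{iii}) \Rightarrow (\mathrm{i})$, and even there the main obstacle is just noticing that quasi-thinness forces the offending valency to be exactly $2$ and the offending characteristic to be exactly $2$, at which point the simple computation $(E_a^*JE_a^*)^2 = O$ finishes the argument.
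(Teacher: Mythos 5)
Your proposal is correct and follows essentially the same route as the paper: the equivalence of (i) and (ii) via Theorem \ref{T;B} (the paper folds your two halves into one appeal to that theorem, while you also invoke Theorem \ref{T;Maincitation}), (ii) $\Rightarrow$ (iii) via Lemma \ref{L;Global-Local} (i), and (iii) $\Rightarrow$ (i) by exhibiting $\langle E_a^*JE_a^*\rangle_\F$ as a nonzero nilpotent two-sided ideal of $E_a^*\mathcal{T}E_a^*$ using $JE_a^*J=\overline{k_a}J$ and Lemma \ref{L;Global-Local} (ii). The only cosmetic difference is that the paper's last step does not need the quasi-thin reduction to $k_a=2$, $p=2$, since $p\mid k_a$ already gives $JE_a^*J=O$.
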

\begin{proof}
By Theorem \ref{T;B}, (i) is equivalent to (ii). By Lemma \ref{L;Global-Local} (i), (ii) implies (iii). If $S$ is not a $p'$-valenced scheme, then there exists $R_a\in S$ such that $p\mid k_a$. Since $JE_a^*J=\overline{k_a}J=O$, by Lemma \ref{L;Global-Local} (ii), $\langle E_a^*JE_a^*\rangle_\F$ is a nonzero two-sided nilpotent ideal of $E_a^*\mathcal{T}E_a^*$, which implies that $E_a^*\mathcal{T}E_a^*$ is not semisimple. So (iii) implies (i).
\end{proof}
\begin{eg}\label{E;example4}
\em In general case, $\mathcal{T}$ is not always semisimple even if Corollary \em \ref{C;semicorollary} (iii) \em holds. For a counterexample, one can refer to \cite[5.1]{Han1}.
\end{eg}
We end this section with a proposition that may have independent interests.
\begin{prop}\label{P;independentinterest}
Assume that $E_a^*\mathcal{J}E_b^*=\{O\}$ for all distinct $R_a, R_b\in S$. Then $\mathcal{T}$ is semisimple if and only if $E_c^*\mathcal{T}E_c^*$ is semisimple for all $R_c\in S$.
\end{prop}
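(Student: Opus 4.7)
The forward direction is already supplied by Lemma~\ref{L;Global-Local}~(i), so the only thing that needs proof is the converse: assuming each $E_c^*\mathcal{T}E_c^*$ is semisimple (and that $E_a^*\mathcal{J}E_b^*=\{O\}$ for distinct $R_a,R_b\in S$), one wants to deduce $\mathcal{J}=\{O\}$.

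My plan is to take an arbitrary $M\in\mathcal{J}$ and use the identity $M=IMI=\sum_{a=0}^{d}\sum_{b=0}^{d}E_a^*ME_b^*$. Each summand lies in $E_a^*\mathcal{J}E_b^*$, so the hypothesis immediately kills all off-diagonal terms, leaving $M=\sum_{c=0}^{d}E_c^*ME_c^*$. It therefore suffices to show that $E_c^*ME_c^*=O$ for every $R_c\in S$, i.e.\ that $E_c^*\mathcal{J}E_c^*=\{O\}$ for every $R_c\in S$.

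To do this I would show that $E_c^*\mathcal{J}E_c^*$ is a nilpotent two-sided ideal of the unital $\F$-algebra $E_c^*\mathcal{T}E_c^*$ (with identity $E_c^*$). The ideal property is a short direct check: for any $E_c^*tE_c^*\in E_c^*\mathcal{T}E_c^*$ and any $E_c^*mE_c^*\in E_c^*\mathcal{J}E_c^*$, the product $E_c^*tE_c^*\cdot E_c^*mE_c^*$ equals $E_c^*(tE_c^*m)E_c^*$ with $tE_c^*m\in \mathcal{T}\cdot\mathcal{J}\subseteq\mathcal{J}$, and symmetrically on the other side. Nilpotency follows from the fact that $\mathcal{T}$ is a finite-dimensional $\F$-algebra, so its Jacobson radical $\mathcal{J}$ is nilpotent, say $\mathcal{J}^n=\{O\}$; then $(E_c^*\mathcal{J}E_c^*)^n\subseteq E_c^*\mathcal{J}^nE_c^*=\{O\}$.

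Since $E_c^*\mathcal{T}E_c^*$ is assumed semisimple, its Jacobson radical is zero, and hence contains no nonzero nilpotent two-sided ideal. Therefore $E_c^*\mathcal{J}E_c^*=\{O\}$ for each $R_c\in S$, which combined with the vanishing of the off-diagonal blocks forces $M=O$, as required. There is no real obstacle here — the only mildly subtle point is remembering that $E_c^*\mathcal{J}E_c^*$ really is a two-sided ideal of $E_c^*\mathcal{T}E_c^*$ (not just of $\mathcal{T}$), and that one must invoke finite-dimensionality to get nilpotency of $\mathcal{J}$ rather than merely quasi-regularity.
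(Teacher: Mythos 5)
Your argument is correct and follows essentially the same route as the paper: decompose $M\in\mathcal{J}$ as $\sum_{a,b}E_a^*ME_b^*$, kill the off-diagonal blocks by the hypothesis, and observe that each diagonal block lies in $E_c^*\mathcal{J}E_c^*$, which is a nilpotent two-sided ideal of the semisimple algebra $E_c^*\mathcal{T}E_c^*$ and hence zero (the paper phrases this contrapositively, and your filling in of the ideal and nilpotency checks is exactly what the paper leaves implicit).
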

\begin{proof}
One direction is from Lemma \ref{L;Global-Local} (i). For the other direction, assume that $\mathcal{T}$ is not semisimple. There is $M\in \mathcal{J}$ such that $M\neq O$. By the hypothesis, we have
$$O\neq M=IMI=(\sum_{e=0}^dE_{e}^*)M(\sum_{f=0}^dE_{f}^*)=\sum_{e=0}^dE_e^*ME_e^*,$$
which implies that there exists $R_g\in S$ such that $O\neq E_g^*ME_g^*\in E_g^*\mathcal{J}E_g^*$. Note that $E_g^*\mathcal{J}E_g^*$ is a nonzero two-sided nilpotent ideal of $E_g^*\mathcal{T}E_g^*$, which implies that $E_g^*\mathcal{T}E_g^*$ is not semisimple. So $\mathcal{T}$ is semisimple if $E_c^*\mathcal{T}E_c^*$ is semisimple for all $R_c\in S$.
\end{proof}
\section{Jacobson radicals of Terwilliger $\F$-\!algebras of quasi-thin schemes}
In this section, we show Theorem \ref{T;C}. By Theorems \ref{T;B} and \ref{T;C}, we get the Jacobson radicals of Terwilliger $\F$-algebras of quasi-thin schemes. For our purpose, recall that $\mathcal{J}$ is the Jacobson radical of $\mathcal{T}$. Moreover, we recall Notation \ref{N;notation1} and the definition of $E_{ab}$ in Subsection \ref{Sc:subsection}. We list some preliminary results as follows.
\begin{lem}\label{L;Basistranspose}
If $(a,b)\in \mathcal{U}$, then $(b,a)\in \mathcal{U}$. Moreover, $B_{ab}^t=B_{ba}$.
\end{lem}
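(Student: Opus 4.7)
The plan is to handle the two halves of the claim independently, using only the defining symmetries and Lemma \ref{L;Intersectionnumber}.

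For the first assertion, I would split into the cases $(a,b)\in\mathcal{R}$ and $(a,b)\in\mathcal{S}$. If $(a,b)\in\mathcal{R}$, then $k_a=k_b=|R_{a'}R_b|=2$, and the symmetry $k_a=k_{a'}$ from Lemma \ref{L;Intersectionnumber}(i) already handles the valencies. For the complex product, I would observe that $A_{b'}A_a=(A_{a'}A_b)^t$, so expanding both sides with \eqref{Eq:preliminary1} and comparing coefficients gives $p_{b'a}^{c}=p_{a'b}^{c'}$; since $c\mapsto c'$ is a bijection on $[d]$, this shows $|R_{b'}R_a|=|R_{a'}R_b|=2$, hence $(b,a)\in\mathcal{R}$.

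The bad-pair case is where the real content lies: given a chain $R_{i_0},R_{j_0},R_{\ell_0},\ldots,R_{i_a},R_{j_a},R_{\ell_a}$ witnessing that $(i_0,\ell_a)\in\mathcal{S}$ in the sense of Definition \ref{D;badpair}, I would build a reversed chain by setting $i'_b=\ell_{a-b}$, $\ell'_b=i_{a-b}$, $j'_b=j_{a-b}{}'$ for $b\in[a]$. The valency conditions $k_{i'_b}=k_{\ell'_b}=2$ and the consecutivity condition $\ell'_c=i'_{c+1}$ are immediate from the original chain. The intersection-number condition $p_{i'_b j'_b}^{\ell'_b}=1$ becomes $p_{\ell_{a-b}\,j_{a-b}'}^{\,i_{a-b}}=1$, and this is where Lemma \ref{L;Intersectionnumber}(iv) does the work: it gives $k_{\ell_{a-b}}p_{i_{a-b}j_{a-b}}^{\ell_{a-b}}=k_{i_{a-b}}p_{\ell_{a-b}j_{a-b}'}^{i_{a-b}}$, and since both valencies equal $2$, the right equality follows from $p_{i_{a-b}j_{a-b}}^{\ell_{a-b}}=1$. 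Finally $|R_{(i'_0)'}R_{\ell'_a}|=|R_{\ell_a'}R_{i_0}|=|R_{i_0'}R_{\ell_a}|=1$ by the equality $|R_{b'}R_a|=|R_{a'}R_b|$ proved above, so $(\ell_a,i_0)\in\mathcal{S}$.

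For the second assertion, I would unwind the definition of $B_{ab}$ from Notation \ref{N;notation1}. Since $(a,b)\in\mathcal{U}$ forces $k_a=k_b=2$, writing $xR_a=\{u_1,u_2\}$, $xR_b=\{v_1,v_2\}$ with $u_1\prec u_2$ and $v_1\prec v_2$, we have $B_{ab}=E_{u_1v_1}+E_{u_2v_2}$, so $B_{ab}^t=E_{v_1u_1}+E_{v_2u_2}$, which is exactly $B_{ba}$ given the ordering convention on $xR_b$ and $xR_a$.

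There is no real obstacle here; the only mildly delicate point is the verification that the reversed chain's intersection-number conditions follow from the original ones, and this is pinned down by the single application of Lemma \ref{L;Intersectionnumber}(iv) together with $k_a=k_{a'}$.
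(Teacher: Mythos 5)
Your overall route is the same as the paper's: split $\mathcal{U}=\mathcal{R}\cup\mathcal{S}$, reverse the witnessing chain with $i'_b=\ell_{a-b}$, $j'_b=j_{a-b}{}'$, $\ell'_b=i_{a-b}$, verify $p_{\ell_{a-b}j_{a-b}'}^{i_{a-b}}=1$ via Lemma \ref{L;Intersectionnumber}(iv) and the valencies $k_{i_{a-b}}=k_{\ell_{a-b}}=2$, and obtain $B_{ab}^t=B_{ba}$ by transposing $E_{u_1v_1}+E_{u_2v_2}$; this is exactly the paper's argument.

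The one step whose justification does not work as written is your derivation of $p_{b'a}^{c}=p_{a'b}^{c'}$ from the identity $A_{b'}A_a=(A_{a'}A_b)^t$ and \eqref{Eq:preliminary1}. The relation \eqref{Eq:preliminary1} lives over $\F$, so comparing coefficients only gives $\overline{p_{b'a}^{c}}=\overline{p_{a'b}^{c'}}$ in $\F$, i.e.\ a congruence modulo $p$. In characteristic $2$ -- the case this paper cares most about -- a quasi-thin scheme can have intersection numbers equal to $2$, and then $\overline{p_{a'b}^{c'}}=0$ tells you nothing about whether $p_{b'a}^{c}$ is $0$ or $2$; so neither $|R_{b'}R_a|=2$ in your $\mathcal{R}$ case nor $|R_{\ell_a'}R_{i_0}|=1$ at the end of your $\mathcal{S}$ case actually follows from the congruence. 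The fix is immediate and is what the paper does: the equality $p_{b'a}^{c}=p_{a'b}^{c'}$ is an \emph{integer} identity, obtained from Lemma \ref{L;Intersectionnumber}(i) (apply it to the triple $(b,a',c')$), and combined with Lemma \ref{L;Intersectionnumber}(v) it transfers both $|R_{a'}R_b|=2$ and $|R_{a'}R_b|=1$ to $R_{b'}R_a$. With that substitution your proof coincides with the paper's.
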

\begin{proof}
For the first assertion, as $\mathcal{U}=\mathcal{R}\cup \mathcal{S}$, we check $(b,a)\in \mathcal{U}$ by two cases.
\begin{enumerate}[\text{Case} 1:]
\item $(a,b)\in \mathcal{R}$.
\end{enumerate}
In this case, we have $k_a=k_b=|R_{a'}R_b|=2$. So there exist distinct $R_u, R_v\in S$ such that $p_{a'b}^u>0$ and $p_{a'b}^v>0$. Therefore we have $p_{b'a}^{u'}>0$ and $p_{b'a}^{v'}>0$ by Lemma \ref{L;Intersectionnumber} (i). Observe that $R_{u'}\neq R_{v'}$ since $u\neq v$. So we have $k_a=k_b=2$ and $R_{b'}R_a=\{R_{u'}, R_{v'}\}$ by Lemma \ref{L;Intersectionnumber} (v). We thus have $(b,a)\in \mathcal{R}\subseteq\mathcal{U}$.
\begin{enumerate}[\text{Case} 2:]
\item $(a,b)\in \mathcal{S}$.
\end{enumerate}
In this case, by Definition \ref{D;badpair}, there exist $m\in \mathbb{N}$ and
$$ R_{a_0}, R_{b_0}, R_{c_0}, R_{a_1}, R_{b_1}, R_{c_1}, \ldots, R_{a_m}, R_{b_m}, R_{c_m}\in S$$
such that $a_0=a$, $c_m=b$, $k_{a_h}=k_{c_h}=2$ and $p_{a_hb_h}^{c_h}=|R_{a_0'}R_{c_m}|=1$ for all $h\in [m]$, $c_k=a_{k+1}$ for all $k\in [m-1]$. Set $i_h=c_{m-h}$, $j_h=b_{m-h}'$, and $\ell_h=a_{m-h}$ for all $h\in [m]$ and consider the sequence
\begin{align*}\label{Eq:21}
\tag{6.1}R_{i_0}, R_{j_0}, R_{\ell_0},R_{i_1}, R_{j_1}, R_{\ell_1}, \ldots, R_{i_m}, R_{j_m}, R_{\ell_m}.
\end{align*}
Note that $i_0=c_m=b$ and $\ell_m=a_0=a$. Notice that $k_{i_h}=k_{c_{m-h}}=k_{a_{m-h}}=k_{\ell_h}=2$ for all $h\in [m]$. By Lemma \ref{L;Intersectionnumber} (iv), we thus get  $p_{i_hj_h}^{\ell_h}=p_{c_{m-h}b_{m-h}'}^{a_{m-h}}=p_{a_{m-h}b_{m-h}}^{c_{m-h}}=1$ for all $h\in [m]$. Note that $\ell_k=a_{m-k}=c_{m-k-1}=i_{k+1}$ for all $k\in [m-1]$. As we have $|R_{a'}R_b|=|R_{a_0'}R_{c_m}|=1$, we have $R_{a'}R_b=\{R_n\}\subseteq S$. For any $R_i\in R_{b'}R_a$, note that $p_{b'a}^i>0$ and $p_{a'b}^{i'}>0$ by Lemma \ref{L;Intersectionnumber} (i). So we have $i=n'$ as $R_{a'}R_b=\{R_n\}$. So we have $R_{b'}R_a=\{R_{n'}\}$ and $|R_{i_0'}R_{\ell_m}|=|R_{b'}R_a|=1$. We now can deduce that $(b,a)=(i_0,\ell_m)\in \mathcal{S}\subseteq\mathcal{U}$ as Definition \ref{D;badpair} (i) and (ii) hold for \eqref{Eq:21}.

The first assertion is shown by the listed cases. For the second assertion, since $(a,b)\in \mathcal{U}$, notice that $k_a=k_b=2$ by the definition of $\mathcal{U}$. Therefore we can set $xR_a=\{u_1, u_2\}$ and $xR_b=\{v_1, v_2\}$, where $u_1\prec u_2$ and $v_1\prec v_2$. According to the first assertion, note that $(b,a)\in \mathcal{U}$. By the definitions of $B_{ab}$ and $B_{ba}$, we deduce that $$B_{ab}^t=(E_{u_1v_1}+E_{u_2v_2})^t=E_{v_1u_1}+E_{v_2u_2}=B_{ba}.$$
The second assertion is shown. The proof is now complete.
\end{proof}
The following corollary may have independent interests.
\begin{cor}\label{C;transposebasis}
Let $M\in \mathcal{B}$. Then $M^t\in \mathcal{B}$.
\end{cor}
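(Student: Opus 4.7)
The plan is a short case split on which of the two sets $\mathcal{V}$ or $\mathcal{W}$ the element $M$ belongs to, since $\mathcal{B}=\mathcal{V}\cup\mathcal{W}$. In either case, the transpose will land in the same piece, so $M^t\in\mathcal{B}$.

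First I would handle $M\in\mathcal{W}$. Here $M=E_y^*JE_z^*$ for some $R_y,R_z\in S$. Since $J^t=J$ and the dual idempotents are diagonal $(0,1)$-matrices, $M^t=(E_y^*JE_z^*)^t=E_z^*JE_y^*\in\mathcal{W}\subseteq\mathcal{B}$.

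Next I would handle $M\in\mathcal{V}$. Here $M=B_{ab}$ for some $(a,b)\in\mathcal{U}$. This is exactly the content of Lemma~\ref{L;Basistranspose}: the lemma gives us both that $(b,a)\in\mathcal{U}$ (so that $B_{ba}$ is actually a member of $\mathcal{V}$) and that $B_{ab}^t=B_{ba}$. Combining these yields $M^t=B_{ba}\in\mathcal{V}\subseteq\mathcal{B}$.

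There is essentially no obstacle here: all the combinatorial work (checking that $\mathcal{U}$ is closed under swapping coordinates, which requires separately treating the pairs coming from $\mathcal{R}$ and those coming from $\mathcal{S}$, and verifying the transpose formula for $B_{ab}$) has already been absorbed into Lemma~\ref{L;Basistranspose}. The corollary is a clean two-line consequence.
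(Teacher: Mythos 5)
Your proposal is correct and matches the paper's own proof essentially verbatim: both split $\mathcal{B}=\mathcal{V}\cup\mathcal{W}$, transpose $E_y^*JE_z^*$ directly to get $E_z^*JE_y^*\in\mathcal{W}$, and invoke Lemma \ref{L;Basistranspose} for the $\mathcal{V}$ case. No gaps.
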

\begin{proof}
Recall that $\B=\mathcal{V}\cup \mathcal{W}$. If $M\in \mathcal{V}$, there is $(a,b)\in \mathcal{U}$ such that $M=B_{ab}$. By Lemma \ref{L;Basistranspose}, we have $(b,a)\in \mathcal{U}$ and $M^t=B_{ab}^t=B_{ba}\in \mathcal{V}\subseteq\B$. If $M\in \mathcal{W}$, there exist $R_i, R_j\in S$ such that $M=E_i^*JE_j^*$. So we have $M^t=E_j^*JE_i^*\in \mathcal{W}\subseteq \B$. The desired corollary thus follows.
\end{proof}
\begin{lem}\label{L;Computation}
Let $\delta_{\alpha\beta}$ be the Kronecker delta of integers $\alpha,\beta$ whose values are in $\F$.
\begin{enumerate}[(i)]
\item [\em (i)] If $R_a, R_b, R_c, R_e\in S$, then $E_a^*JE_b^*E_c^*JE_e^*=\delta_{bc}\overline{k_b}E_a^*JE_e^*$.
\item [\em (ii)] If $R_a, R_b\in S$ and $(c,e)\in \mathcal{U}$, then $E_a^*JE_b^*B_{ce}=\delta_{bc}E_a^*JE_e^*$.
\item [\em (iii)] If $R_a, R_b\in S$ and $(c,e)\in \mathcal{U}$, then $B_{ce}E_a^*JE_b^*=\delta_{ae}E_c^*JE_b^*$.
\end{enumerate}
\end{lem}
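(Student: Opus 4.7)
My plan is to prove the three identities by a direct computation, using the dual-idempotent relations $E_b^*E_c^*=\delta_{bc}E_b^*$, $\sum_{k}E_k^*=I$, $JE_b^*J=\overline{k_b}J$ from Subsection 2.4, together with Lemma \ref{L;Badpairlemma}(ii) and the explicit form $B_{ce}=E_{u_1v_1}+E_{u_2v_2}$ of the basis matrices from Notation \ref{N;notation1}. Part (i) is essentially immediate: inserting $E_b^*E_c^*=\delta_{bc}E_b^*$ gives $E_a^*JE_b^*E_c^*JE_e^*=\delta_{bc}E_a^*JE_b^*JE_e^*$, and then $JE_b^*J=\overline{k_b}J$ collapses this to $\delta_{bc}\overline{k_b}E_a^*JE_e^*$.

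For (ii), the key intermediate observation I would establish first is the pair of absorbing identities
\[
E_b^*B_{ce}=\delta_{bc}B_{ce},\qquad B_{ce}E_a^*=\delta_{ae}B_{ce},
\]
both of which follow from Lemma \ref{L;Badpairlemma}(ii) by summing the equality $E_g^*B_{ce}E_h^*=\delta_{gc}\delta_{eh}B_{ce}$ over the free index ($h$ for the first, $g$ for the second), using $\sum_h E_h^*=I$. Next I would establish the ``collapse of $J$ against $B_{ce}$'' identities
\[
JB_{ce}=JE_e^*,\qquad B_{ce}J=E_c^*J,
\]
by writing $B_{ce}=E_{u_1v_1}+E_{u_2v_2}$ with $xR_c=\{u_1,u_2\}$ and $xR_e=\{v_1,v_2\}$, and using $JE_{uv}=\sum_{y\in X}E_{yv}$ and $E_{uv}J=\sum_{y\in X}E_{uy}$ together with $E_e^*=E_{v_1v_1}+E_{v_2v_2}$, $E_c^*=E_{u_1u_1}+E_{u_2u_2}$. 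Then (ii) follows by the chain
\[
E_a^*JE_b^*B_{ce}=E_a^*J(E_b^*B_{ce})=\delta_{bc}E_a^*JB_{ce}=\delta_{bc}E_a^*JE_e^*,
\]
and (iii) follows symmetrically from $B_{ce}E_a^*=\delta_{ae}B_{ce}$ and $B_{ce}J=E_c^*J$. Alternatively (iii) can be deduced from (ii) by taking transposes, noting that $B_{ce}^t=B_{ec}$ by Lemma \ref{L;Basistranspose} and $(E_a^*JE_b^*)^t=E_b^*JE_a^*$.

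There is no real obstacle here; the only slightly nontrivial bookkeeping is the direct verification of $JB_{ce}=JE_e^*$ and $B_{ce}J=E_c^*J$ from the explicit rank-two description of $B_{ce}$, but this is a one-line matrix calculation once the indexing of the rows and columns via $xR_c$ and $xR_e$ is fixed.
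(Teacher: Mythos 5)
Your proposal is correct and follows essentially the same route as the paper: part (i) via $E_b^*E_c^*=\delta_{bc}E_b^*$ and $JE_b^*J=\overline{k_b}J$, and parts (ii)--(iii) via Lemma \ref{L;Badpairlemma}(ii) together with the explicit rank-two form of $B_{ce}$ (your identity $JB_{ce}=JE_e^*$ is exactly the paper's computation $E_a^*J(E_{r_1s_1}+E_{r_2s_2})=E_a^*JE_e^*$), with the transpose argument via Lemma \ref{L;Basistranspose} being precisely how the paper handles (iii). The only difference is cosmetic: you package the steps as intermediate absorbing identities rather than computing in one chain.
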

\begin{proof}
For (i), if $b\neq c$, then we have $E_a^*JE_b^*E_c^*JE_e^*=O$. If $b=c$, then we have $E_a^*JE_b^*JE_e^*=E_a^*(JE_b^*J)E_e^*=\overline{k_b}E_a^*JE_e^*$, which completes the proof of (i).

For (ii), since $(c,e)\in \mathcal{U}$, note that $k_c=k_e=2$ by the definition of $\mathcal{U}$. Therefore we can set $xR_c=\{r_1, r_2\}$ and $xR_e=\{s_1, s_2\}$, where $r_1\prec r_2$ and $s_1\prec s_2$. If $b\neq c$, then $E_a^*JE_b^*B_{ce}=E_a^*JE_b^*E_c^*B_{ce}E_e^*=O$ by Lemma \ref{L;Badpairlemma} (ii). If $b=c$, then
\begin{align*}
E_a^*JE_c^*B_{ce}=E_a^*JE_c^*E_c^*B_{ce}E_e^*&=E_a^*JE_c^*B_{ce}E_e^*\\
&=E_a^*JB_{ce}\\
&=E_a^*J(E_{r_1s_1}+E_{r_2s_2})=E_a^*J(E_{s_1s_1}+E_{s_2s_2})=E_a^*JE_e^*
\end{align*}
by Lemma \ref{L;Badpairlemma} (ii) and the definitions of $B_{ce}$, $E_e^*$. The proof of (ii) is now complete.

For (iii), as $(c,e)\in \mathcal{U}$, we have $(e,c)\in \mathcal{U}$ by Lemma \ref{L;Basistranspose}. If $a\neq e$, then we have $B_{ce}E_a^*JE_b^*=(E_b^*JE_a^*B_{ec})^t=O$ by Lemma \ref{L;Basistranspose} and (ii). If $a=e$, then we also have $B_{ce}E_a^*JE_b^*=(E_b^*JE_a^*B_{ec})^t=(E_b^*JE_c^*)^t=E_c^*JE_b^*$ by Lemma \ref{L;Basistranspose} and (ii). The proof of (iii) is now complete.
\end{proof}
For further discussion, we need to introduce the following notation.
\begin{nota}\label{N;notation3}
\em Set $\mathcal{J}_0=\langle \mathcal{W}\rangle_\F$ and put $\mathcal{J}_1=\langle\{E_a^*JE_b^*\in \mathcal{W}: \max\{k_a, k_b\}=2\}\rangle_\F$. Note that $\mathcal{J}_1\subseteq\mathcal{J}_0\subseteq\mathcal{T}$.
\end{nota}
By Notation \ref{N;notation3} and Lemma \ref{L;generalresults} (iii), the proof of the following lemma is obvious.
\begin{lem}\label{L;Transpose}
The following assertions hold.
\begin{enumerate}[(i)]
\item [\em (i)] We have $M\in \mathcal{J}_0$ if and only if $M^t\in \mathcal{J}_0$.
\item [\em (ii)] We have $M\in \mathcal{J}_1$ if and only if $M^t\in \mathcal{J}_1$.
\item [\em (iii)] We have $\mathcal{J}_1$ is the zero space if and only if $k_a\neq 2$ for all $R_a\in S$.
\end{enumerate}
\end{lem}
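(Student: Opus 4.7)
The plan is to leverage the $\F$-linear independence of $\mathcal{W}$ established in Lemma \ref{L;generalresults} (iii), which reduces every assertion to a combinatorial statement about index pairs $(a,b)$.

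For (i), the key computation is $(E_a^*JE_b^*)^t = E_b^*JE_a^*$, so the transpose operation sends each member of $\mathcal{W}$ to another member of $\mathcal{W}$ (via the index-swap $(a,b)\mapsto(b,a)$). I would write an arbitrary $M\in\mathcal{J}_0$ as a (unique) $\F$-linear combination of elements of $\mathcal{W}$ and take transposes term-by-term to conclude $M^t\in\langle\mathcal{W}\rangle_\F=\mathcal{J}_0$. The reverse implication is immediate by symmetry since $(M^t)^t=M$. For (ii), the same argument works verbatim once one observes that the condition $\max\{k_a,k_b\}=2$ is symmetric in $a$ and $b$, so the distinguished generating subset of $\mathcal{W}$ is likewise preserved by the index-swap.

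For (iii), since $\{E_a^*JE_b^*\in\mathcal{W}:\max\{k_a,k_b\}=2\}\subseteq\mathcal{W}$ is $\F$-linearly independent (again by Lemma \ref{L;generalresults} (iii)), the subspace $\mathcal{J}_1$ it generates is the zero space if and only if this generating set is empty. If $k_a\neq 2$ for every $R_a\in S$, no pair $(a,b)$ can satisfy $\max\{k_a,k_b\}=2$, so the set is empty. Conversely, if some $R_c\in S$ has $k_c=2$, then $(c,c)$ witnesses that the set contains $E_c^*JE_c^*\neq O$, so $\mathcal{J}_1\neq\{O\}$.

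There is no real obstacle here; the assertions are formal consequences of the linear independence of $\mathcal{W}$ together with the obvious symmetry properties of the transpose and of the condition $\max\{k_a,k_b\}=2$. The proof amounts to a single sentence for each part once these observations are recorded.
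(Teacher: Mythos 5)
Your proposal is correct and matches the paper's intent: the paper omits the proof, declaring it obvious from Notation \ref{N;notation3} and the linear independence of $\mathcal{W}$ in Lemma \ref{L;generalresults} (iii), and your argument (transpose permutes the spanning sets via $(E_a^*JE_b^*)^t=E_b^*JE_a^*$, and $\mathcal{J}_1\neq\{O\}$ exactly when some $k_c=2$, witnessed by the pair $(c,c)$) is precisely that routine verification. No gaps.
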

We are now ready to deduce two corollaries of Lemmas \ref{L;Computation} and \ref{L;Transpose}.
\begin{cor}\label{C;ideal0}
Assume that $S$ is a quasi-thin scheme.
\begin{enumerate}[(i)]
\item [\em (i)] We have $MN, NM\in \mathcal{J}_0$ if $M\in \mathcal{T}$ and $N\in \mathcal{J}_0$.
\item [\em (ii)] We have $\mathcal{J}_0$ is a two-sided ideal of $\mathcal{T}$.
\end{enumerate}
\end{cor}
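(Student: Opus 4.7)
The plan is to reduce part (ii) to part (i). By Notation \ref{N;notation3}, $\mathcal{J}_0 = \langle\mathcal{W}\rangle_\F$ is an $\F$-subspace of $\mathcal{T}$, so it is already closed under addition and scalar multiplication. Therefore (ii) is just the statement that $\mathcal{T}\mathcal{J}_0 \subseteq \mathcal{J}_0$ and $\mathcal{J}_0\mathcal{T} \subseteq \mathcal{J}_0$, which is exactly (i). Hence everything reduces to proving (i).

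For (i), I would invoke Corollary \ref{C;basis}, which (using that $S$ is quasi-thin) furnishes the explicit $\F$-basis $\mathcal{B} = \mathcal{V} \cup \mathcal{W}$ of $\mathcal{T}$. Writing $M \in \mathcal{T}$ as an $\F$-linear combination of elements of $\mathcal{V} \cup \mathcal{W}$ and $N \in \mathcal{J}_0$ as an $\F$-linear combination of elements of $\mathcal{W}$, the products $MN$ and $NM$ expand, by $\F$-bilinearity, into $\F$-linear combinations of the four elementary product shapes
\[
(E_a^*JE_b^*)(E_c^*JE_e^*), \quad B_{ab}(E_c^*JE_e^*), \quad (E_a^*JE_b^*)(E_c^*JE_e^*), \quad (E_a^*JE_b^*)B_{ce},
\]
where $R_a,R_b,R_c,R_e \in S$ and $(a,b),(c,e)\in\mathcal{U}$ wherever a symbol $B$ appears. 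By Lemma \ref{L;Computation} (i), (iii), (i), (ii) respectively, each of these products is either zero or a scalar multiple of a single element of $\mathcal{W}$. In every case the product lies in $\langle\mathcal{W}\rangle_\F = \mathcal{J}_0$, and therefore $MN, NM \in \mathcal{J}_0$.

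The proof is essentially a bookkeeping exercise: the three parts of Lemma \ref{L;Computation} were tailored precisely to handle the four product types that can arise, and Corollary \ref{C;basis} supplies the basis needed to reduce the general case to these elementary products. There is no real obstacle here; the only point to keep in mind is that the quasi-thin hypothesis is used implicitly through Corollary \ref{C;basis} (to ensure $\mathcal{B}$ spans $\mathcal{T}$), which is why the assumption on $S$ cannot be dropped from the statement.
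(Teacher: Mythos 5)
Your proposal is correct and follows essentially the same route as the paper: reduce by $\F$-bilinearity to the case $M\in\mathcal{B}=\mathcal{V}\cup\mathcal{W}$ and $N\in\mathcal{W}$ (using Corollary \ref{C;basis}, which is where quasi-thinness enters), then apply Lemma \ref{L;Computation} (i), (ii), (iii) to see each elementary product lies in $\langle\mathcal{W}\rangle_\F=\mathcal{J}_0$, with (ii) following immediately since $\mathcal{J}_0$ is an $\F$-subspace of $\mathcal{T}$. Aside from listing the product shape $(E_a^*JE_b^*)(E_c^*JE_e^*)$ twice, there is nothing to change.
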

\begin{proof}
For (i), since $S$ is a quasi-thin scheme, $\B$ is an $\F$-basis of $\mathcal{T}$ by Corollary \ref{C;basis}. Therefore there is no loss to assume that $M\in \B$ as $\mathcal{J}_0$ is an $\F$-linear space. Since $\mathcal{J}_0=\langle\mathcal{W}\rangle_\F$, there is also no loss to assume that $N\in \mathcal{W}$. As $\B=\mathcal{V}\cup\mathcal{W}$, by Lemma \ref{L;Computation} (i), (ii), and (iii), we have $MN, NM\in \mathcal{J}_0$. (i) thus follows. As $\mathcal{J}_0=\langle\mathcal{W}\rangle_\F\subseteq\mathcal{T}$, (ii) is shown by (i). The proof is now complete.
\end{proof}
\begin{cor}\label{C;ideal1}
Assume that $p=2$ and $S$ is a quasi-thin scheme.
\begin{enumerate}[(i)]
\item [\em (i)] We have $MN, NM\in \mathcal{J}_1$ if $M\in \mathcal{T}$ and $N\in \mathcal{J}_1$.
\item [\em (ii)] We have $N^3=O$ if $N\in \mathcal{J}_1$.
\item [\em (iii)] We have $\mathcal{J}_1$ is a two-sided nilpotent ideal of $\mathcal{T}$. In particular, $\mathcal{J}_1\subseteq\mathcal{J}$.
\end{enumerate}
\end{cor}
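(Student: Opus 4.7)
The plan is to prove (i), (ii), (iii) in order, exploiting the fact that in the quasi-thin $p=2$ setting $\overline{k_r}=0$ whenever $k_r=2$ and $\overline{k_r}=1$ whenever $k_r=1$. This parity toggle is the whole engine of the argument.

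For (i), I would invoke Corollary \ref{C;basis} so that $\mathcal{B}=\mathcal{V}\cup\mathcal{W}$ is an $\F$-basis of $\mathcal{T}$, and note that by Lemma \ref{L;generalresults}(iii) the set $\{E_a^*JE_b^*:\max\{k_a,k_b\}=2\}$ is an $\F$-basis of $\mathcal{J}_1$. By bilinearity it suffices to check $MN,NM\in\mathcal{J}_1$ when $M$ ranges over $\mathcal{B}$ and $N=E_a^*JE_b^*$ is a basis vector of $\mathcal{J}_1$, and Lemma \ref{L;Computation} then delivers the relevant products. When $M=E_c^*JE_e^*\in\mathcal{W}$, part (i) of that lemma gives $MN=\delta_{ea}\overline{k_a}E_c^*JE_b^*$; the factor $\overline{k_a}$ is non-zero only if $k_a=1$, whereupon $\max\{k_a,k_b\}=2$ forces $k_b=2$ and hence $MN\in\mathcal{J}_1$. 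When $M=B_{ce}\in\mathcal{V}$, part (iii) gives $MN=\delta_{ae}E_c^*JE_b^*$, and $(c,e)\in\mathcal{U}$ supplies $k_c=2$, so $MN\in\mathcal{J}_1$. The products $NM$ are treated symmetrically using parts (i) and (ii) of Lemma \ref{L;Computation}.

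For (ii), I would expand $N=\sum_{a,b}c_{ab}E_a^*JE_b^*$ on the $\mathcal{J}_1$-basis and apply Lemma \ref{L;Computation}(i) twice to obtain
\begin{align*}
N^2 &= \sum_{a,b,e}c_{ab}c_{be}\,\overline{k_b}\,E_a^*JE_e^*,\\
N^3 &= \sum_{a,b,e,g}c_{ab}c_{be}c_{eg}\,\overline{k_b}\,\overline{k_e}\,E_a^*JE_g^*.
\end{align*}
A surviving term in $N^2$ requires $\overline{k_b}\neq 0$, hence $k_b=1$, which combined with $\max\{k_a,k_b\}=\max\{k_b,k_e\}=2$ forces $k_a=k_e=2$. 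But any surviving term in $N^3$ additionally requires $\overline{k_e}\neq 0$, i.e.\ $k_e=1$, contradicting the $k_e=2$ extracted one step earlier. Hence $N^3=O$. Assertion (iii) is then immediate: (i) makes $\mathcal{J}_1$ a two-sided ideal of $\mathcal{T}$, (ii) makes it nil, and in the finite-dimensional $\F$-algebra $\mathcal{T}$ every nil two-sided ideal is nilpotent and contained in the Jacobson radical, giving $\mathcal{J}_1\subseteq\mathcal{J}$. The only genuinely delicate point is the index bookkeeping in (ii); the argument works precisely because the parity constraints coming from the successive factors $\overline{k_b}$ and $\overline{k_e}$ are jointly incompatible with the constraints $\max\{k_a,k_b\}=\max\{k_b,k_e\}=2$ defining $\mathcal{J}_1$.
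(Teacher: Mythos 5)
Your proposal is correct and follows essentially the same route as the paper: reduce to the $\F$-basis $\mathcal{B}$ of $\mathcal{T}$ and the spanning set $\{E_a^*JE_b^*:\max\{k_a,k_b\}=2\}$ of $\mathcal{J}_1$, apply Lemma \ref{L;Computation}, and exploit that $\overline{k_r}=0$ precisely when $k_r=2=p$. The only cosmetic differences are that the paper treats the products $NM$ in (i) by a transpose/contradiction argument (Lemma \ref{L;Transpose}) and phrases (ii) as a vanishing claim for triple products of elements of $\mathcal{W}$ with middle factor in the $\mathcal{J}_1$-spanning set, whereas you compute the cases directly and, for (iii), invoke the standard fact that a nil two-sided ideal of a finite-dimensional algebra is nilpotent and contained in the Jacobson radical.
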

\begin{proof}
If $S$ is a thin scheme, then (i), (ii), and (iii) hold trivially by Lemma \ref{L;Transpose} (iii). We assume further that $S$ is not a thin scheme. So $\mathcal{J}_1\neq \{O\}$ by Lemma \ref{L;Transpose} (iii).

For (i), since $S$ is a quasi-thin scheme, $\B$ is an $\F$-basis of $\mathcal{T}$ by Corollary \ref{C;basis}. So there is no loss to assume that $M\in \B$ as $\mathcal{J}_1$ is an $\F$-linear space. Since $\mathcal{J}_1$ is an $\F$-linear space and $S$ is a quasi-thin scheme, there is also no loss to assume that $N=E_a^*JE_b^*\in \mathcal{W}$ and $\max\{k_a, k_b\}=2$. If $M=B_{ce}\in \mathcal{V}\subseteq\B$, then $k_c=k_e=2$ by the definition of $\mathcal{V}$. So we have $MN=B_{ce}E_a^*JE_b^*\in \mathcal{J}_1$ and
$NM=E_a^*JE_b^*B_{ce}\in\mathcal{J}_1$ by Lemma \ref{L;Computation} (iii) and (ii). If $M=E_i^*JE_j^*\in \mathcal{W}\subseteq\B$, suppose that $MN\notin \mathcal{J}_1$. So $O\neq MN=E_i^*JE_j^*E_a^*JE_b^*=\overline{k_a}E_i^*JE_b^*\notin \mathcal{J}_1$, which implies that $k_i=k_b=1$. Since $\max\{k_a, k_b\}=2=p$, we get $k_a=2$ and $O=\overline{2}E_i^*JE_b^*=MN\neq O$, which is a contradiction. So we have $MN\in \mathcal{J}_1$. As $M^t=E_j^*JE_i^*\in \mathcal{W}$ and $N^t=E_b^*JE_a^*\in \mathcal{J}_1$ by Lemma \ref{L;Transpose} (ii), we can get a similar contradiction if we suppose that $M^tN^t\notin \mathcal{J}_1$. So we have $NM=(M^tN^t)^t\in \mathcal{J}_1$ by Lemma \ref{L;Transpose} (ii). (i) thus follows as $\B=\mathcal{V}\cup \mathcal{W}$.

For (ii), we claim that $E_g^*JE_h^* E_u^*JE_v^*E_y^*JE_z^*=O$ if $E_g^*JE_h^*, E_u^*JE_v^*, E_y^*JE_z^*\in \mathcal{W}$ and $\max\{k_u, k_v\}=2$. Suppose that $E_g^*JE_h^* E_u^*JE_v^*E_y^*JE_z^*\neq O$. So we have $u=h$, $v=y$, and
$E_g^*JE_u^*JE_v^*JE_z^*=E_g^*(JE_u^*J)E_v^*JE_z^*=\overline{k_u}E_g^*(JE_v^*J)E_z^*=\overline{k_uk_v}E_g^*JE_z^*$. As $\max\{k_u, k_v\}=2=p$, we have $O=\overline{k_uk_v}E_g^*JE_z^*=E_g^*JE_h^* E_u^*JE_v^*E_y^*JE_z^*\neq O$, which is a contradiction. The desired claim is shown. (ii) follows by this claim and the definition of $\mathcal{J}_1$. As $\mathcal{J}_1$ is an $\F$-linear space, (iii) is proved by (i) and (ii).
\end{proof}
\begin{nota}\label{N;notation4}
\em Let $\mathcal{T}/\mathcal{I}$ be the quotient $\F$-algebra of $\mathcal{T}$ with respect to the two-sided ideal $\mathcal{I}$. Assume that $p=2$ and $S$ is a quasi-thin scheme. Then $\mathcal{T}/\mathcal{J}_1$ is defined by Corollary \em \ref{C;ideal1} (iii). \em Let $\mathcal{J}_2$ denote the Jacobson radical of $\mathcal{T}/\mathcal{J}_1$. For any $M\in \mathcal{T}$, let $\underline{M}$ be the image of $M$ under the natural $\F$-algebra homomorphism from $\mathcal{T}$ to $\mathcal{T}/\mathcal{J}_1$.
\end{nota}
\begin{lem}\label{L;Radicalsemisimple}
Assume that $p=2$ and $S$ is a quasi-thin scheme. Then $\mathcal{J}_2=\{\underline{O}\}$.
\end{lem}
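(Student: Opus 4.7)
The plan is to show $\mathcal{T}/\mathcal{J}_1$ is semisimple by exhibiting it as a direct sum of full matrix algebras over $\F$. By Corollary \ref{C;basis}, $\mathcal{B}=\mathcal{V}\cup\mathcal{W}$ is an $\F$-basis of $\mathcal{T}$, and by the definition of $\mathcal{J}_1$ in Notation \ref{N;notation3}, every $E_a^*JE_b^*$ with $\max\{k_a,k_b\}=2$ already lies in $\mathcal{J}_1$. Writing $\mathcal{A}_1=\{a:R_a\in S,\ k_a=1\}$, the classes $\{\underline{E_a^*JE_b^*}:a,b\in\mathcal{A}_1\}\cup\{\underline{B_{ab}}:(a,b)\in\mathcal{U}\}$ therefore span $\mathcal{T}/\mathcal{J}_1$; none of these are zero because $\mathcal{V}\cap\mathcal{W}=\varnothing$ by Lemma \ref{L;Badpairlemma}(iii) and the surviving $E_a^*JE_b^*$'s lie outside $\mathcal{J}_1$ by construction.

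Next I would work out the multiplication table of these spanning elements in $\mathcal{T}/\mathcal{J}_1$, using Lemma \ref{L;Computation} together with $p=2$. For $a,b,c,e\in\mathcal{A}_1$, part (i) gives $\underline{E_a^*JE_b^*}\cdot\underline{E_c^*JE_e^*}=\delta_{bc}\underline{E_a^*JE_e^*}$. Mixed products vanish: in parts (ii) and (iii) the surviving Kronecker delta would force a valency-$1$ index to equal the valency-$2$ index imposed by membership in $\mathcal{U}$. For $(a,b),(c,d)\in\mathcal{U}$ with $xR_a=\{u_1,u_2\}$, $xR_b=\{v_1,v_2\}$, $xR_d=\{w_1,w_2\}$ ordered by $\prec$, Lemma \ref{L;Badpairlemma}(ii) together with the explicit form $B_{ab}=E_{u_1v_1}+E_{u_2v_2}$ yields
\[
B_{ab}B_{cd}=\delta_{bc}(E_{u_1w_1}+E_{u_2w_2}).
\]
When $b=c$ the right-hand side is a nonzero element of $E_a^*\mathcal{T}E_d^*$; comparing with the available $\F$-basis of $E_a^*\mathcal{T}E_d^*$ (namely $E_a^*JE_d^*=E_{u_1w_1}+E_{u_1w_2}+E_{u_2w_1}+E_{u_2w_2}$ and, if $(a,d)\in\mathcal{U}$, also $B_{ad}=E_{u_1w_1}+E_{u_2w_2}$) forces $(a,d)\in\mathcal{U}$ and $B_{ab}B_{bd}=B_{ad}$.

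The third step is to verify that the relation $\sim$ of Theorem \ref{T;E}(ii), restricted to $\mathcal{A}_2=\{a:R_a\in S,\ k_a=2\}$, is an equivalence relation. Symmetry follows from Lemma \ref{L;Basistranspose}; reflexivity holds because $|R_{a'}R_a|\in\{1,2\}$ by Lemma \ref{L;Intersectionnumber}(v), giving either $(a,a)\in\mathcal{R}$ or, when $|R_{a'}R_a|=1$, the sequence $(R_a,R_0,R_a),(R_a,R_0,R_a)$ realising $(a,a)$ as a bad pair; transitivity is exactly the statement $(a,d)\in\mathcal{U}$ whenever $(a,b),(b,d)\in\mathcal{U}$ established above. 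Letting $\mathcal{C}_0,\ldots,\mathcal{C}_\gamma$ be the resulting equivalence classes, the multiplication table then shows
\[
\mathcal{T}/\mathcal{J}_1=\mathcal{K}_{-1}\oplus\bigoplus_{k=0}^{\gamma}\mathcal{K}_k,
\]
where $\mathcal{K}_{-1}=\langle\underline{E_a^*JE_b^*}:a,b\in\mathcal{A}_1\rangle_\F\cong M_{|\mathcal{A}_1|}(\F)$ and $\mathcal{K}_k=\langle\underline{B_{ab}}:a,b\in\mathcal{C}_k\rangle_\F\cong M_{|\mathcal{C}_k|}(\F)$ via the evident matrix-unit assignments. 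Each summand is simple, so $\mathcal{T}/\mathcal{J}_1$ is semisimple and $\mathcal{J}_2=\{\underline{O}\}$.

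The main obstacle will be the identity $B_{ab}B_{bd}=B_{ad}$, which simultaneously produces the algebra structure of each $\mathcal{K}_k$ and the transitivity of $\sim$: it requires both the explicit matrix-unit computation and the basis-comparison argument that pins down $(a,d)\in\mathcal{U}$, and this is what ties the index set of each matrix-algebra summand to a single equivalence class of $\mathcal{A}_2$.
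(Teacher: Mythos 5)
Your proposal is correct, but it takes a genuinely different route from the paper. The paper proves this lemma directly: it takes $\underline{M}\in\mathcal{J}_2$, expands $M$ in the basis $\mathcal{B}$ of Corollary \ref{C;basis}, observes that modulo $\mathcal{J}_1$ only the terms $\underline{E_i^*JE_j^*}$ with $k_i=k_j=1$ and the terms $\underline{B_{ab}}$ survive, and then kills each coefficient by multiplying $\underline{M}$ on both sides by suitable basis elements so as to place a manifestly non-nilpotent element (a multiple of $\underline{B_{uu}}$ or of $\underline{E_y^*JE_y^*}$) inside the radical, a contradiction; no structural decomposition of $\mathcal{T}/\mathcal{J}_1$ is needed at this stage. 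You instead establish the full Wedderburn decomposition of $\mathcal{T}/\mathcal{J}_1$, which is essentially Theorem \ref{T;E} (ii)(b) of the paper, proved there only in Section 7 \emph{after} Theorem \ref{T;C}; your argument avoids circularity because you re-derive the needed ingredients (the matrix-unit relations $B_{ab}B_{cd}=\delta_{bc}B_{ad}$, cf.\ Lemma \ref{L;Secondcomputation}, and the equivalence-relation property of $\sim$, cf.\ Lemmas \ref{L;Basistranspose}, \ref{L;Basistransitive}, \ref{L;EquivalenceRelation}) from Sections 2--4 only. A nice feature of your route is the proof of transitivity of $\sim$: you deduce $(a,d)\in\mathcal{U}$ by noting that $B_{ab}B_{bd}=E_{u_1w_1}+E_{u_2w_2}$ lies in $E_a^*\mathcal{T}E_d^*$, whose basis (via Corollary \ref{C;basis} and Lemma \ref{L;Badpairlemma} (ii)) is $\{E_a^*JE_d^*\}$ or $\{E_a^*JE_d^*,B_{ad}\}$, which is slicker than the paper's combinatorial concatenation of bad-pair sequences in Lemma \ref{L;Basistransitive}. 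Two small points you leave implicit but which are easily supplied: the nonvanishing of the surviving classes is best justified by Lemma \ref{L;Badpairlemma} (iii) together with the linear independence of $\mathcal{W}$ (rather than merely $\mathcal{V}\cap\mathcal{W}=\varnothing$), and the directness of your sum follows because $\mathcal{J}_1$ is spanned by a subset of the basis $\mathcal{B}$, so the images of the remaining basis elements form a basis of $\mathcal{T}/\mathcal{J}_1$; also, your reflexivity case $|R_{a'}R_a|=1$ never actually occurs when $k_a=2$, though your treatment of it is harmless. In short, your proof buys more (the structure theorem itself) at the cost of length, while the paper's proof is leaner and defers the structure to Section 7 once the radical is known.
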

\begin{proof}
Let $M\in \mathcal{T}$ and $\underline{M}\in \mathcal{J}_2$. As $S$ is a quasi-thin scheme, by Corollary \ref{C;basis},
\begin{align*}\label{Eq:22}
 \tag{6.2}\underline{M}=\sum_{i=0}^d\sum_{j=0}^dc_{ij}\underline{E_i^*JE_j^*}+\sum_{(a,b)\in\mathcal{U}}e_{ab}\underline{B_{ab}}=\sum_{\substack{i,j=0\\
                  k_i=k_j=1\\
                 }}^dc_{ij}\underline{E_i^*JE_j^*}+\sum_{(a,b)\in\mathcal{U}}e_{ab}\underline{B_{ab}},
\end{align*}
where we have $e_{ab}, c_{ij}\in \F$ for any $(a,b)\in \mathcal{U}$ and $R_i, R_j\in S$. If $\mathcal{U}=\varnothing$, the sum $\sum_{(a,b)\in\mathcal{U}}e_{ab}\underline{B_{ab}}$ is regraded as $\underline{O}$. If $e_{uv}\neq 0$ for some $(u,v)\in \mathcal{U}$, then $k_u=k_v=2$ by the definition of $\mathcal{U}$. So we set $xR_u=\{u_1, u_2\}$ and $xR_v=\{v_1, v_2\}$, where $u_1\prec u_2$ and $v_1\prec v_2$. Observe that $(u,u)\in \mathcal{U}$ as $(u,u)\in \mathcal{R}$. By Lemma \ref{L;Basistranspose}, notice that $(v,u)\in \mathcal{U}$. So $B_{uv}B_{vu}=(E_{u_1v_1}+E_{u_2v_2})(E_{v_1u_1}+E_{v_2u_2})=E_{u_1u_1}+E_{u_2u_2}=B_{uu}$ by the definitions of $B_{uv},B_{vu}, B_{uu}$. According to \eqref{Eq:22} and Lemma \ref{L;Badpairlemma} (ii), (iii), we thus have $\underline{O}\neq e_{uv}\underline{B_{uu}}=e_{uv}\underline{B_{uv}B_{vu}}=\underline{E_u^*ME_v^*B_{vu}}\in\mathcal{J}_2$ as $\underline{M}\in\mathcal{J}_2$. This is absurd as $e_{uv}\underline{B_{uu}}$ is not a nilpotent element of $\mathcal{T}/\mathcal{J}_1$. So $e_{ab}=0$ for all $(a,b)\in\mathcal{U}$. If $c_{yz}\neq 0$ for some $R_y, R_z\in S$ and $k_y=k_z=1$, according to \eqref{Eq:22}, Lemmas \ref{L;Badpairlemma} (ii), and \ref{L;generalresults} (iii), $\underline{O}\neq c_{yz}\underline{E_y^*JE_y^*}=c_{yz}\underline{E_y^*(JE_z^*J)E_y^*}=\underline{E_y^*ME_z^*JE_y^*}\in\mathcal{J}_2$
as $\underline{M}\in \mathcal{J}_2$. This is absurd as $c_{yz}\underline{E_y^*JE_y^*}$ is not a nilpotent element of $\mathcal{T}/\mathcal{J}_1$. So $\underline{M}=\underline{O}$. We are done.
\end{proof}
Theorem \ref{T;C} is shown by Corollary \ref{C;ideal1} (iii) and Lemma \ref{L;Radicalsemisimple}.
\section{Structures of Terwilliger $\F$-algebras of quasi-thin schemes}
In this section, we prove Theorems \ref{T;D} and \ref{T;E} by studying the algebraic structures of Terwilliger $\F$-algebras of quasi-thin schemes. For our purpose, recall that $\mathcal{J}$ is the Jacobson radical of $\mathcal{T}$. Moreover, we recall Notations \ref{N;notation1}, \ref{N;notation3}, \ref{N;notation4}, and the definition of $E_{ab}$ in Subsection \ref{Sc:subsection}. We list some preliminary results as follows.
\begin{lem}\label{L;Basistransitive}
If $(a,b), (b,c)\in \mathcal{U}$, then $(a,c)\in \mathcal{U}$.
\end{lem}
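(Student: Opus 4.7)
The plan is to show $(a,c)\in\mathcal{U}$ by case analysis on $|R_{a'}R_c|$. First I would observe that membership in $\mathcal{U}=\mathcal{R}\cup\mathcal{S}$ forces $k_a=k_b=k_c=2$ (by the definition of $\mathcal{R}$ and by Definition \ref{D;badpair}), so Lemma \ref{L;Intersectionnumber} (v) gives $|R_{a'}R_c|\in\{1,2\}$. If $|R_{a'}R_c|=2$, then $(a,c)\in\mathcal{R}\subseteq\mathcal{U}$ and we are done. Otherwise $|R_{a'}R_c|=1$, and I would show $(a,c)\in\mathcal{S}$ by exhibiting a bad pair chain from $R_a$ to $R_c$ that passes through $R_b$.

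To build this chain I would extract a \emph{building block} from each of $(a,b)$ and $(b,c)$. When $(x,y)\in\mathcal{R}$, so that $k_x=k_y=|R_{x'}R_y|=2$, Lemma \ref{L;Intersectionnumber} (ii) and (iv), combined with the fact that the two relations $R_j$ in $R_{x'}R_y$ contribute to $\sum_e p_{xe}^y=k_x=2$, force $p_{xj}^y=1$ for each such $R_j$; the single triple $(R_x,R_j,R_y)$ therefore meets condition (i) of Definition \ref{D;badpair}. When $(x,y)\in\mathcal{S}$ the required chain is supplied directly by Definition \ref{D;badpair}. Concatenating the chains for $(a,b)$ and $(b,c)$ at the common relation $R_b$ preserves all per-triple valency and intersection-number conditions of Definition \ref{D;badpair} (i), and at the join it gives $\ell=b=i$, which is exactly the gluing condition appearing in Definition \ref{D;badpair} (ii). The concatenated chain has length at least two, meeting the positivity requirement on the chain length; and since $|R_{a'}R_c|=1$ by the case hypothesis, Definition \ref{D;badpair} (ii) holds in full. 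Hence $(a,c)$ is a bad pair and lies in $\mathcal{S}\subseteq\mathcal{U}$.

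The only real difficulty is notational: re-indexing the two chains into a single chain and verifying the conditions at the single junction point. No new substantive ingredient beyond Lemma \ref{L;Intersectionnumber} and Definition \ref{D;badpair} is required.
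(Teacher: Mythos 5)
Your proposal is correct and follows essentially the same route as the paper: reduce via Lemma \ref{L;Intersectionnumber} (v) to the case $|R_{a'}R_c|=1$, convert each pair in $\mathcal{R}$ into a single triple with $p_{xj}^y=1$ using Lemma \ref{L;Intersectionnumber} (ii) and (iv), take the chain from Definition \ref{D;badpair} for pairs in $\mathcal{S}$, and concatenate at $R_b$ to exhibit $(a,c)$ as a bad pair. The only difference is presentational: you phrase it as a uniform building-block concatenation, whereas the paper spells out the three cases ($\mathcal{R}$--$\mathcal{R}$, mixed, $\mathcal{S}$--$\mathcal{S}$) explicitly.
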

\begin{proof}
As $(a,b), (b,c)\in \mathcal{U}$, notice that $k_a=k_b=k_c=2$ by the definition of $\mathcal{U}$. If $|R_{a'}R_c|=2$, we have $(a,c)\in \mathcal{R}\subseteq \mathcal{U}$. We thus assume further that $|R_{a'}R_c|=1$ by Lemma \ref{L;Intersectionnumber} (v). Since $\mathcal{U}=\mathcal{R}\cup \mathcal{S}$, we distinguish three cases to check $(a,c)\in \mathcal{U}$.
\begin{enumerate}[\text{Case} 1:]
\item $(a,b), (b,c)\in \mathcal{R}$.
\end{enumerate}
In this case, we have $k_a=k_b=k_c=|R_{a'}R_b|=|R_{b'}R_c|=2$. According to Lemma \ref{L;Intersectionnumber} (iv) and (ii), we can deduce that there exist $R_g, R_h\in S$ such that $p_{ag}^b=p_{bh}^c=1$. So we have already got that $k_a=k_b=k_c=2$ and $p_{ag}^b=p_{bh}^c=|R_{a'}R_c|=1$, which implies that $(a,c)\in \mathcal{S}\subseteq\mathcal{U}$ as Definition \ref{D;badpair} (i) and (ii) hold.
\begin{enumerate}[\text{Case} 2:]
\item $(a,b)\in\mathcal{S}$ and $(b,c)\in \mathcal{R}$ or $(a,b)\in\mathcal{R}$ and $(b,c)\in \mathcal{S}$.
\end{enumerate}
We first consider the case $(a,b)\in\mathcal{S}$ and $(b,c)\in \mathcal{R}$.
As $(a,b)\in \mathcal{S}$, by Definition \ref{D;badpair}, there exist $m\in \mathbb{N}$ and
$$ R_{i_0}, R_{j_0}, R_{\ell_0}, R_{i_1}, R_{j_1}, R_{\ell_1},\ldots, R_{i_m}, R_{j_m}, R_{\ell_m}\in S$$
such that $i_0=a$, $\ell_m=b$, $k_{i_n}=k_{\ell_n}=2$ and $p_{i_nj_n}^{\ell_n}=|R_{i_0'}R_{\ell_m}|=1$ for all $n\in [m]$, $\ell_q=i_{q+1}$ for all $q\in [m-1]$. As $(b,c)\in \mathcal{R}$, we have $k_b=k_c=|R_{b'}R_c|=2$. By Lemma \ref{L;Intersectionnumber} (iv) and (ii), we can deduce that there exists $R_e\in S$ such that $p_{be}^c=1$. Set $i_{m+1}=b$, $j_{m+1}=e$, and $\ell_{m+1}=c$. We thus get the sequence
\begin{align*}\label{Eq:23}
\tag{7.1} R_{i_0}, R_{j_0}, R_{\ell_0}, R_{i_1}, R_{j_1}, R_{\ell_1},\ldots, R_{i_m}, R_{j_m}, R_{\ell_m}, R_{i_{m+1}}, R_{j_{m+1}}, R_{\ell_{m+1}},
\end{align*}
where we have $i_0=a$, $\ell_{m+1}=c$, $k_{i_{\tilde{n}}}=k_{\ell_{\tilde{n}}}=2$ and $p_{i_{\tilde{n}}j_{\tilde{n}}}^{\ell_{\tilde{n}}}=|R_{i_0'}R_{\ell_{m+1}}|=|R_{a'}R_c|=1$ for all $\tilde{n}\in [m+1]$, $\ell_{\tilde{q}}=i_{\tilde{q}+1}$ for all $\tilde{q}\in [m]$. So $(a,c)=(i_0, \ell_{m+1})\in \mathcal{S}\subseteq\mathcal{U}$ as Definition \ref{D;badpair} (i) and (ii) hold for \eqref{Eq:23}.

One can mimic the presented proof of the case $(a,b)\in\mathcal{S}$ and $(b,c)\in \mathcal{R}$ to provide a similar proof for the case $(a,b)\in\mathcal{R}$ and $(b,c)\in \mathcal{S}$.
\begin{enumerate}[\text{Case} 3:]
\item $(a,b), (b,c)\in \mathcal{S}$.
\end{enumerate}
In this case, by Definition \ref{D;badpair}, there exist $m_1\in \mathbb{N}, 1<m_2\in \mathbb{N}$, and
\begin{align*}
& R_{a_0}, R_{b_0}, R_{c_0}, R_{a_1}, R_{b_1}, R_{c_1},\ldots, R_{a_{m_1}}, R_{b_{m_1}}, R_{c_{m_1}},\\
& R_{a_{m_1+1}}, R_{b_{m_1+1}}, R_{c_{m_1+1}}, R_{a_{m_1+2}}, R_{b_{{m_1}+2}}, R_{c_{m_1+2}},\ldots, R_{a_{m_1+m_2}}, R_{b_{m_1+m_2}}, R_{c_{m_1+m_2}}\in S
\end{align*}
such that $a_0=a$, $c_{m_1}=a_{m_1+1}=b$, $c_{m_1+m_2}=c$, $k_{a_u}=k_{c_u}=2$ and $p_{a_ub_u}^{c_u}=1$ for all $u\in [m_1+m_2]$, $|R_{a_0'}R_{c_{m_1+m_2}}|=|R_{a'}R_c|=1$ and $c_v=a_{v+1}$ for all $v\in [m_1+m_2-1]$. So $(a,c)=(a_0,c_{m_1+m_2})\in \mathcal{S}\subseteq\mathcal{U}$ as Definition \ref{D;badpair} (i) and (ii) hold for the sequence \begin{align*}
& R_{a_0}, R_{b_0}, R_{c_0}, R_{a_1}, R_{b_1}, R_{c_1},\ldots, R_{a_{m_1}}, R_{b_{m_1}}, R_{c_{m_1}},\\
& R_{a_{m_1+1}}, R_{b_{m_1+1}}, R_{c_{m_1+1}}, R_{a_{m_1+2}}, R_{b_{{m_1}+2}}, R_{c_{m_1+2}},\ldots, R_{a_{m_1+m_2}}, R_{b_{m_1+m_2}}, R_{c_{m_1+m_2}}.
\end{align*}

The desired lemma now follows by all listed cases.
\end{proof}
\begin{lem}\label{L;EquivalenceRelation}
Assume that $\mathcal{A}_2=\{a\in [d]: k_a=2\}\neq \varnothing$. Let $\sim$ be a binary relation on $\mathcal{A}_2$, where, for any $b, c\in \mathcal{A}_2$, we have $b\sim c$ if and only if $(b,c)\in \mathcal{U}$. Then $\sim$ is an equivalence relation on $\mathcal{A}_2$.
\end{lem}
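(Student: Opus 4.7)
The plan is to verify the three defining properties of an equivalence relation on $\mathcal{A}_2$: reflexivity, symmetry, and transitivity. Two of the three properties are direct consequences of lemmas already proved in the excerpt, so most of the work is in confirming reflexivity, which has not yet been addressed.

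For reflexivity, I would fix $a \in \mathcal{A}_2$ and show $(a,a)\in\mathcal{R}\subseteq\mathcal{U}$, which amounts to verifying $|R_{a'}R_a|=2$. The upper bound $|R_{a'}R_a|\leq 2$ is immediate from Lemma \ref{L;Intersectionnumber} (v) since $k_a=k_{a'}=2$. For the lower bound, suppose for contradiction that $|R_{a'}R_a|=1$, so $R_{a'}R_a=\{R_0\}$. Then $p_{a'a}^e=0$ for every $e\in[d]\setminus\{0\}$, forcing $\sum_{e=0}^d p_{a'a}^e k_e = p_{a'a}^0 k_0 = k_{a'}\cdot 1 = 2$, which contradicts Lemma \ref{L;Intersectionnumber} (iii) telling us that this sum must equal $k_{a'}k_a=4$. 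Hence $|R_{a'}R_a|=2$ and $(a,a)\in\mathcal{R}$.

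Symmetry follows at once from the first assertion of Lemma \ref{L;Basistranspose}: whenever $(b,c)\in\mathcal{U}$ we have $(c,b)\in\mathcal{U}$, i.e., $c\sim b$ as soon as $b\sim c$. Transitivity is exactly the content of Lemma \ref{L;Basistransitive}: if $(b,c),(c,e)\in\mathcal{U}$ then $(b,e)\in\mathcal{U}$, so $b\sim c$ and $c\sim e$ together give $b\sim e$. Combining these three properties shows that $\sim$ is an equivalence relation on $\mathcal{A}_2$.

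No significant obstacle is expected here, since the heavy lifting has already been done in Lemmas \ref{L;Basistranspose} and \ref{L;Basistransitive}. The only point that warrants explicit verification is reflexivity, and the argument above reduces it to a short intersection-number computation.
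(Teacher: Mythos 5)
Your proposal is correct and follows essentially the same route as the paper: reflexivity via $(a,a)\in\mathcal{R}\subseteq\mathcal{U}$, symmetry from Lemma \ref{L;Basistranspose}, and transitivity from Lemma \ref{L;Basistransitive}. The only difference is that you spell out, via Lemma \ref{L;Intersectionnumber} (iii) and (v), why $|R_{a'}R_a|=2$, a fact the paper simply states as an observation.
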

\begin{proof}
For any $e\in \mathcal{A}_2$, $k_{e'}=k_e=2$ by Lemma \ref{L;Intersectionnumber} (i). Observe that $(e,e)\in \mathcal{R}\subseteq\mathcal{U}$. Therefore $e\sim e$, which implies that $\sim$ is reflexive. For any $g, h\in \mathcal{A}_2$, if $g\sim h$, then $(g,h), (h,g)\in \mathcal{U}$ by Lemma \ref{L;Basistranspose}. So $h\sim g$, which implies that $\sim$ is symmetric. For any $i, j, k\in \mathcal{A}_2$, if $i\sim j$ and $j\sim k$, then $(i,j), (j,k), (i,k)\in \mathcal{U}$ by Lemma \ref{L;Basistransitive}. So $i\sim k$, which implies that $\sim$ is transitive. The desired lemma thus follows.
\end{proof}
\begin{lem}\label{L;Secondcomputation}
Assume that $(a,b), (c,e)\in \mathcal{U}$.
\begin{enumerate}[(i)]
\item [\em (i)] We have $B_{ab}B_{ce}=O$ if $b\neq c$.
\item [\em (ii)] We have $B_{ab}B_{be}=B_{ae}\in \mathcal{V}$ if $b=c$.
\end{enumerate}
\end{lem}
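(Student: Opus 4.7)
The plan for assertion (i) is a one-line application of Lemma \ref{L;Badpairlemma} (ii). That lemma gives $B_{ab} = E_a^* B_{ab} E_b^*$ and $B_{ce} = E_c^* B_{ce} E_e^*$, so the product factors as
\[ B_{ab} B_{ce} = E_a^* B_{ab} (E_b^* E_c^*) B_{ce} E_e^*. \]
When $b \neq c$ the middle factor $E_b^* E_c^*$ is zero by the orthogonality of the dual idempotents, and (i) follows immediately.

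For assertion (ii), the first step is to invoke Lemma \ref{L;Basistransitive} so that $(a,e) \in \mathcal{U}$, which guarantees that $B_{ae}$ is a well-defined element of $\mathcal{V}$. After that, verifying $B_{ab}B_{be} = B_{ae}$ reduces to a direct computation with the explicit formulas in Notation \ref{N;notation1}. Writing $xR_a = \{u_1,u_2\}$, $xR_b = \{v_1,v_2\}$, $xR_e = \{w_1,w_2\}$ with each pair listed in $\prec$-increasing order, the definitions give
\[ B_{ab} = E_{u_1v_1}+E_{u_2v_2}, \quad B_{be} = E_{v_1w_1}+E_{v_2w_2}, \quad B_{ae} = E_{u_1w_1}+E_{u_2w_2}. \]
The multiplication rule $E_{xy}E_{zw} = \delta_{yz}E_{xw}$ from Subsection \ref{Sc:subsection} annihilates the cross terms and leaves $B_{ab}B_{be} = E_{u_1w_1}+E_{u_2w_2} = B_{ae}$, which lies in $\mathcal{V}$ by the previous sentence.

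I do not anticipate any serious obstacle. The only subtle point is ensuring $B_{ae} \in \mathcal{V}$, i.e.\ that the pair $(a,e)$ actually belongs to $\mathcal{U}$; this is exactly what Lemma \ref{L;Basistransitive} was just set up to supply. Everything else is mechanical arithmetic with the matrix units $E_{xy}$, once one notes that the common ordering convention on $xR_a$, $xR_b$, $xR_e$ is what aligns the summands of $B_{ab}$ and $B_{be}$ so that their product telescopes cleanly into $B_{ae}$.
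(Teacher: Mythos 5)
Your proposal is correct and follows essentially the same route as the paper: part (i) via Lemma \ref{L;Badpairlemma} (ii) and orthogonality of the dual idempotents, and part (ii) via Lemma \ref{L;Basistransitive} followed by the explicit matrix-unit computation with the $\prec$-ordered sets $xR_a$, $xR_b$, $xR_e$ (the paper just states explicitly that $k_a=k_b=k_e=2$, from the definition of $\mathcal{U}$, before writing these two-element sets, a point you use implicitly). No gaps.
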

\begin{proof}
As $b\neq c$, $B_{ab}B_{ce}=E_a^*B_{ab}E_b^*E_c^*B_{ce}E_e^*=O$ by Lemma \ref{L;Badpairlemma} (ii). (i) is proved.

For (ii), as $b=c$, notice that $(a,e)\in \mathcal{U}$ by Lemma \ref{L;Basistransitive}. So we have $B_{ae}\in \mathcal{V}$. Since $(a,b), (b,e)\in \mathcal{U}$, we have $k_a=k_b=k_e=2$ by the definition of $\mathcal{U}$. So we can set $xR_a=\{u_1, u_2\}$, $xR_b=\{v_1, v_2\}$, and $xR_e=\{w_1, w_2\}$, where $u_1\prec u_2$, $v_1\prec v_2$, and $w_1\prec w_2$. So we have $B_{ab}B_{be}=(E_{u_1v_1}+E_{u_2v_2})(E_{v_1w_1}+E_{v_2w_2})=E_{u_1w_1}+E_{u_2w_2}=B_{ae}$ by the definitions of $B_{ab}, B_{be}$, and $B_{ae}$. The proof of (ii) is now complete.
\end{proof}
We are now ready to prove Theorem \ref{T;D}.
\begin{proof}[Proof of Theorem \ref{T;D}]
Let $y\in X$ and recall that $\mathcal{T}(y)$ is the Terwilliger $\F$-algebra of $S$ with respect to $y$. It suffices to check $\mathcal{T}(y)\cong \mathcal{T}$ as $\F$-algebras.

If $(a,b)\in \mathcal{U}$, notice that $k_a=k_b=2$ by the definition of $\mathcal{U}$. Therefore we can set $yR_a=\{u_1, u_2\}$ and $yR_b=\{v_1, v_2\}$, where $u_1\prec u_2$ and $v_1\prec v_2$. Let $B_{ab}(y)$ denote the matrix $E_{u_1v_1}+E_{u_2v_2}$. For any $R_c\in S$, recall that $E_c^*(y)$ is the dual $\F$-idempotent with respect to $y$ and $R_c$. Write $\B(y)$ for the set $\mathcal{V}(y)\cup \mathcal{W}(y)$, where $$\mathcal{V}(y)=\{B_{gh}(y): (g,h)\in \mathcal{U}\}\ \text{and}\ \mathcal{W}(y)=\{E_i^*(y)JE_j^*(y): R_i, R_j\in S\}.$$

As $S$ is a quasi-thin scheme and the fixed vertex $x$ is chosen from $X$ arbitrarily, observe that $\B(y)$ is an $\F$-basis of $\mathcal{T}(y)$ and $|\B(y)|=|\B|$ by Corollaries \ref{C;basis} and \ref{C;independent}. Moreover, let $\delta_{\alpha\beta}$ denote the Kronecker delta of integers $\alpha$, $\beta$ whose values are in $\F$. By Lemmas \ref{L;Computation} (i), (ii), (iii), \ref{L;Secondcomputation} (i), and (ii), the following assertions also hold.
\begin{enumerate}[(i)]
\item If $R_k, R_\ell, R_m, R_n\in S$, then $E_k^*(y)JE_\ell^*(y)E_m^*(y)JE_n^*(y)=\delta_{\ell m}\overline{k_\ell}E_k^*(y)JE_{n}^*(y)$.
\item If $R_k, R_\ell\in S$ and $(m,n)\in \mathcal{U}$, then $E_k^*(y)JE_\ell^*(y)B_{mn}(y)=\delta_{\ell m}E_k^*(y)JE_n^*(y)$.
\item If $R_k, R_\ell\in S$ and $(m,n)\in \mathcal{U}$, then $B_{mn}(y)E_k^*(y)JE_{\ell}^*(y)=\delta_{nk}E_m^*(y)JE_\ell^*(y)$.
\item If $(k,\ell), (m,n)\in \mathcal{U}$ and $\ell\neq m$, then $B_{k\ell}(y)B_{mn}(y)=O$.
\item If $(k,\ell), (m,n)\in \mathcal{U}$ and $\ell=m$, then $B_{k\ell}(y)B_{\ell n}(y)=B_{kn}(y)\in \mathcal{V}(y)$.
\end{enumerate}
Since $\B(y)$ is an $\F$-basis of $\mathcal{T}(y)$ and $|\B(y)|=|\B|$, by Corollary \ref{C;basis}, there exists an $\F$-linear isomorphism $\phi$ from $\mathcal{T}(y)$ to $\mathcal{T}$ such that  $$\phi(B_{gh}(y))=B_{gh}\ \text{and}\ \phi(E_i^*(y)JE_j^*(y))=E_i^*JE_j^*$$ for any $B_{gh}(y)\in \mathcal{V}(y)$ and $E_i^*(y)JE_j^*(y)\in \mathcal{W}(y)$. As $\B(y)$ is an $\F$-basis of $\mathcal{T}(y)$, by (i), (ii), (iii), (iv), (v), Corollary \ref{C;basis}, Lemmas \ref{L;Computation} (i), (ii), (iii), \ref{L;Secondcomputation} (i), (ii), note that $\phi$ is also an $\F$-algebra isomorphism from $\mathcal{T}(y)$ to $\mathcal{T}$. So $\mathcal{T}(y)\cong \mathcal{T}$ as $\F$-algebras.
\end{proof}
Our next goal is to prove Theorem \ref{T;E}. We first introduce some notations.
\begin{nota}\label{N;notation5}
\em For any $n\in \mathbb{N}$, let $\mathcal{A}_n=\{a\in [d]: k_a=n\}$. Assume that $\mathcal{A}_2\neq \varnothing$. Let $\sim$ be the equivalence relation on $\mathcal{A}_2$ defined in Lemma \em \ref{L;EquivalenceRelation}. \em So there exists $\gamma\in \mathbb{N}_0$ such that $\mathcal{C}_0, \mathcal{C}_1,\ldots, \mathcal{C}_\gamma$ are exactly all pairwise distinct equivalence classes of $\mathcal{A}_2$ with respect to $\sim$. For any $b\in [\gamma]$ and $g,h\in \mathcal{C}_b$, $(g,h)\in \mathcal{U}$ by the definition of $\sim$. Set
\begin{align*}\label{Eq:24}
\tag{7.2}C_{gh}=\begin{cases} B_{gh}-\overline{2}^{-1}E_g^*JE_h^*, & \text{if}\ p\neq 2,\\
B_{gh}, & \text{if}\ p=2.
\end{cases}
\end{align*}
Let $\mathcal{D}_b=\{C_{ij}: i,j\in \mathcal{C}_b\}$ and $\mathcal{D}=\bigcup_{k=0}^{\gamma}\mathcal{D}_k$. Put $\mathcal{C}=\mathcal{D}\cup\mathcal{W}$. 
\end{nota}
\begin{nota}\label{N;notation6}
\em For any nonempty finite set $L$, let $M_{L}(\F)$ be the full matrix algebra of $\F$-square matrices whose rows and columns are labeled by the members of $L$. Let $M_{|L|}(\F)$ denote the full matrix algebra of $\F$-square matrices of size $|L|$ and note that $M_{L}(\F)\cong M_{|L|}(\F)$ as $\F$-algebras.
\end{nota}
To prove Theorem \ref{T;E}, we present the following lemmas as preparation.
\begin{lem}\label{L;Finalbasiscomputation}
Assume that $\mathcal{A}_2\neq \varnothing$.
\begin{enumerate}[(i)]
\item [\em (i)] The set $\mathcal{C}$ is an $\F$-linearly independent subset of $\mathcal{T}$.
\item [\em (ii)] If $C_{ab}, C_{ce}\in \mathcal{D}$ and $b\neq c$, then $C_{ab}C_{ce}=O$.
\item [\em (iii)] If $C_{ab}, C_{bc}\in \mathcal{D}$, then $C_{ab}C_{bc}=C_{ac}\in \mathcal{D}$.
\end{enumerate}
\end{lem}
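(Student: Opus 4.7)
The plan is to deduce all three claims from the explicit product identities already established in Lemmas \ref{L;Computation} and \ref{L;Secondcomputation}, combined with the fact that $\B = \mathcal{V} \cup \mathcal{W}$ is an $\F$-basis of $\mathcal{T}$ (Corollary \ref{C;basis}).

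For (i), observe first that each $C_{gh} \in \mathcal{D}$ corresponds to a pair $(g,h)$ lying inside some equivalence class $\mathcal{C}_k$, so $(g,h) \in \mathcal{U}$ and therefore $B_{gh} \in \mathcal{V} \subseteq \mathcal{T}$ by Lemma \ref{L;Badpairlemma} (iii); this places $C_{gh}$ in $\mathcal{T}$ and hence $\mathcal{C} \subseteq \mathcal{T}$. When $p = 2$, we have $\mathcal{C} \subseteq \B$ directly, so linear independence is immediate from Corollary \ref{C;basis}. When $p \neq 2$, given any $\F$-linear dependence $\sum \alpha_{gh} C_{gh} + \sum \beta_{ij} E_i^* J E_j^* = O$, rewriting each $C_{gh} = B_{gh} - \overline{2}^{-1} E_g^* J E_h^*$ produces an $\F$-linear combination of $\B$ equal to $O$; since the $B_{gh}$'s appearing arise solely from the $\mathcal{D}$-part, Corollary \ref{C;basis} forces every $\alpha_{gh} = 0$, and then forces every $\beta_{ij} = 0$.

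For (ii) and (iii), the strategy is to expand $C_{ab} C_{ce}$ into at most four summands built from $B_{ab} B_{ce}$, $B_{ab} E_c^* J E_e^*$, $E_a^* J E_b^* B_{ce}$, and $E_a^* J E_b^* E_c^* J E_e^*$, each evaluable by Lemmas \ref{L;Computation} and \ref{L;Secondcomputation}. In characteristic $2$ both claims are immediate: $C_{ab} C_{ce} = B_{ab} B_{ce}$ is $O$ when $b \neq c$ by Lemma \ref{L;Secondcomputation} (i), and equals $B_{ac} = C_{ac}$ when $b = c$ by Lemma \ref{L;Secondcomputation} (ii); Lemma \ref{L;Basistransitive} ensures $(a,c) \in \mathcal{U}$ lies in the same equivalence class as $a$ and $c$, so $C_{ac} \in \mathcal{D}$.

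When $p \neq 2$ and $b \neq c$, each of the four terms vanishes: the two cross terms by the factor $\delta_{bc}$ appearing in Lemma \ref{L;Computation} (ii) and (iii), the $B$-product by Lemma \ref{L;Secondcomputation} (i), and the last because $E_b^* E_c^* = O$, yielding (ii). When $b = c$, substituting the surviving values and using $k_b = 2$ in Lemma \ref{L;Computation} (i) to get $E_a^* J E_b^* E_b^* J E_c^* = \overline{2}\, E_a^* J E_c^*$ gives
\[
C_{ab} C_{bc} = B_{ac} - \overline{2}^{-1} E_a^* J E_c^* - \overline{2}^{-1} E_a^* J E_c^* + \overline{4}^{-1} \overline{2}\, E_a^* J E_c^* = B_{ac} - \overline{2}^{-1} E_a^* J E_c^* = C_{ac},
\]
and $C_{ac} \in \mathcal{D}$ follows again from Lemma \ref{L;Basistransitive}. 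The main thing to watch is the arithmetic identity $-\overline{2}^{-1} - \overline{2}^{-1} + \overline{4}^{-1} \overline{2} = -\overline{2}^{-1}$ which is precisely why the coefficient $\overline{2}^{-1}$ is selected in the definition \eqref{Eq:24} of $C_{gh}$; beyond this scalar bookkeeping, no real conceptual obstacle is expected.
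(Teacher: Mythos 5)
Your computations for (ii) and (iii) are correct and essentially identical to the paper's: the paper also expands $C_{ab}C_{bc}$ into the four terms and uses Lemma \ref{L;Computation} (i), (ii), (iii) together with Lemma \ref{L;Secondcomputation}, and the scalar identity $-\overline{2}^{-1}-\overline{2}^{-1}+\overline{2}^{-1}=-\overline{2}^{-1}$ is exactly the point of the definition \eqref{Eq:24} (the paper gets (ii) a touch more quickly by writing $C_{ab}C_{ce}=E_a^*C_{ab}E_b^*E_c^*C_{ce}E_e^*=O$ via Lemma \ref{L;Badpairlemma} (ii), but your four-term expansion is equivalent). Your membership argument $C_{ac}\in\mathcal{D}$ via Lemma \ref{L;Basistransitive} is the same transitivity fact the paper packages in Lemma \ref{L;EquivalenceRelation}. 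For (i) you take a mildly different route: you rewrite each $C_{gh}$ in terms of $\B$ and invoke independence of $\B$, whereas the paper sandwiches the dependence relation between $E_g^*$ and $E_h^*$ and then applies Lemma \ref{L;Badpairlemma} (ii), (iii); both work, and your version is arguably more transparent. The one thing to fix is the citation: Lemma \ref{L;Finalbasiscomputation} assumes only $\mathcal{A}_2\neq\varnothing$, not that $S$ is quasi-thin, so you are not entitled to Corollary \ref{C;basis} (which needs quasi-thinness to get the spanning property). Fortunately your argument never uses spanning, only the $\F$-linear independence of $\B=\mathcal{V}\cup\mathcal{W}$, and that is exactly Lemma \ref{L;Linearlyindependent}, which holds for an arbitrary scheme; replace the reference and the proof stands in the generality the lemma is stated in (this matters because the lemma is later applied in Lemma \ref{L;Newsubalgebras1}, where quasi-thinness is not assumed).
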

\begin{proof}
For (i), since $\B=\mathcal{V}\cup \mathcal{W}$ and $\mathcal{C}=\mathcal{D}\cup \mathcal{W}$, every member of $\mathcal{C}$ is an $\F$-linear combination of the members of $\B$. So $\mathcal{C}\subseteq\mathcal{T}$ by Lemma \ref{L;Linearlyindependent}. We now assume that
\begin{align*}\label{Eq:25}
\tag{7.3}\sum_{i=0}^d\sum_{j=0}^dc_{ij}E_i^*JE_j^*+\sum_{C_{gh}\in \mathcal{D}}e_{gh}C_{gh}=O,
\end{align*}
where $c_{ij}, e_{gh}\in \F$ for any $R_i, R_j\in S$ and $C_{gh}\in \mathcal{D}$. For any $C_{gh}\in \mathcal{D}$, by \eqref{Eq:25}, \eqref{Eq:24}, and Lemma \ref{L;Badpairlemma} (ii), there exists $k\in \F$ such that $O=E_g^*OE_h^*=kE_g^*JE_h^*+e_{gh}B_{gh}$, which implies that $e_{gh}=0$ by Lemma \ref{L;Badpairlemma} (iii). By Lemma \ref{L;generalresults} (iii), we also have $c_{ij}=0$ for any $R_i, R_j\in S$. Therefore $\mathcal{C}$ is linearly independent over $\F$. (i) is shown.

For (ii), as $b\neq c$, we have $C_{ab}C_{ce}=E_a^*C_{ab}E_b^*E_c^*C_{ce}E_e^*=O$ by \eqref{Eq:24} and Lemma \ref{L;Badpairlemma} (ii). The proof of (ii) is now complete.

For (iii), as $C_{ab}, C_{bc}\in \mathcal{D}$, we have $a\sim b$ and $b\sim c$ by the definitions of $C_{ab}$ and $C_{bc}$. Therefore $a\sim c$ by Lemma \ref{L;EquivalenceRelation}. So there exists $m\in [\gamma]$ such that $C_{ac}\in \mathcal{D}_m\subseteq \mathcal{D}$.

If $p=2$, we have $C_{ab}C_{bc}=B_{ab}B_{bc}=B_{ac}=C_{ac}\in\mathcal{D}$ by \eqref{Eq:24} and Lemma \ref{L;Secondcomputation} (ii). If $p\neq 2$, as $a\sim b$, we have $(a,b)\in \mathcal{U}$ by the definition of $\sim$. So we have $k_b=2$ by the definition of $\mathcal{U}$. By \eqref{Eq:24}, Lemmas \ref{L;Secondcomputation} (ii), and \ref{L;Computation} (i), (ii), (iii), we deduce that
\begin{align*}
C_{ab}C_{bc}&=(B_{ab}-\overline{2}^{-1}E_a^*JE_b^*)(B_{bc}-\overline{2}^{-1}E_b^*JE_c^*)\\
&=B_{ab}B_{bc}-\overline{2}^{-1}B_{ab}E_b^*JE_c^*-\overline{2}^{-1}E_a^*JE_b^*B_{bc}+\overline{4}^{-1}E_a^*(JE_b^*J)E_c^*\\
&=B_{ac}-\overline{2}^{-1}E_a^*JE_c^*-\overline{2}^{-1}E_a^*JE_c^*+\overline{2}^{-1}E_a^*JE_c^*\\
&=B_{ac}-\overline{2}^{-1}E_a^*JE_c^*=C_{ac}\in \mathcal{D}.
\end{align*}
(iii) is now proved by the conclusions of the cases $p=2$ and $p\neq 2$.
\end{proof}
\begin{lem}\label{L;Newsubalgebras1}
Assume that $\mathcal{A}_2\neq\varnothing$ and $a\in [\gamma]$.
\begin{enumerate}[(i)]
\item [\em (i)] We have $\langle\mathcal{D}_a\rangle_\F$ is an $\F$-subalgebra of $\mathcal{T}$ with the identity element $\sum_{b\in \mathcal{C}_a}C_{bb}$.
\item [\em (ii)] We have $\langle\mathcal{D}_a\rangle_\F\cong M_{\mathcal{C}_a}(\F)\cong M_{|\mathcal{C}_a|}(\F)$ as $\F$-algebras.
\end{enumerate}
\end{lem}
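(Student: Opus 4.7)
The plan is to observe that Lemma \ref{L;Finalbasiscomputation} (ii) and (iii) give exactly the multiplication table of matrix units indexed by $\mathcal{C}_a$. More precisely, the family $\{C_{ij} : i,j \in \mathcal{C}_a\}$ satisfies $C_{ij} C_{kl} = \delta_{jk} C_{il}$, which is the defining relation of a system of matrix units. Combined with Lemma \ref{L;Finalbasiscomputation} (i), which tells us that $\mathcal{D}_a$ is an $\F$-linearly independent subset of $\mathcal{T}$, this essentially forces $\langle \mathcal{D}_a \rangle_\F$ to be isomorphic to the full matrix algebra over $\F$ with rows and columns labelled by $\mathcal{C}_a$.

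First I would verify (i). By Lemma \ref{L;Finalbasiscomputation} (ii) and (iii), the product of any two members of $\mathcal{D}_a$ lies in $\mathcal{D}_a \cup \{O\}$, so $\langle \mathcal{D}_a \rangle_\F$ is closed under multiplication and hence is an $\F$-subalgebra of $\mathcal{T}$. Set $F = \sum_{b \in \mathcal{C}_a} C_{bb}$. For any $C_{ij} \in \mathcal{D}_a$ (so in particular $i,j \in \mathcal{C}_a$), Lemma \ref{L;Finalbasiscomputation} (ii) and (iii) yield
\[
F \cdot C_{ij} = \sum_{b \in \mathcal{C}_a} C_{bb} C_{ij} = C_{ii} C_{ij} = C_{ij},
\]
and similarly $C_{ij} \cdot F = C_{ij}$. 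Since $\mathcal{D}_a$ spans $\langle \mathcal{D}_a \rangle_\F$, $F$ is the identity element.

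For (ii), I would define the $\F$-linear map
\[
\phi : \langle \mathcal{D}_a \rangle_\F \longrightarrow M_{\mathcal{C}_a}(\F), \qquad \phi(C_{ij}) = E_{ij},
\]
where $E_{ij}$ denotes the matrix unit in $M_{\mathcal{C}_a}(\F)$ whose unique nonzero entry is a $1$ in position $(i,j)$. Since $\mathcal{D}_a$ is linearly independent over $\F$ by Lemma \ref{L;Finalbasiscomputation} (i), and $|\mathcal{D}_a| = |\mathcal{C}_a|^2 = \dim_\F M_{\mathcal{C}_a}(\F)$, the map $\phi$ is an $\F$-linear isomorphism. The multiplicativity $\phi(C_{ij} C_{kl}) = \phi(C_{ij})\phi(C_{kl})$ follows directly by comparing Lemma \ref{L;Finalbasiscomputation} (ii), (iii) with the standard identities $E_{ij}E_{kl} = \delta_{jk} E_{il}$. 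Finally, $M_{\mathcal{C}_a}(\F) \cong M_{|\mathcal{C}_a|}(\F)$ holds by Notation \ref{N;notation6}.

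There is no substantial obstacle: the lemma is essentially a packaging of the matrix-unit relations already established in Lemma \ref{L;Finalbasiscomputation}. The only minor point to be careful about is checking that $\phi$ respects the identity, i.e.\ that $\phi(F)$ is the identity matrix of $M_{\mathcal{C}_a}(\F)$, which follows because $\phi(F) = \sum_{b \in \mathcal{C}_a} E_{bb}$ is precisely the identity of $M_{\mathcal{C}_a}(\F)$.
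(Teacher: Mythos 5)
Your proof is correct and essentially identical to the paper's: both parts are deduced from the matrix-unit relations of Lemma \ref{L;Finalbasiscomputation} (ii), (iii), and the isomorphism in (ii) is the same linear map sending $C_{uv}$ to the corresponding matrix unit of $M_{\mathcal{C}_a}(\F)$, with linear independence supplied by Lemma \ref{L;Finalbasiscomputation} (i).
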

\begin{proof}
For (i), we claim that $C_{ce}C_{gh}\in \langle\mathcal{D}_a\rangle_\F$ if $C_{ce}, C_{gh}\in \mathcal{D}_a$. If $e\neq g$, then we can obtain that $C_{ce}C_{gh}=O\in \langle\mathcal{D}_a\rangle_\F$ by Lemma \ref{L;Finalbasiscomputation} (ii). If $e=g$, as $C_{ce}, C_{eh}\in \mathcal{D}_a$, note that $c,e,h\in \mathcal{C}_a$. So we have $C_{ch}\in \mathcal{D}_a$. We thus have $C_{ce}C_{eh}=C_{ch}\in \mathcal{D}_a$ by Lemma \ref{L;Finalbasiscomputation} (iii). The claim is shown. Since $\langle \mathcal{D}_a\rangle_\F$ is an $\F$-linear space, by the proved claim, notice that $MN\in \langle\mathcal{D}_a\rangle_\F$ if $M, N\in \langle\mathcal{D}_a\rangle_\F$. So it is enough to check that we have $M(\sum_{b\in \mathcal{C}_a}C_{bb})=M=(\sum_{b\in \mathcal{C}_a}C_{bb})M$ if $M\in \langle \mathcal{D}_a\rangle_\F$. As $\langle \mathcal{D}_a\rangle_\F$ is an $\F$-linear space, there is no loss to assume that $M=C_{ij}\in \mathcal{D}_a$. By Lemma \ref{L;Finalbasiscomputation} (ii) and (iii), we have
$C_{ij}(\sum_{b\in \mathcal{C}_a}C_{bb})=C_{ij}=(\sum_{b\in \mathcal{C}_a}C_{bb})C_{ij}$. The proof of (i) is complete.

For (ii), it suffices to show that $\langle \mathcal{D}_a\rangle_\F\cong M_{\mathcal{C}_a}(\F)$ as $\F$-algebras. Notice that $\mathcal{D}_a$ is an $\F$-basis of $\langle \mathcal{D}_a\rangle_\F$ by Lemma \ref{L;Finalbasiscomputation} (i). Let $F_{mn}$ denote the $(0,1)$-matrix of $M_{\mathcal{C}_a}(\F)$ whose
unique nonzero entry is the $(m,n)$-entry. So $\{F_{rs}: r,s\in \mathcal{C}_a\}$ is an $\F$-basis of $M_{\mathcal{C}_a}(\F)$. Let $\phi$ be the $\F$-linear isomorphism from $\langle \mathcal{D}_a\rangle_\F$ to $M_{\mathcal{C}_a}(\F)$ that sends every $C_{uv}$ to $F_{uv}$. By Lemma \ref{L;Finalbasiscomputation} (ii) and (iii), it is not very difficult to notice that $\phi$ is an $\F$-algebra isomorphism from $\langle \mathcal{D}_a\rangle_\F$ to $M_{\mathcal{C}_a}(\F)$. The proof of (ii) is complete.
\end{proof}
The following lemmas focus on the quasi-thin schemes.
\begin{lem}\label{L;Newsubalgebras2}
If $p\neq 2$ and $S$ is a quasi-thin scheme, for any $R_a, R_b \in S$, define
\begin{align*}\label{Eq:26}
\tag{7.4} D_{ab}=\begin{cases} \overline{2}^{-1}E_a^*JE_b^*, & \text{if}\ k_a=k_b=2,\\
E_a^*JE_b^*, &\text{otherwise}.\end{cases}
\end{align*}
\begin{enumerate}[(i)]
\item [\em (i)] We have $\mathcal{J}_0$ is an $\F$-subalgebra of $\mathcal{T}$ with the identity element $\sum_{c=0}^dD_{cc}$.
\item [\em (ii)] We have $\mathcal{J}_0$ is a simple $\F$-subalgebra of $\mathcal{T}$ with $\F$-dimension $(d+1)^2$.
\end{enumerate}
\end{lem}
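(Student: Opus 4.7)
The plan is to establish both parts simultaneously by producing an explicit $\F$-algebra isomorphism $\psi: \mathcal{J}_0 \to M_{d+1}(\F)$. By Lemma \ref{L;generalresults} (iii), the set $\mathcal{W}$ is an $\F$-basis of $\mathcal{J}_0$ of size $(d+1)^2$, so $\dim_\F \mathcal{J}_0 = (d+1)^2$. Since $S$ is quasi-thin, Corollary \ref{C;ideal0} (ii) gives that $\mathcal{J}_0$ is already a two-sided ideal of $\mathcal{T}$, so in particular it is closed under multiplication and is an $\F$-subalgebra (possibly without identity) of $\mathcal{T}$.

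Let $\{F_{ab}\}_{a,b \in [d]}$ denote the standard matrix units of $M_{d+1}(\F)$ and define $\psi$ on the basis $\mathcal{W}$ by
\[\psi(E_a^*JE_b^*) = \overline{k_b}\, F_{ab}.\]
Because $p \neq 2$ and every valency $k_b$ lies in $\{1, 2\}$, each scalar $\overline{k_b}$ is nonzero, so $\psi$ sends a basis to a basis up to nonzero scaling and is therefore an $\F$-linear isomorphism. To check multiplicativity, Lemma \ref{L;Computation} (i) yields
\[\psi(E_a^*JE_b^* \cdot E_c^*JE_e^*) = \delta_{bc}\, \overline{k_b}\cdot \overline{k_e}\, F_{ae},\]
while a direct computation gives
\[\psi(E_a^*JE_b^*)\,\psi(E_c^*JE_e^*) = \overline{k_b}\,\overline{k_e}\, F_{ab}F_{ce} = \delta_{bc}\,\overline{k_b}\,\overline{k_e}\, F_{ae}.\]
The two sides agree, so $\psi$ is an $\F$-algebra isomorphism onto the simple $\F$-algebra $M_{d+1}(\F)$ of $\F$-dimension $(d+1)^2$. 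This proves (ii).

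For (i), the identity element of $\mathcal{J}_0$ must equal $\psi^{-1}(I_{d+1}) = \sum_{c=0}^d \psi^{-1}(F_{cc})$. A routine case check on $k_c \in \{1,2\}$ shows that $\psi^{-1}(F_{cc}) = \overline{k_c}^{-1} E_c^*JE_c^* = D_{cc}$: when $k_c = 2$ this reads $\overline{2}^{-1} E_c^*JE_c^*$ and when $k_c = 1$ it reads $E_c^*JE_c^*$, matching \eqref{Eq:26} in both cases. Hence the identity of $\mathcal{J}_0$ is $\sum_{c=0}^d D_{cc}$, as claimed.

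The main subtlety is guessing the correct normalization factor $\overline{k_b}$ in the definition of $\psi$. These scalars, which arise from the identity $JE_b^*J = \overline{k_b}J$, must appear on exactly one index to reconcile the multiplicative structure from Lemma \ref{L;Computation} (i) with the asymmetric ad-hoc definition of $D_{ab}$ in \eqref{Eq:26}; the hypothesis $p \neq 2$ is essential precisely to guarantee that these normalizing scalars are invertible.
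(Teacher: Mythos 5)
Your proposal is correct, but it proves the lemma by a genuinely different route than the paper. The paper proves (i) by a direct check: closure of $\mathcal{J}_0$ under multiplication comes from Corollary \ref{C;ideal0}, and Lemma \ref{L;Computation} (i) together with \eqref{Eq:26} shows that $\sum_{c=0}^d D_{cc}$ acts as a two-sided identity on each basis element $E_g^*JE_h^*$; it proves (ii) by an ideal-theoretic argument: if $\mathcal{I}$ is a nonzero two-sided ideal of $\mathcal{J}_0$ and $w=\sum_{i,j}c_{ij}E_i^*JE_j^*\in\mathcal{I}$ has $c_{uv}\neq 0$, then $E_y^*JE_u^*\,w\,E_v^*JE_z^*=c_{uv}\overline{k_uk_v}E_y^*JE_z^*\in\mathcal{I}$, and since $p\neq 2$ and all valencies are at most $2$ this forces $\mathcal{I}=\mathcal{J}_0$; the dimension count is Lemma \ref{L;generalresults} (iii) in both treatments. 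You instead rescale the basis (in effect setting $G_{ab}=\overline{k_b}^{-1}E_a^*JE_b^*$, i.e.\ $\psi(E_a^*JE_b^*)=\overline{k_b}F_{ab}$) and verify from the same multiplication rule that these elements behave as matrix units, producing an explicit $\F$-algebra isomorphism $\mathcal{J}_0\cong M_{d+1}(\F)$, from which simplicity, the dimension $(d+1)^2$, and the identity $\sum_{c=0}^d D_{cc}=\psi^{-1}(I)$ all follow at once (your identification $\psi^{-1}(F_{cc})=D_{cc}$ is exactly the diagonal case of \eqref{Eq:26}, which is all the lemma needs). Both arguments rest on the same two ingredients, Lemma \ref{L;Computation} (i) and the invertibility of $\overline{k_b}$ when $p\neq 2$ and $S$ is quasi-thin, but your version buys more: it shows $\mathcal{J}_0\cong M_{d+1}(\F)$ over an arbitrary field of characteristic different from $2$, a refinement of the abstract summand $\mathcal{I}_{-1}$ in Theorem \ref{T;E} that the paper only extracts for algebraically closed $\F$ via Artin--Wedderburn in Corollary \ref{C;finalstructurecorollary}; the paper's check, on the other hand, avoids the normalization bookkeeping and is marginally shorter.
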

\begin{proof}
For (i), according to Corollary \ref{C;ideal0} (i), notice that $MN\in \mathcal{J}_0$ if $M,N\in \mathcal{J}_0$. To prove (i), it suffices to check that $M(\sum_{c=0}^dD_{cc})=M=(\sum_{c=0}^dD_{cc})M$ if $M\in \mathcal{J}_0$. As $\mathcal{J}_0=\langle \mathcal{W}\rangle_\F$, there is no loss to assume that $M=E_g^*JE_h^*$, where $R_g, R_h\in S$. As $S$ is a quasi-thin scheme, notice that $\max\{k_g, k_h\}\leq2$. By \eqref{Eq:26} and Lemma \ref{L;Computation} (i), we have $E_g^*JE_h^*(\sum_{c=0}^dD_{cc})=E_g^*JE_h^*=(\sum_{c=0}^dD_{cc})E_g^*JE_h^*$. (i) thus follows.

For (ii), by (i) and Lemma \ref{L;generalresults} (iii), it suffices to check that there does not exist a two-sided ideal $\mathcal{I}$ of $\mathcal{J}_0$ such that $\{O\}\neq \mathcal{I}\neq \mathcal{J}_0$. Suppose that there is a two-sided ideal $\mathcal{I}$ of $\mathcal{J}_0$ such that $\{O\}\neq \mathcal{I}\neq \mathcal{J}_0$. Pick $O\neq w\in \mathcal{I}$. As $\mathcal{J}_0=\langle\mathcal{W}\rangle_\F$, we have
\begin{align*}\label{Eq:27}
\tag{7.5}O\neq w=\sum_{i=0}^d\sum_{j=0}^dc_{ij}E_i^*JE_j^*,
\end{align*}
where $c_{ij}\in \F$ for any $R_i,R_j\in S$. By \eqref{Eq:27}, there must exist some $R_u,R_v\in S$ such that $c_{uv}\neq 0$. For any $R_y,R_z\in S$, since $w\in \mathcal{I}$ and $E_y^*JE_u^*, E_v^*JE_z^*\in \mathcal{J}_0$, we have $E_y^*JE_u^*wE_v^*JE_z^*\!=\!c_{uv}E_y^*(JE_u^*J)E_v^*JE_z^*=\!c_{uv}\overline{k_u}E_y^*(JE_v^*J)E_z^*=c_{uv}\overline{k_uk_v}E_y^*JE_z^*\in\mathcal{J}_0,$
which implies that $E_y^*JE_z^*\in \mathcal{J}_0$ as $c_{uv}\neq 0$, $p\neq 2$, and $\max\{k_u,k_v\}\leq 2$. So $\mathcal{I}=\mathcal{J}_0$ as $\mathcal{J}_0=\langle\mathcal{W}\rangle_\F$. This is a contradiction as we have already assumed that $\mathcal{I}\neq \mathcal{J}_0$. (ii) thus follows.
\end{proof}
\begin{nota}\label{N;notation7}
\em Assume that $p=2$ and $S$ is a quasi-thin scheme. By Theorem \ref{T;C} and Notation \em \ref{N;notation4}, \em let $M\in \mathcal{T}$ and recall that $\underline{M}$ is the image of $M$ under the natural $\F$-algebra homomorphism from $\mathcal{T}$ to $\mathcal{T}/\mathcal{J}$. Let $\underline{N}=\{\underline{L}: L\in N\}$ for any $N\subseteq \mathcal{T}$. Notice that $\underline{N}$ is an $\F$-linear subspace of $\mathcal{T}/\mathcal{J}$ if $N$ is an $\F$-linear subspace of $\mathcal{T}$.
\end{nota}
\begin{lem}\label{L;Newsubalgebra3}
Assume that $p=2$ and $S$ is a quasi-thin scheme.
\begin{enumerate}[(i)]
\item [\em (i)] We have $\underline{\mathcal{J}_0}$ is a two-sided ideal of $\mathcal{T}/\mathcal{J}$.
\item [\em (ii)] We have $\underline{\mathcal{J}_0}$ is an $\F$-subalgebra of $\mathcal{T}/\mathcal{J}$ with the identity element $\sum_{a\in\mathcal{A}_1}\underline{E_a^*JE_a^*}$.
\item [\em (iii)] We have $\underline{\mathcal{J}_0}\cong M_{\mathcal{A}_1}(\F)\cong M_{|\mathcal{A}_1|}(\F)$ as $\F$-algebras.
\end{enumerate}
\end{lem}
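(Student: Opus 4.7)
The plan is to exhibit an explicit $\F$-basis of $\underline{\mathcal{J}_0}$ consisting of images of the ``matrix-unit-like'' elements $E_a^*JE_b^*$ with $a,b\in\mathcal{A}_1$, and then to read off the matrix algebra structure directly from Lemma~\ref{L;Computation}(i). Since $p=2$, Theorem~\ref{T;C} gives $\mathcal{J}=\mathcal{J}_1$, so the quotient is $\mathcal{T}/\mathcal{J}_1$. As $S$ is quasi-thin, every valency lies in $\{1,2\}$, so $\mathcal{W}$ splits as the disjoint union of $\mathcal{W}_0:=\{E_a^*JE_b^* : a,b\in\mathcal{A}_1\}$ and the defining spanning set of $\mathcal{J}_1$. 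Because $\mathcal{B}=\mathcal{V}\cup\mathcal{W}$ is an $\F$-basis of $\mathcal{T}$ by Corollary~\ref{C;basis}, the images $\{\underline{E_a^*JE_b^*} : a,b\in\mathcal{A}_1\}$ are $\F$-linearly independent in $\mathcal{T}/\mathcal{J}_1$ and span $\underline{\mathcal{J}_0}$; in particular $\dim_\F\underline{\mathcal{J}_0}=|\mathcal{A}_1|^2$.

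Part (i) is immediate from Corollary~\ref{C;ideal0}(ii) together with the fact that the image of a two-sided ideal under a surjective algebra homomorphism is again a two-sided ideal. For part (ii), I set $e:=\sum_{a\in\mathcal{A}_1}\underline{E_a^*JE_a^*}$ and verify directly on the basis above that $e$ is a two-sided identity. Using Lemma~\ref{L;Computation}(i), for any $b,c\in\mathcal{A}_1$,
\[
\sum_{a\in\mathcal{A}_1}E_a^*JE_a^*\cdot E_b^*JE_c^* \;=\; \sum_{a\in\mathcal{A}_1}\delta_{ab}\,\overline{k_a}\,E_a^*JE_c^* \;=\; E_b^*JE_c^*,
\]
because $\overline{k_b}=\overline{1}=1$; the right-multiplication is symmetric. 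The same lemma shows $E_b^*JE_c^*\cdot E_d^*JE_e^*=\delta_{cd}E_b^*JE_e^*\in\langle\mathcal{W}_0\rangle_\F$ for $b,c,d,e\in\mathcal{A}_1$, so $\underline{\mathcal{J}_0}$ is closed under multiplication and is therefore a unital $\F$-subalgebra of $\mathcal{T}/\mathcal{J}$ with identity $e$.

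For part (iii), define $\phi:\underline{\mathcal{J}_0}\to M_{\mathcal{A}_1}(\F)$ to be the $\F$-linear extension of $\underline{E_a^*JE_b^*}\mapsto F_{ab}$, where $F_{ab}$ denotes the matrix unit whose unique nonzero entry is the $(a,b)$-entry. The basis description from the first paragraph makes $\phi$ an $\F$-linear isomorphism, and the matrix-unit relation $F_{bc}F_{de}=\delta_{cd}F_{be}$ agrees with the product computation from the second paragraph, so $\phi$ is an $\F$-algebra isomorphism; the remaining identification $M_{\mathcal{A}_1}(\F)\cong M_{|\mathcal{A}_1|}(\F)$ is recorded in Notation~\ref{N;notation6}. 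The only delicate point is confirming the $\F$-linear independence of $\{\underline{E_a^*JE_b^*} : a,b\in\mathcal{A}_1\}$ modulo $\mathcal{J}_1$, but this is forced by the fact that these elements belong to $\mathcal{B}$ while $\mathcal{J}_1$ is spanned by a disjoint subset of $\mathcal{B}$; hence no genuine obstacle arises.
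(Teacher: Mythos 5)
Your proposal is correct and follows essentially the same route as the paper: part (i) via Corollary \ref{C;ideal0}(ii) and surjectivity of the quotient map, part (ii) by checking the identity $\sum_{a\in\mathcal{A}_1}\underline{E_a^*JE_a^*}$ with Lemma \ref{L;Computation}(i) using $\overline{k_a}=1$, and part (iii) by matching the basis $\{\underline{E_a^*JE_b^*}:a,b\in\mathcal{A}_1\}$ (forced by Theorem \ref{T;C}) with matrix units. The only cosmetic difference is that you invoke the full basis $\mathcal{B}$ of Corollary \ref{C;basis} for the linear-independence step, where the paper only needs the independence of $\mathcal{W}$ from Lemma \ref{L;generalresults}(iii); both are valid.
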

\begin{proof}
For (i), as $\underline{\mathcal{J}_0}$ is the image of $\mathcal{J}_0$ under the natural $\F$-algebra homomorphism from $\mathcal{T}$ to $\mathcal{T}/\mathcal{J}$, (i) thus follows by Corollary \ref{C;ideal0} (ii).

For (ii), by Corollary \ref{C;ideal0} (i), we have $\underline{MN}\in\underline{\mathcal{J}_0}$ if $M, N\in \mathcal{J}_0$. To prove (ii), it is now enough to check that $\underline{M}(\sum_{a\in\mathcal{A}_1}\underline{E_a^*JE_a^*})=\underline{M}=(\sum_{a\in\mathcal{A}_1}\underline{E_a^*JE_a^*})\underline{M}$ if $M\in \mathcal{J}_0$. Note that $\underline{\mathcal{J}_0}=\langle\underline{\mathcal{W}}\rangle_\F$. So there is no loss to assume that $\underline{M}=\underline{E_b^*JE_c^*}\in \underline{\mathcal{W}}$. We may also assume further that $\underline{E_b^*JE_c^*}\neq \underline{O}$. Since $p=2$ and $S$ is a quasi-thin scheme, the inequality $\underline{E_b^*JE_c^*}\neq \underline{O}$ implies that $k_b=k_c=1$ by Theorem \ref{T;C}. By Lemma \ref{L;Computation} (i), $(\underline{E_b^*JE_c^*})(\sum_{a\in\mathcal{A}_1}\underline{E_a^*JE_a^*})=\underline{E_b^*JE_c^*}=(\sum_{a\in\mathcal{A}_1}\underline{E_a^*JE_a^*})(\underline{E_b^*JE_c^*})$.
(ii) thus follows.

For (iii), it suffices to prove that $\underline{\mathcal{J}_0}\cong M_{\mathcal{A}_1}(\F)$ as $\F$-algebras. As $p=2$ and $S$ is a quasi-thin scheme, by Lemma \ref{L;generalresults} (iii) and Theorem \ref{T;C}, notice that $\underline{\mathcal{J}_0}$ has an $\F$-basis $\{\underline{E_e^*JE_h^*}: e, h\in \mathcal{A}_1\}$. Let $G_{ij}$ be the $(0,1)$-matrix of $M_{\mathcal{A}_1}(\F)$ whose unique nonzero entry is the $(i,j)$-entry. So $\{G_{k\ell}: k, \ell\in \mathcal{A}_1\}$ is an $\F$-basis of $M_{\mathcal{A}_1}(\F)$. Let $\phi$ be the $\F$-linear isomorphism from $M_{\mathcal{A}_1}(\F)$ to $\underline{\mathcal{J}_0}$ that sends every $G_{uv}$ to $\underline{E_u^*JE_v^*}$. According to Lemma \ref{L;Computation} (i), it is not very difficult to notice that $\phi$ is an $\F$-algebra isomorphism from $M_{\mathcal{A}_1}(\F)$ to $\underline{\mathcal{J}_0}$. The proof of (iii) is now complete.
\end{proof}
\begin{lem}\label{L;Newbasis}
If $S$ is a quasi-thin scheme and $\mathcal{A}_2\neq \varnothing$, then $\mathcal{C}$ is an $\F$-basis of $\mathcal{T}$.
\end{lem}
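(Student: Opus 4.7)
The plan is to combine the $\F$-linear independence of $\mathcal{C}$, already established in Lemma~\ref{L;Finalbasiscomputation}(i), with a cardinality count. Since $\mathcal{C}\subseteq\mathcal{T}$ is linearly independent, it suffices to show $|\mathcal{C}|=\dim_\F \mathcal{T}=|\mathcal{B}|$, where the latter equality is Corollary~\ref{C;basis}.

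First I would identify $\mathcal{U}$ with a disjoint union of products of equivalence classes. By the definition of $\mathcal{R}$ and by the last sentence of Definition~\ref{D;badpair}, every pair in $\mathcal{U}=\mathcal{R}\cup\mathcal{S}$ sits inside $\mathcal{A}_2\times\mathcal{A}_2$. Since $\sim$ is defined on $\mathcal{A}_2$ by $g\sim h\iff (g,h)\in\mathcal{U}$ (Lemma~\ref{L;EquivalenceRelation}), and $\mathcal{C}_0,\ldots,\mathcal{C}_\gamma$ partition $\mathcal{A}_2$ into $\sim$-classes, one obtains
$$\mathcal{U}\;=\;\bigsqcup_{k=0}^{\gamma}\mathcal{C}_k\times\mathcal{C}_k,\qquad |\mathcal{U}|=\sum_{k=0}^{\gamma}|\mathcal{C}_k|^2.$$

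Next I would count $|\mathcal{D}|$. Every nonzero entry of $C_{ij}$ lies in $xR_i\times xR_j$ by \eqref{Eq:preliminary2} and \eqref{Eq:24}, and $\{xR_a:a\in[d]\}$ partitions $X$; hence the assignment $(i,j)\mapsto C_{ij}$ is injective on each $\mathcal{C}_k\times\mathcal{C}_k$, giving $|\mathcal{D}_k|=|\mathcal{C}_k|^2$. For the same support reason the $\mathcal{D}_k$ are pairwise disjoint, and $\mathcal{D}\cap\mathcal{W}=\varnothing$ (otherwise some $B_{ij}$ would lie in $\langle\mathcal{W}\rangle_\F$, contradicting Lemma~\ref{L;Badpairlemma}(iii)). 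Therefore
$$|\mathcal{D}|=\sum_{k=0}^{\gamma}|\mathcal{C}_k|^2=|\mathcal{U}|=|\mathcal{V}|.$$

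Putting everything together,
$$|\mathcal{C}|=|\mathcal{D}|+|\mathcal{W}|=|\mathcal{V}|+|\mathcal{W}|=|\mathcal{B}|=\dim_\F\mathcal{T},$$
by Lemma~\ref{L;Badpairlemma}(iv) and Corollary~\ref{C;basis}. An $\F$-linearly independent subset of $\mathcal{T}$ of size $\dim_\F\mathcal{T}$ is an $\F$-basis, and the proof is complete. The only subtle point is the bookkeeping identification $\mathcal{U}=\bigsqcup_k \mathcal{C}_k\times\mathcal{C}_k$, which pins down why the two counts match; once that is in place, the conclusion is a one-line dimension count.
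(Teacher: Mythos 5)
Your proof is correct, but it takes a different route from the paper. The paper's own argument is a three-line spanning check: by Lemma \ref{L;Finalbasiscomputation} (i) the set $\mathcal{C}$ is linearly independent, and by Lemma \ref{L;Quasi-thin} (ii) every element of $\mathcal{T}$ is an $\F$-linear combination of $\mathcal{B}$, so it suffices to see $\mathcal{B}\subseteq\langle\mathcal{C}\rangle_\F$; since $\mathcal{B}=\mathcal{V}\cup\mathcal{W}$ and, by \eqref{Eq:24}, $B_{ab}=C_{ab}$ (if $p=2$) or $B_{ab}=C_{ab}+\overline{2}^{-1}E_a^*JE_b^*$ (if $p\neq2$) for every $(a,b)\in\mathcal{U}$, the inclusion is immediate. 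You instead keep the same independence input and replace the spanning step by a cardinality count against $\dim_\F\mathcal{T}=|\mathcal{B}|$ from Corollary \ref{C;basis}, via the identification $\mathcal{U}=\bigsqcup_{k=0}^{\gamma}\mathcal{C}_k\times\mathcal{C}_k$ (valid, since $\mathcal{U}\subseteq\mathcal{A}_2\times\mathcal{A}_2$ and $b\sim c\iff(b,c)\in\mathcal{U}$), the injectivity of $(i,j)\mapsto C_{ij}$, $|\mathcal{U}|=|\mathcal{V}|$, and $\mathcal{D}\cap\mathcal{W}=\varnothing$. This all goes through; the one fine point is that both your support-based injectivity argument and the disjointness $\mathcal{D}\cap\mathcal{W}=\varnothing$ require each $C_{ij}\neq O$, which indeed follows from Lemma \ref{L;Badpairlemma} (iii) exactly as you argue for the disjointness (if $C_{ij}=O$ then $B_{ij}\in\langle\mathcal{W}\rangle_\F$), so you should state it once explicitly. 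Comparing the two: the paper's route is shorter and needs no bookkeeping, because \eqref{Eq:24} exhibits $\mathcal{C}$ and $\mathcal{B}$ as related by an explicit (triangular) change of basis; your route is heavier for this lemma but makes explicit the combinatorial decomposition of $\mathcal{U}$ into blocks $\mathcal{C}_k\times\mathcal{C}_k$, which is the structural fact underlying the Wedderburn components $M_{|\mathcal{C}_k|}(\F)$ in Theorem \ref{T;E}.
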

\begin{proof}
By Lemmas \ref{L;Finalbasiscomputation} (i) and \ref{L;Quasi-thin} (ii), it is enough to check that $\B\subseteq\langle \mathcal{C}\rangle_\F$. Since $\B=\mathcal{V}\cup\mathcal{W}$ and $\mathcal{C}=\mathcal{D}\cup \mathcal{W}$, we only check that $\mathcal{V}\subseteq\langle \mathcal{C}\rangle_\F$. For any $B_{ab}\in \mathcal{V}$, notice that $(a,b)\in \mathcal{U}$ and $a\sim b$. So $C_{ab}$ is defined. Therefore we have $B_{ab}\in \langle \mathcal{C}\rangle_\F$ by \eqref{Eq:24}. The desired lemma follows.
\end{proof}
\begin{lem}\label{L;Newideal1}
Assume that $p\neq 2$, $\mathcal{A}_2\neq \varnothing$, $a\in [\gamma]$, and $S$ is a quasi-thin scheme.
\begin{enumerate}[(i)]
\item [\em (i)] We have $MN, NM\in \langle \mathcal{D}_a\rangle_\F$ if $M\in \mathcal{T}$ and $N\in \langle \mathcal{D}_a\rangle_\F$.
\item [\em (ii)] We have $\langle \mathcal{D}_a\rangle_\F$ is a minimal two-sided ideal of $\mathcal{T}$.
\end{enumerate}
\end{lem}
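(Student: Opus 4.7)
The plan is to reduce part (i) to a case analysis using the basis $\mathcal{C}=\mathcal{D}\cup\mathcal{W}$ of $\mathcal{T}$ supplied by Lemma \ref{L;Newbasis}, and then to derive the minimality in part (ii) from the simplicity of $\langle\mathcal{D}_a\rangle_\F$ already established in Lemma \ref{L;Newsubalgebras1} (ii). Throughout I will use the fact that for $i\in\mathcal{C}_a$ one has $i\in\mathcal{A}_2$, so $k_i=2$, and since $p\neq 2$ the scalar $\overline{2}^{-1}\in\F$ is well-defined and satisfies $\overline{2}^{-1}\cdot\overline{2}=1$. This is what makes the auxiliary elements $C_{gh}=B_{gh}-\overline{2}^{-1}E_g^*JE_h^*$ orthogonal to $\mathcal{J}_0$.

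For (i), by $\F$-linearity I may assume that $N=C_{ij}$ with $i,j\in\mathcal{C}_a$ and that $M$ is a basis element in $\mathcal{C}$. I will then dispose of three cases. First, if $M\in\mathcal{D}_a$, then $MN,NM\in\langle\mathcal{D}_a\rangle_\F$ directly from Lemma \ref{L;Newsubalgebras1} (i). Second, if $M=C_{k\ell}\in\mathcal{D}_b$ for some $b\in[\gamma]\setminus\{a\}$, then $\ell\in\mathcal{C}_b$ and $i\in\mathcal{C}_a$ belong to disjoint equivalence classes, so $\ell\neq i$ and (symmetrically) $j\neq k$; Lemma \ref{L;Finalbasiscomputation} (ii) then gives $MN=NM=O\in\langle\mathcal{D}_a\rangle_\F$. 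Third, if $M=E_k^*JE_\ell^*\in\mathcal{W}$, I expand $N=B_{ij}-\overline{2}^{-1}E_i^*JE_j^*$ and apply Lemma \ref{L;Computation} (i), (ii) to obtain
\[
MN=\delta_{\ell i}E_k^*JE_j^*-\overline{2}^{-1}\delta_{\ell i}\overline{k_\ell}E_k^*JE_j^*,
\]
where, when $\ell=i$, the factor $\overline{k_\ell}=\overline{k_i}=\overline{2}$ forces $\overline{2}^{-1}\overline{k_\ell}=1$ and the two terms cancel; so $MN=O$. A symmetric computation using Lemma \ref{L;Computation} (i), (iii) together with $k_j=2$ gives $NM=O$ in this case as well.

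For (ii), part (i) already shows that $\langle\mathcal{D}_a\rangle_\F$ is a two-sided ideal of $\mathcal{T}$, and it is nonzero because $C_{bb}\neq O$ for any $b\in\mathcal{C}_a$ (one may verify this using Lemma \ref{L;Badpairlemma} (iii), since $B_{bb}\notin\langle\mathcal{W}\rangle_\F$). To obtain minimality, suppose $\mathcal{I}$ is a nonzero two-sided ideal of $\mathcal{T}$ with $\mathcal{I}\subseteq\langle\mathcal{D}_a\rangle_\F$. Since $\langle\mathcal{D}_a\rangle_\F\subseteq\mathcal{T}$, the ideal $\mathcal{I}$ is in particular stable under left and right multiplication by elements of $\langle\mathcal{D}_a\rangle_\F$, and hence is a nonzero two-sided ideal of the unital $\F$-algebra $\langle\mathcal{D}_a\rangle_\F$ (Lemma \ref{L;Newsubalgebras1} (i)). But by Lemma \ref{L;Newsubalgebras1} (ii) this algebra is isomorphic to $M_{|\mathcal{C}_a|}(\F)$, which is simple, forcing $\mathcal{I}=\langle\mathcal{D}_a\rangle_\F$.

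The only subtle step will be the cancellation in case (c) of part (i); everything else is a routine juxtaposition of the structural lemmas already proved. This cancellation is exactly the motivation for the case split in the definition \eqref{Eq:24} of $C_{gh}$ with the scalar $\overline{2}^{-1}$, so it should go through cleanly once the calculation is carried out with Lemma \ref{L;Computation} in hand.
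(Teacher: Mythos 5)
Your proof is correct and takes essentially the same route as the paper: reduce to basis elements of $\mathcal{C}=\mathcal{D}\cup\mathcal{W}$ via Lemma \ref{L;Newbasis}, handle the case $M\in\mathcal{D}$ with Lemmas \ref{L;Newsubalgebras1} (i) and \ref{L;Finalbasiscomputation} (ii), handle $M\in\mathcal{W}$ by the cancellation $\overline{2}^{-1}\overline{k_b}=1$ coming from Lemma \ref{L;Computation} and $p\neq 2$, and then get minimality from (i) together with the simplicity $\langle\mathcal{D}_a\rangle_\F\cong M_{|\mathcal{C}_a|}(\F)$ of Lemma \ref{L;Newsubalgebras1} (ii). The only difference is that you write out explicitly the cancellation and the ideal-inside-a-simple-unital-subalgebra argument that the paper leaves implicit.
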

\begin{proof}
For (i), by the hypotheses and Lemma \ref{L;Newbasis}, notice that $\mathcal{C}$ is an $\F$-basis of $\mathcal{T}$. So there is no loss to assume that $M\in \mathcal{C}$ as $\langle\mathcal{D}_a\rangle_\F$ is an $\F$-linear space. Moreover, there is also no loss to assume further that $N=C_{bc}\in \mathcal{D}_a$. We thus have $(b,c)\in \mathcal{U}$. So $k_b=k_c=2$ by the definition of $\mathcal{U}$. If $M=E_e^*JE_h^*\in \mathcal{W}$, as $p\neq 2=k_b=k_c$,
$$MN=E_e^*JE_h^*(B_{bc}-\overline{2}^{-1}E_b^*JE_c^*)=(B_{bc}-\overline{2}^{-1}E_b^*JE_c^*)E_e^*JE_h^*=NM=O\in \langle\mathcal{D}_a\rangle_\F$$
by \eqref{Eq:24} and Lemma \ref{L;Computation} (i), (ii), (iii). If $M=C_{ij}\in \mathcal{D}$, then $i,j\in \mathcal{C}_k$, where $k\in [\gamma]$. If $k=a$, then $M\in \mathcal{D}_a$. So $MN, NM\in \langle\mathcal{D}_a\rangle_\F$ by Lemma \ref{L;Newsubalgebras1} (i). If $k\neq a$, notice that $\{i,j\}\cap\{b,c\}\subseteq\mathcal{C}_k\cap\mathcal{C}_a=\varnothing$, which implies that $MN=NM=O\in \langle\mathcal{D}_a\rangle_\F$ by Lemma \ref{L;Finalbasiscomputation} (ii). (i) follows as $\mathcal{C}=\mathcal{D}\cup\mathcal{W}$. (ii) follows by (i) and Lemma \ref{L;Newsubalgebras1} (ii).
\end{proof}
\begin{lem}\label{L;Newideal2}
Assume that $p=2$, $\mathcal{A}_2\neq \varnothing$, $a\in [\gamma]$, and $S$ is a quasi-thin scheme.
\begin{enumerate}[(i)]
\item [\em (i)] We have $\underline{MN}, \underline{NM}\in \langle\underline{\mathcal{D}_a}\rangle_\F$ if $M\in \mathcal{T}$ and $N\in \langle\mathcal{D}_a\rangle_\F$.
\item [\em (ii)] We have $\langle\underline{\mathcal{D}_a}\rangle_\F$ is an $\F$-subalgebra of $\mathcal{T}/\mathcal{J}$ with the identity element $\sum_{b\in \mathcal{C}_a}\underline{C_{bb}}$.
\item [\em (iii)] We have $\langle\underline{\mathcal{D}_a}\rangle_\F\cong \langle\mathcal{D}_a\rangle_\F\cong M_{\mathcal{C}_a}(\F)\cong M_{|\mathcal{C}_a|}(\F)$ as $\F$-algebras.
\item [\em (iv)] We have $\langle\underline{\mathcal{D}_a}\rangle_\F$ is a minimal two-sided ideal of $\mathcal{T}/\mathcal{J}$.
\end{enumerate}
\end{lem}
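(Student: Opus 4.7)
The plan is to establish (i)--(iv) in order, modeled closely on the proof of Lemma \ref{L;Newideal1}, but replacing ``vanishing on the nose'' by ``vanishing modulo $\mathcal{J}$'' wherever needed; the key new ingredient is Theorem \ref{T;C}, which identifies $\mathcal{J}$ with $\mathcal{J}_1$, together with the fact that $C_{ij}=B_{ij}$ when $p=2$ by \eqref{Eq:24}.

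For (i), by Lemma \ref{L;Newbasis} the set $\mathcal{C}=\mathcal{D}\cup\mathcal{W}$ is an $\F$-basis of $\mathcal{T}$, so by $\F$-linearity it suffices to check two cases against $N=C_{bc}=B_{bc}\in\mathcal{D}_a$. If $M=E_e^*JE_h^*\in\mathcal{W}$, then Lemma \ref{L;Computation} (ii), (iii) gives $MN=\delta_{hb}E_e^*JE_c^*$ and $NM=\delta_{ce}E_b^*JE_h^*$; since $b,c\in\mathcal{C}_a$ both have valency $2$, each nonzero product belongs to $\mathcal{J}_1=\mathcal{J}$ by the definition of $\mathcal{J}_1$ and Theorem \ref{T;C}, so its image in $\mathcal{T}/\mathcal{J}$ is $\underline{O}\in\langle\underline{\mathcal{D}_a}\rangle_\F$. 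If $M=C_{ij}\in\mathcal{D}_k$ for some $k\in[\gamma]$, either $k=a$, in which case Lemma \ref{L;Newsubalgebras1} (i) gives $MN,NM\in\langle\mathcal{D}_a\rangle_\F$, or $k\neq a$, in which case $\mathcal{C}_k\cap\mathcal{C}_a=\varnothing$ forces $j\neq b$ and $c\neq i$, so that $MN=B_{ij}B_{bc}=O=B_{bc}B_{ij}=NM$ by Lemma \ref{L;Secondcomputation} (i). Then (ii) follows by combining (i) with Lemma \ref{L;Newsubalgebras1} (i) for closure, and by projecting the identity $\sum_{b\in\mathcal{C}_a}C_{bb}$ of $\langle\mathcal{D}_a\rangle_\F$ through the quotient map.

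The main step is (iii). I would consider the natural $\F$-algebra map $\phi:\langle\mathcal{D}_a\rangle_\F\to\langle\underline{\mathcal{D}_a}\rangle_\F$, $M\mapsto\underline{M}$, which is the restriction of the quotient map and is surjective by construction. For injectivity, suppose $\sum_{i,j\in\mathcal{C}_a}e_{ij}C_{ij}=\sum_{i,j\in\mathcal{C}_a}e_{ij}B_{ij}\in\mathcal{J}$. By Theorem \ref{T;C} and Notation \ref{N;notation3}, $\mathcal{J}=\mathcal{J}_1\subseteq\langle\mathcal{W}\rangle_\F$; but $\mathcal{B}=\mathcal{V}\cup\mathcal{W}$ is an $\F$-basis of $\mathcal{T}$ with $\mathcal{V}\cap\mathcal{W}=\varnothing$ (Corollary \ref{C;basis} together with Lemma \ref{L;Badpairlemma} (iii), (iv)), so no nontrivial $\F$-linear combination of $B_{ij}\in\mathcal{V}$ can lie in $\langle\mathcal{W}\rangle_\F$; this forces every $e_{ij}=0$. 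Hence $\phi$ is an isomorphism, and composing with the chain of isomorphisms from Lemma \ref{L;Newsubalgebras1} (ii) gives $\langle\underline{\mathcal{D}_a}\rangle_\F\cong\langle\mathcal{D}_a\rangle_\F\cong M_{\mathcal{C}_a}(\F)\cong M_{|\mathcal{C}_a|}(\F)$.

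Finally, for (iv), (i) shows that $\langle\underline{\mathcal{D}_a}\rangle_\F$ is a two-sided ideal of $\mathcal{T}/\mathcal{J}$, and (iii) shows it is simple as an $\F$-algebra since it is isomorphic to a full matrix algebra. Any two-sided ideal of $\mathcal{T}/\mathcal{J}$ contained in $\langle\underline{\mathcal{D}_a}\rangle_\F$ is also a two-sided ideal of the subalgebra $\langle\underline{\mathcal{D}_a}\rangle_\F$ (since the unit of this subalgebra, identified in (ii), still lies in $\mathcal{T}/\mathcal{J}$), hence by simplicity is either $\{\underline{O}\}$ or $\langle\underline{\mathcal{D}_a}\rangle_\F$, giving minimality. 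I expect the principal obstacle to be the injectivity step in (iii): it is the one place where the quotient by $\mathcal{J}$ could in principle collapse the structure, and it rests squarely on the disjointness $\mathcal{V}\cap\mathcal{W}=\varnothing$ inside the basis $\mathcal{B}$ together with the inclusion $\mathcal{J}\subseteq\langle\mathcal{W}\rangle_\F$; the remaining parts are then essentially bookkeeping with the lemmas already at hand.
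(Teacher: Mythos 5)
Your proposal is correct and follows essentially the same route as the paper: the same case analysis on the basis $\mathcal{C}=\mathcal{D}\cup\mathcal{W}$ for (i), killing the $\mathcal{W}$-products via $\mathcal{J}=\mathcal{J}_1$ from Theorem \ref{T;C}, and the same key injectivity argument for (iii), resting on $\mathcal{J}\subseteq\langle\mathcal{W}\rangle_\F$ together with the linear independence of the basis containing the $B_{ij}$. Your phrasing of (iii) via the restricted quotient map is only cosmetically different from the paper's basis-by-basis construction of the isomorphism.
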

\begin{proof}
For (i), by the hypotheses and Lemma \ref{L;Newbasis}, notice that $\mathcal{C}$ is an $\F$-basis of $\mathcal{T}$. So there is no loss to assume that $M\in\mathcal{C}$ as $\langle\underline{\mathcal{D}_a}\rangle_\F$ is an $\F$-linear space. Moreover, there is also no loss to assume further that $N=C_{ce}\in \mathcal{D}_a$. We thus have $(c,e)\in \mathcal{U}$. So $k_c=k_e=2$ by the definition of $\mathcal{U}$. If $M=E_g^*JE_h^*\in \mathcal{W}$, as $p=k_c=k_e=2$,
$$\underline{MN}=\underline{E_g^*JE_h^*B_{ce}}=\underline{B_{ce}E_g^*JE_h^*}=\underline{NM}=\underline{O}\in
\langle\underline{\mathcal{D}_a}\rangle_\F$$
by \eqref{Eq:24}, Lemma \ref{L;Computation} (ii), (iii), and Theorem \ref{T;C}. If $M=C_{ij}\in \mathcal{D}$, we have $i, j\in \mathcal{C}_k$, where $k\in [\gamma]$. If $k=a$, then $M\in \mathcal{D}_a$. So $\underline{MN}, \underline{NM}\in \langle\underline{\mathcal{D}_a}\rangle_\F$ by Lemma \ref{L;Newsubalgebras1} (i). If $k\neq a$, then $\{i,j\}\cap\{c,e\}\subseteq\mathcal{C}_k\cap\mathcal{C}_a=\varnothing$, which means that $\underline{MN}=\underline{NM}=\underline{O}\in \langle\underline{\mathcal{D}_a}\rangle_\F$ by Lemma \ref{L;Finalbasiscomputation} (ii). (i) follows as $\mathcal{C}=\mathcal{D}\cup\mathcal{W}$.

For (ii), by (i), we only need to check that $\underline{L}(\sum_{b\in \mathcal{C}_a}\underline{C_{bb}})=\underline{L}=(\sum_{b\in \mathcal{C}_a}\underline{C_{bb}})\underline{L}$ if $L\in \langle\mathcal{D}_a\rangle_\F$. As $\langle\mathcal{D}_a\rangle_\F$ is an $\F$-linear space, we may assume further that $L=C_{\ell m}\in \mathcal{D}_a$. By Lemma \ref{L;Finalbasiscomputation} (ii) and (iii), we have $\underline{C_{\ell m}}(\sum_{b\in \mathcal{C}_a}\underline{C_{bb}})=\underline{C_{\ell m}}=(\sum_{b\in \mathcal{C}_a}\underline{C_{bb}})\underline{C_{\ell m}}$. The proof of (ii) is now complete.

For (iii), by Lemma \ref{L;Newsubalgebras1} (ii), it suffices to prove that $\langle\underline{\mathcal{D}_a}\rangle_\F\cong \langle\mathcal{D}_a\rangle_\F$ as $\F$-algebras. By the definition of $\underline{\mathcal{D}_a}$, let $\phi$ denote the surjective map from $\mathcal{D}_a$ to $\underline{\mathcal{D}_a}$ that sends every $C_{nq}$ to $\underline{C_{nq}}$. For any distinct $C_{rs}, C_{uv}\in \mathcal{D}_a$, observe that the equality $\underline{C_{rs}}=\underline{C_{uv}}$ implies that $C_{rs}-C_{uv}\in \langle \mathcal{W}\rangle_\F$ by Theorem \ref{T;C}. This yields a contradiction by the fact $\mathcal{C}=\mathcal{D}\cup\mathcal{W}$ and Lemma \ref{L;Finalbasiscomputation} (i). Therefore $\phi$ is a bijective map from $\mathcal{D}_a$ to $\underline{\mathcal{D}_a}$.
Since $\mathcal{C}=\mathcal{D}\cup\mathcal{W}$, by Theorem \ref{T;C} and Lemma \ref{L;Finalbasiscomputation} (i), notice that $\mathcal{D}_a$ is an $\F$-basis of $\langle\mathcal{D}_a\rangle_\F$ and $\underline{\mathcal{D}_a}$ is an $\F$-basis of $\langle\underline{\mathcal{D}_a}\rangle_\F$. So we can define $\psi$ to be the $\F$-linear isomorphism from $\langle \mathcal{D}_a\rangle$ to $\langle\underline{\mathcal{D}_a}\rangle$ satisfying $\psi(C_{nq})=\underline{C_{nq}}$ for all $C_{nq}\in \mathcal{D}_a$. By Lemma \ref{L;Finalbasiscomputation} (ii) and (iii), it is not very difficult to see that $\psi$ is an $\F$-algebra isomorphism from $\langle\mathcal{D}_a\rangle_\F$ to $\langle\underline{\mathcal{D}_a}\rangle_\F$. The proof of (iii) is now complete.

(iv) is proved by (i) and (iii).
\end{proof}
We are now ready to deduce the following corollaries.
\begin{cor}\label{C;maintheorem1}
If we have $p\neq 2$, $\mathcal{A}_2\neq \varnothing$, and $S$ is a quasi-thin scheme, then there exist minimal two-sided ideals $\mathcal{I}_{-1}, \mathcal{I}_0,\ldots, \mathcal{I}_{\gamma}$ of $\mathcal{T}$ such that the following assertions hold.
\begin{enumerate}[(i)]
\item [\em (i)] For any $a\in [\gamma]\cup\{-1\}$, $\mathcal{I}_a$ is a unital $\F$-algebra.
\item [\em (ii)] The ideal $\mathcal{I}_{-1}$ is a simple $\F$-algebra with $\F$-dimension $(d+1)^2$.
\item [\em (iii)] As $\F$-algebras, $\mathcal{T}=\mathcal{I}_{-1}\oplus \bigoplus_{b=0}^{\gamma}\mathcal{I}_b\cong \mathcal{I}_{-1}\oplus \bigoplus_{b=0}^{\gamma}M_{|\mathcal{C}_b|}(\F)$.
\end{enumerate}
\end{cor}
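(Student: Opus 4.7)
The plan is to set $\mathcal{I}_{-1}=\mathcal{J}_0$ and $\mathcal{I}_b=\langle\mathcal{D}_b\rangle_\F$ for every $b\in[\gamma]$, and then deduce the three assertions from the structural lemmas already in hand. By Corollary \ref{C;ideal0} (ii), $\mathcal{I}_{-1}$ is a two-sided ideal of $\mathcal{T}$, and Lemma \ref{L;Newsubalgebras2} supplies (ii) directly: $\mathcal{I}_{-1}$ is unital and simple of $\F$-dimension $(d+1)^2$, with identity $e_{-1}:=\sum_{c=0}^{d}D_{cc}$. Its minimality as a two-sided ideal of $\mathcal{T}$ is automatic, since any $\mathcal{T}$-ideal contained in $\mathcal{I}_{-1}$ absorbs $e_{-1}$ and is therefore a two-sided ideal of the simple algebra $\mathcal{I}_{-1}$. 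For $b\in[\gamma]$, Lemmas \ref{L;Newsubalgebras1} (i) and \ref{L;Newideal1} (ii) show that $\mathcal{I}_b$ is a minimal two-sided ideal of $\mathcal{T}$ that is unital with identity $e_b:=\sum_{i\in\mathcal{C}_b}C_{ii}$, establishing (i).

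The substantive task is (iii), for which I would show that the $e_a$ ($a\in\{-1\}\cup[\gamma]$) form a complete system of orthogonal central idempotents of $\mathcal{T}$. Centrality of each $e_a$ is automatic: for $x\in\mathcal{T}$, both $xe_a$ and $e_ax$ lie in the two-sided ideal $\mathcal{I}_a$, so applying the identity $e_a$ gives $xe_a=e_axe_a=e_ax$. For distinct $a,b\in[\gamma]$, orthogonality $e_ae_b=O$ reduces by bilinearity to $C_{ii}C_{jj}=O$ with $i\in\mathcal{C}_a$, $j\in\mathcal{C}_b$, and $i\neq j$, which is Lemma \ref{L;Finalbasiscomputation} (ii). For the cross-orthogonality $e_{-1}e_b=O=e_be_{-1}$, I would expand $C_{ii}=B_{ii}-\overline{2}^{-1}E_i^*JE_i^*$ using $B_{ii}=E_i^*$ (valid since $k_i=2$ for $i\in\mathcal{C}_b\subseteq\mathcal{A}_2$); the summand $D_{cc}C_{ii}$ vanishes when $c\neq i$ because $E_c^*E_i^*=O$ and Lemma \ref{L;Computation} (i) apply, while the case $c=i\in\mathcal{A}_2$ produces the key cancellation
\[
D_{ii}C_{ii}=\overline{2}^{-1}E_i^*JE_i^*-\overline{2}^{-2}\,\overline{k_i}\,E_i^*JE_i^*=O,
\]
where $\overline{k_i}=\overline{2}$ and the hypothesis $p\neq 2$ makes $\overline{2}$ invertible. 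The identity $e_{-1}+\sum_{b=0}^{\gamma}e_b=I$ then follows by regrouping: $E_c^*JE_c^*=E_c^*$ when $c\in\mathcal{A}_1$, while $D_{ii}+C_{ii}=E_i^*$ when $i\in\mathcal{A}_2$, so the total collapses to $\sum_{c\in[d]}E_c^*=I$ using $[d]=\mathcal{A}_1\sqcup\mathcal{A}_2$ in the quasi-thin setting.

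Once these relations are in hand, $\mathcal{I}_a=\mathcal{T}e_a=e_a\mathcal{T}$ for each $a$, and the standard central-idempotent argument yields the $\F$-algebra direct sum $\mathcal{T}=\mathcal{I}_{-1}\oplus\bigoplus_{b=0}^{\gamma}\mathcal{I}_b$ (pairwise products $\mathcal{I}_a\mathcal{I}_{a'}=\mathcal{T}e_ae_{a'}\mathcal{T}$ vanish for $a\neq a'$, forcing uniqueness of decomposition). Combining with the isomorphism $\mathcal{I}_b\cong M_{|\mathcal{C}_b|}(\F)$ from Lemma \ref{L;Newsubalgebras1} (ii) completes (iii). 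The main obstacle is the cross-orthogonality computation $D_{ii}C_{ii}=O$ at $c=i\in\mathcal{A}_2$: it is precisely this step that motivates the definition \eqref{Eq:24} of $C_{ii}$ in the $p\neq 2$ case and makes the hypothesis $p\neq 2$ indispensable, since the cancellation depends on the identity $\overline{2}\cdot\overline{2}^{-2}=\overline{2}^{-1}$ in $\F$.
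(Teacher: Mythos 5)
Your proposal is correct, and it sets up exactly the same ideals as the paper ($\mathcal{I}_{-1}=\mathcal{J}_0$, $\mathcal{I}_b=\langle\mathcal{D}_b\rangle_\F$), with assertions (i), (ii) and minimality obtained from the same lemmas (Corollary \ref{C;ideal0} (ii), Lemmas \ref{L;Newsubalgebras2}, \ref{L;Newsubalgebras1} (i), \ref{L;Newideal1} (ii)); your remark that minimality of $\mathcal{I}_{-1}$ follows from absorbing its identity into any $\mathcal{T}$-ideal it contains is a correct filling-in of a detail the paper leaves implicit. Where you diverge is in (iii): the paper gets the direct-sum decomposition immediately from Lemma \ref{L;Newbasis}, namely that $\mathcal{C}=\mathcal{D}\cup\mathcal{W}$ is an $\F$-basis of $\mathcal{T}$, so $\mathcal{T}=\mathcal{J}_0\oplus\bigoplus_{b}\langle\mathcal{D}_b\rangle_\F$ as linear spaces and the ideal property upgrades this to an algebra decomposition. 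You instead exhibit $e_{-1}=\sum_{c=0}^dD_{cc}$ and $e_b=\sum_{i\in\mathcal{C}_b}C_{ii}$ as a complete system of orthogonal central idempotents, using $B_{ii}=E_i^*$ for $i\in\mathcal{A}_2$ (valid, since $(i,i)\in\mathcal{R}$ and $E_i^*=E_{u_1u_1}+E_{u_2u_2}$), the cancellation $D_{ii}C_{ii}=O$ from $JE_i^*J=\overline{k_i}J$ with $\overline{k_i}=\overline{2}$ invertible, and the collapse $D_{ii}+C_{ii}=E_i^*$, $E_c^*JE_c^*=E_c^*$ for $c\in\mathcal{A}_1$, giving $e_{-1}+\sum_b e_b=I$; centrality is correctly deduced from each $e_a$ being the identity of a two-sided ideal, which also disposes of the one-sided orthogonality you compute. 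Your route avoids citing Lemma \ref{L;Newbasis} and yields the extra explicit information $\mathcal{I}_a=\mathcal{T}e_a$ with central idempotents summing to $I$, at the cost of redoing by hand computations that the basis lemma (via Lemmas \ref{L;Computation} and \ref{L;Finalbasiscomputation}) already packages; the paper's argument is shorter given that $\mathcal{C}$ has already been shown to be a basis, but both proofs rest on the same product formulas and are equally valid under the hypothesis $p\neq 2$.
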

\begin{proof}
By Corollary \ref{C;ideal0} (ii), Lemmas \ref{L;Newsubalgebras2} (ii), and \ref{L;Newideal1} (ii), $\mathcal{J}_0$ and $\langle\mathcal{D}_c\rangle_\F$ are minimal two-sided ideals of $\mathcal{T}$ for any $c\in [\gamma]$. Set $\mathcal{I}_{-1}=\mathcal{J}_0$ and $\mathcal{I}_e=\langle\mathcal{D}_e\rangle_\F$ for any $e\in [\gamma]$. By Lemmas \ref{L;Newsubalgebras2} (i) and \ref{L;Newsubalgebras1} (i), the desired assertion (i) holds. By Lemma \ref{L;Newsubalgebras2} (ii), the desired assertion (ii) holds. Since  $\mathcal{C}=\mathcal{D}\cup \mathcal{W}$, by Lemmas \ref{L;Newbasis}, \ref{L;Newsubalgebras1} (ii), and the assertion (i), the desired assertion (iii) holds. The desired corollary thus follows.
\end{proof}
\begin{cor}\label{C;maintheorem2}
If we have $p=2$, $\mathcal{A}_2\neq \varnothing$, and $S$ is a quasi-thin scheme, then there exist minimal two-sided ideals $\mathcal{K}_{-1},\mathcal{K}_0,\ldots, \mathcal{K}_{\gamma}$ of $\mathcal{T}/\mathcal{J}$ such that the following assertions hold.
\begin{enumerate}[(i)]
\item [\em (i)] For any $a\in [\gamma]\cup\{-1\}$, $\mathcal{K}_a$ is a unital $\F$-algebra.
\item [\em (ii)]As $\F$-algebras, $\mathcal{T}/\mathcal{J}=\mathcal{K}_{-1}\oplus\bigoplus_{b=0}^{\gamma}\mathcal{K}_b\cong M_{|\mathcal{A}_1|}(\F)\oplus \bigoplus_{b=0}^{\gamma}M_{|\mathcal{C}_b|}(\F)$.
\end{enumerate}
\end{cor}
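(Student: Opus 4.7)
The plan is to mirror the proof of Corollary~\ref{C;maintheorem1} inside the semisimple quotient $\mathcal{T}/\mathcal{J}$. Set $\mathcal{K}_{-1}=\underline{\mathcal{J}_0}$ and $\mathcal{K}_b=\langle\underline{\mathcal{D}_b}\rangle_\F$ for each $b\in[\gamma]$. Lemma~\ref{L;Newsubalgebra3} already guarantees that $\mathcal{K}_{-1}$ is a two-sided ideal of $\mathcal{T}/\mathcal{J}$, a unital $\F$-algebra with identity $e_{-1}=\sum_{a\in\mathcal{A}_1}\underline{E_a^*JE_a^*}$, and satisfies $\mathcal{K}_{-1}\cong M_{|\mathcal{A}_1|}(\F)$; Lemma~\ref{L;Newideal2} provides the parallel facts for each $\mathcal{K}_b$ with identity $e_b=\sum_{i\in\mathcal{C}_b}\underline{C_{ii}}$, together with the minimality of $\mathcal{K}_b$ and the isomorphism $\mathcal{K}_b\cong M_{|\mathcal{C}_b|}(\F)$. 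Minimality of $\mathcal{K}_{-1}$, which is not recorded earlier, will follow once $e_{-1}$ is shown to be central in $\mathcal{T}/\mathcal{J}$, because then any two-sided ideal of $\mathcal{T}/\mathcal{J}$ contained in $\mathcal{K}_{-1}$ is a two-sided ideal of the simple algebra $\mathcal{K}_{-1}$.

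The heart of the argument is the direct sum decomposition in (ii). Since $p=2$, formula~\eqref{Eq:24} together with the definition of $B_{ii}$ yields $C_{ii}=B_{ii}=E_i^*$ for every $i\in\mathcal{A}_2$, and a direct matrix computation using $|xR_a|=1$ yields $E_a^*JE_a^*=E_a^*$ for every $a\in\mathcal{A}_1$. Because $S$ is quasi-thin, $\mathcal{A}_1$ together with the classes $\mathcal{C}_0,\ldots,\mathcal{C}_\gamma$ partitions $[d]$, so that
\[
\sum_{j=-1}^{\gamma}e_j=\sum_{a\in\mathcal{A}_1}\underline{E_a^*}+\sum_{b=0}^{\gamma}\sum_{i\in\mathcal{C}_b}\underline{E_i^*}=\sum_{k=0}^{d}\underline{E_k^*}=\underline{I},
\]
and the same partition combined with $E_k^*E_\ell^*=\delta_{k\ell}E_k^*$ gives $e_ie_j=\underline{O}$ for $i\neq j$. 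A standard argument (the identity of any two-sided ideal is automatically central, since $xe_j=e_jxe_j=e_jx$ for all $x$) then promotes the $e_j$'s to pairwise orthogonal central idempotents summing to $\underline{I}$, and the associated Peirce decomposition gives $\mathcal{T}/\mathcal{J}=\bigoplus_{j=-1}^{\gamma}e_j(\mathcal{T}/\mathcal{J})=\bigoplus_{j=-1}^{\gamma}\mathcal{K}_j$.

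To confirm that $\sum_{j=-1}^{\gamma}\mathcal{K}_j$ really is all of $\mathcal{T}/\mathcal{J}$, one invokes Lemma~\ref{L;Newbasis}: the set $\mathcal{C}=\mathcal{D}\cup\mathcal{W}$ spans $\mathcal{T}$, so $\underline{\mathcal{W}}$ spans $\mathcal{K}_{-1}$ while $\bigcup_{b=0}^{\gamma}\underline{\mathcal{D}_b}$ spans $\sum_{b=0}^{\gamma}\mathcal{K}_b$, which delivers the spanning half of the decomposition. The isomorphism $\mathcal{T}/\mathcal{J}\cong M_{|\mathcal{A}_1|}(\F)\oplus\bigoplus_{b=0}^{\gamma}M_{|\mathcal{C}_b|}(\F)$ then assembles from Lemmas~\ref{L;Newsubalgebra3}(iii) and \ref{L;Newideal2}(iii). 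The main obstacle anticipated is the centrality of the $e_j$'s, without which a sum of two-sided ideals need not be direct; but this is a formal consequence of each $\mathcal{K}_j$ being two-sided with identity $e_j$, so no new scheme-theoretic input is required beyond the results already assembled.
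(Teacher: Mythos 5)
Your proposal is correct, and in outline it is the paper's proof: you take the same ideals $\mathcal{K}_{-1}=\underline{\mathcal{J}_0}$ and $\mathcal{K}_b=\langle\underline{\mathcal{D}_b}\rangle_\F$, and you cite Lemmas \ref{L;Newsubalgebra3} and \ref{L;Newideal2} for exactly what the paper cites them for (unitality, the matrix-algebra isomorphisms, minimality of each $\mathcal{K}_b$, and minimality of $\mathcal{K}_{-1}$ via simplicity plus the two-sided ideal property). Where you genuinely diverge is the verification that the sum is direct and exhausts $\mathcal{T}/\mathcal{J}$: the paper gets this from the basis $\mathcal{C}=\mathcal{D}\cup\mathcal{W}$ of Lemma \ref{L;Newbasis} together with Theorem \ref{T;C} (which kills precisely the $\underline{E_i^*JE_j^*}$ with $\max\{k_i,k_j\}=2$), whereas you observe that for $p=2$ one has $C_{ii}=B_{ii}=E_i^*$ for $i\in\mathcal{A}_2$ and $E_a^*JE_a^*=E_a^*$ for $a\in\mathcal{A}_1$, so the identities $e_{-1},e_0,\ldots,e_\gamma$ of the $\mathcal{K}_j$ are pairwise orthogonal idempotents summing to $\underline{I}$, and the Peirce decomposition does the rest. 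Both routes are sound; your idempotent computation is arguably cleaner in that, once the $e_j$ are in hand, the spanning statement $\mathcal{T}/\mathcal{J}=\sum_j e_j(\mathcal{T}/\mathcal{J})=\sum_j\mathcal{K}_j$ is automatic, so your third paragraph invoking Lemma \ref{L;Newbasis} for spanning is redundant (though harmless); note also that centrality of the $e_j$ is not actually needed either for directness (orthogonality plus $e_jm=m$ for $m\in\mathcal{K}_j$ suffices) or for minimality of $\mathcal{K}_{-1}$, since any two-sided ideal of $\mathcal{T}/\mathcal{J}$ contained in $\mathcal{K}_{-1}$ is automatically a two-sided ideal of the simple algebra $\mathcal{K}_{-1}$.
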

\begin{proof}
By Lemmas \ref{L;Newsubalgebra3} (i), (iii), and \ref{L;Newideal2} (iv), $\underline{\mathcal{J}_0}$ and $\langle\underline{\mathcal{D}_c}\rangle_\F$ are minimal two-sided ideals of $\mathcal{T}/\mathcal{J}$ for any $c\in [\gamma]$. Set $\mathcal{K}_{-1}=\underline{\mathcal{J}_0}$ and $\mathcal{K}_e=\langle\underline{\mathcal{D}_e}\rangle$ for any $e\in [\gamma]$. By Lemmas \ref{L;Newsubalgebra3} (ii) and \ref{L;Newideal2} (ii), the desired assertion (i) holds. Since $\mathcal{C}=\mathcal{D}\cup\mathcal{W}$, by Theorem \ref{T;C} and Lemma \ref{L;Newbasis}, notice that $\mathcal{T}/\mathcal{J}=\mathcal{K}_{-1}\oplus\bigoplus_{b=0}^{\gamma}\mathcal{K}_b$, which implies that the desired assertion (ii) holds by Lemmas \ref{L;Newsubalgebra3} (iii) and \ref{L;Newideal2} (iii). We are done.
\end{proof}
Theorem \ref{T;E} is proved by Lemmas \ref{L;Thinschemecase} (i), \ref{L;EquivalenceRelation}, Corollaries \ref{C;maintheorem1}, and \ref{C;maintheorem2}.

We close the whole paper by a corollary that is proved by Corollary \ref{C;maintheorem1} and the Artin-Wedderburn Theorem.
\begin{cor}\label{C;finalstructurecorollary}
Assume that $\F$ is an algebraically closed field and $p\neq 2$. If $S$ is a quasi-thin scheme and $\mathcal{A}_2\neq \varnothing$, then $\mathcal{T}\cong M_{d+1}(\F)\oplus\bigoplus_{a=0}^\gamma M_{|\mathcal{C}_a|}(\F)$ as $\F$-algebras.
\end{cor}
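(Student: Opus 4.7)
The plan is to invoke Corollary \ref{C;maintheorem1} to reduce the claim to identifying the single ``mysterious'' summand $\mathcal{I}_{-1}$, and then to pin down $\mathcal{I}_{-1}$ via the Artin--Wedderburn theorem using the hypothesis that $\F$ is algebraically closed.

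First, I would apply Corollary \ref{C;maintheorem1}, whose hypotheses are exactly those in force here ($p\neq 2$, $\mathcal{A}_2\neq\varnothing$, $S$ a quasi-thin scheme). This yields the $\F$-algebra decomposition
\[
\mathcal{T}=\mathcal{I}_{-1}\oplus\bigoplus_{b=0}^{\gamma}\mathcal{I}_b\cong \mathcal{I}_{-1}\oplus\bigoplus_{b=0}^{\gamma}M_{|\mathcal{C}_b|}(\F),
\]
together with the facts that $\mathcal{I}_{-1}$ is a simple $\F$-algebra and $\dim_{\F}\mathcal{I}_{-1}=(d+1)^2$. Thus, to finish the corollary it suffices to prove that $\mathcal{I}_{-1}\cong M_{d+1}(\F)$ as $\F$-algebras.

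Next, observe that $\mathcal{T}$ is finite-dimensional over $\F$ (being a subalgebra of $M_X(\F)$), so $\mathcal{I}_{-1}$ is a finite-dimensional simple $\F$-algebra. By the Artin--Wedderburn theorem there exist a positive integer $n$ and a finite-dimensional division $\F$-algebra $D$ with $\mathcal{I}_{-1}\cong M_n(D)$. Since $\F$ is algebraically closed, any finite-dimensional division $\F$-algebra is $\F$ itself, so $D=\F$ and $\mathcal{I}_{-1}\cong M_n(\F)$. Comparing $\F$-dimensions gives $n^2=(d+1)^2$, hence $n=d+1$ and $\mathcal{I}_{-1}\cong M_{d+1}(\F)$. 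Substituting this into the decomposition displayed above yields
\[
\mathcal{T}\cong M_{d+1}(\F)\oplus\bigoplus_{a=0}^{\gamma}M_{|\mathcal{C}_a|}(\F),
\]
which is the required isomorphism. There is no real obstacle here: the entire structural work has already been carried out in Corollary \ref{C;maintheorem1}, and the only ingredient added by this corollary is the standard consequence of Artin--Wedderburn over an algebraically closed field.
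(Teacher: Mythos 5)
Your proposal is correct and follows exactly the route the paper intends: the paper proves this corollary by citing Corollary \ref{C;maintheorem1} together with the Artin--Wedderburn theorem, which is precisely your argument (with the dimension count $n^2=(d+1)^2$ forcing $\mathcal{I}_{-1}\cong M_{d+1}(\F)$ over the algebraically closed field $\F$). No gaps; nothing further is needed.
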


\end{document}